\newtheorem{theorem}{Theorem}[section]
\newtheorem{lemma}{Lemma}[section]
\newtheorem{proposition}{Proposition}[section]
\theoremstyle{remark}
\theoremstyle{definition}
\newcommand{\ve}{\varepsilon}
\numberwithin{equation}{section}
\numberwithin{table}{section}
\numberwithin{figure}{section}
\newcommand{\abs}[1]{\lvert#1\rvert}
\newcommand{\kp}{\kappa}
\newcommand{\wh}{\widehat}
\renewcommand{\theequation}{\thesection.\arabic{equation}}
\def\ontop#1#2{\setbox0\hbox{#2}\copy0\llap{\raise\ht0\hbox{#1}}}
\begin{document}
\newcommand{\shw}{\ensuremath{\overset{\scriptsize \circ}{S}}}
\title{NOTES ON ERROR ESTIMATES FOR THE STANDARD GALERKIN-FINITE ELEMENT METHOD FOR THE SHALLOW WATER EQUATIONS.}
\author{D.C. Antonopoulos}
%
\author{V.A. Dougalis}
\address{Department of Mathematics, University of Athens, 15784 Zographou, Greece, and
Institute of Applied and Computational Mathematics, FORTH, 70013 Heraklion, Greece}
\email{antonod@math.uoa.gr\,,  doug@math.uoa.gr}
\subjclass[2010]{65M60, 35L60;}
\keywords{Shallow water equations, first-order quasilinear hyperbolic systems, standard Galerkin method, error estimates}
\begin{abstract}
We consider a simple initial-boundary-value problem for the shallow water equations in one space dimension, and also
the analogous problem for a symmetric variant of the system. Assuming smoothness of solutions, we discretize these problems in
space using standard Galerkin-finite element methods and prove $L^{2}$-error estimates for the semidiscrete problems for
quasiuniform and uniform meshes. In particular we show that in the case of spatial discretizations with piecewise linear
continuous functions on a uniform mesh, suitable compatibility conditions at the boundary and superaccuracy properties of the
$L^{2}$ projection on the finite element subspaces lead to an optimal-order $O(h^{2})$ $L^{2}$-error estimate. We also examine
temporal discretizations of the semidiscrete problems by three explicit Runge-Kutta methods (the Euler, improved Euler, and
the Shu-Osher scheme) and prove $L^{2}$-error estimates, which are of optimal order in the temporal variable, under appropriate
stability conditions. In a final section of remarks we prove optimal-order $L^{2}$-error estimates for smooth spline spatial
discretizations of the periodic initial-value problem for the systems. We also prove that small-amplitude, appropriately
transformed solutions of the symmetric system are close to the corresponding solutions of the usual system while they are
both smooth, thus providing a justification of the symmetric system.
\end{abstract}
\maketitle
\section{Introduction}
In this paper we will analyze standard Galerkin approximations to the system of \emph{shallow water equations} (also known as
\emph{Saint-Venant equations})
\begin{equation}
\begin{aligned}
\eta_{t} & + u_{x} + (\eta u)_{x} = 0,\\
u_{t} & + \eta_{x} + uu_{x} = 0,
\end{aligned}
\label{eq11}
\end{equation}
which is an approximation of the two-dimensional Euler equations of water-wave theory that models two-way propagation of long waves
of finite amplitude on the surface of an ideal fluid in a uniform horizontal channel of finite depth, \cite{w}, \cite{p}. The
variables in \eqref{eq11} are nondimensional and unscaled; $x\in \mathbb{R}$ and $t\geq 0$ are proportional to position along the
channel and time, respectively, and $\eta=\eta(x,t)$ and $u=u(x,t)$ are proportional to the elevation of the free surface above a level
of rest corresponding to $\eta=0$, and to the horizontal velocity of the fluid at the free surface, respectively. (In these variables
the bottom of the channel lies at a depth equal to $-1$.) \par
It is well known that, given smooth initial conditions $\eta(x,0)=\eta_{0}(x)$, $u(x,0)=u_{0}(x)$, $x\in \mathbb{R}$, the initial-value
problem for \eqref{eq11} has smooth solutions in general only locally in $t$; the existence of smooth solutions may be studied by
standard methods of the theory of nonlinear hyperbolic systems, cf. e.g.  \cite{m}, Ch. 2, and \cite{t}, Ch. 16. \par
In this paper we shall consider the following initial-boundary-value problem (ibvp) for \eqref{eq11} posed on the spatial interval
$[0,1]$. We seek $\eta=\eta(x,t)$, $u=u(x,t)$, $0\leq x\leq 1$, $0\leq t\leq T$, satisfying
\begin{align}
\begin{aligned}
\eta_{t} & + u_{x} + (\eta u)_{x} = 0, \notag\\
u_{t} & + \eta_{x} + uu_{x} = 0,\notag
\end{aligned}
\quad  0\leq x\leq 1,\,\,\, 0\leq t\leq T,
\tag{SW} \label{eqsw}
\\
\eta(x,0) =\eta_{0}(x), \quad u(x,0)=u_{0}(x), \quad 0\leq x\leq 1,\hspace{-9pt}&  \notag\\
u(0,t) = 0,\quad u(1,t)=0, \quad 0\leq t\leq T. \hspace{32pt} & \notag
\end{align}
In \cite{pt} Petcu and Temam established the existence-uniqueness of $H^{2}$-solutions of \eqref{eqsw} for some
$T=T(\|\eta_{0}\|_{2},\|u_{0}\|_{2})>0$ under the hypothesis that $1+\eta_{0}(x)>0$, $x\in [0,1]$. Moreover, in the temporal interval
$[0,T]$ of existence of solutions there holds that $1+\eta>0$ for $x\in [0,1]$.  (For a precise statement of this result
see section 6.2 below.) \par
We shall also consider the analogous ibvp for a \emph{symmetric} variant of the shallow water equations, posed again on $[0,1]$.
For this purpose we seek $\eta=\eta(x,t)$, $u=u(x,t)$, $0\leq x\leq 1$, $0\leq t\leq T$, satisfying
\begin{align}
\begin{aligned}
\eta_{t} & + u_{x} + \tfrac{1}{2}(\eta u)_{x} = 0, \notag\\
u_{t} & + \eta_{x} + \tfrac{3}{2}uu_{x} + \tfrac{1}{2}\eta\eta_{x} = 0,\notag
\end{aligned}
\quad  0\leq x\leq 1,\,\,\, 0\leq t\leq T,
\tag{SSW} \label{eqssw}
\\
\eta(x,0) =\eta_{0}(x), \quad u(x,0)=u_{0}(x), \quad 0\leq x\leq 1,\hspace{23pt}&  \notag\\
u(0,t) = 0,\quad u(1,t)=0, \quad 0\leq t\leq T. \hspace{65pt} & \notag
\end{align}
Here, the nonlinear hyperbolic system is symmetric; existence-uniqueness of $H^{2}$-solutions of the ibvp \eqref{eqssw}
for $T$ sufficiently small may be established if one follows the argument of \cite{pt}, cf. Section 6.2 below. \par
We chose this symmetric system motivated by the work of Bona, Colin, and Lannes, \cite{bcl}, on completely symmetric Boussinesq-type
dispersive approximations of small-amplitude, long-wave solutions of the Euler equations. In Section 6.3 we derive the
symmetric system in the context of small-amplitude, scaled shallow water equations and study its relation to the usual
shallow water system by analytical and numerical means.\par
In the analysis of the Galerkin approximations that we pursue in this paper we generally prove in parallel error estimates
for both \eqref{eqsw} and \eqref{eqssw}. It will be seen that, as a result of the symmetry of the latter system, the proofs for
\eqref{eqssw} are more straightforward and generally hold under less stringent hypotheses compared to their \eqref{eqsw}
analogs. Let us also mention that it is easy to see that the solution of \eqref{eqssw} satisfies the $L^{2}$-conservation equation
\begin{equation}
\int_{0}^{1}\bigl(\eta^{2}(x,t) + u^{2}(x,t)\bigr)dx = \int_{0}^{1}\bigl(\eta_{0}^{2}(x) + u_{0}^{2}(x)\bigr)dx
\label{eq12}
\end{equation}
for $0\leq t\leq T$. \par
We begin the error analysis in Section 2 considering first the standard Galerkin semidiscretizations of
\eqref{eqsw} and \eqref{eqssw} using for the spatial approximation piecewise polynomial functions of order $r\geq 2$
(i.e. of degree $r-1\geq 1$) with respect to a quasiuniform mesh on $[0,1]$ of maximum meshlength $h$; the spaces consist
of $C^{k}$ functions, where $0\leq k\leq r-2$. We assume throughout that the solutions of \eqref{eqsw} and \eqref{eqssw}
are sufficiently smooth for the purposes of the error estimates. In the case of \eqref{eqssw} the error analysis is
straightforward due to the symmetry of the system and yields, for $r\geq 2$, the expected $O(h^{r-1})$ $L^{2}$-error estimate
for the Galerkin approximations of $\eta$ and $u$. (In this proof and in subsequent error estimates in this paper we compare
the Galerkin approximation with the $L^{2}$ projection of the solution of the p.d.e. problem onto the finite element subspaces
and estimate their difference.) For \eqref{eqsw} the proof is more complicated; we use a symmetrizing choice of test
function in the error equation corresponding to the second p.d.e. of \eqref{eqsw}, a `superapproximation' property of the
finite element subspaces, and the positivity of $1+\eta$ in order to establish the expected $O(h^{r-1})$ $L^{2}$-error estimates
for $\eta$ and $u$ assuming now that $r\geq 3$. This last assumption is needed in the proof for the control of the $W^{1,\infty}$
norm of an intermediate error term. Thus our proof for \eqref{eqsw} and its assumptions resemble those of the analogous proof of
Dupont, \cite{d1}, in the case of a $2\times 2$ nonlinear hyperbolic system which is close relative of \eqref{eqsw}. It is worth
noting that numerical experiments, the results of which are presented at the end of Section 2, suggest that for $r=2$, i.e. for
piecewise linear continuous functions on a quasiuniform mesh, the $L^{2}$- and $L^{\infty}$- errors of the Galerkin approximations
to $\eta$ and $u$ have $O(h)$ bounds, i.e. that the assumption $r\geq 3$ may not be needed. In fact, for special quasiuniform meshes,
e.g. for piecewise uniform or gradually varying meshes, the numerical experiments indicate that the error bounds are of $O(h^{2})$,
resembling those of the uniform mesh case (see below.) \par
In Sections 3 and 4 we examine the error of the standard Galerkin semidiscretization of \eqref{eqsw} and \eqref{eqssw} in the special
case of subspaces of continuous, piecewise linear functions on a uniform mesh on $[0,1]$. It is well known that for \emph{linear},
first-order hyperbolic equations in the uniform mesh case the standard Galerkin approximations may enjoy optimal-order
$L^{2}$-convergence, i.e. of $O(h^{r})$, as a result of superaccuracy due to cancellations in the interior mesh intervals and to
suitable compatibility conditions at the boundary, provided the solutions of the continuous problem are smooth enough. Early
evidence of this were the classic results of Dupont, \cite{d2}, in the case of $r=2$, and $r=4$ (with $k=2$, i.e. cubic splines),
and e.g. of Thom\'{e}e and Wendroff, \cite{twe}, for problems with variable coefficients in the case of subspaces consisting of
smooth splines of arbitrary order ($k=r-2$, $r\geq 2$). In these works the \emph{periodic} initial-value problem was under
consideration; the spatial periodicity and the assumed smoothness of solutions automatically furnishes the requisite compatibility
conditions at the boundary that yield superaccuracy. In Section 6.1 of \cite{adarxiv} we point out how compatibility at the boundary
for smooth solutions of a simple initial-boundary-value problem for a first-order linear hyperbolic equation gives the
superaccuracy estimate in the case $r=2$ for uniform mesh. We also refer the reader to the papers \cite{l} and \cite{zl} for
results and references to the Chinese literature on related topics. \par
In order to treat the nonlinear case, in Section 3 of the paper at hand we prove some superconvergence properties of the $L^{2}$
projections of smooth functions on $[0,1]$ satisfying suitable boundary conditions, onto spaces of piecewise linear, continuous
functions defined on a uniform mesh in $[0,1]$. The key results are Lemmas 3.3 and 3.6 in which it is shown that integrals of the form
$\int_{I_{i}}wedx$, where $w$ is a $C^{2}$ function and $e$ is the error of the $L^{2}$ projection of a $C^{4}$ function
satisfying suitable boundary conditions at $0$ and $1$, is, for any mesh interval $I_{i}$, of $O(h^{5})$. These results are used
in Section 4 where the optimal-order $O(h^{2})$ $L^{2}$-error estimates for the Galerkin semidiscretizations of \eqref{eqssw} and
\eqref{eqsw} are established. It is assumed that the ibvp's have classical, sufficiently smooth solutions, which, as a consequence
of their smoothness, must satisfy natural compatibility conditions at $0$ and $1$. Again the proof for the \eqref{eqssw} is
relatively straightforward, while in the case of \eqref{eqsw} some additional twists are needed. These theoretical results are
confirmed in numerical experiments at the end of Section 4. These also indicate that the analogous $L^{2}$ errors for
spatial discretizations with cubic splines ($k=2$, $r=4$) on uniform meshes have convergence rates which are practically equal to
4, i.e. optimal. \par
In Section 5 we turn to the temporal discretizations of the o.d.e. systems represented by the semidiscretizations considered
in Sections 2 and 4. In \cite{d1} Dupont analyzed, in the case of a system similar to the shallow water equations, the convergence
of a linearized Crank-Nicolson scheme. In the paper at hand we analyze three explicit Runge-Kutta schemes: (i) The explicit
Euler method, of first-order accuracy, which requires for stability the restrictive mesh condition $k=O(h^{2})$, where $k$ is the
(uniform) time step. (ii) The `improved' Euler method (explicit midpoint rule) of second-order accuracy, which requires for
stability the mesh condition $k=O(h^{4/3})$. (iii) An explicit, third-order Runge-Kutta method due to Shu and Osher, \cite{so}, that
needs the condition $k/h\leq \lambda_{0}$ for a small enough constant $\lambda_{0}$. (It is to be noted that these stability
restrictions are consistent with those predicted by the stability analysis of the temporal discretization with these methods of the
stiff linear systems of o.d.e.'s resulting e.g. from the standard Galerkin semidiscretization of $u_{t} + u_{x}=0$ with periodic
boundary conditions.)\par
Since our emphasis is on the energy proofs of stability and convergence of the time-discrete schemes, we chose the easiest p.d.e.
system, i.e. \eqref{eqssw}, and the most straightforward spatial discretization, i.e. the one with piecewise polynomial functions
of order $r$ on a quasiuniform mesh, in order to prove the $L^{2}$-error estimates; these have bounds of $O(k+h^{r-1})$ for
$r\geq 2$, $O(k^{2}+h^{r-1})$ for $r\geq 3$, and $O(k^{3} + h^{r-1})$ for $r\geq 3$, for the three Runge-Kutta schemes considered,
having orders of accuracy 1,2 and 3, respectively; these error estimates hold under the mesh stability conditions previously
mentioned. Similar results hold for the \eqref{eqsw} system but the proofs are omitted here. Numerical experiments in Section 5
with \eqref{eqsw} and \eqref{eqssw} indicate that the condition $k=O(h^{4/3})$ is probably necessary for the stability the improved
Euler method even in the simple case of a spatial discretization using piecewise linear, continuous functions on a uniform mesh.
The results of another numerical experiment suggest that in addition to the $L^{2}$-error, the $L^{\infty}$- and $H^{1}$-errors of
full discretizations of \eqref{eqssw} with the Shu-Osher scheme have an $O(k^{3})$ temporal behaviour. \par
We should point out that in recent years there have appeared a number of papers with proofs of error estimates of full
discretizations of Galerkin type methods with explicit Runge-Kutta methods. For example, Zhang and Shu have analyzed discontinuous
Galerkin methods for scalar conservation laws in \cite{zs1} and for symmetrizable systems of conservation laws in \cite{zs2}
using a second-order explicit Runge-Kutta method (the explicit trapezoidal rule) for time-stepping. For the DG methods analyzed
in these papers this full discretization turns out to be stable if $k=O(h)$ for a $\mathbb{P}_{1}$ spatial discretization but
needs $k$ to be of $O(h^{4/3})$ for higher-order polynomial spaces. The same Runge-Kutta scheme is used by Ying, \cite{y}, and
proved to yield a stable full discretization and the expected error estimates for a standard Galerkin method for scalar
conservation laws in several space dimensions under the condition $k=O(h^{4/3})$. In \cite{zs3} Zhang and Shu prove error estimates
for a fully discrete DG-$3^{\text{d}}$ order Shu-Osher scheme for scalar conservation laws under the condition $k=O(h)$.
In \cite{bef} Burman \emph{et al.} consider
initial-boundary value problems for first-order linear hyperbolic systems of Friedrichs type in several space dimensions, discretized
in space by a class of symmetrically stabilized finite element methods that includes DG schemes, and in time by explicit Runge-Kutta
schemes of second (RK2) and third (RK3) order of accuracy. They prove $L^{2}$-error estimates of optimal order in time and quasioptimal
in space under Courant-number restrictions for RK2 schemes with $\mathbb{P}_{1}$ elements and under the condition $k=O(h^{4/3})$ for
higher-order elements, and under Courant-number restrictions for RK3 schemes. Let us
also mention that for a closely related to the shallow water equations \emph{dispersive} system (the `classical' Boussinesq
equations), we proved error estimates in \cite{adarxiv}, \cite{ad}, for the classical, four-stage, fourth-order explicit
Runge-Kutta temporal discretization of standard Galerkin methods with cubic splines; the error bounds had an $O(k^{4})$
dependence under a $k=O(h)$ stability condition. \par
We close the paper by a series of supplementary remarks in Section 6. In Section 6.1 we consider the \emph{periodic} initial-value
problem for the shallow water system and its symmetric version and discretize it in space using the standard Galerkin method with
smooth periodic splines of order $r\geq 2$ on a uniform mesh. Using suitable \emph{quasiinterpolants} in the space of periodic
splines, cf. \cite{twe}, we prove optimal-order, i.e. $O(h^{r})$, $L^{2}$-error estimates for both systems. In Section 6.2 we
state precisely the local existence result proved by Petcu and Temam in \cite{pt} for \eqref{eqsw}; we also state the analogous
result that one may derive for \eqref{eqssw} following the proof of \cite{pt}. \par
Finally, in Section 6.3 we first recall the nondimensional \emph{scaled} form of the shallow water equations in the case of long
surface waves of \emph{small} amplitude (in which the nonlinear terms of the system are multiplied by the small parameter
$\ve=a/h_{0}$, where $a$ is a typical wave amplitude and $h_{0}$ the depth of the channel), and derive the analogous scaled form
of the symmetric shallow water equations using the nonlinear change of variables of Bona, Colin, and Lannes, \cite{bcl}.
In view of the classical theory of local existence of solutions of initial-value problems of quasilinear hyperbolic
systems and the results of \cite{bcl} we argue that the
difference in suitable norms of appropriately transformed solutions of the two systems is of $O(\ve^{2}t)$ for times $t$ up to
$O(1/\ve)$. Given that initially smooth solutions of both systems are expected in general to develop singularities after times of
$O(1/\ve)$, this result indicates that appropriately transformed, smooth, small-amplitude solutions of the symmetric system remain
close to corresponding smooth solutions of the usual system within their life span, and provides a justification for the
symmetric system. Section 6.3 closes with some numerical experiments which suggest that the difference of solutions of \eqref{eqsw}
and \eqref{eqssw} (i.e. of the ibvp's) also behaves like $\ve^{2}t$ for times up to $O(1/\ve)$. \par
In this paper we use the following notation: We let $C^{k}=C^{k}[0,1]$, $k=0,1,2,\dots$, denote the space of $k$ times continuously
differentiable functions on $[0,1]$ and define $C_{0}^{k}=\{\phi\in C^{k} : \phi(0)=\phi(1)=0\}$. For integer $k\geq 0$ $H^{k}$,
$\|\cdot\|_{k}$ will denote the usual, $L^{2}$-based Sobolev space of classes of functions on $[0,1]$ and its associated norm.
The inner product and norm on $L^{2}=L^{2}(0,1)$ is denoted simply by $\|\cdot\|$, $(\cdot,\cdot)$, respectively, while the
norms on $L^{\infty}=L^{\infty}(0,1)$ and on the $L^{\infty}$-based Sobolev space $W_{\infty}^{k}=W_{\infty}^{k}(0,1)$ by
$\|\cdot\|_{\infty}$, $\|\cdot\|_{k,\infty}$, respectively. We let $\mathbb{P}_{r}$ be the polynomials of degree $\leq r$,
and $\langle\cdot , \cdot\rangle$, $\abs{\cdot}$ be the Euclidean inner product and norm on $\mathbb{R}^{N}$. Finally, for a Banach
space $X$ of functions on $[0,1]$, $C(0,T;X)$ will denote the space of continuous maps from the interval $[0,T]$ into $X$.\\
\emph{Acknowledgment}: The authors would like to thank Prof. David Lannes for his advice on the theoretical results of Section 6.3.
and Profs. Ch. Makridakis and T. Katsaounis for helpful discussions.
\section{Semidiscretization on quasiuniform meshes}
Let $0=x_{1}<x_{2}<\dots<x_{N+1}=1$ denote a quasiuniform partition of $[0,1]$ with
$h:=\max_{i}(x_{i+1}-x_{i})$, and for integers $r$, $k$ such that $r\geq 2$, $0\leq k\leq r-2$, let
$S_{h}=S_{h}^{r,k}:=\{\phi \in C^{k} : \phi\big|_{[x_{j},x_{j+1}]} \in \mathbb{P}_{r-1}\,, 1\leq j\leq N\}$,
and $S_{h,0}=S_{h,0}^{k,r}=\{\phi \in S_{h}^{k,r}\,, \phi(0)=\phi(1)=0\}$.
It is well known that given $w\in H^{r}$ there exists an element $\chi \in S_{h}$ such that
\begin{equation}
\|w-\chi\| + h\|w' - \chi'\| \leq Ch^{r}\|w^{(r)}\|,
\stepcounter{equation}\tag{\theequation a}\label{eq21a}
\end{equation}
and if $r\geq 3$ in addition, cf. \cite{sch},
\begin{equation}
\|w-\chi\|_{2}\leq Ch^{r-2}\|w^{(r)}\|,
\label{eq21b} \tag{2.1b}
\end{equation}
for some constant $C$  independent of $h$ and $w$, and that a similar property holds in $S_{h,0}$ if
$w \in H^{r}\cap H_{0}^{1}$. Let $P$, $P_{0}$ denote the $L^{2}$-projection operators onto $S_{h}$, $S_{h,0}$,
respectively. Then, cf. \cite{ddw}, there holds that
\begin{subequations}
\begin{align}
  \|Pv\|_{\infty}&\leq C\|v\|_{\infty}\quad \text{if}\quad v\in L^{\infty},
\label{eq22a} \tag{2.2a}\\
\intertext{and}
 \|Pv - v\|_{\infty}&\leq Ch^{r}\|v\|_{r,\infty}\quad \text{if}\quad v\in W^{r,\infty},
\label{eq22b} \tag{2.2b}
\end{align}
\end{subequations}
and that a similar property holds for $P_{0}$ if $v \in W^{r,\infty}\cap H_{0}^{1}$.
(Here and in the sequel we will denote by $C$ generic constants independent of discretization parameters).\par
As a consequence of the quasiuniformity of the mesh the inverse inequalities
\begin{align}
\|\chi\|_{1} & \leq C h^{-1}\|\chi\|,
\label{eq23}\\
\|\chi\|_{j,\infty} & \leq Ch^{-(j+1/2)} \|\chi\|,\quad j=0,1,
\label{eq24}
\end{align}
hold for $\chi \in S_{h}$. (In \eqref{eq24} $\|\chi\|_{0,\infty}=\|\chi\|_{\infty}$). \par
We let the {\it{standard Galerkin semidiscretization}} of \eqref{eqsw} be defined as follows: We seek
$\eta_{h} : [0,T] \to S_{h}$, $u_{h} : [0,T] \to S_{h,0}$, such that for $t \in [0,T]$
\begin{equation}
\begin{aligned}
(\eta_{ht},\phi) + (u_{hx},\phi) + ((\eta_{h}u_{h})_{x},\phi) & = 0, \quad \forall \phi \in S_{h}\,,\\
(u_{ht},\chi) + (\eta_{hx},\chi) + (u_{h}u_{hx},\chi) & = 0, \quad \forall \chi \in S_{h,0}\,,
\end{aligned}
\label{eq25}
\end{equation}
with initial conditions
\begin{equation}
\eta_{h}(0) = P\eta_{0}\,, \qquad u_{h}(0)=P_{0}u_{0}\,.
\label{eq26}
\end{equation}
Similarly, we define the analogous semidiscretization of \eqref{eqssw}, given for $t\in [0,T]$ by
\begin{equation}
\begin{aligned}
(\eta_{ht},\phi) + (u_{hx},\phi) + \tfrac{1}{2}((\eta_{h}u_{h})_{x},\phi) & = 0,
\quad \forall \phi \in S_{h},\\
(u_{ht},\chi) + (\eta_{hx},\chi) + \tfrac{3}{2}(u_{h}u_{hx},\chi) + \tfrac{1}{2}(\eta_{h}\eta_{hx},\chi)& = 0,
\quad \forall \chi \in S_{h,0},
\end{aligned}
\label{eq27}
\end{equation}
with
\begin{equation}
\eta_{h}(0) = P\eta_{0}\,, \qquad u_{h}(0)=P_{0}u_{0}.
\label{eq28}
\end{equation}
Upon choice of bases for $S_{h}$, $S_{h}^{0}$, it is seen that the semidiscrete problems
\eqref{eq25}-\eqref{eq26} and \eqref{eq27}-\eqref{eq28} represent initial-value problems (ivp's)
for systems of o.d.e's. Clearly, these ivp's have unique solutions at least locally in time.
One conclusion of Propositions 2.1 and 2.2 is that they possess unique solutions up to at least $t=T$,
where $[0,T]$ is the interval of existence of smooth solutions of \eqref{eqsw} or \eqref{eqssw} as the case may be.
We start with the error analysis of the semidiscrete symmetric system \eqref{eq27}-\eqref{eq28}, which is quite
straightforward, due to the symmetry of \eqref{eqssw}.
\begin{proposition}
Let $(\eta,u)$ be the solution of \eqref{eqssw}. Then the semidiscrete ivp \eqref{eq27}-\eqref{eq28} has a unique
solution $(\eta_{h},u_{h})$ for $0\leq t\leq T$ satisfying
\begin{equation}
\max_{0\leq t\leq T}\bigl(\|\eta(t) - \eta_{h}(t)\| + \|u(t)-u_{h}(t)\|\bigr) \leq C h^{r-1}.
\label{eq29}
\end{equation}
\end{proposition}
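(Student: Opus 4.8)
Here is how I would prove Proposition 2.1.

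\medskip
\emph{Setup.} The plan is a standard energy argument in which the Galerkin approximation is compared with the $L^{2}$ projection of the exact solution. Put $\eta-\eta_{h}=\rho+\theta$ and $u-u_{h}=\sigma+\xi$, where $\rho=\eta-P\eta$, $\theta=P\eta-\eta_{h}$, $\sigma=u-P_{0}u$, $\xi=P_{0}u-u_{h}$. From \eqref{eq21a}, \eqref{eq22b} and the inverse inequalities \eqref{eq23}, \eqref{eq24} (the latter giving in particular that the $L^{2}$ projections are stable in $W^{1,\infty}$ on a quasiuniform mesh), one has, uniformly for $t\in[0,T]$,
\[
\|\rho\|+\|\sigma\|+h\bigl(\|\rho'\|+\|\sigma'\|\bigr)\le Ch^{r},\qquad \|\rho\|_{\infty}+\|\sigma\|_{\infty}\le Ch^{r},\qquad \|(P\eta)'\|_{\infty}+\|(P_{0}u)'\|_{\infty}\le C,
\]
the constants depending on the assumed smoothness of $(\eta,u)$. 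Since \eqref{eq27}--\eqref{eq28} is an o.d.e. ivp with a unique local solution, it suffices to establish the a priori bound \eqref{eq29} on the maximal interval of existence; the bound then forces that interval to contain $[0,T]$ and simultaneously yields \eqref{eq29}.

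\medskip
\emph{Error equations.} Subtracting \eqref{eq27} from \eqref{eqssw} tested against $\phi\in S_{h}$ and $\chi\in S_{h,0}$, using the Galerkin orthogonalities $(\rho_{t},\phi)=0$, $(\sigma_{t},\chi)=0$, and writing $\eta u-\eta_{h}u_{h}=\eta\sigma+P_{0}u\,\rho+P\eta\,\xi+P_{0}u\,\theta-\theta\xi$, $u^{2}-u_{h}^{2}=2u\sigma-\sigma^{2}+2P_{0}u\,\xi-\xi^{2}$, $\eta^{2}-\eta_{h}^{2}=2\eta\rho-\rho^{2}+2P\eta\,\theta-\theta^{2}$, one obtains, for all $\phi\in S_{h}$, $\chi\in S_{h,0}$,
\begin{align*}
(\theta_{t},\phi)+(\xi_{x},\phi)+\tfrac12\bigl((P\eta\,\xi+P_{0}u\,\theta-\theta\xi)_{x},\phi\bigr)&=(\mathcal E_{1},\phi),\\
(\xi_{t},\chi)+(\theta_{x},\chi)+\tfrac32\bigl((P_{0}u\,\xi)_{x},\chi\bigr)+\tfrac12\bigl((P\eta\,\theta)_{x},\chi\bigr)-\tfrac34\bigl((\xi^{2})_{x},\chi\bigr)-\tfrac14\bigl((\theta^{2})_{x},\chi\bigr)&=(\mathcal E_{2},\chi),
\end{align*}
where $\mathcal E_{1},\mathcal E_{2}$ collect only projection-error contributions; by the bounds above (and \eqref{eq24}) they satisfy $|(\mathcal E_{1},\phi)|\le Ch^{r-1}\|\phi\|$ and $|(\mathcal E_{2},\chi)|\le Ch^{r-1}\|\chi\|$.

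\medskip
\emph{Energy estimate and the role of the symmetry.} Now take $\phi=\theta$, $\chi=\xi$ and add the two equations; this is where the symmetry of \eqref{eqssw} is used. The first-order linear terms give $(\xi_{x},\theta)+(\theta_{x},\xi)=[\xi\theta]_{0}^{1}=0$ since $\xi\in S_{h,0}$. The ``diagonal'' nonlinear terms, after a single integration by parts, reduce to $\tfrac14\int_{0}^{1}(P_{0}u)'\theta^{2}$ and $\tfrac34\int_{0}^{1}(P_{0}u)'\xi^{2}$, the boundary contributions vanishing because $P_{0}u(0)=P_{0}u(1)=0$; the ``off-diagonal'' nonlinear terms $\tfrac12((P\eta\,\xi)_{x},\theta)+\tfrac12((P\eta\,\theta)_{x},\xi)$ combine to $\tfrac12\int_{0}^{1}(P\eta)'\theta\xi$, the boundary contribution vanishing because $\xi(0)=\xi(1)=0$; and the genuinely quadratic terms $-\tfrac12((\theta\xi)_{x},\theta)-\tfrac34((\xi^{2})_{x},\xi)-\tfrac14((\theta^{2})_{x},\xi)$ are each an exact $x$-derivative of a quantity vanishing at $0$ and $1$ (again because $\xi\in S_{h,0}$), hence vanish identically. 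It is precisely this last cancellation that makes the symmetric case elementary: no a priori bound on $\|\theta\|_{1,\infty}$, $\|\xi\|_{1,\infty}$ is needed, unlike in the treatment of \eqref{eqsw}. What survives is
\[
\tfrac12\tfrac{d}{dt}\bigl(\|\theta\|^{2}+\|\xi\|^{2}\bigr)\le C\bigl(\|\theta\|^{2}+\|\xi\|^{2}\bigr)+Ch^{r-1}\bigl(\|\theta\|+\|\xi\|\bigr),
\]
with $C$ depending on $\|\eta\|_{C(0,T;W^{1,\infty})}$ and $\|u\|_{C(0,T;W^{1,\infty})}$.

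\medskip
\emph{Conclusion.} Using $h^{r-1}\|\theta\|\le\tfrac12\|\theta\|^{2}+\tfrac12h^{2(r-1)}$ (and likewise for $\xi$) and Gronwall's lemma on $[0,T]$, together with $\theta(0)=\xi(0)=0$ (which holds since $\eta_{h}(0)=P\eta_{0}$, $u_{h}(0)=P_{0}u_{0}$), gives $\|\theta(t)\|+\|\xi(t)\|\le Ch^{r-1}$ for $0\le t\le T$; combined with $\|\rho\|+\|\sigma\|\le Ch^{r}$ this yields \eqref{eq29}. I do not expect a genuine obstacle here: the essential structural input is the symmetry of \eqref{eqssw}, which forces the would-be dangerous first-order and quadratic error terms either to cancel in pairs or to integrate by parts onto the smooth exact solution; the only mild technicalities are bookkeeping of the boundary terms and invoking the $W^{1,\infty}$-stability of the $L^{2}$ projections.
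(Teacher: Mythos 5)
Your proof is correct and follows essentially the same route as the paper's: compare $(\eta_h,u_h)$ with the $L^2$ projections, test with $\theta$ and $\xi$, let the symmetry of \eqref{eqssw} kill the linear and leading nonlinear terms after integration by parts, and conclude with Gronwall. One small imprecision in your treatment of the quadratic terms: only $-\tfrac34((\xi^{2})_{x},\xi)$ is individually the integral of an exact derivative; the other two are not separately zero but satisfy $-\tfrac12((\theta\xi)_{x},\theta)=-\tfrac14(\xi_{x}\theta,\theta)$ and $-\tfrac14((\theta^{2})_{x},\xi)=+\tfrac14(\xi_{x}\theta,\theta)$, so they vanish only as a pair (their combined integrand is $-\tfrac12(\theta^{2}\xi)'$) --- this is exactly the cancellation of the $\pm\tfrac14(\xi_{x}\theta,\theta)$ terms between \eqref{eq214} and \eqref{eq216} in the paper, and your conclusion is unaffected.
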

\begin{proof} Setting $\phi = \eta_{h}$ and $\chi = u_{h}$ in \eqref{eq27} and adding the resulting equations
we obtain the discrete analog of \eqref{eq12}, i.e. that the conservation property
\begin{equation}
\|\eta_{h}(t)\|^{2} + \|u_{h}(t)\|^{2} = \|\eta_{h}(0)\|^{2} + \|u_{h}(0)\|^{2}
\label{eq210}
\end{equation}
holds in the interval of existence of solutions of \eqref{eq27}-\eqref{eq28}. By standard o.d.e. theory we conclude
that the ivp \eqref{eq27}-\eqref{eq28} possesses unique solutions in any finite temporal interval $[0,t^{*}]$ and in
particular in $[0,T]$.  \par
We now let $\rho := \eta - P\eta$, $\theta := P\eta - \eta_{h}$, $\sigma := u-P_{0}u$, $\xi := P_{0}u - u_{h}$.
Using \eqref{eqssw} and \eqref{eq27}-\eqref{eq28} we obtain for $0\leq t\leq T$,
\small
\begin{align}
(\theta_{t},\phi) + (\sigma_{x} + \xi_{x},\phi) +
\tfrac{1}{2}((\eta u - \eta_{h}u_{h})_{x},\phi) & = 0, \quad \forall \phi \in S_{h},
\label{eq211} \\
(\xi_{t},\chi) + (\rho_{x}+\theta_{x},\chi) + \tfrac{3}{2}(uu_{x}-u_{h}u_{hx},\chi)
+\tfrac{1}{2}(\eta\eta_{x}-\eta_{h}\eta_{hx},\chi) & = 0, \quad \forall \chi \in S_{h,0}.
\label{eq212}
\end{align}
\normalsize
Note that
\begin{align*}
\eta u - \eta_{h}u_{h} & = \eta (\sigma + \xi) + u(\rho + \theta) - (\rho + \theta)(\sigma + \xi), \\
uu_{x} - u_{h}u_{hx} & = (u\sigma)_{x} + (u\xi)_{x} - (\sigma \xi)_{x} - \sigma\sigma_{x} - \xi\xi_{x},\\
\eta\eta_{x} - \eta_{h}\eta_{hx} & = (\eta\rho)_{x} -\theta\theta_{x} + (\eta\theta)_{x} - (\rho\theta)_{x} - \rho\rho_{x}.
\end{align*}
Take $\phi=\theta$ in $(\ref{eq211})$ and obtain, for $0\leq t\leq T$, using integration by parts
\begin{equation}
\begin{aligned}
\tfrac{1}{2}\tfrac{d}{dt}\|\theta\|^{2} & + \bigl([(1+\tfrac{\eta}{2})\xi]_{x},\theta\bigr)= - (\sigma_{x},\theta)
-\tfrac{1}{2}((\eta\sigma)_{x},\theta)\\
& -\tfrac{1}{2}((u\rho)_{x},\theta) - \tfrac{1}{2}((u\theta)_{x},\theta) +\tfrac{1}{2}((\rho\sigma)_{x},\theta)
+ \tfrac{1}{2}((\theta\sigma)_{x},\theta) \\
& +\tfrac{1}{2}((\rho\xi)_{x},\theta) + \tfrac{1}{2}((\theta\xi)_{x},\theta).
\end{aligned}
\label{eq213}
\end{equation}
We now examine the various terms in the r.h.s. of \eqref{eq213}. Integration by parts yields
\[
((\theta\xi)_{x},\theta) = \tfrac{1}{2}(\xi_{x}\theta,\theta).
\]
Using now \eqref{eq21a}, \eqref{eq22b}, \eqref{eq23}, \eqref{eq24} and integration by parts we have
\begin{align*}
\abs{(\sigma_{x},\theta)} &\leq \|\sigma_{x}\|\|\theta\|\leq Ch^{r-1}\|\theta\|,\\
\abs{((\eta\sigma)_{x},\theta)} & \leq C\|\sigma\|_{1}\|\theta\|\leq Ch^{r-1}\|\theta\|, \\
\abs{((u\rho)_{x},\theta)} & \leq C\|\rho\|_{1}\|\theta\| \leq Ch^{r-1}\|\theta\|,\\
\abs{((u\theta)_{x},\theta)}& =\tfrac{1}{2}\abs{(u_{x}\theta,\theta)}\leq C\|\theta\|^{2},\\
\abs{((\rho\sigma)_{x},\theta)}&\leq \|\rho\|_{\infty}\|\sigma_{x}\|\|\theta\| +
\|\sigma\|_{\infty}\|\rho_{x}\|\|\theta\|\leq Ch^{2r-1}\|\theta\|,\\
\abs{((\theta\sigma)_{x},\theta)}&=\tfrac{1}{2}\abs{(\sigma_{x}\theta,\theta)}
\leq C\|\sigma_{x}\|_{\infty}\|\theta\|^{2}\leq C\|\theta\|^{2},\\
\abs{((\rho\xi)_{x},\theta)}&\leq \|\rho_{x}\|_{\infty}\|\xi\|\|\theta\| + \|\rho\|_{\infty}\|\xi_{x}\|\|\theta\|
\leq C\|\xi\|\|\theta\|.
\end{align*}
Therefore \eqref{eq213} and the above yield for $0\leq t\leq T$
\begin{equation}
\tfrac{1}{2}\tfrac{d}{dt}\|\theta\|^{2} + \bigl([(1+\tfrac{\eta}{2})\xi]_{x},\theta\bigr)
\leq \tfrac{1}{4}(\xi_{x}\theta,\theta) + C(h^{r-1}\|\theta\| + \|\theta\|^{2} + \|\xi\|^{2}).
\label{eq214}
\end{equation}
Take now $\chi=\xi$ in \eqref{eq212}. Then for $0\leq t\leq T$ using integration by parts we have
\begin{equation}
\begin{aligned}
\tfrac{1}{2}\tfrac{d}{dt}\|\xi\|^{2} & - \bigl([(1+\tfrac{\eta}{2})\xi]_{x},\theta\bigr)= - \tfrac{1}{4}(\xi_{x}\theta,\theta)
-(\rho_{x},\xi) - \tfrac{3}{2}((u\sigma)_{x},\xi)\\
& -\tfrac{3}{2}((u\xi)_{x},\xi) + \tfrac{3}{2}((\sigma\xi)_{x},\xi) +\tfrac{3}{2}(\sigma\sigma_{x},\xi)
- \tfrac{1}{2}((\eta\rho)_{x},\xi) - \tfrac{1}{2}(\eta_{x}\theta,\xi) \\
& +\tfrac{1}{2}((\rho\theta)_{x},\xi) + \tfrac{1}{2}(\rho\rho_{x},\xi).
\end{aligned}
\label{eq215}
\end{equation}
Using again \eqref{eq21a}-\eqref{eq24} and integration by parts we see that
\begin{align*}
\abs{(\rho_{x},\xi)} &\leq \|\rho_{x}\|\|\xi\|\leq Ch^{r-1}\|\xi\|,\\
\abs{((u\sigma)_{x},\xi)} & \leq C\|\sigma\|_{1}\|\xi\|\leq Ch^{r-1}\|\xi\|, \\
\abs{((u\xi)_{x},\xi)} & =\tfrac{1}{2}\abs{(u_{x}\xi,\xi)} \leq C\|\xi\|^{2},\\
\abs{((\sigma\xi)_{x},\xi)}& =\tfrac{1}{2}\abs{(\sigma_{x}\xi,\xi)}\leq C\|\sigma_{x}\|_{\infty}\|\xi\|^{2}\leq C\|\xi\|^{2},\\
\abs{(\sigma\sigma_{x},\xi)}&\leq \|\sigma\|_{\infty}\|\sigma_{x}\|\|\xi\| \leq Ch^{2r-1}\|\xi\|,\\
\abs{((\eta\rho)_{x},\xi)}& \leq C\|\rho\|_{1}\|\xi\| \leq Ch^{r-1}\|\xi\|,\\
\abs{(\eta_{x}\theta,\xi)}&\leq C\|\theta\|\|\xi\|,\\
\abs{((\rho\theta)_{x},\xi)}& \leq\|\rho_{x}\|_{\infty}\|\theta\|\|\xi\|
+ \|\rho\|_{\infty}\|\theta_{x}\|\|\xi\|\leq C\|\theta\|\|\xi\|,\\
\abs{(\rho\rho_{x},\xi)}&\leq \|\rho\|_{\infty}\|\rho_{x}\|\|\xi\| \leq Ch^{2r-1}\|\xi\|.
\end{align*}
Therefore, by \eqref{eq215}, for $0\leq t\leq T$
\begin{equation}
\tfrac{1}{2}\tfrac{d}{dt}\|\xi(t)\|^{2} - \bigl([(1+\tfrac{\eta}{2})\xi]_{x},\theta\bigr)
\leq -\tfrac{1}{4}(\xi_{x}\theta,\theta) + C(h^{r-1}\|\xi\| + \|\theta\|^{2} + \|\xi\|^{2}).
\label{eq216}
\end{equation}
Adding \eqref{eq214} and \eqref{eq216} gives
\[
\tfrac{d}{dt}(\|\xi\|^{2}  + \|\theta\|^{2})\leq C[h^{r-1}(\|\theta\| + \|\xi\|) + \|\theta\|^{2} + \|\xi\|^{2}],\quad
0\leq t\leq T.
\]
Therefore, by Gronwall's inequality and \eqref{eq26} we see that
\[
\|\theta\| + \|\xi\| \leq Ch^{r-1}, \quad 0\leq t\leq T,
\]
from which \eqref{eq29} follows.
\end{proof}
We turn now to the semidiscrete approximation to the \eqref{eqsw}. The error analysis that follows is similar to that
of Dupont \cite{d1} and the proof assumes that $r\geq 3$ and that the solution of \eqref{eqsw} satisfies $1+\eta>0$,
cf. \cite{pt} and Section 6.2.
\begin{proposition} Let $(\eta,u)$ be the solution of \eqref{eqsw}, satisfying $1+\eta>0$ for $t\in [0,T]$, $r\geq 3$, and
$h$ be sufficiently small. Then the semidiscrete ivp \eqref{eq25}-\eqref{eq26} has a unique solution $(\eta_{h},u_{h})$ for
$0\leq t\leq T$ satisfying
\begin{equation}
\max_{0\leq t\leq T}\bigl(\|\eta(t) - \eta_{h}(t)\| + \|u(t)-u_{h}(t)\|\bigr) \leq C h^{r-1}.
\label{eq217}
\end{equation}
\end{proposition}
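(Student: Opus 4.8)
The plan is to parallel the proof of Proposition 2.1, the new difficulty being the absence of symmetry of \eqref{eqsw}. I would compensate for this by testing the error equation associated with the second p.d.e. of \eqref{eqsw} with a suitably \emph{weighted} element of $S_{h,0}$, so that the troublesome coupling term between the two error equations is symmetrized and cancels when the equations are added. The price is a superapproximation remainder, which turns out to be harmless, and the appearance of cubic error terms that force an a priori $W^{1,\infty}$ bound on one of the error components; the latter is obtained by a bootstrap (continuation) argument, and it is precisely the self-consistency of that argument, through the inverse inequality \eqref{eq24}, that needs the hypothesis $r\geq 3$.

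Concretely, with $\rho:=\eta-P\eta$, $\theta:=P\eta-\eta_h$, $\sigma:=u-P_0u$, $\xi:=P_0u-u_h$, subtracting \eqref{eq25} from the weak form of \eqref{eqsw} and using $(\rho_t,\phi)=0$, $(\sigma_t,\chi)=0$ gives, for $\phi\in S_h$ and $\chi\in S_{h,0}$,
\begin{align*}
(\theta_t,\phi)+(\sigma_x+\xi_x,\phi)+\bigl((\eta u-\eta_hu_h)_x,\phi\bigr)&=0,\\
(\xi_t,\chi)+(\rho_x+\theta_x,\chi)+(uu_x-u_hu_{hx},\chi)&=0,
\end{align*}
with the same algebraic identities for $\eta u-\eta_hu_h$ and $uu_x-u_hu_{hx}$ used in the proof of Proposition 2.1. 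Putting $\phi=\theta$ in the first equation, collecting the terms linear in $\xi$, and integrating by parts (the boundary contributions vanish since $u$, $\sigma$, $\xi$ vanish at $x=0,1$) yields
\[
\tfrac12\tfrac{d}{dt}\|\theta\|^2+\bigl(((1+\eta)\xi)_x,\theta\bigr)=\mathrm{R}_1,
\]
where $\mathrm{R}_1$ is a sum of products each of which carries at least one projection-error factor $\rho$ or $\sigma$, together with the benign $-\tfrac12(u_x\theta,\theta)$ and the cubic terms $\tfrac12(\sigma_x\theta,\theta)$, $\tfrac12(\xi_x\theta,\theta)$. Using \eqref{eq21a}, \eqref{eq22a}, \eqref{eq22b}, \eqref{eq23}, \eqref{eq24}, the $W^{1,\infty}$ bounds for $\rho$ and $\sigma$, and (provisionally) the bound $\|\xi\|_{1,\infty}\leq 1$, one gets $\mathrm{R}_1\leq C\bigl(h^{2(r-1)}+\|\theta\|^2+\|\xi\|^2\bigr)$.

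The key step is the choice of test function in the second equation: I would take $\chi=\chi^{*}:=P_0\bigl((1+\eta)\xi\bigr)\in S_{h,0}$ and set $\zeta:=(1+\eta)\xi-\chi^{*}=(I-P_0)\bigl((1+\eta)\xi\bigr)$. Since $\xi_t\in S_{h,0}$ and $P_0$ is self-adjoint, $(\xi_t,\chi^{*})=(\xi_t,(1+\eta)\xi)=\tfrac12\tfrac{d}{dt}\|\sqrt{1+\eta}\,\xi\|^2-\tfrac12(\eta_t\xi,\xi)$; integration by parts gives $(\theta_x,\chi^{*})=-\bigl(((1+\eta)\xi)_x,\theta\bigr)+(\theta,\zeta_x)$; and, by the hypothesis $1+\eta>0$ on $[0,T]$, $\|\sqrt{1+\eta}\,\xi\|^2$ is equivalent to $\|\xi\|^2$. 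Hence the second error equation becomes
\[
\tfrac12\tfrac{d}{dt}\|\sqrt{1+\eta}\,\xi\|^2-\bigl(((1+\eta)\xi)_x,\theta\bigr)=\mathrm{R}_2,
\]
so that, adding the two identities, the coupling terms $\pm\bigl(((1+\eta)\xi)_x,\theta\bigr)$ cancel. To estimate $\mathrm{R}_2$ one uses the superapproximation estimate $\|\zeta\|+h\|\zeta\|_1\leq Ch\|\xi\|$ (so $\abs{(\theta,\zeta_x)}\leq C\|\theta\|\|\xi\|$, cf. \cite{d1}), the bounds \eqref{eq21a}--\eqref{eq24} for the terms containing $\rho,\sigma$, repeated integrations by parts exploiting the vanishing of $\xi$, $\sigma$, $\chi^{*}$ at $x=0,1$ (which turn $\bigl((u\xi)_x,(1+\eta)\xi\bigr)$ and $(\xi\xi_x,(1+\eta)\xi)$ into zeroth-order quantities), and once more the provisional bound $\|\xi\|_{1,\infty}\leq 1$ for the remaining cubic terms; this gives $\mathrm{R}_2\leq C\bigl(h^{2(r-1)}+\|\theta\|^2+\|\xi\|^2\bigr)$. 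Adding the two identities and invoking $\|\theta(0)\|=\|\xi(0)\|=0$ (by \eqref{eq26}), Gronwall's inequality yields $\|\theta(t)\|+\|\xi(t)\|\leq Ch^{r-1}$ for as long as the bound $\|\xi\|_{1,\infty}\leq 1$ remains valid.

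Finally I would close the bootstrap. Since $\xi(0)=0$, the set of times on which $\|\xi\|_{1,\infty}\leq 1$ is a nonempty interval; on it the estimate just obtained together with the inverse inequality \eqref{eq24} gives $\|\xi(t)\|_{1,\infty}\leq Ch^{-3/2}\|\xi(t)\|\leq Ch^{r-5/2}$, which is $<1$ for $h$ small \emph{exactly} when $r\geq 3$. A standard continuation argument then shows that $(\eta_h,u_h)$ exists on all of $[0,T]$ and that $\|\theta\|+\|\xi\|\leq Ch^{r-1}$ there; \eqref{eq217} follows by the triangle inequality, \eqref{eq21a} and \eqref{eq22b}. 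I expect this last point to be the main obstacle: the symmetrizing test function $\chi^{*}$ makes the argument work at the level of the leading terms, but the cubic error terms $(\xi_x\theta,\theta)$, $(\eta_x,\xi^3)$ and $(\xi\xi_x,\zeta)$ it leaves behind can be absorbed only if $\|\xi\|_{1,\infty}$ is controlled a priori, and the $h^{-3/2}$ loss in \eqref{eq24} renders the bootstrap consistent solely for $r\geq 3$.
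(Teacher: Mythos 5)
Your proposal is correct and follows essentially the same route as the paper's proof of Proposition 2.2: the same $L^{2}$-projection splitting, the same symmetrizing test function $\chi=P_{0}[(1+\eta)\xi]$ with the superapproximation estimate \eqref{eq223} absorbing the projection defect, the same weighted norm $((1+\eta)\xi,\xi)$ for Gronwall, and the same continuation argument closing via \eqref{eq24} with $\|\xi_{x}\|_{\infty}\leq Ch^{r-5/2}$, which is where $r\geq 3$ enters. The only cosmetic difference is that you integrate by parts and bound $(\theta,\zeta_{x})$ via an $H^{1}$ superapproximation estimate, whereas the paper keeps $(\theta_{x},P_{0}\gamma-\gamma)$ and uses the inverse inequality \eqref{eq23} with the $L^{2}$ bound; both yield $C\|\theta\|\,\|\xi\|$.
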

\begin{proof}
We use the same notation as in the proof of Proposition 2.1. While the solution of \eqref{eq25}-\eqref{eq26} exists we have
\small
\begin{align}
(\theta_{t},\phi) + (\xi_{x}+\sigma_{x},\phi) + ((\eta u)_{x} - (\eta_{h}u_{h})_{x},\phi) & = 0, \quad \forall \phi \in S_{h},
\label{eq218} \\
(\xi_{t},\chi) + (\theta_{x} + \rho_{x},\chi) + (uu_{x}-u_{h}u_{hx},\chi) & = 0, \quad \forall \chi \in S_{h,0}.
\label{eq219}
\end{align}
\normalsize
Taking $\phi=\theta$ in \eqref{eq218} and using integration by parts we have
\begin{equation}
\begin{aligned}
\tfrac{1}{2}\tfrac{d}{dt}\|\theta\|^{2} & + \bigl([(1+\eta)\xi]_{x},\theta\bigr)= - (\sigma_{x},\theta)
-((\eta\sigma)_{x},\theta)-((u\rho)_{x},\theta) \\
& - ((u\theta)_{x},\theta) + ((\rho\sigma)_{x},\theta) + ((\theta\sigma)_{x},\theta)
+ ((\rho\xi)_{x},\theta) + ((\theta\xi)_{x},\theta).
\end{aligned}
\label{eq220}
\end{equation}
In view of \eqref{eq26}, by continuity we conclude that there exists a maximal temporal instance $t_{h}>0$ such that
$(\eta_{h},u_{h})$ exist and $\|\xi_{x}\|_{\infty}\leq 1$ for $t\leq t_{h}$. Suppose that $t_{h}<T$. Using
$(2.1)$-\eqref{eq24} and integration by parts we may then estimate the various terms in the r.h.s. of \eqref{eq220}
for $t\in[0,t_{h}]$ as follows
\begin{align*}
\abs{(\sigma_{x},\theta)} &\leq \|\sigma_{x}\|\|\theta\|\leq Ch^{r-1}\|\theta\|,\\
\abs{((\eta\sigma)_{x},\theta)} & \leq C\|\sigma\|_{1}\|\theta\|\leq Ch^{r-1}\|\theta\|, \\
\abs{((u\rho)_{x},\theta)} & \leq C\|\rho\|_{1}\|\theta\| \leq Ch^{r-1}\|\theta\|,\\
\abs{((u\theta)_{x},\theta)}& =\tfrac{1}{2}\abs{(u_{x}\theta,\theta)}\leq C\|\theta\|^{2},\\
\abs{((\rho\sigma)_{x},\theta)}&\leq \|\rho\|_{\infty}\|\sigma_{x}\|\|\theta\| +
\|\sigma\|_{\infty}\|\rho_{x}\|\|\theta\|\leq Ch^{2r-1}\|\theta\|,\\
\abs{((\theta\sigma)_{x},\theta)}&=\tfrac{1}{2}\abs{(\sigma_{x}\theta,\theta)}
\leq C\|\sigma_{x}\|_{\infty}\|\theta\|^{2}\leq C\|\theta\|^{2},\\
\abs{((\rho\xi)_{x},\theta)}&\leq \|\rho_{x}\|_{\infty}\|\xi\|\|\theta\| + \|\rho\|_{\infty}\|\xi_{x}\|\|\theta\|
\leq C\|\xi\|\|\theta\|,\\
\abs{((\theta\xi)_{x},\theta)}&=\tfrac{1}{2}\abs{(\xi_{x}\theta,\theta)}
\leq\tfrac{1}{2}\|\xi_{x}\|_{\infty}\|\theta\|^{2}\leq \tfrac{1}{2}\|\theta\|^{2}.
\end{align*}
Hence, we conclude from \eqref{eq220} that for $t\in[0,t_{h}]$
\begin{equation}
\tfrac{1}{2}\tfrac{d}{dt}\|\theta\|^{2} - (\gamma,\theta_{x}) \leq C(h^{r-1}\|\theta\| + \|\theta\|^{2} + \|\xi\|^{2}),
\label{eq221}
\end{equation}
where we have put $\gamma:=(1+\eta)\xi$. \par
We turn now to \eqref{eq219} in which we set $\chi=P_{0}\gamma=P_{0}[(1+\eta)\xi]$. Then for $0\leq t\leq t_{h}$ it holds
that
\begin{equation}
\begin{aligned}
(\xi_{t},\gamma)  + (\theta_{x},P_{0}\gamma) = & - (\rho_{x},P_{0}\gamma) - ((u\sigma)_{x},P_{0}\gamma) - ((u\xi)_{x},P_{0}\gamma)\\
 & + ((\sigma\xi)_{x},P_{0}\gamma) + (\sigma\sigma_{x},P_{0}\gamma) + (\xi\xi_{x},P_{0}\gamma).
\end{aligned}
\label{eq222}
\end{equation}
For the first two terms in the r.h.s. of \eqref{eq222} we have
\begin{align*}
\abs{(\rho_{x},P_{0}\gamma)} & \leq \|\rho_{x}\|\|P_{0}\gamma\|\leq Ch^{r-1}\|\gamma\|\leq Ch^{r-1}\|\xi\|,\\
\abs{((u\sigma)_{x},P_{0}\gamma)} &\leq C\|\sigma\|_{1}\|P_{0}\gamma\| \leq Ch^{r-1}\|\xi\|.
\end{align*}
Note now that
\begin{align*}
((u\xi)_{x},P_{0}\gamma) & = ((u\xi)_{x},P_{0}\gamma-\gamma) + ((u\xi)_{x},\gamma)\\
& = ((u\xi)_{x},P_{0}\gamma-\gamma) + (u_{x}\xi,(1+\eta)\xi) + (u\xi_{x},(1+\eta)\xi)\\
& = ((u\xi)_{x},P_{0}\gamma-\gamma) + (u_{x}(1+\eta),\xi^{2}) - \tfrac{1}{2}\bigl([(1+\eta)u]_{x},\xi^{2}\bigr).
\end{align*}
We now use a well-known {\it{superapproximation}} property of $S_{h,0}$, cf. \cite{d1}, \cite{ddw}, to estimate the term
$P_{0}\gamma-\gamma$:
\begin{equation}
\|P_{0}\gamma - \gamma\|=\|P_{0}[(1+\eta)\xi] - (1+\eta)\xi\| \leq Ch\|\xi\|.
\label{eq223}
\end{equation}
Therefore, by \eqref{eq23}
\[
\abs{((u\xi)_{x},P_{0}\gamma)} \leq Ch\|\xi\|_{1}\|\xi\| + C\|\xi\|^{2}\leq C\|\xi\|^{2}.
\]
Similarly, using $(2.1)$-\eqref{eq24} and \eqref{eq223} we have
\begin{align*}
\abs{((\sigma\xi)_{x},P_{0}\gamma)} & \leq \abs{(\sigma_{x}\xi,P_{0}\gamma-\gamma)}
+ \abs{(\sigma\xi_{x},P_{0}\gamma-\gamma)} + \abs{((\sigma\xi)_{x},\gamma)} \\
& \leq C\|\sigma_{x}\|_{\infty}h\|\xi\|^{2} + C\|\sigma\|_{\infty}\|\xi_{x}\|h\|\xi\|
+ C \|\sigma_{x}\|_{\infty}\|\xi\|^{2} + C\|\sigma\|_{\infty}\|\xi_{x}\|\|\xi\|\\
& \leq C\|\xi\|^{2},
\end{align*}
\begin{align*}
\abs{(\sigma\sigma_{x},P_{0}\gamma)} & \leq \abs{(\sigma\sigma_{x},P_{0}\gamma-\gamma)}
+ \abs{(\sigma\sigma_{x},\gamma)}  \\
& \leq C\|\sigma\|_{\infty}\|\sigma_{x}\|h\|\xi\| + C\|\sigma\|_{\infty}\|\sigma_{x}\|\|\xi\|\\
& \leq Ch^{2r-1}\|\xi\|,
\end{align*}
\begin{align*}
\abs{(\xi\xi_{x},P_{0}\gamma)} & \leq \abs{(\xi\xi_{x},P_{0}\gamma-\gamma)}
+ \abs{(\xi\xi_{x},(1+\eta)\xi)} \\
& \leq C\|\xi_{x}\|_{\infty}h\|\xi\|^{2} + C\|\xi_{x}\|_{\infty}\|\xi\|^{2}\\
& \leq C\|\xi\|^{2}.
\end{align*}
Therefore, using \eqref{eq222} we have for $0\leq t\leq t_{h}$
\begin{equation}
(\xi_{t},(1+\eta)\xi) + (\theta_{x},P_{0}\gamma) \leq C(h^{r-1}\|\xi\| + \|\xi\|^{2}).
\label{eq224}
\end{equation}
Adding now \eqref{eq221} and \eqref{eq224} we obtain
\[
\tfrac{1}{2}\tfrac{d}{dt}\|\theta\|^{2} + (\xi_{t},(1+\eta)\xi) +
(\theta_{x},P_{0}\gamma-\gamma) \leq C\bigl[h^{r-1}(\|\theta\| + \|\xi\|) + \|\theta\|^{2} + \|\xi\|^{2}\bigr].
\]
But
\[
(\xi_{t},(1+\eta)\xi) = \tfrac{1}{2}\tfrac{d}{dt}((1+\eta)\xi,\xi) - \tfrac{1}{2}(\eta_{t}\xi,\xi).
\]
Therefore, for $0\leq t\leq t_{h}$
\[
\tfrac{1}{2}\tfrac{d}{dt}\bigl[\|\theta\|^{2} + ((1+\eta)\xi,\xi)\bigr]
\leq C\bigl[h^{r-1}(\|\theta\| + \|\xi\|) + \|\theta\|^{2} + \|\xi\|^{2}\bigr],
\]
for a constant $C$ independent of $h$ and $t_{h}$.\par
Since $1+\eta>0$, the norm $((1+\eta)\cdot,\cdot)^{1/2}$ is equivalent to that of $L^{2}$ uniformly for $t\in[0,T]$.
Hence, Gronwall's inequality and \eqref{eq26} yield for a constant $C=C(T)$
\begin{equation}
\|\theta\| + \|\xi\| \leq Ch^{r-1}\quad for \quad 0\leq t\leq t_{h}.
\label{eq225}
\end{equation}
We conclude from \eqref{eq24} that $\|\xi_{x}\|_{\infty}\leq Ch^{r-5/2}$ for $0\leq t\leq t_{h}$, and, since
$r\geq 3$, if $h$ was sufficiently small, we see that $t_{h}$ is not maximal. Hence we may take $t_{h}=T$ and
\eqref{eq217} follows from \eqref{eq225}.
\end{proof}
We close this section with some numerical experiments. Table \ref{tbl21} shows the errors and associated orders of convergence
in the $L^{2}$ and $L^{\infty}$ norms at $t=1$ of the standard Galerkin approximation with piecewise linear continuous
functions (i.e. $r=2$) of \eqref{eqssw} with suitable right-hand side and initial conditions so that its exact solution
is $\eta = \exp(2t)(\cos(\pi x) + x + 2)$, $u=\exp(-xt)\sin(\pi x)$. The semidiscrete i.v.p. was integrated in time with
the `classical', four-stage, fourth-order explicit Runge-Kutta (RK) method using a time step $k=\Delta x/20$.
\scriptsize
\begin{table}[h]
\begin{tabular}[t]{ | c | c | c | c | c | c | c | c | c | }\hline
 & \multicolumn{4}{| c |}{$L^{2}-errors$} & \multicolumn{4}{| c |}{$L^{\infty}-errors$} \\ \hline
$N$   &    $\eta$      &  $order$  &      $u$       &  $order$ &
$\eta$     &  $order$  &      $u$       & $order$    \\ \hline
$40$  & $0.7332(-1)$ & $      $ & $0.1169(-2)$ & $      $ & $0.1412$     & $      $ & $0.3316(-2)$ & $      $  \\ \hline
$80$  & $0.3605(-1)$ & $1.0240$ & $0.7283(-3)$ & $0.6830$ & $0.7116(-1)$ & $0.9882$ & $0.2351(-2)$ & $0.4961$  \\ \hline
$120$ & $0.2390(-1)$ & $1.0135$ & $0.4655(-3)$ & $1.1039$ & $0.4700(-1)$ & $1.0230$ & $0.1314(-2)$ & $1.4357$  \\ \hline
$160$ & $0.1788(-1)$ & $1.0096$ & $0.3329(-3)$ & $1.1660$ & $0.3522(-1)$ & $1.0031$ & $0.9258(-3)$ & $1.2162$  \\ \hline
$200$ & $0.1428(-1)$ & $1.0077$ & $0.2601(-3)$ & $1.1052$ & $0.2816(-1)$ & $1.0019$ & $0.7443(-3)$ & $0.9778$  \\ \hline
$240$ & $0.1189(-1)$ & $1.0063$ & $0.2138(-3)$ & $1.0752$ & $0.2340(-1)$ & $1.0161$ & $0.5879(-3)$ & $1.2941$  \\ \hline
$280$ & $0.1018(-1)$ & $1.0054$ & $0.1815(-3)$ & $1.0614$ & $0.2006(-1)$ & $0.9990$ & $0.4983(-3)$ & $1.0727$  \\ \hline
$320$ & $0.8901(-2)$ & $1.0046$ & $0.1580(-3)$ & $1.0420$ & $0.1754(-1)$ & $1.0068$ & $0.4240(-3)$ & $1.2088$  \\ \hline
$360$ & $0.7908(-2)$ & $1.0041$ & $0.1398(-3)$ & $1.0345$ & $0.1557(-1)$ & $1.0103$ & $0.3703(-3)$ & $1.1505$  \\ \hline
$400$ & $0.7115(-2)$ & $1.0037$ & $0.1254(-3)$ & $1.0308$ & $0.1402(-1)$ & $0.9954$ & $0.3328(-3)$ & $1.0139$  \\ \hline
$440$ & $0.6466(-2)$ & $1.0033$ & $0.1139(-3)$ & $1.0235$ & $0.1274(-1)$ & $1.0047$ & $0.3062(-3)$ & $0.8731$  \\ \hline
$480$ & $0.5925(-2)$ & $1.0030$ & $0.1041(-3)$ & $1.0200$ & $0.1167(-1)$ & $1.0036$ & $0.2818(-3)$ & $0.9556$  \\ \hline
$520$ & $0.5468(-2)$ & $1.0028$ & $0.9597(-4)$ & $1.0187$ & $0.1078(-1)$ & $0.9975$ & $0.2616(-3)$ & $0.9259$  \\ \hline
$560$ & $0.5077(-2)$ & $1.0026$ & $0.8901(-4)$ & $1.0157$ & $0.1000(-1)$ & $1.0069$ & $0.2447(-3)$ & $0.9025$  \\ \hline
$600$ & $0.4738(-2)$ & $1.0024$ & $0.8300(-4)$ & $1.0131$ & $0.9334(-2)$ & $1.0032$ & $0.2285(-3)$ & $0.9930$  \\ \hline
$640$ & $0.4441(-2)$ & $1.0023$ & $0.7705(-4)$ & $1.0125$ & $0.8751(-2)$ & $0.9991$ & $0.2148(-3)$ & $0.9593$  \\ \hline
\end{tabular}
\normalsize
\caption{
Errors and orders of convergence. \eqref{eqssw} system, standard Galerkin semidiscretization
with piecewise linear, continuous elements on a quasiuniform mesh, $t=1$.}
\label{tbl21}
\end{table}
\normalsize
(This method is stable for systems like \eqref{eqssw} and \eqref{eqsw} for $k/\Delta x$ sufficiently small. We checked
that the temporal error of the discretization was very small compared with the spatial error, so that the errors and
rates of convergence shown are essentially those of the semidiscrete problem.) On the spatial interval $[0,1]$
we used the quasiuniform mesh given by $h_{2i-1}=0.75\Delta x$, $h_{2i}=0.5\Delta x$, $i=1,\dots,N/2$, where
$h_{i}=x_{i+1}-x_{i}$ and $\Delta x=1.6/N$. The table suggests that the $L^{2}$-errors for $\eta$ and $u$ are
of $O(h)$, thus confirming the result of Proposition 2.1. It also suggests that the $L^{\infty}$-errors are also $O(h)$.
(The $H^{1}$-errors were found to be of $O(1)$).
\scriptsize
\begin{table}[h]
\begin{tabular}[t]{ | c | c | c | c | c | c | c | c | c | }\hline
 & \multicolumn{4}{| c |}{$L^{2}-errors$} & \multicolumn{4}{| c |}{$L^{\infty}-errors$} \\ \hline
$N$   &    $\eta$      &  $order$  &      $u$       &  $order$ &
$\eta$     &  $order$  &      $u$       & $order$    \\ \hline
$40$  & $0.1216$ & $      $     & $0.1749(-2)$ & $      $ & $0.2099$     & $      $ & $0.4090(-2)$ & $      $  \\ \hline
$80$  & $0.5973(-1)$ & $1.0260$ & $0.8259(-2)$ & $1.0827$ & $0.1051$     & $0.9970$ & $0.1935(-2)$ & $1.1080$  \\ \hline
$120$ & $0.3958(-1)$ & $1.0149$ & $0.5467(-3)$ & $1.0174$ & $0.6960(-1)$ & $1.0177$ & $0.1332(-2)$ & $0.9219$  \\ \hline
$160$ & $0.2959(-1)$ & $1.0106$ & $0.4092(-3)$ & $1.0068$ & $0.5188(-1)$ & $1.0210$ & $0.1015(-2)$ & $0.9423$  \\ \hline
$200$ & $0.2363(-1)$ & $1.0082$ & $0.3271(-3)$ & $1.0039$ & $0.4146(-1)$ & $1.0049$ & $0.8080(-3)$ & $1.0245$  \\ \hline
$240$ & $0.1967(-1)$ & $1.0067$ & $0.2724(-3)$ & $1.0035$ & $0.3458(-1)$ & $0.9960$ & $0.6834(-3)$ & $0.9186$  \\ \hline
$280$ & $0.1684(-1)$ & $1.0057$ & $0.2334(-3)$ & $1.0034$ & $0.2961(-1)$ & $1.0067$ & $0.5823(-3)$ & $1.0380$  \\ \hline
$320$ & $0.1473(-1)$ & $1.0049$ & $0.2041(-3)$ & $1.0036$ & $0.2587(-1)$ & $1.0112$ & $0.5115(-3)$ & $0.9718$  \\ \hline
$360$ & $0.1309(-1)$ & $1.0044$ & $0.1814(-3)$ & $1.0034$ & $0.2299(-1)$ & $1.0009$ & $0.4521(-3)$ & $1.0468$  \\ \hline
$400$ & $0.1177(-1)$ & $1.0039$ & $0.1632(-3)$ & $1.0033$ & $0.2070(-1)$ & $0.9977$ & $0.4095(-3)$ & $0.9395$  \\ \hline
$440$ & $0.1070(-1)$ & $1.0035$ & $0.1483(-3)$ & $1.0031$ & $0.1880(-1)$ & $1.0061$ & $0.3716(-3)$ & $1.0184$  \\ \hline
$480$ & $0.9804(-2)$ & $1.0032$ & $0.1359(-3)$ & $1.0029$ & $0.1723(-1)$ & $1.0066$ & $0.3418(-3)$ & $0.9631$  \\ \hline
$520$ & $0.9048(-2)$ & $1.0030$ & $0.1254(-3)$ & $1.0027$ & $0.1590(-1)$ & $0.9986$ & $0.3144(-3)$ & $1.0446$  \\ \hline
$560$ & $0.8400(-2)$ & $1.0028$ & $0.1164(-3)$ & $1.0025$ & $0.1477(-1)$ & $0.9996$ & $0.2924(-3)$ & $0.9770$  \\ \hline
$600$ & $0.7839(-2)$ & $1.0026$ & $0.1087(-3)$ & $1.0024$ & $0.1378(-1)$ & $1.0071$ & $0.2727(-3)$ & $1.0112$  \\ \hline
$640$ & $0.7347(-2)$ & $1.0024$ & $0.1019(-3)$ & $1.0023$ & $0.1291(-1)$ & $1.0048$ & $0.2562(-3)$ & $0.9692$  \\ \hline
\end{tabular}
\normalsize
\caption{
Errors and orders of convergence. \eqref{eqsw} system, standard Galerkin semidiscretization
with piecewise linear, continuous elements on a quasiuniform mesh, $t=1$.}
\label{tbl22}
\end{table}
\normalsize
The proof of Proposition 2.2 for the analysis of the semidiscretization of \eqref{eqsw} needs the assumption that
$r\geq 3$. Table \ref{tbl22} suggests that the result holds for $r=2$, i.e. for piecewise linear continuous functions, as well.
Specifically, for the quasiuniform spatial mesh and the exact solution used for the computations of Table \ref{tbl21}, it suggests
that the $L^{2}$-errors for $\eta$ and $u$ at $t=1$ are of $O(h)$; the same rate is suggested for the $L^{\infty}$-errors
as well. (The $H^{1}$-errors were of $O(1)$.) Again, we used the `classical' fourth-order RK method with $k=\Delta x/10$
for time stepping. \par
For special quasiuniform meshes (e.g. piecewise uniform and slowly varying meshes) numerical experiments indicate that the
$L^{2}$- and $L^{\infty}$- errors are $O(h^{2})$, as in the case of a uniform mesh (cf. Section 4). For example, in Table \ref{tbl23}
we present the $L^{2}$- and $L^{\infty}$- errors and orders of convergence in the case of the same \eqref{eqsw} example used for
computing the results of Table \ref{tbl22} and the same temporal discretization. The spatial mesh that we used was piecewise uniform and
was defined, for $N$  a given integer, by taking
\begin{align*}
& h_{i} = \frac{1}{4N_{1}}\,, \qquad \quad 1\leq i\leq N_{1}\,, \quad N_{1}=\frac{3N}{10}\,, \quad \text{on}\,\,\, [0,0.25]\,,\\[3pt]
& h_{i+N_{1}}=\frac{1}{2N_{2}}\,, \qquad 1\leq i\leq N_{2}\,, \quad N_{2}=\frac{5N}{10}\,, \quad \text{on}\,\,\, [0.25, 0.75]\,,\\[3pt]
& h_{i+N_{1}+N_{2}} = \frac{1}{4N_{3}}\,, \quad 1\leq i\leq N_{3}\,, \quad N_{3}=\frac{2N}{10}\,,\quad  \text{on}\,\,\, [0.75, 1]\,.
\end{align*}
\scriptsize
\begin{table}[h]
\begin{tabular}[t]{ | c | c | c | c | c | c | c | c | c | }\hline
 & \multicolumn{4}{| c |}{$L^{2}-errors$} & \multicolumn{4}{| c |}{$L^{\infty}-errors$} \\ \hline
$N$   &    $\eta$      &  $order$  &      $u$       &  $order$ &
$\eta$     &  $order$  &      $u$       & $order$    \\ \hline
$40$  & $0.7403(-2)$ & $      $ & $0.2625(-3)$ & $      $ & $0.1752(-1)$ & $      $ & $0.5473(-3)$ & $      $  \\ \hline
$80$  & $0.1850(-2)$ & $2.0006$ & $0.6539(-4)$ & $2.0054$ & $0.4348(-2)$ & $2.0108$ & $0.1470(-3)$ & $1.8964$  \\ \hline
$120$ & $0.8222(-3)$ & $2.0000$ & $0.2899(-4)$ & $2.0061$ & $0.1947(-2)$ & $1.9807$ & $0.6151(-4)$ & $2.1490$  \\ \hline
$160$ & $0.4625(-3)$ & $2.0001$ & $0.1626(-4)$ & $2.0092$ & $0.1096(-2)$ & $1.9981$ & $0.3464(-4)$ & $1.9963$  \\ \hline
$200$ & $0.2960(-3)$ & $2.0000$ & $0.1040(-4)$ & $2.0033$ & $0.7061(-3)$ & $1.9707$ & $0.2349(-4)$ & $1.7393$  \\ \hline
$240$ & $0.2056(-3)$ & $2.0000$ & $0.7227(-5)$ & $1.9967$ & $0.4901(-3)$ & $2.0021$ & $0.1581(-4)$ & $2.1729$  \\ \hline
$280$ & $0.1510(-3)$ & $1.9998$ & $0.5310(-5)$ & $1.9996$ & $0.3615(-3)$ & $1.9752$ & $0.1165(-4)$ & $1.9787$  \\ \hline
$320$ & $0.1156(-3)$ & $2.0002$ & $0.4068(-5)$ & $1.9965$ & $0.2776(-3)$ & $1.9781$ & $0.8902(-5)$ & $2.0166$  \\ \hline
$360$ & $0.9136(-4)$ & $2.0001$ & $0.3214(-5)$ & $2.0009$ & $0.2191(-3)$ & $2.0072$ & $0.6945(-5)$ & $2.1086$  \\ \hline
$400$ & $0.7400(-4)$ & $2.0002$ & $0.2603(-5)$ & $2.0007$ & $0.1779(-3)$ & $1.9773$ & $0.5641(-5)$ & $1.9727$  \\ \hline
$440$ & $0.6116(-4)$ & $1.9999$ & $0.2151(-5)$ & $1.9982$ & $0.1471(-3)$ & $1.9935$ & $0.4789(-5)$ & $1.7181$  \\ \hline
$480$ & $0.5139(-4)$ & $2.0000$ & $0.1808(-5)$ & $2.0003$ & $0.1242(-3)$ & $1.9466$ & $0.3971(-5)$ & $2.1518$  \\ \hline
$520$ & $0.4379(-4)$ & $2.0001$ & $0.1541(-5)$ & $1.9967$ & $0.1054(-3)$ & $2.0517$ & $0.3343(-5)$ & $2.1517$  \\ \hline
\end{tabular}
\normalsize
\caption{
Errors and orders of convergence. \eqref{eqsw} system, standard Galerkin spatial discretization
with piecewise linear, continuous elements on a piecewise uniform mesh, $t=1$.}
\label{tbl23}
\end{table}\\
\normalsize
The $L^{2}$-errors are practically equal to 2 while the $L^{\infty}$-errors are very close to $2$. Table \ref{tbl24} shows that
the analogous errors are again practically equal to $2$ when the same example was solved on a `slowly varying' mesh defined by the
meshlengths
\begin{align*}
& h_{i} = \wh{h}_{1}=\frac{1}{4N_{1}}\,, \qquad 1\leq i\leq N_{1}\,, \quad N_{1}=\frac{3N}{7}\,, \quad \text{on}\,\,\, [0,0.25]\,,\\[3pt]
& h_{i+N_{1}}=\wh{h}_{1} + (\wh{h}_{3}-\wh{h}_{1})\frac{i-1}{N_{2}-1}\,,
\quad 1\leq i\leq N_{2}\,, \quad N_{2}=\frac{3N}{7}\,, \quad \text{on}\,\,\, [0.25, 0.75]\,,\\[3pt]
& h_{i+N_{1}+N_{2}} = \wh{h}_{3}= \frac{1}{4N_{3}}\,, \quad 1\leq i\leq N_{3}\,,
\quad N_{3}=\frac{N}{7}\,,\quad  \text{on}\,\,\, [0.75, 1]\,.
\end{align*}
(Thus, the meshlength in the middle interval $[0,25, 0.75]$ varies linearly
\scriptsize
\begin{table}[h]
\begin{tabular}[t]{ | c | c | c | c | c | c | c | c | c | }\hline
 & \multicolumn{4}{| c |}{$L^{2}-errors$} & \multicolumn{4}{| c |}{$L^{\infty}-errors$} \\ \hline
$N$   &    $\eta$      &  $order$  &      $u$       &  $order$ &
$\eta$     &  $order$  &      $u$       & $order$    \\ \hline
$70$  & $0.1957(-2)$ & $      $ & $0.1172(-3)$ & $      $ & $0.5554(-2)$ & $      $ & $0.3039(-3)$ & $      $  \\ \hline
$140$ & $0.4979(-3)$ & $1.9744$ & $0.2922(-4)$ & $2.0036$ & $0.1414(-2)$ & $1.9739$ & $0.7555(-4)$ & $2.0082$  \\ \hline
$210$ & $0.2225(-3)$ & $1.9861$ & $0.1297(-4)$ & $2.0033$ & $0.6313(-3)$ & $1.9883$ & $0.3333(-4)$ & $2.0180$  \\ \hline
$280$ & $0.1255(-3)$ & $1.9903$ & $0.7289(-5)$ & $2.0028$ & $0.3560(-3)$ & $1.9916$ & $0.1878(-4)$ & $1.9946$  \\ \hline
$350$ & $0.8047(-4)$ & $1.9926$ & $0.4663(-5)$ & $2.0019$ & $0.2283(-3)$ & $1.9904$ & $0.1201(-4)$ & $2.0027$  \\ \hline
$420$ & $0.5594(-4)$ & $1.9940$ & $0.3237(-5)$ & $2.0014$ & $0.1586(-3)$ & $1.9979$ & $0.8330(-5)$ & $2.0073$  \\ \hline
$490$ & $0.4113(-4)$ & $1.9949$ & $0.2378(-5)$ & $2.0015$ & $0.1166(-3)$ & $1.9954$ & $0.6119(-5)$ & $2.0007$  \\ \hline
$560$ & $0.3151(-4)$ & $1.9956$ & $0.1820(-5)$ & $2.0011$ & $0.8934(-4)$ & $1.9952$ & $0.4684(-5)$ & $2.0012$  \\ \hline
$630$ & $0.2491(-4)$ & $1.9962$ & $0.1438(-5)$ & $2.0009$ & $0.7062(-4)$ & $1.9968$ & $0.3700(-5)$ & $2.0021$  \\ \hline
$700$ & $0.2018(-4)$ & $1.9966$ & $0.1165(-5)$ & $2.0010$ & $0.5722(-4)$ & $1.9967$ & $0.2996(-5)$ & $2.0028$  \\ \hline
$770$ & $0.1669(-4)$ & $1.9969$ & $0.9625(-6)$ & $2.0008$ & $0.4731(-4)$ & $1.9962$ & $0.2477(-5)$ & $1.9977$  \\ \hline
$840$ & $0.1402(-4)$ & $1.9972$ & $0.8087(-6)$ & $2.0007$ & $0.3976(-4)$ & $1.9978$ & $0.2081(-5)$ & $1.9991$  \\ \hline
\end{tabular}
\normalsize
\caption{
Errors and orders of convergence. \eqref{eqsw} system, standard Galerkin spatial discretization
with piecewise linear, continuous elements on a slowly varying mesh, $t=1$.}
\label{tbl24}
\end{table}
\normalsize
between the values of $\wh{h}_{1}$ and $\wh{h}_{3}$ of the meshlengths of the uniform meshes on $[0, 0.25]$ and $[0.75, 1]$,
respectively.) Finally, Table \ref{tbl25} shows again that the errors are of optimal order in the case of a quasiuniform mesh obtained by
perturbing a uniform mesh with meshlength $h$ by $O(h^{2})$ quantities as follows:
\begin{align*}
& h_{4i-3} = h-0.25 h^{2}\,, \quad h_{4i-2} = h + 0.5 h^{2}\,, \quad h_{4i-2} = h - 0.5 h^{2}\,,\\[3pt]
& h_{4i}=h + 0.25 h^{2}\,, \quad  1\leq i\leq N/4\,, \quad h=1/N\,.
\end{align*}
\scriptsize
\begin{table}[h]
\begin{tabular}[t]{ | c | c | c | c | c | c | c | c | c | }\hline
 & \multicolumn{4}{| c |}{$L^{2}-errors$} & \multicolumn{4}{| c |}{$L^{\infty}-errors$} \\ \hline
$N$   &    $\eta$    &  $order$  &      $u$       &  $order$ &
$\eta$               &  $order$ &      $u$       & $order$    \\ \hline
$40$  & $0.1864(-2)$ & $      $ & $0.2509(-3)$ & $      $ & $0.6354(-2)$ & $      $ & $0.5334(-3)$ & $      $  \\ \hline
$80$  & $0.4654(-3)$ & $2.0021$ & $0.6398(-4)$ & $1.9711$ & $0.1632(-2)$ & $1.9615$ & $0.1350(-3)$ & $1.9823$  \\ \hline
$120$ & $0.2069(-3)$ & $1.9996$ & $0.2856(-4)$ & $1.9890$ & $0.7233(-3)$ & $2.0061$ & $0.6022(-4)$ & $1.9908$  \\ \hline
$160$ & $0.1164(-3)$ & $1.9996$ & $0.1611(-4)$ & $1.9913$ & $0.4052(-3)$ & $2.0147$ & $0.3413(-4)$ & $1.9735$  \\ \hline
$200$ & $0.7449(-4)$ & $1.9995$ & $0.1032(-4)$ & $1.9932$ & $0.2595(-3)$ & $1.9972$ & $0.2203(-4)$ & $1.9611$  \\ \hline
$240$ & $0.5173(-4)$ & $1.9996$ & $0.7175(-5)$ & $1.9958$ & $0.1806(-3)$ & $1.9873$ & $0.1539(-4)$ & $1.9690$  \\ \hline
$280$ & $0.3801(-4)$ & $1.9997$ & $0.5273(-5)$ & $1.9974$ & $0.1327(-3)$ & $2.0008$ & $0.1137(-4)$ & $1.9654$  \\ \hline
$320$ & $0.2910(-4)$ & $1.9997$ & $0.4039(-5)$ & $1.9980$ & $0.1015(-3)$ & $2.0094$ & $0.8717(-5)$ & $1.9871$  \\ \hline
$360$ & $0.2300(-4)$ & $1.9998$ & $0.3191(-5)$ & $1.9986$ & $0.8020(-4)$ & $1.9953$ & $0.6884(-5)$ & $2.0050$  \\ \hline
$400$ & $0.1863(-4)$ & $1.9998$ & $0.2585(-5)$ & $1.9994$ & $0.6502(-4)$ & $1.9918$ & $0.5576(-5)$ & $2.0002$  \\ \hline
$440$ & $0.1539(-4)$ & $1.9999$ & $0.2137(-5)$ & $1.9998$ & $0.5372(-4)$ & $2.0029$ & $0.4602(-5)$ & $2.0137$  \\ \hline
$480$ & $0.1294(-4)$ & $1.9999$ & $0.1795(-5)$ & $1.9999$ & $0.4511(-4)$ & $2.0065$ & $0.3863(-5)$ & $2.0112$  \\ \hline
$520$ & $0.1102(-4)$ & $1.9999$ & $0.1530(-5)$ & $2.0000$ & $0.3846(-4)$ & $1.9947$ & $0.3289(-5)$ & $2.0087$  \\ \hline
$560$ & $0.9504(-5)$ & $1.9999$ & $0.1319(-5)$ & $2.0001$ & $0.3317(-4)$ & $1.9942$ & $0.2838(-5)$ & $1.9907$  \\ \hline
$600$ & $0.8279(-5)$ & $2.0000$ & $0.1149(-5)$ & $2.0001$ & $0.2889(-4)$ & $2.0047$ & $0.2473(-5)$ & $1.9988$  \\ \hline
$640$ & $0.7276(-5)$ & $2.0000$ & $0.1010(-5)$ & $1.9999$ & $0.2538(-4)$ & $2.0056$ & $0.2175(-5)$ & $1.9903$  \\ \hline
\end{tabular}
\normalsize
\caption{
Errors and orders of convergence. \eqref{eqsw} system, standard Galerkin spatial discretization
with piecewise linear, continuous elements on a perturbed uniform mesh, $t=1$.}
\label{tbl25}
\end{table}
\normalsize
\section{Some superaccuracy properties of the $L^{2}$ projection on spaces of continuous, piecewise linear functions}
In this section we will prove in a series of Lemmas some {\it{superaccuracy (superconvergence)}} properties of the
$L^{2}$ projection of smooth functions that satisfy suitable boundary conditions onto spaces of piecewise linear,
continuous functions defined on a {\it{uniform mesh}} in $[0,1]$. These properties will be used in Section 4 to establish
optimal-order $L^{2}$ error estimates for the semidiscrete approximations of \eqref{eqsw} and \eqref{eqssw} in these
finite element spaces.\par
For the purposes of this section (and of \S 4) for integer $N\geq 2$ we let $h=1/N$, $x_{i}=(i-1)h$, $i=1,\dots,N+1$, be
a uniform partition of $[0,1]$ and $I_{i}=x_{i+1}-x_{i}$, $1\leq i\leq N$. We put $x_{i+1/2}=(x_{i}+x_{i+1})/2$.
We also let $S_{h}=S_{h}^{0,2}:=\{\phi \in C^{0} : \phi\big|_{[x_{j},x_{j+1}]} \in \mathbb{P}_{1}\,, 1\leq j\leq N\}$ and
$S_{h,0}=S_{h,0}^{0,2}=\{\phi \in S_{h} : \phi(0)=\phi(1)=0\}$. We equip $S_{h}$ with the basis $\{\phi_{i}\}_{i=1}^{N+1}$,
where $\phi_{i}\in S_{h}$ and $\phi_{i}(x_{j})=\delta_{ij}$, $1\leq i,j,\leq N+1$, and $S_{h,0}$ with the basis
$\{\chi_{i}\}_{i=1}^{N-1}$, where $\chi_{i}=\phi_{i+1}$, $1\leq i\leq N-1$. We let again $P$, $P_{0}$ be the
$L^{2}$ projection operators onto $S_{h}$, $S_{h,0}$, respectively.
\begin{lemma} Let $u\in C_{0}^{4}$, $u''(0)=u''(1)=0$, and $\sigma=u-P_{0}u$. Then, there exists a constant
$C=C(\|u^{(4)}\|_{\infty})$ such that
\begin{equation}
\max_{1\leq i\leq N}\biggl|\int_{I_{i}}\sigma dx\biggr|\leq Ch^{5}.
\label{eq31}
\end{equation}
\end{lemma}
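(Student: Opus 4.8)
The plan is to reduce $\int_{I_i}\sigma\,dx$ to the nodal values of $P_0u$ and then exploit a cancellation between the trapezoidal‑quadrature error of $\int_{I_i}u$ and a \emph{refined} nodal error of the $L^2$ projection. Write $\alpha_j:=(P_0u)(x_j)$ and $e_j:=\alpha_j-u(x_j)$; since functions in $S_{h,0}$ vanish at $0$ and $1$ and $u\in C_0^4$, one has $e_1=e_{N+1}=0$. Because $P_0u$ is affine on each $I_i$, $\int_{I_i}P_0u\,dx=\tfrac h2(\alpha_i+\alpha_{i+1})$, so
\[
\int_{I_i}\sigma\,dx=\Bigl(\int_{I_i}u\,dx-\tfrac h2(u(x_i)+u(x_{i+1}))\Bigr)-\tfrac h2(e_i+e_{i+1}).
\]
By a Peano‑kernel representation the first parenthesis equals $-\tfrac{h^2}{12}(u'(x_{i+1})-u'(x_i))+O(h^5\|u^{(4)}\|_\infty)$, and applying the trapezoidal rule to $u''$ gives $u'(x_{i+1})-u'(x_i)=\int_{I_i}u''\,dx=\tfrac h2(u''(x_i)+u''(x_{i+1}))+O(h^3\|u^{(4)}\|_\infty)$. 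Hence the first term on the right is $-\tfrac{h^3}{24}(u''(x_i)+u''(x_{i+1}))+O(h^5\|u^{(4)}\|_\infty)$, and it remains to prove the refined pointwise bound $e_j=-\tfrac{h^2}{12}u''(x_j)+O(h^4\|u^{(4)}\|_\infty)$ uniformly in $j$, so that $\tfrac h2(e_i+e_{i+1})$ produces exactly the compensating $+\tfrac{h^3}{24}(u''(x_i)+u''(x_{i+1}))$.

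For the nodal analysis I would test the defining relation $(P_0u,\phi_j)=(u,\phi_j)$ ($2\le j\le N$) against the hat basis and subtract the corresponding identity for the piecewise‑linear interpolant $I_hu$; since $(P_0u-I_hu,\phi_j)=\tfrac h6(e_{j-1}+4e_j+e_{j+1})$ (same mass matrix), this yields
\[
\tfrac16(e_{j-1}+4e_j+e_{j+1})=\tfrac1h(u-I_hu,\phi_j)=:r_j,\qquad 2\le j\le N,\quad e_1=e_{N+1}=0.
\]
A short Taylor expansion about $x_j$, using only $u\in C^4$ and the symmetry of $\phi_j$ about $x_j$ (which annihilates the odd‑order contributions), gives $r_j=-\tfrac{h^2}{12}u''(x_j)+O(h^4\|u^{(4)}\|_\infty)$. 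Now set $e_j=-\tfrac{h^2}{12}u''(x_j)+w_j$. Since $\tfrac16(a_{j-1}+4a_j+a_{j+1})=a_j+\tfrac{h^2}{6}a''(\xi_j)$ for $a\in C^2$, the ansatz term contributes $-\tfrac{h^2}{12}u''(x_j)+O(h^4\|u^{(4)}\|_\infty)$ to the left side, so $w$ solves the same tridiagonal system with right‑hand side $O(h^4\|u^{(4)}\|_\infty)$, and the boundary data vanishes: $w_1=\tfrac{h^2}{12}u''(0)=0$, $w_{N+1}=\tfrac{h^2}{12}u''(1)=0$ — this is precisely where the hypothesis $u''(0)=u''(1)=0$ is used. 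The reduced matrix (the scaled mass matrix, diagonal $\tfrac46$, off‑diagonals $\tfrac16$) is strictly diagonally dominant with row defect $\tfrac46-\tfrac26=\tfrac13$, hence its $\ell^\infty\!\to\!\ell^\infty$ inverse has norm $\le 3$; therefore $\max_j|w_j|\le 3\cdot O(h^4\|u^{(4)}\|_\infty)$, which gives the refined nodal estimate and, combined with the first paragraph, the desired $O(h^5)$ bound.

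The genuinely delicate point will be the matching of the two $O(h^3)$ contributions: one must carry the quadrature expansion of $\int_{I_i}u$ exactly one order beyond the trapezoidal rule and, in parallel, extract the $-\tfrac{h^2}{12}u''$ part of $e_j$ (the crude bound $e_j=O(h^2)$ is useless here), so that the leading $\tfrac{h^3}{24}(u''(x_i)+u''(x_{i+1}))$ terms cancel and only $O(h^5)$ remainders survive. A secondary bookkeeping obstacle is keeping every remainder controlled by $\|u^{(4)}\|_\infty$ alone, which forces the use of integral/Peano‑form remainders rather than pointwise higher derivatives, and noting that the $\ell^\infty$ estimate on $w$ holds \emph{globally}, not merely in the interior, thanks to the vanishing of the boundary data furnished by $u''(0)=u''(1)=0$.
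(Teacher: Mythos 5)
Your argument is correct, and it reaches \eqref{eq31} by a genuinely different route from the paper. The paper works directly with the unknowns $\ve_{i}=\int_{I_{i}}\sigma\,dx$: it shows they solve a diagonally dominant tridiagonal system $\Gamma\ve=r$ whose right-hand side is proved to be $O(h^{5})$ entry by entry (Simpson's rule plus Taylor expansion in the interior, explicit Taylor computations of the two boundary entries $r_{1},r_{N}$ in which the hypotheses $u(0)=u(1)=0$ and $u''(0)=u''(1)=0$ kill the low-order terms), and then inverts $\Gamma$ uniformly in $\ell^{\infty}$. You instead split $\int_{I_{i}}\sigma\,dx$ into the trapezoidal quadrature error of $\int_{I_{i}}u$ and the nodal errors $e_{j}=(P_{0}u)(x_{j})-u(x_{j})$, prove the asymptotic expansion $e_{j}=-\tfrac{h^{2}}{12}u''(x_{j})+O(h^{4}\|u^{(4)}\|_{\infty})$ by an ansatz-and-correction argument on the mass-matrix system $\tfrac16(e_{j-1}+4e_{j}+e_{j+1})=r_{j}$, and exhibit the exact cancellation of the two $\tfrac{h^{3}}{24}(u''(x_{i})+u''(x_{i+1}))$ contributions. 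I checked the key computations: the identity $(P_{0}u-I_{h}u,\phi_{j})=\tfrac h6(e_{j-1}+4e_{j}+e_{j+1})$, the value $r_{j}=-\tfrac{h^{2}}{12}u''(x_{j})+O(h^{4})$ (the cubic Taylor term indeed integrates to zero against $\phi_{j}$ by antisymmetry), the $\ell^{\infty}$ bound $3$ on the inverse of the reduced matrix, and the cancellation $\tfrac h2\cdot\tfrac{h^{2}}{12}=\tfrac{h^{3}}{24}$; all are right. Both proofs ultimately rest on the same linear-algebra fact (a uniform $\ell^{\infty}$ bound for the inverse of a diagonally dominant tridiagonal matrix), but applied to different unknowns. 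What your version buys is a cleaner role for the boundary hypotheses — $u(0)=u(1)=0$ gives $e_{1}=e_{N+1}=0$ and $u''(0)=u''(1)=0$ gives $w_{1}=w_{N+1}=0$, rather than entering through explicit boundary Taylor expansions — together with the nodal superconvergence expansion of $P_{0}u$ as a by-product; the paper's version is shorter because it never needs the two-term matching and can delegate the interior estimate to a Simpson's-rule lemma proved elsewhere.
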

\begin{proof}
By the definition of $P_{0}$ we have for $x\in I_{i}$, $2\leq i\leq N-1$,
$\sigma=u-P_{0}u=u-(d_{i-1}\chi_{i-1} + d_{i}\chi_{i})$, giving
\begin{equation}
\int_{I_{i}}\sigma dx = \int_{I_{i}}u dx -\tfrac{h}{2}(d_{i-1} + d_{i}), \quad 2\leq i\leq N-1.
\label{eq32}
\end{equation}
Similarly
\begin{equation}
\int_{I_{i}}\sigma dx = \int_{I_{i}}u dx - \tfrac{h}{2}d_{1}, \quad
\int_{I_{N}}\sigma dx = \int_{I_{N}}u dx - \tfrac{h}{2}d_{N-1}.
\label{eq33}
\end{equation}
Here we have denoted by $d=(d_{1},d_{2},\dots,d_{N-1})^{T}$ the coefficients of $P_{0}u$ with respect to the basis
$\{\chi_{i}\}_{i=1}^{N-1}$, i.e. the solution of the linear system $G^{0}d=b$, where $G_{ij}^{0}=(\chi_{j},\chi_{i})$,
$1\leq i,j\leq N-1$, and $b_{i}=(u,\chi_{i})$, $1\leq i\leq N-1$. The equations of this system may be written
explicitly as
\begin{equation}
\begin{aligned}
4d_{1} + d_{2} & = 6b_{1}/h,\\
d_{i-1} + 4d_{i} + d_{i+1}&=6b_{i}/h, \quad 2\leq i\leq N-2,\\
d_{N-2} + 4d_{N-1} & = 6b_{N-1}/h.
\end{aligned}
\label{eq34}
\end{equation}
From \eqref{eq34} it is straightforward to infer that the last terms in the right-hand sides of \eqref{eq32} and \eqref{eq33}
satisfy the equations
\begin{equation}
\begin{aligned}
3\cdot\tfrac{h}{2}d_{1} + \tfrac{h}{2}(d_{1} + d_{2}) & = 3b_{1},\\
\tfrac{h}{2}d_{1} + 4\cdot\tfrac{h}{2}(d_{1} + d_{2}) + \tfrac{h}{2}(d_{2} + d_{3})& = 3(b_{1} + b_{2}),\\
\tfrac{h}{2}(d_{i-1} + d_{i}) + 4\cdot\tfrac{h}{2}(d_{i} + d_{i+1}) + \tfrac{h}{2}(d_{i+1} + d_{i+2})&=3(b_{i}+b_{i+1}),
\quad 2\leq i\leq N-3,\\
\tfrac{h}{2}(d_{N-3} + d_{N-2}) + 4\cdot\tfrac{h}{2}(d_{N-2} + d_{N-1}) + \tfrac{h}{2}d_{N-1}& = 3(b_{N-2}+b_{N-1}),\\
\tfrac{h}{2}(d_{N-2} + d_{N-1}) + 3\cdot\tfrac{h}{2}d_{N-1} & = 3b_{N-1}.
\end{aligned}
\label{eq35}
\end{equation}
Hence, by \eqref{eq32}, \eqref{eq33}, and the above equations, if $\ve_{i}=\int_{I_{i}}\sigma dx$, $1\leq i\leq N$,
we see that $\ve=(\ve_{1},\dots,\ve_{N})^{T}$ is the solution of the linear system $\Gamma\ve=r$, where $\Gamma$ is the
$N\times N$ tridiagonal matrix with elements $\Gamma_{11}=\Gamma_{NN}=3$, $\Gamma_{ii}=4$, $2\leq i\leq N-1$, and
$\Gamma_{ij}=1$ if $\abs{i-j}=1$, and $r=(r_{1}\dots,r_{N})^{T}$ is given by
\begin{equation}
\begin{aligned}
r_{1}& = 3\int_{I_{1}}u dx + \int_{I_{2}}u dx - 3b_{1},\\
r_{i}& = \int_{I_{i-1}}u dx + 4\int_{I_{i}}u dx + \int_{I_{i+1}}u dx - 3(b_{i-1} + b_{i}), \quad 2\leq i\leq N-1,\\
r_{N}& = \int_{I_{N-1}}u dx + 3\int_{I_{N}}u dx - 3b_{N-1}.
\end{aligned}
\label{eq36}
\end{equation}
We will show that $r_{i}=O(h^{5})$, $1\leq i\leq N$. For $r_{1}$ we have by \eqref{eq36}, \eqref{eq35}, \eqref{eq32},
and \eqref{eq33}, that
\begin{align*}
r_{1} & = 3\int_{I_{1}}u dx + \int_{I_{2}}u dx - \tfrac{3}{h}\int_{I_{1}}xu dx - \tfrac{3}{h}\int_{I_{2}}(2h-x)u dx\\
& = \tfrac{3}{h}J_{1} + \tfrac{1}{h}J_{2},
\end{align*}
where
\[
J_{1} = \int_{I_{1}}(h-x)u dx, \quad J_{2}=\int_{I_{2}}(3x-5h)u dx.
\]
From Taylor's theorem, using our hypotheses on $u$, we obtain
\begin{align*}
J_{1} & = \tfrac{h^{3}}{6}u'(0) + \tfrac{h^{5}}{120}u'''(0) + O(h^{6}),\\
J_{2} & = -\tfrac{h^{3}}{2}u'(0) - \tfrac{h^{5}}{40}u'''(0) + O(h^{6}),
\end{align*}
which give that $r_{1}=O(h^{5})$. For $r_{i}$, $2\leq i\leq N-2$, we have by \eqref{eq36}, \eqref{eq35}, \eqref{eq32}, and
\eqref{eq33} that
\[
r_{i} = \int_{I_{i-1}\cup I_{i}\cup I_{i+1}}u dx - 3\int_{I_{i-1}}\frac{x-x_{i-1}}{h}u dx
-3\int_{I_{i+1}}\frac{x_{i+2}-x}{h}u dx,\quad 2\leq i\leq N-1.
\]
Since $u\in C^{4}$, it follows from Simpson's rule and Taylor's theorem, as in the first part of the proof of Lemma 5.7
of \cite{adarxiv}, that $r_{i}=O(h^{5})$, $2\leq i\leq N-1$. Finally, since
\[
r_{N} = 3\int_{I_{1}}v dx + \int_{I_{2}}v dx - 3(v,\chi_{1}),
\]
where $v(x):=u(1-x)$, we see that $r_{N}$ is the same as $r_{1}$ with $v$ replacing $u$. It follows that $r_{N}=O(h^{5})$.
Note that $\frac{1}{4}\Gamma\ve = \frac{1}{4}r$, where $\frac{1}{4}\Gamma=I-E$, and $E$ is a $N\times N$ matrix with
$\|E\|_{\infty}=1/2$. Hence, $\|(I-E)^{-1}\|_{\infty}\leq 2$, and thus
\[
\max_{1\leq i\leq N}\abs{\ve_{i}} \leq \tfrac{1}{2}\max_{1\leq i\leq N}\abs{r_{i}} \leq Ch^{5}.
\]
\end{proof}
\begin{lemma} Let $u\in C_{0}^{3}$, $u''(0)=u''(1)=0$, and $\sigma=u-P_{0}u$. Then, there exists a constant
$C=C(\|u^{(3)}\|_{\infty})$ such that
\begin{equation}
\max_{1\leq i\leq N}\bigl|\sigma'(x_{i+1/2})\bigr| \leq Ch^{2}.
\label{eq37}
\end{equation}
\end{lemma}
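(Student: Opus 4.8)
My plan is to reduce \eqref{eq37} to an estimate on the (piecewise constant) derivative of $P_0u$. Write $m_i$ for the constant value of $(P_0u)'$ on $I_i$; with $d=(d_1,\dots,d_{N-1})$ as in Lemma~3.1 and the conventions $d_0=d_N=0$ (which encode $P_0u(0)=P_0u(1)=0$) one has $m_i=(d_i-d_{i-1})/h$ and $d_i=h\sum_{k=1}^{i}m_k$. Since $\sigma'(x_{i+1/2})=u'(x_{i+1/2})-m_i=:\delta_i$, \eqref{eq37} is the claim $\max_i|\delta_i|\le Ch^2$. I would prove this by exhibiting an $N\times N$ tridiagonal, strictly diagonally dominant matrix $\Gamma'$ with $\|(\Gamma')^{-1}\|_\infty$ bounded independently of $h$, and a vector $\varrho$, such that the slope vector $m=(m_1,\dots,m_N)^T$ solves $\Gamma'm=\varrho$ \emph{exactly}, while the vector $U=(u'(x_{3/2}),\dots,u'(x_{N+1/2}))^T$ satisfies $\Gamma'U=\varrho+O(h^2)$ componentwise (with the $O(h^2)$ depending only on $\|u'''\|_\infty$); then $\delta=(\Gamma')^{-1}(\Gamma'U-\varrho)$ gives \eqref{eq37}. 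This mirrors the structure of the argument at the end of the proof of Lemma~3.1.

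To obtain the system I would start from \eqref{eq34}, which with the conventions above takes the uniform form $d_{i-1}+4d_i+d_{i+1}=6b_i/h$ for $1\le i\le N-1$. Subtracting consecutive equations and using $d_\ell-d_{\ell-1}=hm_\ell$ localizes them to $m_j+4m_{j+1}+m_{j+2}=\tfrac6{h^2}(b_{j+1}-b_j)$, $1\le j\le N-2$; these form the interior rows of $\Gamma'$. The $i=1$ equation of \eqref{eq34} becomes $5m_1+m_2=6b_1/h^2$, and the $i=N-1$ equation, after eliminating the partial sums via $d_N=0$, becomes $m_{N-1}+5m_N=-6b_{N-1}/h^2$; these are the two boundary rows. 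Thus $\Gamma'$ is tridiagonal with diagonal $(5,4,\dots,4,5)$ and unit off-diagonals, and, exactly as for the matrix $\Gamma$ at the end of the proof of Lemma~3.1, $\tfrac14\Gamma'=I-E'$ with $\|E'\|_\infty=1/2$, whence $\|(\Gamma')^{-1}\|_\infty\le\tfrac12$.

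Next I would check the $O(h^2)$ consistency of $U$. Writing $b_i=(u,\chi_i)=\int_{-h}^{h}u(x_{i+1}+t)(1-|t|/h)\,dt$ and Taylor-expanding about $x_{i+1}$ (the weight is even, so the odd moments leave only an $O(h^4\|u'''\|_\infty)$ remainder) gives $b_i=hu(x_{i+1})+\tfrac{h^3}{12}u''(x_{i+1})+O(h^4\|u'''\|_\infty)$. Hence for the interior rows $\tfrac6{h^2}(b_{j+1}-b_j)=6u'(x_{j+3/2})+O(h^2)$ (the $u''$-difference is $O(h^4)$ and $u(x_{j+2})-u(x_{j+1})=h\,u'(x_{j+3/2})+O(h^3\|u'''\|_\infty)$ by a midpoint expansion), while the second central difference of $u'$ gives $u'(x_{j+1/2})+u'(x_{j+5/2})-2u'(x_{j+3/2})=O(h^2\|u'''\|_\infty)$; together these make the interior components of $\Gamma'U-\varrho$ of order $h^2$. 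For the two boundary components I would use the hypotheses: $u(0)=u''(0)=0$ yield $u(h)=hu'(0)+O(h^3\|u'''\|_\infty)$ and $u''(h)=O(h\|u'''\|_\infty)$, hence $6b_1/h^2=6u'(0)+O(h^2)$; moreover $u''(0)=0$ gives $u'(h/2)=u'(0)+O(h^2\|u'''\|_\infty)$ and $u'(3h/2)=u'(0)+O(h^2\|u'''\|_\infty)$, so $5u'(x_{3/2})+u'(x_{5/2})=6u'(0)+O(h^2)$; the symmetric estimates at $x=1$ (using $u(1)=u''(1)=0$) treat the last component. Collecting, $\Gamma'(U-m)$ is $O(h^2)$ componentwise and \eqref{eq37} follows with $C=C(\|u'''\|_\infty)$.

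The Taylor and quadrature estimates above are routine and parallel to those in the proofs of Lemma~3.1 and of Lemma~5.7 of \cite{adarxiv}; the part that needs genuine care is the bookkeeping at the two endpoints. One must verify that folding $d_0=0$ into the first equation of \eqref{eq34} and $d_N=0$ into the last produces exactly the diagonal entry $5$ (so that $\Gamma'$ stays strictly diagonally dominant with the same $\|E'\|_\infty=1/2$), and that the conditions $u(0)=u(1)=0$ together with $u''(0)=u''(1)=0$ are precisely what makes the two boundary rows $O(h^2)$-consistent with $U$ --- without $u''(0)=0$ the first row would only be $O(h)$-consistent and the conclusion would degrade to $\max_i|\delta_i|=O(h)$.
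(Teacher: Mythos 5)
Your proposal is correct and takes essentially the same route as the paper's own proof: the paper likewise differences the normal equations \eqref{eq34} to obtain a tridiagonal system $A\ve'=r'$ for $\ve_i'=\sigma'(x_{i+1/2})$ with $A$ exactly your $\Gamma'$ (diagonal $(5,4,\dots,4,5)$, unit off-diagonals), verifies $r_i'=O(h^2)$ by the same Taylor/moment computations (using $u(0)=u''(0)=0$ for the first row, a reflection $v(x)=u(1-x)$ for the last, and citing Lemma 5.5 of \cite{adarxiv} for the interior rows), and concludes by inverting $A$. The only difference is presentational: you phrase things in terms of the slopes $m_i$ and write out the interior-row consistency estimates explicitly rather than citing them.
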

\begin{proof}
Let $d=(d_{1},\dots,d_{N-1})^{T}$ be defined as in the proof of Lemma 3.1. Then, for $x\in I_{1}$ we have
\[
\sigma'(x)=u'(x) - (P_{0}u)'(x)=u'(x) - \frac{d_{1}}{h}.
\]
In addition
\begin{align*}
\sigma'(x) & = u'(x) - \frac{d_{i}-d_{i-1}}{h}, \quad x\in I_{i}, \quad 2\leq i\leq N-1,\\
\sigma'(x) & = u'(x) - \frac{-d_{N-1}}{h}, \quad x\in I_{N}.
\end{align*}
Now, it follows from the equations \eqref{eq34} that
\begin{align*}
5d_{1} + (d_{2}-d_{1}) & = 6b_{1}/h,\\
d_{1} + 4(d_{2}-d_{1}) + (d_{3}-d_{2}) & = 6(b_{2}-b_{1})/h,\\
(d_{i-1}-d_{i-2}) + 4(d_{i}-d_{i-1}) + (d_{i+1}-d_{i}) & = 6(b_{i}-b_{i-1})/h,\quad 3\leq i\leq N-2,\\
(d_{N-2} - d_{N-3}) + 4(d_{N-1} - d_{N-2}) + (-d_{N-1}) & = 6(b_{N-1}-b_{N-2})/h,\\
(d_{N-1} - d_{N-2}) + 5(-d_{N-1}) & = -6b_{N-1}/h.
\end{align*}
Hence, if $\ve_{i}':=\sigma'(x_{i+1/2})$, $1\leq i\leq N$, then the vector $\ve'=(\ve_{1}',\dots,\ve_{N}')^{T}$
is the solution of the system $A\ve'=r'$, where $A$ is the $N\times N$ tridiagonal matrix with elements
$A_{11}=A_{NN}=5$, $A_{ii}=4$, $2\leq i\leq N-1$, and $A_{ij}=1$ if $\abs{i-j}=1$, and $r'=(r_{1}',\dots,r_{N}')^{T}$
is given by
\begin{align*}
r_{1}' & = 5u'(x_{1+1/2}) + u'(x_{2+1/2}) - 6b_{1}/h^{2},\\
r_{i}' & = u'(x_{i-1/2}) + 4u'(x_{i+1/2}) + u'(x_{i+3/2}) - 6(b_{i}-b_{i-1})/h^{2},\quad 2\leq i\leq N-1,\\
r_{N}' & = u'(x_{N-1/2}) + 5u'(x_{N+1/2}) + 6b_{N-1}/h^{2}.
\end{align*}
We will show that $r_{i}'=O(h^{2})$, $1\leq i\leq N$. By Taylor's theorem and our assumptions on $u$ we first have
\[
b_{1}=(u,\chi_{1})=\tfrac{1}{4}\int_{I_{1}}xu dx + \tfrac{1}{h}\int_{I_{2}}(2h-x)u dx = h^{2}u'(0) + O(h^{4}).
\]
Therefore,
\[
r_{1}'=5u'(0) + u'(0) - 6u'(0) + O(h^{2}) = O(h^{2}).
\]
For $r_{i}'$ , $2\leq i\leq N-1$, we have, since $\phi_{i}=\chi_{i-1}$,
\[
r_{i}' =u'(x_{i-1/2}) + 4u'(x_{i+1/2}) + u'(x_{i+3/2}) -
\tfrac{6}{h^{2}}\bigl((u,\phi_{i+1}) - (u,\phi_{i})\bigr).
\]
It then follows from the relations $(5.13)$-$(5.17)$ et seq. in the proof of Lemma 5.5 of \cite{adarxiv} that
$r_{i}' = O(h^{2})$, $2\leq i\leq N-1$. Finally, since
\[
r_{N}' = -\bigl[v'(x_{2+1/2}) + 5v'(x_{1+1/2}) - 6(v,\chi_{1})/h^{2}\bigr],
\]
where we have denoted $v(x):=u(1-x)$, we see that $r_{N}'$ is given by $-r_{1}'$ with $u$ replaced by $v$. It follows
that $r_{N}' = O(h^{2})$ as well. Obviously $r_{i}'=O(h^{2})$, $1\leq i\leq N$, implies that $\ve_{i}'=O(h^{2})$
in view of the properties of the matrix $A$.
\end{proof}
\begin{lemma} Suppose that $v\in C^{2}$, $u\in C_{0}^{4}$, $u''(0)=u''(1)=0$, and $\sigma=u-P_{0}u$. Then there
exists a constant $C$ independent of $h$ such that
\begin{equation}
\max_{1\leq i\leq N}\biggl|\int_{I_{i}}v\sigma dx\biggr| \leq Ch^{5}.
\label{eq38}
\end{equation}
\end{lemma}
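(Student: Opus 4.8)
\emph{Proof proposal.} The plan is to fix a mesh interval $I_i=[x_i,x_{i+1}]$, Taylor-expand $v$ about the midpoint $x_{i+1/2}$, and reduce the bound to the two superconvergence facts already in hand: $\bigl|\int_{I_i}\sigma\,dx\bigr|\le Ch^5$ (Lemma 3.1) and $\bigl|\sigma'(x_{i+1/2})\bigr|\le Ch^2$ (Lemma 3.2). Writing $v(x)=v(x_{i+1/2})+v'(x_{i+1/2})(x-x_{i+1/2})+\tfrac12 v''(\zeta_x)(x-x_{i+1/2})^2$ for a suitable $\zeta_x\in I_i$, one splits
\[
\int_{I_i}v\sigma\,dx=v(x_{i+1/2})\int_{I_i}\sigma\,dx+v'(x_{i+1/2})\int_{I_i}(x-x_{i+1/2})\sigma\,dx+\tfrac12\int_{I_i}v''(\zeta_x)(x-x_{i+1/2})^2\sigma\,dx,
\]
and estimates the three terms in turn.

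The first term is immediately $O(h^5)$ by Lemma 3.1, with constant $\le\|v\|_{\infty}\cdot C$. For the third term I would use that $u\in C_0^4\subset W^{2,\infty}\cap H_0^1$, so that the $P_0$-analogue of \eqref{eq22b} with $r=2$ gives $\|\sigma\|_{\infty}\le Ch^2\|u\|_{2,\infty}$; since $|x-x_{i+1/2}|\le h/2$ on $I_i$ and $|I_i|=h$, this term is bounded by $\tfrac12\|v''\|_{\infty}\cdot\tfrac{h^2}{4}\cdot Ch^2\cdot h=O(h^5)$. (Only a local $L^1$ bound $\|\sigma\|_{L^1(I_i)}\le h\|\sigma\|_\infty\le Ch^3$ is actually needed here, so $\|\sigma\|_{L^2}$-type bounds would be too weak.)

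The crux is the middle term, i.e. the first moment $\int_{I_i}(x-x_{i+1/2})\sigma\,dx$. Since $P_0u$ is affine on $I_i$, write $P_0u(x)=a_i+b_i(x-x_{i+1/2})$ there with $b_i=(P_0u)'(x_{i+1/2})$; by parity $\int_{I_i}(x-x_{i+1/2})P_0u\,dx=b_i\int_{I_i}(x-x_{i+1/2})^2\,dx=b_i\,\tfrac{h^3}{12}$. On the other hand, Taylor-expanding $u$ about $x_{i+1/2}$ to fourth order and using that the odd central moments of $I_i$ vanish gives $\int_{I_i}(x-x_{i+1/2})u\,dx=u'(x_{i+1/2})\tfrac{h^3}{12}+O(h^5)$, where the $O(h^5)$ term is $\tfrac{1}{480}u'''(x_{i+1/2})h^5$ plus an $O(h^6)$ remainder bounded through $\|u^{(4)}\|_\infty$. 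Subtracting,
\[
\int_{I_i}(x-x_{i+1/2})\sigma\,dx=\bigl(u'(x_{i+1/2})-(P_0u)'(x_{i+1/2})\bigr)\frac{h^3}{12}+O(h^5)=\sigma'(x_{i+1/2})\,\frac{h^3}{12}+O(h^5),
\]
which is $O(h^5)$ by Lemma 3.2. Multiplying by $|v'(x_{i+1/2})|\le\|v\|_{1,\infty}$ shows the middle term is $O(h^5)$ as well, and summing the three contributions yields \eqref{eq38} with a constant depending only on $\|v\|_{2,\infty}$ and $\|u^{(4)}\|_{\infty}$ (the latter through Lemmas 3.1 and 3.2).

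The main obstacle is exactly this middle term. The first moment of a projection error against $(x-x_{i+1/2})$ is not $O(h^5)$ for an arbitrary affine-on-each-interval approximant; what rescues it is that the two explicit leading terms — the $u'(x_{i+1/2})h^3/12$ coming from Taylor expanding $u$ and the $(P_0u)'(x_{i+1/2})h^3/12$ coming from the affine piece — carry the same coefficient $h^3/12$ and hence combine into $\sigma'(x_{i+1/2})h^3/12$, which is then killed by the pointwise midpoint superaccuracy of $\sigma'$ from Lemma 3.2. Everything else is routine one-interval Taylor estimation.
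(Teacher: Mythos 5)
Your proposal is correct and follows essentially the same route as the paper: Taylor-expand $v$ about $x_{i+1/2}$, dispatch the zeroth-order term by Lemma 3.1 and the quadratic remainder by $\|\sigma\|_{\infty}=O(h^{2})$, and reduce the first moment to $\tfrac{h^{3}}{12}\sigma'(x_{i+1/2})+O(h^{5})$ using that $P_{0}u$ is affine on $I_{i}$, which Lemma 3.2 then controls. Your write-up merely makes explicit the moment computations that the paper states in one line.
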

\begin{proof}
Since $\|\sigma\|_{\infty}=O(h^{2})$ by \eqref{eq22b}, a Taylor expansion of $v$ gives
\[
\int_{I_{i}}v\sigma dx=v(x_{i+1/2})\int_{I_{i}}\sigma dx + v'(x_{i+1/2})\int_{I_{i}}(x-x_{i+1/2})\sigma dx + O(h^{5}).
\]
For the second integral in the right-hand side of this relation a Taylor expansion of $\sigma$ and the fact that
$P_{0}u\bigl|_{I_{i}}\in \mathbb{P}_{1}$ yield
\[
\int_{I_{i}}(x-x_{i+1/2})\sigma dx = \tfrac{h^{3}}{12}\sigma'(x_{i+1/2}) + O(h^{5}).
\]
The estimate \eqref{eq38} now follows from \eqref{eq31} and \eqref{eq37}.
\end{proof}
\begin{lemma} Let $\eta\in C^{4}$ with $\eta'(0)=\eta'(1)=0$ and $\eta'''(0)=\eta'''(1)=0$. If $\rho=\eta-P\eta$,
then there exists a constant $C=C(\|\eta^{(4)}\|_{L^{\infty}})$ such that
\begin{equation}
\max_{1\leq i\leq N}\biggl|\int_{I_{i}}\rho dx\biggr| \leq Ch^{5}.
\label{eq39}
\end{equation}
\end{lemma}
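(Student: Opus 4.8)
The plan is to mimic the proof of Lemma 3.1, adapting it to the unconstrained space $S_h$ (with projection $P$) and to the present Neumann-type endpoint conditions on $\eta$. First I would write $P\eta=\sum_{i=1}^{N+1}c_i\phi_i$, so that $c=(c_1,\dots,c_{N+1})^T$ solves the Gram system $Gc=b$ with $G_{ij}=(\phi_j,\phi_i)$, $b_i=(\eta,\phi_i)$; because $\phi_1$ and $\phi_{N+1}$ are supported on a single subinterval, this system reads
\begin{align*}
2c_1+c_2&=6b_1/h,\\
c_{i-1}+4c_i+c_{i+1}&=6b_i/h,\qquad 2\le i\le N,\\
c_N+2c_{N+1}&=6b_{N+1}/h.
\end{align*}
Since on $I_i$ the restriction $P\eta|_{I_i}$ equals the linear function $c_i\phi_i+c_{i+1}\phi_{i+1}$, we have $\ve_i:=\int_{I_i}\rho\,dx=\int_{I_i}\eta\,dx-\tfrac h2(c_i+c_{i+1})$ for $1\le i\le N$.

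Next I would eliminate the $c$'s to obtain a tridiagonal system $\Gamma\ve=r$ for $\ve=(\ve_1,\dots,\ve_N)^T$. Adding consecutive interior equations (equation $j$ plus equation $j+1$) and expressing the result through the partial sums $\tfrac h2(c_k+c_{k+1})$ gives $\ve_{j-1}+4\ve_j+\ve_{j+1}=r_j$ for $2\le j\le N-1$, with
\[
r_j=\int_{I_{j-1}\cup I_j\cup I_{j+1}}\eta\,dx-3\int_{I_{j-1}}\tfrac{x-x_{j-1}}{h}\eta\,dx-3\int_{I_{j+1}}\tfrac{x_{j+2}-x}{h}\eta\,dx;
\]
this is exactly the quantity appearing in the proof of Lemma 3.1, so $r_j=O(h^5)$ follows, as there, from Simpson's rule and Taylor's theorem, using only $\eta\in C^4$. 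For the left endpoint the combination $2\times(\text{first equation})+(\text{second equation})$ yields $5\ve_1+\ve_2=r_1$ with $r_1=\tfrac1h\int_0^h(3x-h)\eta\,dx+\tfrac1h\int_h^{2h}(3x-5h)\eta\,dx$, and the right endpoint is treated by the reflection $v(x):=\eta(1-x)$, which satisfies the same hypotheses and for which, the mesh being symmetric under $x\mapsto 1-x$, $r_N=r_1[v]$. Thus $\Gamma$ is the $N\times N$ tridiagonal matrix with $\Gamma_{11}=\Gamma_{NN}=5$, $\Gamma_{ii}=4$ for $2\le i\le N-1$, and $\Gamma_{ij}=1$ for $|i-j|=1$ — the analog of the matrix in Lemma 3.1, with corner entries $5$ rather than $3$ on account of the endpoint basis functions.

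It then remains to bound $r_1$ (and hence $r_N$). Expanding $\eta$ about $0$ and using $\eta\in C^4$ together with $\eta'(0)=\eta'''(0)=0$, one checks that in $r_1$ the $\eta(0)$ and $\eta''(0)$ (and in fact also the $\eta'(0)$) contributions cancel between the two integrals, the quartic remainder contributes $O(h^5)$, and the vanishing of $\eta'''(0)$ removes the single surviving $O(h^4)$ term; hence $r_1=O(h^5)$, and symmetrically $r_N=O(h^5)$. Finally $\Gamma$ is strictly row-diagonally dominant, with $\min_i\bigl(|\Gamma_{ii}|-\sum_{j\ne i}|\Gamma_{ij}|\bigr)=2$ (equivalently $\tfrac14\Gamma=I+E$ with $\|E\|_\infty=\tfrac12$), so $\|\Gamma^{-1}\|_\infty\le\tfrac12$ and therefore $\max_{1\le i\le N}|\ve_i|\le\tfrac12\max_{1\le i\le N}|r_i|\le Ch^5$, which is \eqref{eq39}.

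The interior estimate is routine, being identical to the one in Lemma 3.1. The part that requires care is the boundary analysis: choosing the right (asymmetric) combination of the first two Gram equations so that its left-hand side lies in the span of the partial sums $\tfrac h2(c_k+c_{k+1})$ — which is what forces the corner entry $5$ — and then keeping track of exactly which Taylor coefficients of $\eta$ survive in $r_1$, so as to identify the endpoint conditions that are actually used.
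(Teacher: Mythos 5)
Your proof is correct and follows essentially the same route as the paper's: the reduction to the tridiagonal system $\Gamma\ve=r$ with corner entries $5$ (obtained from the combinations $2\times$(first Gram equation)$+$(second) and its mirror image), the interior bound $r_j=O(h^{5})$ from $\eta\in C^{4}$ alone, the Taylor cancellations at the left endpoint (your formula for $r_1$ coincides with the paper's $\tfrac{1}{h}J_{1}+\tfrac{1}{h}J_{2}$, and the surviving $\tfrac{h^{4}}{30}\eta'''(0)$ term is indeed the one killed by $\eta'''(0)=0$), the reflection $v(x)=\eta(1-x)$ for $r_N$, and the diagonal-dominance bound $\|\Gamma^{-1}\|_{\infty}\leq\tfrac{1}{2}$. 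Your parenthetical remark that the $\eta'(0)$ contributions cancel between the two integrals even without assuming $\eta'(0)=0$ is a small sharpening the paper does not make explicit, but otherwise the two arguments are identical.
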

\begin{proof}
Let $\ve=(\ve_{1},\dots,\ve_{N})^{T}$, where $\ve_{i}:=\int_{I_{i}}\rho dx$, and $r=(r_{1},\dots,r_{N})^{T}$, where
\begin{align*}
r_{1} & = 5\int_{I_{1}}\eta dx + \int_{I_{2}}\eta dx - 3(2b_{1}+b_{2}),\\
r_{i} & = \int_{I_{i-1}}\eta dx + 4\int_{I_{i}}\eta dx + \int_{I_{i+1}}\eta dx - 3(b_{i} + b_{i+1}), \quad 2\leq i\leq N-1,\\
r_{N} & = \int_{I_{N-1}}\eta dx + 5\int_{I_{N}}\eta dx - 3(b_{N} + 2b_{N+1}),
\end{align*}
and $b_{i}=(\eta,\phi_{i})$, $1\leq i\leq N+1$. Then  $\Gamma\ve=r$, where $\Gamma$ is the $N\times N$ tridiagonal matrix
with $\Gamma_{11}=\Gamma_{NN}=5$, $\Gamma_{ii}=4$, $2\leq i \leq N-1$, and $\Gamma_{ij}=1$ if $\abs{i-j}=1$.
In Lemma 5.7 of \cite{adarxiv} it was proved that $r_{i}=O(h^{5})$, $2\leq i\leq N-1$, under the sole assumption that
$\eta\in C^{4}$. Here we will show, as a result of the boundary conditions imposed on $\eta$ in our hypotheses, that
we also have $r_{1}=O(h^{5})$ and $r_{N}=O(h^{5})$. Since
\[
2b_{1} + b_{2} = \tfrac{1}{h}\int_{I_{1}}(2h-x)\eta dx + \tfrac{1}{h}\int_{I_{2}}(2h-x)\eta dx,
\]
if follows that
\[
r_{1} = \tfrac{1}{h}J_{1} + \tfrac{1}{h}J_{2},
\]
where
\[
J_{1}=\int_{I_{1}}(3x - h)\eta dx, \quad J_{2}=\int_{I_{2}}(3x-5h)\eta dx.
\]
Taylor's expansion of $\eta$ and our hypotheses on its boundary conditions yield
\begin{align*}
J_{1} & = \tfrac{h^{2}}{2}\eta(0) + \tfrac{5h^{4}}{24}\eta''(0) + O(h^{6}),\\
J_{2} & = -\tfrac{h^{2}}{2}\eta(0) - \tfrac{5h^{4}}{24}\eta''(0) + O(h^{6}),
\end{align*}
so that $r_{1}=O(h^{5})$. For $r_{N}$ we note that it is given by the expression for $r_{1}$ with $\eta$ replaced
by $w$, where $w(x):=\eta(1-x)$. Indeed, since $\phi_{N}(x)=\phi_{2}(1-x)$ and $\phi_{N+1}(x)=\phi_{1}(1-x)$,
it follows that $b_{N}=(\eta,\phi_{N-1})=(w,\phi_{2})$ and $b_{N+1}=(\eta,\phi_{N+1})=(w,\phi_{1})$. Therefore
\[
r_{N}=5\int_{I_{1}}w dx + \int_{I_{2}}w dx -3\bigl(2(w,\phi_{1}) + (w,\phi_{2})\bigr),
\]
so that $r_{N}=O(h^{5})$ in view of the estimate for $r_{1}$. The estimate \eqref{eq39} follows now from the
properties of the matrix $\Gamma$.
\end{proof}
\begin{lemma} Let $\eta\in C^{3}$ and $\rho=\eta - P\eta$. Then, there exists a constant $C=C(\|\eta'''\|_{\infty})$
such that
\begin{equation}
\max_{1\leq i\leq N}\bigl|\rho'(x_{i+1/2})\bigr| \leq Ch^{2}.
\label{eq310}
\end{equation}
\end{lemma}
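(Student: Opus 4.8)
The plan is to follow the pattern of Lemmas 3.1 and 3.2: build a tridiagonal linear system whose unknown is the vector $\varepsilon'=(\varepsilon_1',\dots,\varepsilon_N')^T$, $\varepsilon_i':=\rho'(x_{i+1/2})$, and show that its right-hand side is $O(h^2)$ entry by entry. Write $P\eta=\sum_{j=1}^{N+1}c_j\phi_j$, where $c=(c_1,\dots,c_{N+1})^T$ solves $Gc=b$, $G_{ij}=(\phi_j,\phi_i)$, $b_i=(\eta,\phi_i)$. On the uniform mesh these equations read $2c_1+c_2=6b_1/h$, $c_{i-1}+4c_i+c_{i+1}=6b_i/h$ for $2\le i\le N$, and $c_N+2c_{N+1}=6b_{N+1}/h$. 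Since $P\eta$ is affine on each $I_i$, we have $(P\eta)'|_{I_i}=(c_{i+1}-c_i)/h=:e_i/h$, and hence $\varepsilon_i'=\eta'(x_{i+1/2})-e_i/h$ for $1\le i\le N$.

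Next I derive a closed tridiagonal system for $e=(e_1,\dots,e_N)^T$. Subtracting consecutive interior mass equations gives $e_{i-1}+4e_i+e_{i+1}=6(b_{i+1}-b_i)/h$ for $2\le i\le N-1$. For the first row, subtract the first mass equation from the second and then eliminate the leftover $2c_1+c_2=6b_1/h$, obtaining $3e_1+e_2=6(b_2-2b_1)/h$; symmetrically, $e_{N-1}+3e_N=6(2b_{N+1}-b_N)/h$. Thus $Me=v$, where $M$ is the $N\times N$ tridiagonal matrix with diagonal $(3,4,\dots,4,3)$ and off-diagonal entries $1$, i.e.\ the matrix $\Gamma$ of Lemma 3.1, and $v_1=6(b_2-2b_1)/h$, $v_i=6(b_{i+1}-b_i)/h$ ($2\le i\le N-1$), $v_N=6(2b_{N+1}-b_N)/h$. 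Substituting $e_i=h\bigl(\eta'(x_{i+1/2})-\varepsilon_i'\bigr)$ gives $M\varepsilon'=r'$ with $r_i'=\sum_jM_{ij}\eta'(x_{j+1/2})-h^{-1}v_i$; explicitly
\[ r_i'=\eta'(x_{i-1/2})+4\eta'(x_{i+1/2})+\eta'(x_{i+3/2})-\tfrac{6}{h^2}\bigl((\eta,\phi_{i+1})-(\eta,\phi_i)\bigr),\qquad 2\le i\le N-1, \]
\[ r_1'=3\eta'(x_{3/2})+\eta'(x_{5/2})-\tfrac{6}{h^2}(b_2-2b_1), \]
and $r_N'$ the mirror image of $r_1'$.

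It remains to show $r_i'=O(h^2)$. For $2\le i\le N-1$ this is exactly the quantity estimated in the proof of Lemma 3.2, so $r_i'=O(h^2)$ by the relations $(5.13)$--$(5.17)$ \emph{et seq.}\ in the proof of Lemma 5.5 of \cite{adarxiv}, which require only $\eta\in C^3$. For $r_1'$ I insert the Taylor expansions $\eta(x)=\eta(0)+\eta'(0)x+\tfrac{1}{2}\eta''(0)x^2+O(x^3)$ into the integrals defining $b_2-2b_1=\tfrac{1}{h}\int_{I_1}(3x-2h)\eta\,dx+\tfrac{1}{h}\int_{I_2}(2h-x)\eta\,dx$, together with $\eta'(s)=\eta'(0)+\eta''(0)s+O(s^2)$ at $s=h/2,3h/2$: the $\eta(0)$-contributions of the two integrals cancel one another, and the remaining $\eta'(0)$- and $h\,\eta''(0)$-terms of $\tfrac{6}{h^2}(b_2-2b_1)$ match exactly those of $3\eta'(x_{3/2})+\eta'(x_{5/2})$, so $r_1'=O(h^2)$ with constant depending only on $\|\eta'''\|_\infty$ — and no boundary conditions on $\eta$ are needed. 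Finally, using $\phi_N(x)=\phi_2(1-x)$, $\phi_{N+1}(x)=\phi_1(1-x)$, $x_{N+1/2}=1-x_{3/2}$, $x_{N-1/2}=1-x_{5/2}$, one checks that $r_N'$ equals $-r_1'$ with $\eta$ replaced by $w(x):=\eta(1-x)\in C^3$, hence $r_N'=O(h^2)$ as well. Since $\tfrac{1}{4}M=I-E$ with $\|E\|_\infty=\tfrac{1}{2}$, the argument at the end of the proof of Lemma 3.1 yields $\|M^{-1}\|_\infty\le\tfrac{1}{2}$, so $\max_i|\varepsilon_i'|\le\tfrac{1}{2}\max_i|r_i'|\le Ch^2$, which is \eqref{eq310}.

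I expect the only delicate point to be getting the two boundary rows of the $e$-system right: the first and last mass equations are not of the interior three-term form, so plain differencing leaves a residual that must be removed with the endpoint equations themselves — this is what makes the corner diagonal entries of $M$ equal to $3$ (rather than the $5$ arising in the $S_{h,0}$ case of Lemma 3.2) and produces the $-2b_1$, $2b_{N+1}$ in $v$ — after which one must still verify that the $\eta(0)$, $\eta'(0)$, $\eta''(0)$ terms genuinely cancel in $r_1'$.
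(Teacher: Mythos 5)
Your proof is correct. The paper itself gives no argument for this lemma --- it simply cites Lemma 5.5 of \cite{adarxiv} --- so your write-up supplies the missing in-house proof, and it does so by exactly the template the paper uses for Lemmas 3.1, 3.2 and 3.4: reduce to a tridiagonal system for the midpoint derivative errors and bound the right-hand side row by row. The two points where you had to do real work both check out: the boundary rows of the $e$-system are correctly obtained by eliminating with the endpoint mass equations (giving corner entries $3$ and the combinations $b_2-2b_1$, $2b_{N+1}-b_N$), and the Taylor computation for $r_1'$ is right --- one finds $\tfrac{6}{h^2}(b_2-2b_1)=4\eta'(0)+3h\eta''(0)+O(h^2)$, which matches $3\eta'(h/2)+\eta'(3h/2)$ to the same order with the $\eta(0)$-terms cancelling identically, so indeed no boundary conditions on $\eta$ are needed, consistent with the hypotheses of the lemma. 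The reflection argument for $r_N'$ and the bound $\max_i|\ve_i'|\leq\tfrac{1}{2}\max_i|r_i'|$ via $\tfrac{1}{4}M=I-E$, $\|E\|_{\infty}=\tfrac{1}{2}$, are also correct.
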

\begin{proof} See Lemma 5.5 of \cite{adarxiv}.
\end{proof}
\begin{lemma} Let $w\in C^{2}$, $\eta\in C^{4}$ with $\eta'(0)=\eta'(1)=0$, $\eta'''(0)=\eta'''(1)=0$. If $\rho=\eta-P\eta$,
there exists a constant $C$ independent of $h$, such that
\begin{equation}
\max_{1\leq i\leq N}\biggl|\int_{I_{i}}w\rho dx\biggr| \leq Ch^{5}.
\label{eq311}
\end{equation}
\end{lemma}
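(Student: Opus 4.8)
The plan is to follow the proof of Lemma 3.3 essentially verbatim, with $S_{h,0},P_{0},\sigma,u$ replaced throughout by $S_{h},P,\rho,\eta$, and with Lemmas 3.4 and 3.5 invoked in place of Lemmas 3.1 and 3.2. The hypotheses imposed on $\eta$ are precisely those needed to apply these two lemmas: the conditions $\eta'(0)=\eta'(1)=0$, $\eta'''(0)=\eta'''(1)=0$ are exactly what Lemma 3.4 requires, while Lemma 3.5 asks only for $\eta\in C^{3}$.

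First, since $\eta\in C^{4}$, estimate \eqref{eq22b} with $r=2$ gives $\|\rho\|_{\infty}\le Ch^{2}$. Fix a mesh interval $I_{i}$ and Taylor-expand $w$ about the midpoint $x_{i+1/2}$; multiplying by $\rho$, integrating over $I_{i}$, and using $|x-x_{i+1/2}|\le h/2$, $|I_{i}|=h$, $\|\rho\|_{\infty}=O(h^{2})$ and $\|w''\|_{\infty}\le C$ to bound the quadratic remainder by $O(h^{5})$, one obtains
\[
\int_{I_{i}}w\rho\,dx = w(x_{i+1/2})\int_{I_{i}}\rho\,dx + w'(x_{i+1/2})\int_{I_{i}}(x-x_{i+1/2})\rho\,dx + O(h^{5}).
\]
For the first-moment integral I would Taylor-expand $\rho$ to third order about $x_{i+1/2}$: since $P\eta\big|_{I_{i}}\in\mathbb{P}_{1}$, the derivatives $\rho''=\eta''$ and $\rho'''=\eta'''$ on $I_{i}$ are bounded, and the odd powers of $x-x_{i+1/2}$ integrate to zero over the interval $I_{i}$, which is symmetric about its midpoint; this yields
\[
\int_{I_{i}}(x-x_{i+1/2})\rho\,dx = \tfrac{h^{3}}{12}\,\rho'(x_{i+1/2}) + O(h^{5}),
\]
exactly as in the proof of Lemma 3.3.

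Finally, Lemma 3.4 gives $\bigl|\int_{I_{i}}\rho\,dx\bigr|\le Ch^{5}$ and Lemma 3.5 gives $\bigl|\rho'(x_{i+1/2})\bigr|\le Ch^{2}$; inserting these, together with $\|w\|_{\infty},\|w'\|_{\infty}\le C$, into the two displays above gives $\bigl|\int_{I_{i}}w\rho\,dx\bigr|\le Ch^{5}$ uniformly in $i$, which is \eqref{eq311}. I do not expect a genuine obstacle here: the argument is a mechanical combination of Lemmas 3.4 and 3.5 with the two elementary Taylor-expansion identities above, the only point deserving a moment's care being the remark that $\rho''$ and $\rho'''$ coincide with $\eta''$ and $\eta'''$ on each $I_{i}$ — which is what keeps the Taylor remainders for $\rho$ of the right order — and the routine bookkeeping of powers of $h$.
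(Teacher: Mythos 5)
Your proposal is correct and is exactly the paper's argument: the paper's proof of this lemma consists of the single remark that one repeats the proof of Lemma 3.3 with \eqref{eq39} and \eqref{eq310} (i.e.\ Lemmas 3.4 and 3.5) in place of \eqref{eq31} and \eqref{eq37}, which is precisely the substitution you carry out. Your additional observation that $\rho''$ and $\rho'''$ agree with $\eta''$ and $\eta'''$ on the interior of each $I_{i}$ (so the Taylor remainders have the right order) is the only point of care, and you have handled it correctly.
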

\begin{proof} The proof is similar to that of Lemma 3.3 if \eqref{eq39} and \eqref{eq310} are taken into account.
\end{proof}
\begin{lemma} Consider the mass matrices $G_{ij}=(\phi_{j},\phi_{i})$, $1\leq i,j\leq N+1$, and
$G_{ij}^{0}=(\chi_{j},\chi_{i})$, $1\leq i,j\leq N-1$. \\
(i)\,\,There exist constants $c_{i}$, $1\leq i\leq 4$, independent of $h$, such that
\begin{align*}
c_{1}h\abs{\beta}^{2} & \leq <G\beta,\beta> \leq c_{2}h\abs{\beta}^{2}\quad \forall \beta\in \mathbb{R}^{N+1},\\
c_{3}h\abs{\beta}^{2} & \leq <G^{0}\beta,\beta> \leq c_{4}h\abs{\beta}^{2}\quad \forall \beta\in \mathbb{R}^{N-1}.
\end{align*}
(ii)\,\,Let $b\in\mathbb{R}^{N+1}$, $G\beta=b$, and $\zeta=\sum_{j=1}^{N+1}\beta_{j}\phi_{j}$. Then
\[
\|\zeta\|\leq (c_{1}h)^{-1/2}\abs{b}.
\]\indent
\,\,\,If $b\in\mathbb{R}^{N-1}$, $G^{0}\beta=b$, and $\zeta=\sum_{j=1}^{N-1}\beta_{j}\chi_{j}$, then
\[
\|\zeta\| \leq (c_{3}h)^{-1/2}\abs{b}.
\]
\end{lemma}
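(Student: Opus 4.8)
The plan is to reduce everything to the elementary identity that the quadratic form of a mass matrix equals the squared $L^{2}$ norm of the associated finite element function. Concretely, for $\beta\in\mathbb{R}^{N+1}$ and $\zeta=\sum_{j=1}^{N+1}\beta_{j}\phi_{j}$ one has, by bilinearity of the $L^{2}$ inner product,
\[
\langle G\beta,\beta\rangle=\sum_{i,j=1}^{N+1}(\phi_{j},\phi_{i})\beta_{i}\beta_{j}=\Bigl(\textstyle\sum_{j}\beta_{j}\phi_{j},\sum_{i}\beta_{i}\phi_{i}\Bigr)=\|\zeta\|^{2},
\]
and likewise $\langle G^{0}\beta,\beta\rangle=\|\zeta\|^{2}$ for $\zeta=\sum_{j=1}^{N-1}\beta_{j}\chi_{j}$, $\beta\in\mathbb{R}^{N-1}$. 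Thus part (i) is exactly the assertion that, with the factor $h$ removed, the $L^{2}$ norm on $S_{h}$ (resp.\ $S_{h,0}$) is uniformly equivalent to the Euclidean norm of the coefficient vector in the nodal basis.

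To establish that equivalence I would argue interval by interval. On $I_{i}$ the function $\zeta$ is affine, hence determined by its endpoint values $a:=\zeta(x_{i})$, $b:=\zeta(x_{i+1})$, and a one-line computation (change variables $x=x_{i}+sh$) gives $\int_{I_{i}}\zeta^{2}dx=\tfrac{h}{3}(a^{2}+ab+b^{2})$. The elementary inequalities $\tfrac12(a^{2}+b^{2})\le a^{2}+ab+b^{2}\le\tfrac32(a^{2}+b^{2})$ — the left one from $a^{2}+ab+b^{2}=\tfrac12(a^{2}+b^{2})+\tfrac12(a+b)^{2}$, the right from $2ab\le a^{2}+b^{2}$ — then yield $\tfrac h6(a^{2}+b^{2})\le\int_{I_{i}}\zeta^{2}dx\le\tfrac h2(a^{2}+b^{2})$. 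Summing over $i=1,\dots,N$ and counting how often each nodal coefficient occurs among the endpoint values (twice for each interior node, once for the two end nodes $\phi_{1},\phi_{N+1}$ in the case of $S_{h}$; exactly twice for every $\chi_{j}$ in the case of $S_{h,0}$, since the values at $x_{1}$ and $x_{N+1}$ vanish) gives
\[
\tfrac16 h\,\abs{\beta}^{2}\le\|\zeta\|^{2}\le h\,\abs{\beta}^{2}\quad(\beta\in\mathbb{R}^{N+1}),\qquad \tfrac13 h\,\abs{\beta}^{2}\le\|\zeta\|^{2}\le h\,\abs{\beta}^{2}\quad(\beta\in\mathbb{R}^{N-1}),
\]
which proves (i) with, for instance, $c_{1}=\tfrac16$, $c_{2}=1$, $c_{3}=\tfrac13$, $c_{4}=1$.

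For part (ii), if $G\beta=b$ and $\zeta=\sum_{j}\beta_{j}\phi_{j}$, I would combine the identity of the first step with the Cauchy--Schwarz inequality in $\mathbb{R}^{N+1}$ and the lower bound of (i):
\[
\|\zeta\|^{2}=\langle G\beta,\beta\rangle=\langle b,\beta\rangle\le\abs{b}\,\abs{\beta}\le\abs{b}\,(c_{1}h)^{-1/2}\|\zeta\|,
\]
whence $\|\zeta\|\le(c_{1}h)^{-1/2}\abs{b}$; the statement for $G^{0}$ follows the same way with $c_{3}$ replacing $c_{1}$. No step here poses a genuine obstacle; the only point requiring a little care is the bookkeeping of multiplicities in the summation over intervals, which is precisely what pins down the constants. (Alternatively, part (i) could be obtained from a standard scaling-to-the-reference-element argument, but the elementwise estimate above is self-contained and furnishes explicit constants.)
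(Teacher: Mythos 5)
Your proof is correct. It takes a mildly different route from the paper's: the paper disposes of part (i) in one line by citing Dupont and invoking Gerschgorin's theorem --- since $G$ and $G^{0}$ are symmetric tridiagonal with diagonal entries $\tfrac{2h}{3}$ (or $\tfrac{h}{3}$ at the two end rows of $G$) and off-diagonal entries $\tfrac{h}{6}$, their eigenvalues, and hence the Rayleigh quotients $\langle G\beta,\beta\rangle/\abs{\beta}^{2}$, are confined to $[\tfrac{h}{6},h]$ --- and then states that (ii) is a consequence of (i). You instead prove the norm equivalence by hand, using the identity $\langle G\beta,\beta\rangle=\|\zeta\|^{2}$ together with the exact formula $\int_{I_{i}}\zeta^{2}=\tfrac{h}{3}(a^{2}+ab+b^{2})$ on each element and a multiplicity count at the nodes. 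Your elementwise argument is self-contained, avoids any appeal to spectral localization, and yields the same explicit constants ($c_{1}=\tfrac16$, $c_{2}=1$, with the sharper $c_{3}=\tfrac13$ for $S_{h,0}$ because the boundary values vanish); the Gerschgorin argument is shorter but leans on the structure of the matrix rather than of the functions. Your derivation of (ii) from (i) via $\|\zeta\|^{2}=\langle b,\beta\rangle\leq\abs{b}\,\abs{\beta}\leq\abs{b}(c_{1}h)^{-1/2}\|\zeta\|$ is exactly the standard argument the paper leaves implicit. No gaps.
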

\begin{proof}
The proofs of (i) and (ii) are given in Dupont, \cite{d2}, when the elements of the finite element subspace satisfy
periodic boundary conditions. In our case, the proof of (i) follows again from Gerschgorin's Lemma, and (ii) is a
consequence of (i).
\end{proof}
\begin{lemma} Let $w\in C_{0}^{2}$, $v\in C^{2}$, $\eta\in C^{4}$ with $\eta'(0)=\eta'(1)=0$,
$\eta'''(0)=\eta'''(1)=0$, $u\in C_{0}^{4}$ with $u''(0)=u''(1)=0$, $\rho=\eta-P\eta$, $\sigma=u-P_{0}u$.
Then, for constants $C$ independent of $h$: \\
(i)\,\,If $\zeta_{1}\in S_{h,0}$ is defined by $(\zeta_{1},\chi)=(\rho',\chi)$, $\forall \chi\in S_{h,0}$, then
$\|\zeta_{1}\|\leq Ch^{3}$.\\
(ii)\,\, If $\zeta_{2}\in S_{h}$ is defined by $(\zeta_{2},\phi)=(\sigma',\phi)$, $\forall \phi\in S_{h}$, then
$\|\zeta_{2}\|\leq Ch^{3}$.\\
(iii)\,\,If $\zeta_{3}\in S_{h}$ is defined by $(\zeta_{3},\phi)=((w\rho)',\phi)$, $\forall \phi \in S_{h}$, then
$\|\zeta_{3}\|\leq Ch^{3}$.\\
(iv)\,\,If $\zeta_{4}\in S_{h}$ is defined by $(\zeta_{4},\phi)=((v\sigma)',\phi)$, $\forall \phi \in S_{h}$, then
$\|\zeta_{4}\|\leq Ch^{3}$.\\
(v)\,\,If $\zeta_{5}\in S_{h,0}$ is defined by $(\zeta_{5},\chi)=((v\sigma)',\chi)$, $\forall \chi \in S_{h,0}$, then
$\|\zeta_{5}\|\leq Ch^{3}$.\\
(vi)\,\,If $\zeta_{6}\in S_{h,0}$ is defined by $(\zeta_{6},\chi)=((v\rho)',\chi)$, $\forall \chi \in S_{h,0}$, then
$\|\zeta_{6}\|\leq Ch^{3}$.
\end{lemma}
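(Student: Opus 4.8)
The plan is to dispatch all six parts by one and the same device, reducing each to the cell-integral superconvergence estimates already proved in Lemmas 3.1, 3.3, 3.4, and 3.6. In every part $\zeta_j$ is, by construction, the $L^2$ projection onto $S_h$ (parts (ii)--(iv)) or onto $S_{h,0}$ (parts (i), (v), (vi)) of some derivative $(\,\cdot\,)'$. First I would pass to coordinates: writing $\zeta_j=\sum_k\beta_k\phi_k$ (resp. $\sum_k\beta_k\chi_k$), the coefficient vector solves $G\beta=b$ (resp. $G^0\beta=b$), where the load vector $b$ has entries $b_i=((\,\cdot\,)',\phi_i)$ (resp. $((\,\cdot\,)',\chi_i)$). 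By Lemma 3.7(ii) it then suffices to prove $\abs{b}\le Ch^{7/2}$, and since $b$ has $N\pm1=O(h^{-1})$ components this will follow from the componentwise bound $\max_i\abs{b_i}\le Ch^{4}$.

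To obtain that componentwise bound I would integrate by parts in each $b_i$, using that $\phi_i'=\tfrac1h\mathbf 1_{I_{i-1}}-\tfrac1h\mathbf 1_{I_i}$ and $\chi_i'=\tfrac1h\mathbf 1_{I_i}-\tfrac1h\mathbf 1_{I_{i+1}}$ are piecewise constant. For the test space $S_{h,0}$ (parts (i), (v), (vi)) the boundary terms drop out since $\chi_i(0)=\chi_i(1)=0$, leaving $b_i=\tfrac1h\bigl(\int_{I_{i+1}}(\,\cdot\,)\,dx-\int_{I_i}(\,\cdot\,)\,dx\bigr)$, whence $\abs{b_i}\le Ch^{4}$ by Lemma 3.4 for $\rho$ (part (i)), Lemma 3.3 for $v\sigma$ (part (v)), and Lemma 3.6 for $v\rho$ (part (vi)). For the test space $S_h$ (parts (ii)--(iv)) the same computation controls the interior entries $b_i$, $2\le i\le N$, via Lemmas 3.1, 3.3, 3.6 respectively; the two endpoint entries $b_1$ and $b_{N+1}$ pick up an extra boundary term, equal to $-\sigma(0)$ in (ii), $-w(0)\rho(0)$ in (iii), and $-v(0)\sigma(0)$ in (iv) (and the mirror-image terms at $x=1$), all of which vanish because $w\in C_0^2$ and because $\sigma(0)=u(0)-(P_0u)(0)=0$ for $u\in C_0^4$. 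Hence $\max_i\abs{b_i}\le Ch^4$ in every case.

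To finish, in each part $\abs{b}^2=\sum_i b_i^2\le(N+1)(Ch^4)^2\le Ch^{7}$, so $\abs{b}\le Ch^{7/2}$, and Lemma 3.7(ii) gives $\|\zeta_j\|\le(c\,h)^{-1/2}\abs{b}\le Ch^{3}$ with the appropriate constant $c$ from Lemma 3.7(i). I expect the only non-mechanical point — and hence the main obstacle — to be the bookkeeping that makes Lemmas 3.1--3.6 apply verbatim: one must check that the boundary hypotheses assumed here, namely $\eta'(0)=\eta'(1)=\eta'''(0)=\eta'''(1)=0$ for $\rho$, $u(0)=u(1)=u''(0)=u''(1)=0$ for $\sigma$, and $w\in C_0^2$, $v\in C^2$ for the products, are precisely the ones that upgrade the cell integrals from $O(h^4)$ to $O(h^5)$ and that annihilate the endpoint boundary terms. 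The argument also uses that the mesh is uniform, so that $b$ has $O(h^{-1})$ entries and the constants in Lemma 3.7(i) are $h$-independent; beyond that nothing more than the cited lemmas is needed.
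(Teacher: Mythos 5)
Your proof is correct and is essentially identical to the paper's: integrate by parts in each load-vector entry, bound each $b_i$ by $Ch^4$ via the cell-integral estimates \eqref{eq31}, \eqref{eq38}, \eqref{eq39}, \eqref{eq311}, deduce $\abs{b}\leq Ch^{7/2}$, and conclude with Lemma 3.7(ii). You in fact spell out a detail the paper leaves implicit, namely that the endpoint boundary terms in parts (ii)--(iv) vanish because $\sigma(0)=\sigma(1)=0$ and $w(0)=w(1)=0$.
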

\begin{proof}
(i)\,\,If $b_{i}=(\rho',\chi_{i})$, $1\leq i\leq N-1$, then $b_{i}=-(\rho,\chi_{i}')$, i.e.
$b_{i}=-\frac{1}{h}\int_{I_{i}}\rho dx + \frac{1}{h}\int_{I_{i+1}}\rho dx$, $1\leq i\leq N-1$. By \eqref{eq39},
$\abs{b_{i}}\leq Ch^{4}$. Hence $\abs{b}\leq Ch^{3.5}$ and (i) follows by Lemma 3.7(ii).\\
The proof of (ii) is similar and takes into account \eqref{eq31}.\\
(iii)\,\,If now $b_{i}=((w\rho)',\phi_{i})$, $1\leq i\leq N+1$, then $b_{i}=-(w\rho,\phi_{i}')$, i.e.
$b_{1}=\frac{1}{h}\int_{I_{1}}w\rho dx$, $b_{i}=-\frac{1}{h}\int_{I_{i-1}}w\rho dx + \frac{1}{h}\int_{I_{i}}w\rho dx$,
$2\leq i\leq N$, $b_{N+1}=-\frac{1}{h}\int_{I_{N}}w\rho dx$. By \eqref{eq311}
$\max_{1\leq i\leq N}\abs{b_{i}}\leq Ch^{4}$, so that $\abs{b}\leq Ch^{3.5}$ and (iii) follows from Lemma 3.7(ii).\\
The proofs of (iv) and (v) are similar to that of (iii) if we take into account \eqref{eq38}. Finally, if
$b_{i}=((v\rho)',\chi_{i})$, $1\leq i\leq N-1$, then
$b_{i}=-(v\rho,\chi_{i}')=-\frac{1}{h}\int_{I_{i}}v\rho dx + \frac{1}{h}\int_{I_{i+1}}v\rho dx$, $1\leq i\leq N-1$.
By \eqref{eq311}, $\abs{b_{i}}\leq Ch^{4}$. Hence, $\abs{b}\leq Ch^{3.5}$ and (vi) follows from Lemma 3.7(ii).
\end{proof}
\section{Semidiscretization with continuous, piecewise linear functions on uniform meshes}
In this section we will prove optimal-order $L^{2}$-error estimates for the solutions of the semidiscrete problems
\eqref{eq25}-\eqref{eq26} and \eqref{eq27}-\eqref{eq28} that approximate the ibvp's \eqref{eqsw} and \eqref{eqssw},
respectively, in the spaces $S_{h}=S_{h}^{0,2}$, $S_{h,0}=S_{h,0}^{0,2}$ of piecewise linear continuous functions on
a uniform spatial mesh, using the notation and results of Section 3.\par
The proof of optimality of the order of convergence in the error estimates uses, in addition to the superaccuracy
properties of the $L^{2}$ projection, {\it{compatibility conditions}} at the boundary $\partial I=\{0,1\}$ that smooth
solutions of \eqref{eqsw} and \eqref{eqssw} satisfy. \par
We will assume that the ibvp \eqref{eqsw} has a unique solution $(\eta,u)$ such that $\eta\in C(0,T;C^{4})$,
$u\in C(0,T;C_{0}^{4})$ for some $0<T<\infty$. We will also assume that for some $\alpha>0$,
$\min_{0\leq x\leq 1}(1+\eta_{0}(x))\geq \frac{\alpha}{2}$, so that by the theory of \cite{pt}|,
$\min_{0\leq x\leq 1}(1+\eta(x,t))\geq \alpha>0$, for all $t\in[0,T]$. In addition to the hypothesis
$(\eta_{0},u_{0})\in C^{4}\times C_{0}^{4}$, we assume that $\eta_{0}'\in C_{0}^{3}$, $\eta_{0}'''\in C_{0}^{1}$,
$u_{0}''\in C_{0}^{2}$. Then, from the second p.d.e. of \eqref{eqsw} and the b.c. $u|_{\partial I}=0$, it follows that
$\eta_{x}|_{\partial I}=0$ for $t\in [0,T]$. Differentiating the first p.d.e. with respect to $x$ and using the positivity
of $1+\eta$ we also conclude that $u_{xx}|_{\partial I}=0$ for $t\in [0,T]$. Finally, differentiating the second p.d.e.
twice with respect to $x$ we see that for $0\leq t\leq T$, $\eta_{xxx}|_{\partial I}=0$ as well. We will make the
same hypotheses, leading to the same compatibility conditions for the solution $(\eta,u)$ of \eqref{eqssw}, under
the assumption that $\min_{\substack{0\leq x\leq 1\\0\leq t\leq T}} (1+\frac{1}{2}\eta(x,t))\geq \beta$ for some positive
constant $\beta >0$, which may also be similarly justified, cf. Section 6.2. \par
We begin with the error estimate for the \eqref{eqssw} which is again simpler due to the symmetry of this system.
\begin{theorem} Let $(\eta,u)$ be the solution of \eqref{eqssw} and suppose that $\eta\in C(0,T;C^{4})$,
$u\in C(0,T;C_{0}^{4})$, $\eta_{0}'\in C_{0}^{3}$, $\eta_{0}'''\in C_{0}^{1}$, $u_{0}''\in C_{0}^{2}$ and that
$\min_{\substack{0\leq x\leq 1\\0\leq t\leq T}}(1+\frac{1}{2}\eta(x,t))\geq \beta >0$ for some constant $\beta$.
Let $x_{i}=(i-1)h$, $1\leq i\leq N+1$, $Nh=1$, and $(\eta_{h},u_{h})$ be the solution of \eqref{eq27}-\eqref{eq28} for
$t\in [0,T]$ in the space of piecewise linear continuous functions $S_{h}\times S_{h,0}$. Then
\begin{equation}
\max_{0\leq t\leq T}(\|\eta(t)-\eta_{h}(t)\| + \|u(t)-u_{h}(t)\|) \leq Ch^{2},
\label{eq41}
\end{equation}
and
\begin{equation}
\max_{0\leq t\leq T}(\|\eta(t)-\eta_{h}(t)\|_{\infty} + \|u(t)-u_{h}(t)\|_{\infty}) \leq Ch^{2}.
\tag{$4.1^{'}$}
\label{eq41a}
\end{equation}
\end{theorem}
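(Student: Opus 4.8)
The plan is to repeat, almost verbatim, the argument of Proposition 2.1 --- comparing $(\eta_h,u_h)$ with the $L^{2}$ projection $(P\eta,P_{0}u)$ of the exact solution --- and to replace the $O(h^{r-1})$ estimates that there yield only first-order accuracy by the $O(h^{3})$ estimates of Lemma 3.8, which rest on the superaccuracy Lemmas 3.3 and 3.6 and on the compatibility conditions at $\partial I=\{0,1\}$ recorded before the statement of the theorem. First I would note that, by the conservation identity \eqref{eq210}, the semidiscrete solution $(\eta_h,u_h)$ exists on all of $[0,T]$, so no continuation argument is needed. Writing $\rho=\eta-P\eta$, $\theta=P\eta-\eta_h$, $\sigma=u-P_{0}u$, $\xi=P_{0}u-u_h$, one has $\|\rho\|+\|\sigma\|\leq Ch^{2}$ and $\|\rho_{x}\|+\|\sigma_{x}\|\leq Ch$ by \eqref{eq21a}, $\|\rho\|_{\infty}+\|\sigma\|_{\infty}\leq Ch^{2}$ by \eqref{eq22b}, uniform bounds on $\|\rho_{x}\|_{\infty}$ and $\|\sigma_{x}\|_{\infty}$ (e.g.\ via Lemmas 3.2 and 3.5), and $\theta(0)=\xi(0)=0$ by \eqref{eq28}. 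The error equations \eqref{eq211}--\eqref{eq212} and the algebraic expansions of $\eta u-\eta_h u_h$, $uu_{x}-u_h u_{hx}$, $\eta\eta_{x}-\eta_h\eta_{hx}$ are unchanged; taking $\phi=\theta$ in \eqref{eq211} and $\chi=\xi$ in \eqref{eq212} produces exactly the identities \eqref{eq213} and \eqref{eq215}, whose symmetrizing terms $\pm\bigl([(1+\tfrac{\eta}{2})\xi]_{x},\theta\bigr)$ cancel upon addition, as do the terms $\pm\tfrac14(\xi_{x}\theta,\theta)$. (The positivity $1+\tfrac{\eta}{2}\geq\beta>0$ is not used in the energy estimate itself here --- only in deriving the compatibility conditions $\eta_{x}|_{\partial I}=\eta_{xxx}|_{\partial I}=0$ and $u_{xx}|_{\partial I}=0$, which in turn are what make the Section 3 lemmas applicable.)

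The substance of the proof is to re-estimate the six terms that are only $O(h)$ in Proposition 2.1, namely the terms linear in $\rho$ or $\sigma$ tested against an element of $S_{h}$ or $S_{h,0}$: $(\sigma_{x},\theta)$, $((\eta\sigma)_{x},\theta)$, $((u\rho)_{x},\theta)$ from \eqref{eq213}, and $(\rho_{x},\xi)$, $((u\sigma)_{x},\xi)$, $((\eta\rho)_{x},\xi)$ from \eqref{eq215}. Since $\theta\in S_{h}$ and $\xi\in S_{h,0}$, each of these equals the $L^{2}$ inner product of $\theta$ (resp.\ $\xi$) with the matching auxiliary function of Lemma 3.8: $(\sigma_{x},\theta)=(\zeta_{2},\theta)$ by part (ii); $((\eta\sigma)_{x},\theta)=(\zeta_{4},\theta)$ by part (iv) with $v=\eta\in C^{2}$; $((u\rho)_{x},\theta)=(\zeta_{3},\theta)$ by part (iii) with $w=u\in C_{0}^{2}$ and $\eta$ satisfying the boundary conditions of Lemma 3.8; and symmetrically $(\rho_{x},\xi)=(\zeta_{1},\xi)$, $((u\sigma)_{x},\xi)=(\zeta_{5},\xi)$, $((\eta\rho)_{x},\xi)=(\zeta_{6},\xi)$ from parts (i), (v), (vi). Since $\|\zeta_{j}\|\leq Ch^{3}$, Cauchy--Schwarz bounds each such term by $Ch^{3}(\|\theta\|+\|\xi\|)$, with constants uniform in $t$ by the $C(0,T;\cdot)$ hypotheses on $\eta$ and $u$. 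All remaining right-hand side terms in \eqref{eq213} and \eqref{eq215} are handled exactly as in Proposition 2.1: those quadratic in the projection errors ($((\rho\sigma)_{x},\theta)$, $(\sigma\sigma_{x},\xi)$, $(\rho\rho_{x},\xi)$) are $O(h^{3})$ times $\|\theta\|$ or $\|\xi\|$ because $\|\rho\|_{\infty},\|\sigma\|_{\infty}=O(h^{2})$ and $\|\rho_{x}\|,\|\sigma_{x}\|=O(h)$, while the mixed terms ($((u\theta)_{x},\theta)$, $((\theta\sigma)_{x},\theta)$, $((\rho\xi)_{x},\theta)$, $((u\xi)_{x},\xi)$, $((\sigma\xi)_{x},\xi)$, $(\eta_{x}\theta,\xi)$, $((\rho\theta)_{x},\xi)$) are $\leq C(\|\theta\|^{2}+\|\xi\|^{2})$ via integration by parts, the inverse inequality \eqref{eq23}, and the uniform bounds on $\|u_{x}\|_{\infty}$, $\|\eta_{x}\|_{\infty}$, $\|\sigma_{x}\|_{\infty}$, $\|\rho_{x}\|_{\infty}$.

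Adding the two differential inequalities so obtained, the symmetrizing and $\tfrac14(\xi_{x}\theta,\theta)$ terms drop out and one is left with
\[
\tfrac{d}{dt}\bigl(\|\theta\|^{2}+\|\xi\|^{2}\bigr)\leq C\bigl[h^{3}(\|\theta\|+\|\xi\|)+\|\theta\|^{2}+\|\xi\|^{2}\bigr],\qquad 0\leq t\leq T.
\]
Gronwall's inequality together with $\theta(0)=\xi(0)=0$ then gives $\|\theta\|+\|\xi\|\leq Ch^{3}$, and combining this with $\|\rho\|+\|\sigma\|\leq Ch^{2}$ proves \eqref{eq41}; note the $O(h^{2})$ rate is dictated by the projection error $\rho,\sigma$, not by $\theta,\xi$. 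For \eqref{eq41a} I would invoke the inverse inequality \eqref{eq24}, which yields $\|\theta\|_{\infty}+\|\xi\|_{\infty}\leq Ch^{-1/2}(\|\theta\|+\|\xi\|)\leq Ch^{5/2}$, and add $\|\rho\|_{\infty}+\|\sigma\|_{\infty}\leq Ch^{2}$ from \eqref{eq22b}.

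I expect the main obstacle to be purely organizational rather than analytical: carefully matching each of the six ``critical'' terms to the correct auxiliary function in Lemma 3.8 --- right subspace ($S_{h}$ versus $S_{h,0}$), right smooth multiplier, right boundary conditions on the projected function --- and checking that the compatibility conditions derived before the theorem supply exactly the hypotheses those lemmas demand. Once that is in place, the rest of the argument is a routine repetition of the estimates in the proof of Proposition 2.1 with $h^{r-1}$ replaced by $h^{3}$.
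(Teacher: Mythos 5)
Your proposal is correct and follows essentially the same route as the paper: compare with the $L^{2}$ projections, isolate the three terms in each error identity that are linear in $\rho$ or $\sigma$ and tested against $\theta$ or $\xi$, bound them by $Ch^{3}(\|\theta\|+\|\xi\|)$ via exactly the parts (i)--(vi) of Lemma 3.8 you cite, treat the remaining terms as in Proposition 2.1 with $r=2$, and conclude by Gronwall; the $L^{\infty}$ bound then follows from \eqref{eq24} and \eqref{eq22b} as you say. Your side remarks (global existence from \eqref{eq210}, and the role of $1+\tfrac{\eta}{2}\geq\beta$ only in deriving $u_{xx}|_{\partial I}=0$) are also consistent with the paper.
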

\begin{proof}
We refer to the analogous proof (Proposition 2.1) in the quasiuniform mesh case for notation. We let again $\theta=P\eta-\eta_{h}$,
$\xi=P_{0}u-u_{h}$, $\rho=\eta-P\eta$, $\sigma=u-P_{0}u$. The identity \eqref{eq213} still holds and we write it,
using integration by parts, in the form
\begin{equation}
\tfrac{1}{2}\tfrac{d}{dt}\|\theta\|^{2} + \bigl([(1+\tfrac{\eta}{2})\xi]_{x},\theta\bigr)=
\tfrac{1}{4}(\xi_{x}\theta,\theta) + A_{1} + A_{2},
\label{eq42}
\end{equation}
where
\begin{equation}
A_{1} := - (\sigma_{x},\theta) -\tfrac{1}{2}((\eta\sigma)_{x},\theta) -\tfrac{1}{2}((u\rho)_{x},\theta),
\label{eq43}
\end{equation}
\begin{equation}
A_{2}:=  - \tfrac{1}{4}(u_{x}\theta,\theta) +\tfrac{1}{4}(\sigma_{x}\theta,\theta)
+ \tfrac{1}{2}((\rho\sigma)_{x},\theta)  +\tfrac{1}{2}((\rho\xi)_{x},\theta).
\label{eq44}
\end{equation}
We will estimate the terms of $A_{1}$ using the superaccuracy properties of Section 3, in view of the compatibility
conditions on $\eta$ and $u$ for $0\leq t\leq T$ implied by our hypotheses as was previously explained.\\
By Lemma 3.8(ii),(iv) with $v=\eta$, and (iii) with $w=u$, we have, respectively,
\begin{align*}
\abs{(\sigma_{x},\theta)} & \leq Ch^{3}\|\theta\|,\\
\abs{((\eta\sigma)_{x},\theta)} & \leq Ch^{3}\|\theta\|,\\
\abs{((u\rho)_{x},\theta)} & \leq Ch^{3}\|\theta\|.
\end{align*}
We conclude by \eqref{eq43} that
\begin{equation}
\abs{A_{1}} \leq Ch^{3}\|\theta\|.
\label{eq45}
\end{equation}
The terms of $A_{2}$ are estimated as in the proof of Proposition 2.1, immediately after \eqref{eq213}, in the case $r=2$.
As a result we have
\begin{equation}
\abs{A_{2}} \leq C(h^{3}\|\theta\| + \|\theta\|^{2} + \|\xi\|^{2}).
\label{eq46}
\end{equation}
Therefore, by \eqref{eq42}, \eqref{eq45}, and \eqref{eq46}, there holds for $t\in [0,T]$ that
\begin{equation}
\tfrac{1}{2}\tfrac{d}{dt}\|\theta\|^{2} + \bigl([(1+\tfrac{\eta}{2})\xi]_{x},\theta\bigr)\leq
\tfrac{1}{4}(\xi_{x}\theta,\theta) + C(h^{3}\|\theta\| + \|\theta\|^{2} + \|\xi\|^{2}).
\label{eq47}
\end{equation}
In addition, the identity \eqref{eq215} still holds. Using integration by parts we write it for $t\in [0,T]$ in the form
\begin{equation}
\tfrac{1}{2}\tfrac{d}{dt}\|\xi\|^{2} - \bigl([(1+\tfrac{\eta}{2})\xi]_{x},\theta\bigr)= - \tfrac{1}{4}(\xi_{x}\theta,\theta)
+ B_{1} + B_{2},
\label{eq48}
\end{equation}
where
\begin{equation}
B_{1}:= -(\rho_{x},\xi) - \tfrac{1}{2}((\eta\rho)_{x},\xi) - \tfrac{3}{2}((u\sigma)_{x},\xi),
\label{eq49}
\end{equation}
\begin{equation}
\begin{aligned}
B_{2}:= & -\tfrac{3}{2}((u\xi)_{x},\xi) + \tfrac{3}{2}((\sigma\xi)_{x},\xi) +\tfrac{3}{2}(\sigma\sigma_{x},\xi)
 - \tfrac{1}{2}(\eta_{x}\theta,\xi) \\
& +\tfrac{1}{2}((\rho\theta)_{x},\xi) + \tfrac{1}{2}(\rho\rho_{x},\xi).
\end{aligned}
\label{eq410}
\end{equation}
Using again the compatibility properties of $\eta$ and $u$ for $0\leq t\leq T$, by Lemma 3.8(i), (vi) with $v=\eta$,
and (v) with $v=u$ we have, respectively,
\begin{align*}
\abs{(\rho_{x},\xi)} & \leq Ch^{3}\|\xi\|,\\
\abs{((\eta\rho)_{x},\xi)} & \leq Ch^{3} \|\xi\|,\\
\abs{((u\sigma)_{x},\xi)} & \leq Ch^{3}\|\xi\|,
\end{align*}
so that by \eqref{eq49}
\begin{equation}
\abs{B_{1}}\leq Ch^{3}\|\xi\|.
\label{eq411}
\end{equation}
The terms of $B_{2}$ are estimated again as in the proof of Proposition 2.1, after \eqref{eq215}, in the case $r=2$.
We have therefore
\begin{equation}
\abs{B_{2}}\leq C(h^{3}\|\xi\| + \|\theta\|^{2} + \|\xi\|^{2}),
\label{eq412}
\end{equation}
and by \eqref{eq48}, \eqref{eq411}, and \eqref{eq412}, for $t\in [0,T]:$
\begin{equation}
\tfrac{1}{2}\tfrac{d}{dt}\|\xi\|^{2} - \bigl([(1+\tfrac{\eta}{2})\xi]_{x},\theta\bigr)= - \tfrac{1}{4}(\xi_{x}\theta,\theta)
+ C(h^{3}\|\xi\| + \|\theta\|^{2} + \|\xi\|^{2}).
\label{eq413}
\end{equation}
Adding \eqref{eq47} and \eqref{eq413} we get for $t\in [0,T]$
\[
\tfrac{d}{dt}(\|\theta\|^{2} + \|\xi\|^{2}) \leq Ch^{3}(\|\xi\| + \|\theta\|) + C(\|\theta\|^{2} + \|\xi\|^{2}).
\]
Therefore, since $\theta(0)=0$, $\xi(0)=0$, Gronwall's lemma gives the superaccurate estimate
\begin{equation}
\|\theta\| + \|\xi\| \leq Ch^{3}, \quad 0\leq t\leq T,
\label{eq414}
\end{equation}
from which \eqref{eq41} follows. In view of \eqref{eq24} and \eqref{eq22b} \eqref{eq414} implies the $L^{\infty}$ estimate
\eqref{eq41a} as well.
\end{proof}
We prove now the analogous optimal-order $L^{2}$ error estimate for the \eqref{eqsw}.
\begin{theorem} Let $(\eta,u)$ be the solution of \eqref{eqsw} and suppose that $\eta\in C(0,T;C^{4})$,
$u\in C(0,T;C_{0}^{4})$, $\eta_{0}'\in C_{0}^{3}$, $u_{0}''\in C_{0}^{2}$, and that
$\min_{\substack{0\leq x\leq 1\\0\leq t\leq T}} (1+\eta(x,t))\geq \alpha>0$ for some positive constant $\alpha$. Let
$x_{i}=(i-1)h$, $1\leq i\leq N+1$, $Nh=1$, and $(\eta_{h},u_{h})$ be the solution of \eqref{eq25}-\eqref{eq26} for
$t\in [0,T]$ in the space of piecewise linear continuous functions $S_{h}\times S_{h,0}$. Then
\begin{equation}
\max_{0\leq t\leq T}(\|\eta(t)-\eta_{h}(t)\| + \|u(t)-u_{h}(t)\|) \leq Ch^{2},
\label{eq415}
\end{equation}
and
\begin{equation}
\max_{0\leq t\leq T}(\|\eta(t)-\eta_{h}(t)\|_{\infty} + \|u(t)-u_{h}(t)\|_{\infty}) \leq Ch^{2}. \tag{$4.15^{'}$}
\label{eq415a}
\end{equation}
\end{theorem}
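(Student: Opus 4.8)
The plan is to run the argument of Proposition 2.2 (the quasiuniform-mesh analysis of \eqref{eqsw}), the only essential change being that the ``consistency'' contributions built from $\rho=\eta-P\eta$ and $\sigma=u-P_{0}u$ alone will now be estimated as $O(h^{3})$ rather than $O(h)$, by invoking the superaccuracy Lemma 3.8 (hence Lemmas 3.3--3.6) together with the compatibility conditions $\eta_{x}|_{\partial I}=\eta_{xxx}|_{\partial I}=0$ and $u_{xx}|_{\partial I}=0$, $t\in[0,T]$, that smooth solutions of \eqref{eqsw} satisfy as explained just before Theorem 4.1. Keeping the notation $\rho,\sigma,\theta=P\eta-\eta_{h},\xi=P_{0}u-u_{h}$, the error equations \eqref{eq218}--\eqref{eq219} hold while $(\eta_{h},u_{h})$ exists; as in Proposition 2.2 I would let $t_{h}\in(0,T]$ be maximal with $(\eta_{h},u_{h})$ defined and $\|\xi_{x}\|_{\infty}\le1$ on $[0,t_{h}]$, and argue by contradiction assuming $t_{h}<T$.

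Taking $\phi=\theta$ in \eqref{eq218} gives \eqref{eq220}. Since $\theta\in S_{h}$, the three linear terms $-(\sigma_{x},\theta)$, $-((\eta\sigma)_{x},\theta)$, $-((u\rho)_{x},\theta)$ equal $-(\zeta_{2},\theta)$, $-(\zeta_{4},\theta)$ with $v=\eta$, and $-(\zeta_{3},\theta)$ with $w=u\in C_{0}^{2}$ in the notation of Lemma 3.8, hence are each $\le Ch^{3}\|\theta\|$; all remaining terms of \eqref{eq220} are bounded exactly as in the case $r=2$ of Proposition 2.2 (the bootstrap $\|\xi_{x}\|_{\infty}\le1$ entering through $((\theta\xi)_{x},\theta)=\tfrac12(\xi_{x}\theta,\theta)$). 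This yields
\[
\tfrac12\tfrac{d}{dt}\|\theta\|^{2}-(\gamma,\theta_{x})\le C\bigl(h^{3}\|\theta\|+\|\theta\|^{2}+\|\xi\|^{2}\bigr),\qquad \gamma:=(1+\eta)\xi .
\]
Next, following Dupont, I would test \eqref{eq219} with $\chi=P_{0}\gamma=P_{0}[(1+\eta)\xi]\in S_{h,0}$ to obtain \eqref{eq222}. Because $P_{0}\gamma\in S_{h,0}$, the linear terms $-(\rho_{x},P_{0}\gamma)$ and $-((u\sigma)_{x},P_{0}\gamma)$ equal $-(\zeta_{1},P_{0}\gamma)$ and $-(\zeta_{5},P_{0}\gamma)$ (with $v=u$), so by Lemma 3.8(i),(v) and $\|P_{0}\gamma\|\le\|\gamma\|\le C\|\xi\|$ they are $\le Ch^{3}\|\xi\|$; the terms $((u\xi)_{x},P_{0}\gamma)$, $((\sigma\xi)_{x},P_{0}\gamma)$, $(\sigma\sigma_{x},P_{0}\gamma)$, $(\xi\xi_{x},P_{0}\gamma)$ are treated exactly as in Proposition 2.2 via the superapproximation bound $\|P_{0}\gamma-\gamma\|\le Ch\|\xi\|$, the inverse inequalities \eqref{eq23}--\eqref{eq24}, and the bootstrap, contributing at most $C(h^{3}\|\xi\|+\|\xi\|^{2})$. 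Hence
\[
(\xi_{t},(1+\eta)\xi)+(\theta_{x},P_{0}\gamma)\le C\bigl(h^{3}\|\xi\|+\|\xi\|^{2}\bigr).
\]

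Adding the two displayed inequalities, and using $(\theta_{x},P_{0}\gamma)-(\gamma,\theta_{x})=(\theta_{x},P_{0}\gamma-\gamma)$ with $|(\theta_{x},P_{0}\gamma-\gamma)|\le\|\theta_{x}\|\,\|P_{0}\gamma-\gamma\|\le C\|\theta\|\,\|\xi\|$ (from \eqref{eq23} and superapproximation) together with $(\xi_{t},(1+\eta)\xi)=\tfrac12\tfrac{d}{dt}((1+\eta)\xi,\xi)-\tfrac12(\eta_{t}\xi,\xi)$, I would arrive at
\[
\tfrac12\tfrac{d}{dt}\bigl[\|\theta\|^{2}+((1+\eta)\xi,\xi)\bigr]\le C\bigl[h^{3}(\|\theta\|+\|\xi\|)+\|\theta\|^{2}+\|\xi\|^{2}\bigr],\qquad 0\le t\le t_{h},
\]
with $C$ independent of $h,t_{h}$. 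Since $1+\eta\ge\alpha>0$, the norm $((1+\eta)\cdot,\cdot)^{1/2}$ is uniformly equivalent to $\|\cdot\|$ on $[0,T]$, so Gronwall's inequality and $\theta(0)=\xi(0)=0$ give $\|\theta\|+\|\xi\|\le Ch^{3}$ on $[0,t_{h}]$. The bootstrap then closes: by \eqref{eq24}, $\|\xi_{x}\|_{\infty}\le Ch^{-3/2}\|\xi\|\le Ch^{3/2}<1$ for $h$ small, contradicting the maximality of $t_{h}$; hence $t_{h}=T$ and $\|\theta\|+\|\xi\|\le Ch^{3}$ on $[0,T]$. Then \eqref{eq415} follows from $\|\eta-\eta_{h}\|\le\|\rho\|+\|\theta\|\le Ch^{2}$ (and similarly for $u$) via \eqref{eq21a}, and \eqref{eq415a} from $\|\eta-\eta_{h}\|_{\infty}\le\|\rho\|_{\infty}+\|\theta\|_{\infty}\le Ch^{2}+Ch^{-1/2}\|\theta\|\le Ch^{2}$ via \eqref{eq22b} and \eqref{eq24}.

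I expect the substantive point --- the one place this argument genuinely differs from Proposition 2.2 --- to be the recognition that every linear/consistency term on the right-hand sides of the two energy identities can, after one integration by parts onto a discrete test function, be written as a pairing with one of the auxiliary functions $\zeta_{1},\dots,\zeta_{5}$ of Lemma 3.8, so that the boundary compatibility of $\eta$ and $u$ upgrades its size from $O(h)$ to $O(h^{3})$; once this is secured the self-improvement $\|\xi\|=O(h^{3})\Rightarrow\|\xi_{x}\|_{\infty}=O(h^{3/2})$ lets the bootstrap close even for $r=2$, precisely where Proposition 2.2 needed $r\ge3$. A secondary thing to watch --- as already in Proposition 2.2 --- is that testing \eqref{eq219} with $P_{0}\gamma$ instead of $\xi$ (forced by the lack of symmetry of \eqref{eqsw}) injects the $O(h)$ superapproximation factor $P_{0}\gamma-\gamma$ into several estimates, and each such occurrence must be checked to be multiplied by a quantity carrying an inverse power of $h$ no worse than $h^{-1}$, so that no order is lost.
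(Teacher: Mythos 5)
Your proposal is correct and follows essentially the same route as the paper's proof of Theorem 4.2: the same splitting of the right-hand sides of \eqref{eq220} and \eqref{eq222} into the linear consistency terms (upgraded to $O(h^{3})$ via Lemma 3.8 and the boundary compatibility conditions) and the remainder terms handled exactly as in Proposition 2.2, the same symmetrizing test function $P_{0}\gamma=P_{0}[(1+\eta)\xi]$, and the same bootstrap on $\|\xi_{x}\|_{\infty}\leq 1$ closed by $\|\xi\|=O(h^{3})\Rightarrow\|\xi_{x}\|_{\infty}=O(h^{3/2})$. No gaps.
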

\begin{proof}
We refer again to the analogous proof (Proposition 2.2) in the quasiuniform case for notation. In particular
we let again $\theta=P\eta-\eta_{h}$, $\xi=P_{0}u-u_{h}$, $\rho=\eta-P\eta$, $\sigma=u-P_{0}u$.
The identity \eqref{eq220} still holds and we write it, using integration by parts, in the form
\begin{equation}
\tfrac{1}{2}\tfrac{d}{dt}\|\theta\|^{2} + \bigl([(1+\eta)\xi]_{x},\theta\bigr)=
\tfrac{1}{2}(\xi_{x}\theta,\theta) + A_{3} + A_{4},
\label{eq416}
\end{equation}
where
\begin{equation}
A_{3} = -(\sigma_{x},\theta) - ((\eta\sigma)_{x},\theta) - ((u\rho)_{x},\theta),
\label{eq417}
\end{equation}
\begin{equation}
A_{4} = -\tfrac{1}{2}(u_{x}\theta,\theta) + \tfrac{1}{2}(\sigma_{x}\theta,\theta) + ((\rho\sigma)_{x},\theta)
+ ((\rho\xi)_{x},\theta).
\label{eq418}
\end{equation}
Using the compatibility conditions on $\eta$ and $u$ implied by our hypotheses, we have, by Lemma 3.8 (ii), (iv)
with $v=\eta$, and (iii) with $w=u$, respectively, that
\begin{align*}
\abs{(\sigma_{x},\theta)} &\leq Ch^{3}\|\theta\|,\\
\abs{((\eta\sigma)_{x},\theta)} & \leq Ch^{3}\|\theta\|,\\
\abs{((u\rho)_{x},\theta)} & \leq Ch^{3}\|\theta\|.
\end{align*}
Hence, by \eqref{eq417}
\begin{equation}
\abs{A_{3}} \leq Ch^{3}\|\theta\|.
\label{eq419}
\end{equation}
The terms of $A_{4}$ are estimated as in the proof of Proposition 2.2 in the various inequalities after \eqref{eq220}
for $r=2$. As a result, we have
\begin{equation}
\abs{A_{4}} \leq C(h^{3}\|\theta\| + \|\theta\|^{2} + \|\xi\|^{2}).
\label{eq420}
\end{equation}
As in Proposition 2.2, we let $t_{h}$ be such that $\|\xi_{x}\|_{\infty}\leq 1$ for $t\leq t_{h}$ and suppose that
$t_{h}<T$. Then we have that $\abs{(\xi_{x}\theta,\theta)}\leq \|\theta\|^{2}$ and \eqref{eq416}, \eqref{eq418} and
\eqref{eq420} imply that for $0\leq t\leq t_{h}$
\begin{equation}
\tfrac{1}{2}\tfrac{d}{dt}\|\theta\|^{2} - (\gamma,\theta_{x}) \leq C(h^{3}\|\theta\| + \|\theta\|^{2} + \|\xi\|^{2}),
\label{eq421}
\end{equation}
where $\gamma=(1+\eta)\xi$.\\
The identity \eqref{eq222} still holds. We write it in the form
\begin{equation}
(\xi_{t},\gamma) + (\theta_{x},P_{0}\gamma)=(\xi\xi_{x},P_{0}\gamma) + B_{3} + B_{4},
\label{eq422}
\end{equation}
where
\begin{equation}
B_{3} = -(\rho_{x},P_{0}\gamma) - ((u\sigma)_{x},P_{0}\gamma),
\label{eq423}
\end{equation}
\begin{equation}
B_{4} = -((u\xi)_{x},P_{0}\gamma) + ((\sigma\xi)_{x},P_{0}\gamma) + (\sigma\sigma_{x},P_{0}\gamma).
\label{eq424}
\end{equation}
By Lemma 3.8(i), and (v) with $v=u$, we have, respectively,
\begin{align*}
\abs{(\rho_{x},P_{0}\gamma)} & \leq Ch^{3}\|P_{0}\gamma\| \leq Ch^{3}\|\gamma\| \leq Ch^{3}\|\xi\|,\\
\abs{((u\sigma)_{x},P_{0}\gamma)} & \leq Ch^{3}\|P_{0}\gamma\| \leq Ch^{3}\|\xi\|.
\end{align*}
Therefore,
\begin{equation}
\abs{B_{3}} \leq Ch^{3}\|\xi\|.
\label{eq425}
\end{equation}
Now, using the superapproximation property \eqref{eq223} we have, as in the proof of Proposition 2.2, that
\begin{align*}
\abs{((u\xi)_{x},P_{0}\gamma)} & \leq C\|\xi\|^{2},\\
\abs{((\sigma\xi)_{x},P_{0}\gamma)} &\leq C\|\xi\|^{2},\\
\abs{(\sigma\sigma_{x},P_{0}\gamma)} &\leq C\|\sigma\|_{\infty}\|\sigma_{x}\|\|\xi\|\leq Ch^{3}\|\xi\|.
\end{align*}
Hence,
\begin{equation}
\abs{B_{4}}\leq C(h^{3}\|\xi\| + \|\xi\|^{2}).
\label{eq426}
\end{equation}
Finally, as in the proof of Proposition 2.2, we have for $0\leq t\leq t_{h}$
\begin{equation}
\abs{(\xi\xi_{x},P_{0}\gamma)} \leq C\|\xi\|^{2}.
\label{eq427}
\end{equation}
From \eqref{eq422}, \eqref{eq425}, \eqref{eq426}, and \eqref{eq427} it follows that for $0\leq t\leq t_{h}$
\begin{equation}
(\xi_{t},\gamma) + (\theta_{x},P_{0}\gamma) \leq C(h^{3}\|\xi\| + \|\xi\|^{2}).
\label{eq428}
\end{equation}
From \eqref{eq421} and \eqref{eq428} we have, as in the proof of Proposition 2.2 that for $0\leq t\leq t_{h}$
\[
\tfrac{d}{dt}\bigl[\|\theta\|^{2} + ((1+\eta)\xi,\xi)\bigr] \leq C(h^{6} + \|\theta\|^{2} + \|\xi\|^{2}),
\]
from which, since $\theta(0)=\xi(0)=0$, we see from Gronwall's lemma that for a constant $C=C(T)$ it holds that
\begin{equation}
\|\theta\| + \|\xi\| \leq Ch^{3}, \quad 0\leq t\leq t_{h}.
\label{eq429}
\end{equation}
Hence $\|\xi_{x}\|_{\infty}\leq Ch^{3/2}$ for $0\leq t\leq t_{h}$ in view of \eqref{eq24}. It follows that $t_{h}$
is not maximal; thus we may take $t_{h}=T$ in \eqref{eq429}. The conclusion of the theorem follows.
\end{proof}
We close this section by presenting the results of some relevant numerical experiments. We solve the nonhomogeneous
\eqref{eqssw} and \eqref{eqsw} using the standard Galerkin method with piecewise linear continuous functions
on a uniform mesh on $[0,1]$ with $h=1/N$ using as exact solutions the functions $\eta=\exp(2t)(\cos(\pi x) + x + 2)$
and $u=\exp(-xt)\sin(\pi x)$. As in Section 2, the fourth-order explicit classical RK method was used for time-stepping
with $k=h/10$. Table \ref{tbl41} shows the $L^{2}$-errors at $t=1$ and their order of convergence for this problem for both
systems. As predicted by the theory of the present section the order of convergence is equal to 2.\par
In Table \ref{tbl42} we present the $L^{2}$ errors for the same problems for the analogous Galerkin method that uses cubic
splines on a uniform mesh for the spatial discretization. The convergence of this scheme was not analyzed here, but
its order of convergence appears to be equal to 4., i.e. optimal in $L^{2}$. (It should be noted that in the \eqref{eqssw}
case we had to take $k/h=1/80$ to achieve stability of the fully discrete scheme.)\vspace{10pt} \\
\scriptsize
\begin{table}[h]
\begin{tabular}[t]{ | c | c | c | c | c | c | c | c | c | }\hline
 & \multicolumn{4}{| c |}{$L^{2}-errors$: SW} & \multicolumn{4}{| c |}{$L^{2}-errors$: SSW} \\ \hline
$N$   &    $\eta$      &  $order$  &      $u$       &  $order$ &
$\eta$     &  $order$  &      $u$       & $order$    \\ \hline
$40$  & $0.4721(-2)$ & $      $ & $0.1859(-3)$ & $      $ & $0.2883(-2)$ & $      $ & $0.1772(-3)$ & $      $  \\ \hline
$80$  & $0.1179(-2)$ & $2.0011$ & $0.4627(-4)$ & $2.0060$ & $0.7203(-3)$ & $2.0011$ & $0.4415(-4)$ & $2.0052$  \\ \hline
$120$ & $0.5241(-3)$ & $2.0004$ & $0.2055(-4)$ & $2.0020$ & $0.3201(-3)$ & $2.0004$ & $0.1963(-4)$ & $1.9992$  \\ \hline
$160$ & $0.2948(-3)$ & $2.0002$ & $0.1155(-4)$ & $2.0012$ & $0.1800(-3)$ & $2.0002$ & $0.1105(-4)$ & $1.9986$  \\ \hline
$200$ & $0.1887(-3)$ & $2.0001$ & $0.7394(-5)$ & $2.0008$ & $0.1152(-3)$ & $2.0001$ & $0.7072(-5)$ & $1.9983$  \\ \hline
$240$ & $0.1310(-3)$ & $2.0001$ & $0.5134(-5)$ & $2.0003$ & $0.8001(-4)$ & $2.0001$ & $0.4911(-5)$ & $1.9997$  \\ \hline
$280$ & $0.9625(-4)$ & $2.0001$ & $0.3772(-5)$ & $2.0001$ & $0.5878(-4)$ & $2.0001$ & $0.3608(-5)$ & $2.0005$  \\ \hline
$320$ & $0.7369(-4)$ & $2.0000$ & $0.2888(-5)$ & $2.0000$ & $0.4501(-4)$ & $2.0000$ & $0.2762(-5)$ & $2.0004$  \\ \hline
$360$ & $0.5822(-4)$ & $2.0000$ & $0.2282(-5)$ & $1.9999$ & $0.3556(-4)$ & $2.0000$ & $0.2182(-5)$ & $2.0003$  \\ \hline
$400$ & $0.4716(-4)$ & $2.0000$ & $0.1848(-5)$ & $1.9998$ & $0.2880(-4)$ & $2.0000$ & $0.1768(-5)$ & $1.9998$  \\ \hline
$440$ & $0.3898(-4)$ & $2.0000$ & $0.1528(-5)$ & $1.9998$ & $0.2381(-4)$ & $2.0000$ & $0.1461(-5)$ & $1.9996$  \\ \hline
$480$ & $0.3275(-4)$ & $2.0000$ & $0.1284(-5)$ & $1.9998$ & $0.2000(-4)$ & $2.0000$ & $0.1228(-5)$ & $1.9998$  \\ \hline
$520$ & $0.2791(-4)$ & $2.0000$ & $0.1094(-5)$ & $1.9998$ & $0.1704(-4)$ & $2.0000$ & $0.1046(-5)$ & $2.0000$  \\ \hline
$560$ & $0.2406(-4)$ & $2.0000$ & $0.9431(-6)$ & $1.9998$ & $0.1470(-4)$ & $2.0000$ & $0.9019(-6)$ & $2.0002$  \\ \hline
$600$ & $0.2096(-4)$ & $2.0000$ & $0.8215(-6)$ & $1.9999$ & $0.1280(-4)$ & $2.0000$ & $0.7857(-6)$ & $2.0002$  \\ \hline
$640$ & $0.1842(-4)$ & $2.0000$ & $0.7221(-6)$ & $1.9999$ & $0.1125(-4)$ & $2.0000$ & $0.6905(-6)$ & $2.0000$  \\ \hline
\end{tabular}
\normalsize
\caption{
$L^{2}$-errors and orders of convergence, continuous, piecewise linear functions, uniform mesh.}
\label{tbl41}
\end{table}
\newpage
\scriptsize
\begin{table}[h]
\begin{tabular}[t]{ | c | c | c | c | c | c | c | c | c | }\hline
 & \multicolumn{4}{| c |}{$L^{2}-errors$: SW} & \multicolumn{4}{| c |}{$L^{2}-errors$: SSW} \\ \hline
$N$   &    $\eta$      &  $order$  &      $u$       &  $order$ &
$\eta$     &  $order$  &      $u$       & $order$    \\ \hline
$40$  & $0.4877(-6)$ & $      $ & $0.2307(-7)$ & $      $ & $0.3287(-6)$ & $      $ & $0.2280(-7)$ & $      $  \\ \hline
$80$  & $0.2938(-7)$ & $4.0530$ & $0.1444(-8)$ & $3.9979$ & $0.1997(-7)$ & $4.0405$ & $0.1412(-8)$ & $4.0134$  \\ \hline
$120$ & $0.5731(-8)$ & $4.0312$ & $0.2848(-9)$ & $4.0037$ & $0.3908(-8)$ & $4.0237$ & $0.2786(-9)$ & $4.0022$  \\ \hline
$160$ & $0.1802(-8)$ & $4.0224$ & $0.9015(-10)$ & $3.9982$ & $0.1230(-8)$ & $4.0172$ & $0.8821(-10)$ & $3.9977$  \\ \hline
$200$ & $0.7351(-9)$ & $4.0175$ & $0.3692(-10)$ & $4.0011$ & $0.5024(-9)$ & $4.0137$ & $0.3623(-10)$ & $3.9882$  \\ \hline
$240$ & $0.3536(-9)$ & $4.0143$ & $0.1781(-10)$ & $3.9977$ & $0.2418(-9)$ & $4.0110$ & $0.1746(-10)$ & $4.0038$  \\ \hline
$280$ & $0.1905(-9)$ & $4.0122$ & $0.9619(-11)$ & $3.9965$ & $0.1303(-9)$ & $4.0091$ & $0.9429(-11)$ & $3.9960$  \\ \hline
$320$ & $0.1115(-9)$ & $4.0104$ & $0.5640(-11)$ & $3.9975$ & $0.7632(-10)$ & $4.0081$ & $0.5523(-11)$ & $4.0057$  \\ \hline
$360$ & $0.6954(-10)$ & $4.0093$ & $0.3523(-11)$ & $3.9949$ & $0.4760(-10)$ & $4.0071$ & $0.3451(-11)$ & $3.9925$  \\ \hline
$400$ & $0.4559(-10)$ & $4.0080$ & $0.2312(-11)$ & $3.9969$ & $0.3121(-10)$ & $4.0068$ & $0.2265(-11)$ & $3.9955$  \\ \hline
$440$ & $0.3112(-10)$ & $4.0072$ & $0.1580(-11)$ & $3.9969$ & $0.2131(-10)$ & $4.0055$ & $0.1549(-11)$ & $3.9875$  \\ \hline
$480$ & $0.2196(-10)$ & $4.0067$ & $0.1116(-11)$ & $3.9946$ & $0.1504(-10)$ & $4.0038$ & $0.1096(-11)$ & $3.9745$  \\ \hline
$520$ & $0.1593(-10)$ & $4.0052$ & $0.8105(-12)$ & $3.9970$ & $0.1092(-10)$ & $3.9992$ & $0.8003(-12)$ & $3.9314$  \\ \hline
$560$ & $0.1184(-10)$ & $4.0062$ & $0.6029(-12)$ & $3.9931$ & $0.8113(-11)$ & $4.0093$ & $0.5997(-12)$ & $3.8922$  \\ \hline
$600$ & $0.8982(-11)$ & $4.0053$ & $0.4576(-12)$ & $3.9946$ & $0.6155(-11)$ & $4.0024$ & $0.4617(-12)$ & $3.7910$  \\ \hline
\end{tabular}
\normalsize
\caption{
$L^{2}$-errors and orders of convergence, cubic splines, uniform mesh.}
\label{tbl42}
\end{table}
\normalsize
\section{Full discretizations with the explicit Euler, the improved Euler, and the third-order Shu-Osher scheme}
In this section we will examine three temporal discretizations of the o.d.e. systems represented by the standard Galerkin
semidiscretizations that we analyzed in Sections 2 and 4. In \cite{d1} Dupont analyzed, in the case of a system similar to that
of the shallow water equations, the convergence of a linearized Crank-Nicolson scheme. In this paper we will analyze three
explicit Runge-Kutta schemes. The explicit Euler method, which will require for stability the restrictive mesh condition $k=O(h^{2})$,
the explicit, second-order accurate `improved' Euler method, which requires that $k=O(h^{4/3})$,
and an explicit third-order Runge-Kutta method due to Shu and Osher, \cite{so}, that needs the condition $k/h\leq \lambda_{0}$
for a small enough constant $\lambda_{0}$.\par
In order to simplify somewhat the proofs we will analyze the convergence of all methods only in the most straightforward to treat case
of the \eqref{eqssw} system and a semidiscretization based on a quasiuniform spatial mesh. Thus, the expected spatial order of convergence
(see Section 2) in $L^{2}$ is of $O(h^{r-1})$. A similar result holds for the \eqref{eqsw} system but we omit the proofs here. Our
proofs require $r\geq 2$ for the Euler method and $r\geq 3$ for the other two fully discrete schemes.
\subsection{The explicit Euler method}
We use the notation of Section 2 for the spatial discretization on a quasiuniform mesh, letting
$S_{h}=S_{h}^{k,r}$, $S_{h,0}=S_{h,0}^{k,r}$, for $r\geq 2$. We let $k=T/M$ be the time step, where $M$ is a positive
integer, and set $t^{n}=nk$ for $n=0,1,2,\dots,M$. The fully discrete approximations $H_{h}^{n}\in S_{h}$, $U_{h}^{n}\in S_{h,0}$
of $\eta(\cdot,t^{n})$, $u(\cdot,t^{n})$, respectively, where $(\eta,u)$ is the solution of the \eqref{eqssw}, are given for
$0\leq n\leq M-1$ by the equations:
\begin{equation}
\begin{aligned}
(H_{h}^{n+1},\phi) - (H_{h}^{n},\phi) +k(U_{hx}^{n},\phi) + \tfrac{k}{2}((H_{h}^{n}U_{h}^{n})_{x},\phi) & = 0,
\quad \forall \phi \in S_{h},\\
(U_{h}^{n+1},\chi) - (U_{h}^{n},\chi) + k(H_{hx}^{n},\chi) + \tfrac{k}{2}(H_{h}^{n}H_{hx}^{n},\chi)
+\tfrac{3k}{2}(U_{h}^{n}U_{hx}^{n},\chi) & = 0,\quad \forall \chi \in S_{h,0},
\end{aligned}
\label{eq51}
\end{equation}
with
\[
H_{h}^{0} = P\eta_{0},\qquad U_{h}^{0}=P_{0}u_{0}.
\]
The equations \eqref{eq51} are written in the form
\begin{equation}
H_{h}^{n+1} - H_{h}^{n} + kPU_{hx}^{n} + \tfrac{k}{2}P\bigl((H_{h}^{n}U_{h}^{n})_{x}\bigr) = 0,
\label{eq52}
\end{equation}
\begin{equation}
U_{h}^{n+1} - U_{h}^{n} + kP_{0}H_{hx}^{n} + \tfrac{k}{2}P_{0}(H_{h}^{n}H_{hx}^{n}) + \tfrac{3k}{2}P_{0}(U_{h}^{n}U_{hx}^{n})=0.
\label{eq53}
\end{equation}
We start with an estimate of the continuous-time truncation error of the $L^{2}$ projections.
\begin{lemma} Suppose that $(\eta,u)$ is the solution of \eqref{eqssw} on $[0,T]$. Let $H(t)=P\eta(t)$,
$U(t) = P_{0}u(t)$ and $\psi(t) \in S_{h}$, $\zeta(t)\in S_{h,0}$ for $0\leq t\leq T$ be such that
\begin{equation}
H_{t} + PU_{x} + \tfrac{1}{2}P\bigl((HU)_{x}\bigr) = \psi,
\label{eq54}
\end{equation}
\begin{equation}
U_{t} + P_{0}H_{x} + \tfrac{1}{2}P_{0}(HH_{x}) + \tfrac{3}{2}P_{0}(UU_{x})=\zeta.
\label{eq55}
\end{equation}
Then
\[
\|\psi\| + \|\psi_{t}\| + \|\zeta\| + \|\zeta_{t}\|\leq Ch^{r-1}.
\]
\end{lemma}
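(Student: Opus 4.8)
The plan is to eliminate the temporal derivatives using the p.d.e.'s. Since $P$ and $P_0$ do not depend on $t$ we have $H_t = P\eta_t$ and $U_t = P_0 u_t$; substituting $\eta_t = -u_x - \tfrac12(\eta u)_x$ from \eqref{eqssw} into \eqref{eq54}, and using $U - u = -\sigma$, $H - \eta = -\rho$ and $HU - \eta u = -\rho U - \eta\sigma$, gives
\[
\psi = P(U_x - u_x) + \tfrac12 P\bigl((HU - \eta u)_x\bigr) = -P\sigma_x - \tfrac12 P\bigl((\rho U + \eta\sigma)_x\bigr),
\]
and substituting $u_t = -\eta_x - \tfrac32 uu_x - \tfrac12\eta\eta_x$ into \eqref{eq55}, together with $UU_x - uu_x = \tfrac12(U^2-u^2)_x = -\tfrac12(\sigma(U+u))_x$ and the analogous identity for the $\eta$-terms, gives
\[
\zeta = -P_0\rho_x - \tfrac34 P_0\bigl((\sigma(U+u))_x\bigr) - \tfrac14 P_0\bigl((\rho(H+\eta))_x\bigr).
\]
Thus $\psi$ and $\zeta$ are expressed solely through the projection errors $\rho = \eta - P\eta$, $\sigma = u - P_0 u$ and the (smooth) exact solution.

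Next I would bound these right-hand sides. Since $P$ and $P_0$ are $L^2$-orthogonal projections they are bounded on $L^2$, so it suffices to estimate $\|\sigma_x\|$, $\|\rho_x\|$ and the $L^2$-norms of the four products obtained by the product rule. From \eqref{eq21a}, optimality of the $L^2$ projection, and the inverse inequality \eqref{eq23} one gets $\|\rho\|_1 + \|\sigma\|_1 \le Ch^{r-1}$; from \eqref{eq22b} one gets $\|\rho\|_\infty + \|\sigma\|_\infty \le Ch^r$; and from \eqref{eq22a} together with the $H^1$-stability of $P,P_0$ (itself a consequence of \eqref{eq21a} and \eqref{eq23}) one gets $\|H\|_\infty + \|H\|_1 + \|U\|_\infty + \|U\|_1 \le C$. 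Expanding $(\rho U)_x$, $(\eta\sigma)_x$, $(\sigma(U+u))_x$, $(\rho(H+\eta))_x$ by the product rule and combining these bounds then yields $\|\psi\| + \|\zeta\| \le Ch^{r-1}$.

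Finally, for $\psi_t$ and $\zeta_t$ I would differentiate the two displayed identities in $t$. The key observation is that $\sigma_t = u_t - P_0 u_t$, $\rho_t = \eta_t - P\eta_t$, $U_t = P_0 u_t$ and $H_t = P\eta_t$ obey exactly the same bounds as $\sigma, \rho, U, H$, because the p.d.e.'s of \eqref{eqssw} express $\eta_t$ and $u_t$ as smooth spatial expressions in $\eta$ and $u$; applying the product rule to $\partial_t(\rho U + \eta\sigma)_x$ and to $\partial_t$ of the two quadratic terms in $\zeta$ therefore reproduces the same $O(h^{r-1})$ estimate. The one point requiring care is justifying the temporal regularity invoked here — one needs $\eta_t, u_t$ (and, through the product rule, quantities such as $\eta_{tt}, u_{tt}$) to possess enough spatial derivatives — but this follows from the standing smoothness hypothesis on $(\eta,u)$ by repeatedly differentiating the p.d.e.'s. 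This bookkeeping of product-rule terms, rather than any isolated difficulty, is the main part of the proof.
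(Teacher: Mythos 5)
Your proposal is correct and follows essentially the same route as the paper: eliminate $H_t$, $U_t$ by subtracting the $P$- (resp.\ $P_0$-) projected p.d.e., express $\psi$ and $\zeta$ in terms of the projection errors $\rho$, $\sigma$, and bound the resulting products via the approximation properties of $S_h$, $S_{h,0}$; the only difference is your factoring of the quadratic differences as $U^2-u^2=-\sigma(U+u)$ etc., where the paper instead expands into linear-in-error terms (which are $O(h^{r-1})$) plus quadratic-in-error terms (which are $O(h^{2r-1})$) — both groupings yield the same estimate. The same remark applies to the time-differentiated identities, where indeed only $\eta_t$, $u_t$ (not second time derivatives) are actually needed.
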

\begin{proof}
Subtracting \eqref{eq54} from the equation $P\bigl(\eta_{t}+u_{x} + \tfrac{1}{2}(\eta u)_{x}\bigr)=0$, and putting
$\rho:=\eta - H$, $\sigma:=u - U$, we obtain
\[
P\bigl([(1+\tfrac{1}{2}\eta)\sigma]_{x} + \tfrac{1}{2}(u\rho)_{x} - \tfrac{1}{2}(\rho\sigma)_{x}\bigr)=-\psi.
\]
Thus, from the approximation properties of $S_{h}$ and $S_{h,0}$ we have
\begin{align*}
\|\psi\| & \leq \|P[(1+\tfrac{1}{2}\eta)\sigma]_{x}\| + \tfrac{1}{2}\|P(u\rho)_{x}\|
+ \tfrac{1}{2}\|P(\rho\sigma)_{x}\|\\
&\leq C(h^{r-1} + h^{r-1} + h^{2r-1})\leq Ch^{r-1}.
\end{align*}
Similarly,
\[
\|\psi_{t}\|\leq \|P[(1+\tfrac{1}{2}\eta)\sigma]_{xt}\| + \tfrac{1}{2}\|P(u\rho)_{xt}\| + \tfrac{1}{2}\|P(\rho\sigma)_{xt}\|
\leq Ch^{r-1}.
\]
Subtracting \eqref{eq55} from the equation $P_{0}(u_{t}+\eta_{x} + \tfrac{1}{2}\eta\eta_{x} + \tfrac{3}{2}uu_{x})=0$
we also obtain
\[
P_{0}\bigl(\rho_{x} + \tfrac{1}{2}(\eta\rho)_{x} - \tfrac{1}{2}\rho\rho_{x} + \tfrac{3}{2}(u\sigma)_{x} -
\tfrac{3}{2}\sigma\sigma_{x}\bigr)=-\zeta.
\]
Therefore,
\begin{align*}
\|\zeta\|&\leq \|P_{0}[(1+\tfrac{1}{2}\eta)\rho]_{x}\| + \tfrac{1}{2}\|P_{0}(\rho\rho_{x})\|
+ \tfrac{3}{2}\|P_{0}(u\sigma)_{x}\| + \tfrac{3}{2}\|P_{0}(\sigma\sigma_{x})\|\\
& \leq C(h^{r-1} + h^{2r-1} + h^{r-1} + h^{2r-1})\leq Ch^{r-1},
\end{align*}
and
\begin{align*}
\|\zeta_{t}\| & \leq \|P_{0}[(1+\tfrac{1}{2}\eta)\rho]_{xt}\| + \tfrac{1}{2}\|P_{0}(\rho\rho_{x})_{t}\|
+\tfrac{3}{2}\|P_{0}(u\sigma)_{xt}\| + \tfrac{3}{2}\|P_{0}(\sigma\sigma_{x})_{t}\|\\
& \leq C(h^{r-1} + h^{2r-1} + h^{r-1} + h^{2r-1}) \leq Ch^{r-1}.
\end{align*}
\end{proof}
We now derive consistency estimates for the scheme \eqref{eq51}.
\begin{lemma} Let $H^{n}:=H(t^{n})=P\eta(t^{n})$, $U^{n}:=U(t^{n})=P_{0}u(t^{n})$ for $n=0,1,\dots,M$, and
$\delta_{1}$, $\delta_{2}$, $n=0,1,2,\dots,M-1$, be defined as
\begin{align*}
\delta_{1}^{n}:=H^{n+1} - H^{n} + kPU_{x} + \tfrac{k}{2}P(H^{n}U^{n})_{x},\\
\delta_{2}^{n}:=U^{n+1} - U^{n} + kP_{0}H_{x}^{n} + \tfrac{k}{2}P_{0}(H^{n}H_{x}^{n}) + \tfrac{3k}{2}P_{0}(U^{n}U_{x}^{n}).
\end{align*}
Then
\[
\max_{0\leq n\leq M-1}(\|\delta_{1}^{n}\| + \|\delta_{2}^{n}\|)\leq Ck(k+h^{r-1}).
\]
\end{lemma}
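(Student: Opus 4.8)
The plan is to combine the semidiscrete truncation-error estimates of Lemma 5.1 with a Taylor expansion in time. First I would note that $H=P\eta$, $U=P_{0}u$ are smooth functions of $t$ taking values in $S_{h}$, $S_{h,0}$ respectively, and that by Lemma 5.1 they satisfy \eqref{eq54}--\eqref{eq55} with $\|\psi(t)\|+\|\zeta(t)\|\leq Ch^{r-1}$ for all $t\in[0,T]$. Reading $PU_{x}$ in the definition of $\delta_{1}^{n}$ as $PU_{x}(t^{n})$, the idea is simply to substitute $H_{t}(t^{n})$ and $U_{t}(t^{n})$ from \eqref{eq54}--\eqref{eq55} evaluated at $t=t^{n}$.

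By Taylor's theorem with integral remainder,
\[
H^{n+1}-H^{n}=kH_{t}(t^{n})+\int_{t^{n}}^{t^{n+1}}(t^{n+1}-s)H_{tt}(s)\,ds,
\]
and similarly for $U^{n+1}-U^{n}$. Inserting this into the definition of $\delta_{1}^{n}$ and using \eqref{eq54} at $t=t^{n}$, the two terms $kPU_{x}(t^{n})$ and $\tfrac{k}{2}P(H^{n}U^{n})_{x}$ combine with $kH_{t}(t^{n})$ to give exactly $k\psi(t^{n})$, so that
\[
\delta_{1}^{n}=k\psi(t^{n})+\int_{t^{n}}^{t^{n+1}}(t^{n+1}-s)H_{tt}(s)\,ds,
\]
whence $\|\delta_{1}^{n}\|\leq k\|\psi(t^{n})\|+\tfrac{k^{2}}{2}\max_{[0,T]}\|H_{tt}\|$. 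The identical computation with \eqref{eq55} yields $\delta_{2}^{n}=k\zeta(t^{n})+\int_{t^{n}}^{t^{n+1}}(t^{n+1}-s)U_{tt}(s)\,ds$ and hence $\|\delta_{2}^{n}\|\leq k\|\zeta(t^{n})\|+\tfrac{k^{2}}{2}\max_{[0,T]}\|U_{tt}\|$.

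It then remains to bound $\|H_{tt}\|$ and $\|U_{tt}\|$ uniformly in $h$. Since $H=P\eta$ and $P$ is the orthogonal projection in $L^{2}$, $\|H_{tt}\|=\|P\eta_{tt}\|\leq\|\eta_{tt}\|$, which is finite and independent of $h$ by the assumed smoothness of the solution of \eqref{eqssw} (the regularity $\eta_{tt},u_{tt}\in C(0,T;L^{2})$ needed here follows by differentiating the p.d.e.'s in $t$); likewise $\|U_{tt}\|=\|P_{0}u_{tt}\|\leq\|u_{tt}\|$. Combining these with $\|\psi(t^{n})\|,\|\zeta(t^{n})\|\leq Ch^{r-1}$ gives $\|\delta_{1}^{n}\|+\|\delta_{2}^{n}\|\leq Ckh^{r-1}+Ck^{2}=Ck(k+h^{r-1})$, uniformly in $n$, which is the assertion. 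There is no serious obstacle here; the argument is a routine consistency computation, and the only point requiring a little care is the uniform-in-$h$ control of the second time derivatives of the projected solution, which is supplied by the $L^{2}$-stability of $P$ and $P_{0}$ together with the smoothness hypotheses on $(\eta,u)$. (Note that only $\psi$, $\zeta$ themselves enter this lemma; the bounds on $\psi_{t}$, $\zeta_{t}$ from Lemma 5.1 will be used in the subsequent convergence proof.)
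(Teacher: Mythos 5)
Your argument is correct and is essentially identical to the paper's proof: both substitute \eqref{eq54}--\eqref{eq55} at $t=t^{n}$ to rewrite $\delta_{i}^{n}$ as a Taylor remainder plus $k\psi^{n}$ (resp. $k\zeta^{n}$), then invoke Lemma 5.1 and the smoothness of $(\eta,u)$. Your extra remark on the uniform-in-$h$ bound for $H_{tt}$, $U_{tt}$ via the $L^{2}$-stability of the projections is a detail the paper leaves implicit, but it is the same argument.
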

\begin{proof}
From \eqref{eq54}, \eqref{eq55} it follows
\begin{align*}
\delta_{1}^{n} & = H^{n+1} - H^{n} - kH_{t}^{n} + k\psi^{n},\\
\delta_{2}^{n} & = U^{n+1} - U^{n} - kU_{t}^{n} + k\zeta^{n},
\end{align*}
where $\psi^{n} = \psi(t^{n})$ and $\zeta^{n}=\zeta(t^{n})$. Therefore, for $0\leq n\leq M-1$, we have from
Taylor's theorem and Lemma 5.1
\begin{align*}
\|\delta_{1}^{n}\| + \|\delta_{2}^{n}\| & \leq \|H^{n+1} - H^{n} - kH_{t}^{n}\| + k\|\psi^{n}\|
+ \|U^{n+1} - U^{n} - kU_{t}^{n}\| + k\|\zeta^{n}\|\\
& \leq C(k^{2} + kh^{r-1} + k^{2} + kh^{r-1})\leq Ck(k+h^{r-1}).
\end{align*}
\end{proof}
Our stability and convergence result follows.
\begin{proposition} Let $H_{h}^{n}$, $U_{h}^{n}$ be the solution of \eqref{eq51} and $\mu=k/h^{2}$. Then there
exists a constant $C=C(\mu)$ such that
\[
\max_{0\leq n\leq M}(\|H_{h}^{n} - \eta(t^{n})\| + \|U_{h}^{n} - u(t^{n})\|\leq C(k + h^{r-1}).
\]
\end{proposition}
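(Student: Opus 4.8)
The plan is to estimate the difference between the fully discrete solution and the $L^2$-projections $H^n:=P\eta(t^n)$, $U^n:=P_0u(t^n)$ of the exact solution, and then combine this with Lemma 5.1, Lemma 5.2, the triangle inequality and the approximation property \eqref{eq21a}. Put $\Theta^n:=H^n-H_h^n\in S_h$ and $\Xi^n:=U^n-U_h^n\in S_{h,0}$; then $\Theta^0=\Xi^0=0$. Subtracting \eqref{eq52}--\eqref{eq53} from the relations defining $\delta_1^n,\delta_2^n$ in Lemma 5.2, and writing the quadratic differences through $\Theta^n,\Xi^n$ exactly as in the proof of Proposition 2.1 (so $H^nU^n-H_h^nU_h^n=H^n\Xi^n+U^n\Theta^n-\Theta^n\Xi^n$, $H^nH^n_x-H_h^nH_{hx}^n=(H^n\Theta^n)_x-\Theta^n\Theta^n_x$, and analogously for the $UU_x$ term), one gets the error equations
\[
(\Theta^{n+1}-\Theta^n,\phi)+k(\Xi^n_x,\phi)+\tfrac k2\bigl((H^nU^n-H_h^nU_h^n)_x,\phi\bigr)=(\delta_1^n,\phi),\quad\forall\phi\in S_h,
\]
\[
(\Xi^{n+1}-\Xi^n,\chi)+k(\Theta^n_x,\chi)+\tfrac k2\bigl(H^nH^n_x-H_h^nH_{hx}^n,\chi\bigr)+\tfrac{3k}2\bigl(U^nU^n_x-U_h^nU_{hx}^n,\chi\bigr)=(\delta_2^n,\chi),
\]
for all $\chi\in S_{h,0}$.

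Next I set $\phi=\Theta^n$ and $\chi=\Xi^n$, use $(v^{n+1}-v^n,v^n)=\tfrac12\|v^{n+1}\|^2-\tfrac12\|v^n\|^2-\tfrac12\|v^{n+1}-v^n\|^2$, integrate by parts as in Proposition 2.1, and add the two identities. By the symmetry of \eqref{eqssw} the leading cross terms collapse: $k(\Xi^n_x,\Theta^n)+\tfrac k2((H^n\Xi^n)_x,\Theta^n)=k\bigl([(1+\tfrac12H^n)\Xi^n]_x,\Theta^n\bigr)$, while $k(\Theta^n_x,\Xi^n)+\tfrac k2((H^n\Theta^n)_x,\Xi^n)=-k\bigl([(1+\tfrac12H^n)\Xi^n]_x,\Theta^n\bigr)$ after integration by parts (using $\Xi^n(0)=\Xi^n(1)=0$), so these cancel — exactly as $\bigl([(1+\tfrac\eta2)\xi]_x,\theta\bigr)$ cancelled between \eqref{eq214} and \eqref{eq216}. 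Likewise the remaining ``error-cubed'' pieces $\tfrac k2((\Theta^n\Xi^n)_x,\Theta^n)$ and $\tfrac k2(\Theta^n\Theta^n_x,\Xi^n)$ both reduce, after integration by parts and $\Xi^n(0)=\Xi^n(1)=0$, to $\pm\tfrac k4(\Xi^n_x\Theta^n,\Theta^n)$ and cancel, while $(\Xi^n\Xi^n_x,\Xi^n)=0$; this mirrors the cancellation of $(\xi_x\theta,\theta)$ in Proposition 2.1 and is the point where the symmetry is essential. After these cancellations the right-hand side contains only terms $k(w\Theta^n,\Theta^n)$, $k(w\Xi^n,\Xi^n)$, $k(w\Theta^n,\Xi^n)$ with $w\in\{H^n_x,U^n_x\}$ bounded in $L^\infty$ uniformly in $h$, together with $(\delta_1^n,\Theta^n)+(\delta_2^n,\Xi^n)$ and $\tfrac12\|\Theta^{n+1}-\Theta^n\|^2+\tfrac12\|\Xi^{n+1}-\Xi^n\|^2$; all but the last are bounded by $Ck(k+h^{r-1})^2+Ck(\|\Theta^n\|^2+\|\Xi^n\|^2)$ via Young's inequality and Lemma 5.2.

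The genuinely new contributions are the increments. From the first error equation $\Theta^{n+1}-\Theta^n=-kP\Xi^n_x-\tfrac k2P\bigl((H^nU^n-H_h^nU_h^n)_x\bigr)+\delta_1^n$, and the inverse inequalities \eqref{eq23}--\eqref{eq24} give $\|(H^n\Xi^n)_x\|\le Ch^{-1}\|\Xi^n\|$, $\|(U^n\Theta^n)_x\|\le Ch^{-1}\|\Theta^n\|$, $\|(\Theta^n\Xi^n)_x\|\le Ch^{-3/2}\|\Theta^n\|\,\|\Xi^n\|$, so that with $\mu=k/h^2$,
\[
\tfrac12\|\Theta^{n+1}-\Theta^n\|^2\le C\mu k\bigl(\|\Theta^n\|^2+\|\Xi^n\|^2\bigr)+C\mu^2h\,\|\Theta^n\|^2\|\Xi^n\|^2+C\|\delta_1^n\|^2,
\]
and similarly for $\Xi$; this is precisely where the stability restriction $k=O(h^2)$ (i.e.\ $\mu$ bounded) enters, since it turns $k^2h^{-2}$ into $\mu k$. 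To dispose of the quadratic remainder $\mu^2h\|\Theta^n\|^2\|\Xi^n\|^2$ I run the whole argument as an induction on $n$, carrying along the hypothesis $\|\Theta^j\|+\|\Xi^j\|\le C_0(k+h^{r-1})$ for $0\le j\le n$; since $\mu$ is fixed and $r\ge2$, $k+h^{r-1}=O(h)$, so this remainder is $\le C\mu k(\|\Theta^n\|^2+\|\Xi^n\|^2)$ and is absorbed.

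Collecting everything gives, for $0\le n\le M-1$ and $C=C(\mu,T)$ independent of $n$ and of $C_0$,
\[
\|\Theta^{n+1}\|^2+\|\Xi^{n+1}\|^2\le(1+Ck)\bigl(\|\Theta^n\|^2+\|\Xi^n\|^2\bigr)+Ck(k+h^{r-1})^2,
\]
whence, since $\Theta^0=\Xi^0=0$, the discrete Gronwall inequality yields $\|\Theta^n\|+\|\Xi^n\|\le C_1(\mu)(k+h^{r-1})$ for all $n$ with $C_1$ independent of $C_0$; choosing $C_0=2C_1(\mu)$ and $h$ small enough closes the induction. Finally $\eta(t^n)-H_h^n=(\eta(t^n)-H^n)+\Theta^n$, $u(t^n)-U_h^n=(u(t^n)-U^n)+\Xi^n$, and $\|\eta(t^n)-H^n\|+\|u(t^n)-U^n\|\le Ch^r\le Ch^{r-1}$ by \eqref{eq21a}, so the triangle inequality gives $\max_{0\le n\le M}(\|H_h^n-\eta(t^n)\|+\|U_h^n-u(t^n)\|)\le C(k+h^{r-1})$. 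The main obstacle is the interplay in the third paragraph: the explicit nature of Euler's method costs a factor $h^{-1}$ per time step through the inverse inequality, which forces $k=O(h^2)$, and one must simultaneously close the nonlinear bootstrap so that the quadratic error terms generated by the increments are of higher order; the symmetry-driven cancellations of the second paragraph, which keep the remaining constants $h$-independent, are routine adaptations of Proposition 2.1.
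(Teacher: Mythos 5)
Your proposal is correct and follows essentially the same route as the paper: comparison with the $L^{2}$ projections $H^{n}=P\eta(t^{n})$, $U^{n}=P_{0}u(t^{n})$, the consistency bounds of Lemmas 5.1--5.2, the symmetric cancellation of the leading $O(k)$ transport terms, absorption of the increment (equivalently, the $k^{2}\|P\Xi^{n}_{x}\|^{2}$-type) terms via the inverse inequality and $k=\mu h^{2}$, a bootstrap to control the nonlinear remainders, and discrete Gronwall. The only differences are organizational — you square via the polarization identity on the tested equations where the paper squares the explicit update directly, and your bootstrap carries the $L^{2}$ bound itself rather than the paper's $\|\ve^{n}\|_{\infty}+\|e^{n}\|_{\infty}\leq 1$ — and both variants close in the same way.
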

\begin{proof}
Let $\ve^{n}=H^{n} - H_{h}^{n}$ and $e^{n}=U^{n} - U_{h}^{n}$. From \eqref{eq52}, \eqref{eq53} and the definition of
$\delta_{1}^{n}$, $\delta_{2}^{n}$ in the previous lemma, we have, for $n=0,1,2,\dots,M-1$,
\begin{align*}
\ve^{n+1} & =\ve^{n} - kPe_{x}^{n} + \tfrac{k}{2}P(\ve^{n}e^{n})_{x} - \tfrac{k}{2}P(H^{n}e^{n} + U^{n}\ve^{n})_{x}
+ \delta_{1}^{n},\\
e^{n+1} & = e^{n} - kP_{0}\ve_{x}^{n} + \tfrac{k}{2}P_{0}(\ve^{n}\ve_{x}^{n}) - \tfrac{k}{2}P_{0}(H^{n}\ve^{n})_{x}
+\tfrac{3k}{2}P_{0}(e^{n}e_{x}^{n}) - \tfrac{3k}{2}P_{0}(U^{n}e^{n})_{x} + \delta_{2}^{n}
\end{align*}
Hence,
\begin{equation}
\begin{aligned}
\|\ve^{n+1}\|^{2} & = \|\ve^{n}\|^{2} - 2k(\ve^{n},e_{x}^{n}) + k(\ve^{n},(\ve^{n}e^{n})_{x})
-k(\ve^{n},(H^{n}e^{n} + U^{n}\ve^{n})_{x}) + F^{n},\\
\|e^{n+1}\|^{2} & = \|e^{n}\|^{2} - 2k(e^{n},\ve_{x}^{n}) + k(e^{n},\ve^{n}\ve_{x}^{n})
-k(e^{n},(H^{n}\ve^{n})_{x}) -3k(e^{n},(U^{n}e^{n})_{x}) + G^{n},
\end{aligned}
\label{eq56}
\end{equation}
where
\begin{align*}
F^{n} = & 2(\ve^{n},\delta_{1}^{n}) + k^{2}(Pe_{x}^{n},P[e^{n}-\ve^{n}e^{n} + H^{n}e^{n} + U^{n}\ve^{n}]_{x})
-2k(e_{x}^{n},\delta_{1}^{n})\\
& + \tfrac{k^{2}}{4}(P(\ve^{n}e^{n})_{x},P[\ve^{n}e^{n} - 2(H^{n}e^{n} + U^{n}\ve^{n})]_{x}) + k((\ve^{n}e^{n})_{x},\delta_{1}^{n})\\
& + \tfrac{k^{2}}{4}\|P(H^{n}e^{n} + U^{n}\ve^{n})_{x}\|^{2} - k((H^{n}e^{n} + U^{n}\ve^{n})_{x},\delta_{1}^{n})
+\|\delta_{1}^{n}\|^{2},
\end{align*}
and
\begin{align*}
G^{n} = & 2(e^{n},\delta_{2}^{n}) +
k^{2}(P_{0}\ve_{x}^{n},P_{0}[\ve_{x}^{n} - \ve^{n}\ve_{x}^{n}+ (H^{n}\ve^{n})_{x} - 3e^{n}e_{x}^{n} + 3(U^{n}e^{n})_{x}])
- 2k(\ve_{x}^{n},\delta_{2}^{n})\\
 & + \tfrac{k^{2}}{2}(P_{0}(\ve^{n}\ve_{x}^{n}),P_{0}[\tfrac{1}{2}\ve^{n}\ve_{x}^{n} - (H^{n}\ve^{n})_{x}
 + 3e^{n}e_{x}^{n} - 3(U^{n}e^{n})_{x}]) + k(\ve^{n}\ve_{x}^{n},\delta_{2}^{n})\\
 & + \tfrac{k^{2}}{2}(P_{0}(H^{n}\ve^{n})_{x},P_{0}[\tfrac{1}{2}(H^{n}\ve^{n})_{x} - 3e^{n}e_{x}^{n} + 3(U^{n}e^{n})_{x}])
 -k((H^{n}\ve^{n})_{x},\delta_{2}^{n})\\
& + \tfrac{9k^{2}}{2}(P_{0}(e^{n}e_{x}^{n}),P_{0}[\tfrac{1}{2}e^{n}e_{x}^{n} - (U^{n}e^{n})_{x}]) + 3k(e^{n}e_{x}^{n},\delta_{2}^{n})\\
 & + \tfrac{9k^{2}}{4}\|P_{0}(U^{n}e^{n})_{x}\|^{2} - 3k((U^{n}e^{n})_{x},\delta_{2}^{n}) + \|\delta_{2}^{n}\|^{2}.
\end{align*}
Let $0\leq n^{*}\leq M-1$ be the maximal integer such that
\[
\|\ve^{n}\|_{\infty} + \|e^{n}\|_{\infty} \leq 1, \quad 0\leq n\leq n^{*}.
\]
Then, by the approximation and inverse properties of $S_{h}$, $S_{h,0}$, \eqref{eq22b} and Lemma 5.2 we have for
$0\leq n\leq n^{*}$
\[
\|Pe_{x}^{n}\|\leq \tfrac{C}{h}\|e^{n}\|, \qquad \|P(\ve^{n}e^{n})_{x}\|\leq \tfrac{C}{h}(\|\ve^{n}\| + \|e^{n}\|),
\]
\begin{align*}
\|P(H^{n}e^{n} + U^{n}\ve^{n})_{x}\| & \leq \|(H^{n}e^{n})_{x}\| + \|(U^{n}\ve^{n})_{x}\|\\
& \leq \|(\rho^{n}e^{n})_{x}\| + \|(\eta^{n}e^{n})_{x}\| + \|(\sigma^{n}\ve^{n})_{x}\| + \|(u^{n}\ve^{n})_{x}\|\\
& \leq Ch^{r-1} + \tfrac{C}{h}(\|\ve^{n}\| + \|e^{n}\|),
\end{align*}
where $\rho^{n}=\eta^{n} - H^{n}=\eta(t^{n}) - H^{n}$, $\sigma^{n}=u^{n}-U^{n}=u(t^{n})-U^{n}$. Thus,
\begin{align*}
\|F^{n}\|\leq & \tfrac{Ck^{2}}{h^{2}}(\|e^{n}\|^{2} + \|\ve^{n}\|^{2})
+ \tfrac{Ck^{2}}{h}h^{r-1}(\|e^{n}\| + \|\ve^{n}\|) + Ck^{2}h^{2r-2}\\
& + \|\ve^{n}\|\|\delta_{1}^{n}\| + \tfrac{Ck}{h}(\|e^{n}\| + \|\ve^{n}\|)\|\delta_{1}^{n}\|
+ Ckh^{r-1}\|\delta_{1}^{n}\| + \|\delta_{1}^{n}\|^{2},
\end{align*}
and therefore
\begin{align*}
\|F^{n}\|\leq & \mu Ck(\|e^{n}\|^{2} + \|\ve^{n}\|^{2}) + \tfrac{Ck^{2}}{h}(h^{2r-1}
+ \tfrac{\|e^{n}\|^{2} + \|\ve^{n}\|^{2}}{h}) + Ck^{2}h^{2r-2}\\
& + Ck\|\ve^{n}\|(k+h^{r-1}) + Ckh^{r-1}\|\delta_{1}^{n}\| + C\|\delta_{1}^{n}\|^{2}\\
\leq & \mu Ck(\|e^{n}\|^{2} + \|\ve^{n}\|^{2}) + Ck^{2}h^{2r-2}\\
& + Ck(\|e^{n}\|^{2} + \|\ve^{n}\|^{2}) + Ck(k+h^{r-1})^{2} + Ck^{2}h^{r-1}(k+h^{r-1})\\
\leq & C_{\mu}k(\|e^{n}\|^{2} + \|\ve^{n}\|^{2}) + Ckh^{2r-2} + Ck(k+h^{r-1})^{2},
\end{align*}
where $C_{\mu}$ is a polynomial of $\mu$ of degree one and with positive coefficients. Hence, for $0\leq n\leq n^{*}$,
\begin{equation}
\|F^{n}\| \leq C_{\mu}k(\|e^{n}\|^{2} + \|\ve^{n}\|^{2}) + Ck(k+h^{r-1})^{2}.
\label{eq57}
\end{equation}
Similarly, for $0\leq n\leq n^{*}$
\begin{equation}
\|G^{n}\| \leq C_{\mu}k(\|e^{n}\|^{2} + \|\ve^{n}\|^{2}) + Ck(k+h^{r-1})^{2}.
\label{eq58}
\end{equation}
Adding now the two equations of \eqref{eq56} we obtain
\[
\|\ve^{n+1}\|^{2} + \|e^{n+1}\|^{2} = \|\ve^{n}\|^{2} + \|e^{n}\|^{2} - k(e^{n},H_{x}^{n}\ve^{n})
+ k(\ve_{x}^{n},U^{n}\ve^{n}) + 3k(e_{x}^{n},U^{n}e^{n}) + F^{n} + G^{n},
\]
or
\[
\|\ve^{n+1}\|^{2} + \|e^{n+1}\|^{2} = \|\ve^{n}\|^{2} + \|e^{n}\|^{2} - k(e^{n},H_{x}^{n}\ve^{n})
- \tfrac{k}{2}(\ve^{n},U_{x}^{n}\ve^{n}) - \tfrac{3k}{2}(e^{n},U_{x}^{n}e^{n}) + F^{n} + G^{n}.
\]
But, for $0\leq n\leq n^{*}$,
\[
\abs{(e^{n},H_{x}^{n}\ve^{n})} \leq \abs{(e^{n},\rho_{x}^{n}\ve^{n})} + \abs{(e^{n},\eta_{x}^{n}\ve^{n})}
\leq Ch^{r-1}\|e^{n}\| + C\|\ve^{n}\|\|e^{n}\|,
\]
and similarly
\[
\abs{(\ve^{n},U_{x}^{n}\ve^{n})} + \abs{(e^{n},U_{x}^{n}e^{n})}\leq Ch^{r-1}(\|\ve^{n}\| + \|e^{n}\|)
+ C(\|\ve^{n}\|^{2} + \|e^{n}\|^{2}).
\]
Therefore, taking into account \eqref{eq57} and \eqref{eq58}, we obtain for $0\leq n\leq n^{*}$
\[
\|\ve^{n+1}\|^{2} + \|e^{n+1}\|^{2} \leq (1+C_{\mu}k)(\|\ve^{n}\|^{2} + \|e^{n}\|^{2}) + Ck(k+h^{r-1})^{2}.
\]
Hence, from the discrete Gronwall's lemma we see that there exists a constant $C(\mu,T)$ such that
\[
\|\ve^{n}\|^{2} + \|e^{n}\|^{2}\leq C(\mu,T)\bigl[\|\ve^{0}\|^{2} + \|e^{0}\|^{2} + (k + h^{r-1})^{2}\bigr],
\quad \text{for} \quad 0\leq n\leq n^{*}+1.
\]
We conclude, since $\ve^{0}=e^{0}=0$, that
\begin{equation}
\|\ve^{n}\| + \|e^{n}\| \leq C(\mu,T)(k+h^{r-1}),\quad 0\leq n\leq n^{*}+1.
\label{eq59}
\end{equation}
Since $k=\mu h^{2}$, we see by \eqref{eq24}, for $h$ sufficiently small, that
$\|\ve^{n^{*}+1}\|_{\infty} + \|e^{n^{*}+1}\|_{\infty} \leq 1$. This contradicts the maximal property of $n^{*}$
and we may reach $n^{*}=M-1$ in \eqref{eq59}. Since $H_{h}^{n} - \eta(t^{n})=-(\rho^{n} + \ve^{n})$,
$U_{h}^{n} - u(t^{n}) = - (\sigma^{n} + e^{n})$, the conclusion of the proposition follows.
\end{proof}
\subsection{The improved Euler method}
We next study the temporal discretization of the semidiscrete problem \eqref{eq27}, \eqref{eq28} by the explicit, second-order accurate
`improved Euler' scheme (the explict midpoint method), which for the o.d.e. $y'=f(t,y)$ may be written in the form
\begin{align*}
y^{n,1} & = y^{n} + \tfrac{k}{2}f(t^{n},y^{n}),\\
y^{n+1} & = y^{n} + kf(t^{n}+\tfrac{k}{2},y^{n,1}).
\end{align*}
Using the notation introduced in the previous subsection, but assuming now that $r\geq 3$, we seek approximations
$H_{h}^{n}\in S_{h}$, $U_{h}^{n}\in S_{h,0}$ of $\eta(\cdot,t^{n})$, $u(\cdot,t^{n})$, respectively, where $(\eta,u)$ is the solution
of the \eqref{eqssw}, and $H_{h}^{n,1}\in S_{h}$, $U_{h}^{n,1}\in S_{h,0}$, that are given for $0\leq n\leq M-1$ by the equations
\begin{equation}
\begin{aligned}
(H^{n,1}_{h},\phi) - (H^{n}_{h},\phi) + \tfrac{k}{2}(U^{n}_{hx},\phi)
+ \tfrac{k}{4}((H^{n}_{h}U^{n}_{h})_{x},\phi) & = 0, \quad \forall \phi \in S_{h}^{r},
\\
(U^{n,1}_{h},\chi) - (U_{h}^{n},\chi) + \tfrac{k}{2}(H^{n}_{hx},\chi)
+ \tfrac{k}{4}(H^{n}_{h}H^{n}_{hx},\chi) +
\tfrac{3k}{4}(U^{n}_{h}U^{n}_{hx},\chi) & = 0, \quad \forall \chi \in S_{h,0}^{r}, \\
(H^{n+1}_{h},\phi) - (H^{n}_{h},\phi) + k(U^{n,1}_{hx},\phi)
+ \tfrac{k}{2}((H^{n,1}_{h}U^{n,1}_{h})_{x},\phi) & = 0, \quad \forall \phi \in S_{h}^{r},
\\
(U^{n+1}_{h},\chi) - (U_{h}^{n},\chi) + k(H^{n,1}_{hx},\chi)
+ \tfrac{k}{2}(H^{n,1}_{h}H^{n,1}_{hx},\chi) +
\tfrac{3k}{2}(U^{n,1}_{h}U^{n,1}_{hx},\chi) & = 0, \quad \forall \chi \in S_{h,0}^{r},
\end{aligned}
\label{equ510}
\end{equation}
with
\[
H^{0}_{h}=P\eta_{0}, \qquad U_{h}^{0}=P_{0}u_{0}.
\]
The equations \eqref{equ510} may be written in the form
\begin{equation}
\begin{aligned}
H_{h}^{n,1} - H_{h}^{n} + \tfrac{k}{2}PU_{hx}^{n} + \tfrac{k}{4}P(H^{n}_{h}U^{n}_{h})_{x}&=0,
\\
U_{h}^{n,1} - U_{h}^{n} + \tfrac{k}{2}P_{0}H_{hx}^{n} + \tfrac{k}{4}P_{0}\bigl(H^{n}_{h}H^{n}_{hx}\bigr)
+ \tfrac{3k}{4}P_{0}\bigl(U^{n}_{h}U^{n}_{hx}\bigr)&=0,\\
H_{h}^{n+1} - H_{h}^{n} + kPU_{hx}^{n,1} + \tfrac{k}{2}P(H^{n,1}_{h}U^{n,1}_{h})_{x}&=0,
\\
U_{h}^{n+1} - U_{h}^{n} + kP_{0}H_{hx}^{n,1} + \tfrac{k}{2}P_{0}\bigl(H^{n,1}_{h}H^{n,1}_{hx}\bigr)
+ \tfrac{3k}{2}P_{0}\bigl(U^{n,1}_{h}U^{n,1}_{hx}\bigr)&=0.
\end{aligned}
\label{equ511}
\end{equation}
We start again by estimating the continuous-time truncation error.
\begin{lemma} Suppose that $(\eta,u)$ is the solution of \eqref{eqssw} on $[0,T]$. Let $H(t)=P\eta(t)$, $U(t)=P_{0}u(t)$,
\,$\psi(t)$ $\in$ $S_{h}$, \,$\zeta(t)$ $\in$ $S_{h,0}$ for $0\leq t\leq T$ be such that
\begin{equation}
\begin{aligned}
H_{t} + PU_{x} + \tfrac{1}{2}P(HU)_{x}&=\psi,
\\
U_{t}+ P_{0}H_{x} + \tfrac{1}{2}P_{0}\bigl(HH_{x}\bigr)
+ \tfrac{3}{2}P_{0}\bigl(UU_{x}\bigr)&=\zeta.
\end{aligned}
\label{equ512}
\end{equation}
Then
\begin{equation}
\|\psi\|+ \|\psi_{t}\| + \|\zeta\|+\|\zeta_{t}\| \leq Ch^{r-1}.
\label{equ513}
\end{equation}
\end{lemma}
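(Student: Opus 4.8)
The plan is to observe that the defining relations \eqref{equ512} for $\psi$ and $\zeta$ are \emph{identical} to the relations \eqref{eq54}--\eqref{eq55} used in Lemma 5.1; the two-stage structure of the improved Euler scheme plays no role here, since we are only estimating the continuous-time truncation error of the $L^{2}$ projections. Hence the proof is verbatim that of Lemma 5.1, and I would simply carry it out (or cite it).

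Concretely, first I would subtract the first equation of \eqref{equ512} from $P\bigl(\eta_{t} + u_{x} + \tfrac{1}{2}(\eta u)_{x}\bigr) = 0$, which holds because $(\eta,u)$ solves \eqref{eqssw} and $P$ is linear and commutes with $\partial_{t}$. Writing $\rho := \eta - H = \eta - P\eta$ and $\sigma := u - U = u - P_{0}u$, and expanding $\eta u - HU = (1+\tfrac{1}{2}\eta)\sigma + \tfrac{1}{2}u\rho - \tfrac{1}{2}\rho\sigma$ under the $x$-derivative (up to the terms already accounted for by $U$), I would obtain
\[
P\bigl([(1+\tfrac{1}{2}\eta)\sigma]_{x} + \tfrac{1}{2}(u\rho)_{x} - \tfrac{1}{2}(\rho\sigma)_{x}\bigr) = -\psi.
\]
Then, using the $L^{2}$-stability of $P$ together with the approximation estimates $\|\rho\|_{1}, \|\sigma\|_{1} \leq Ch^{r-1}$ and $\|\rho\|_{\infty}, \|\sigma\|_{\infty} \leq Ch^{r}$ from \eqref{eq21a} and \eqref{eq22b}, each of the three terms is bounded by $Ch^{r-1}$, $Ch^{r-1}$, and $Ch^{2r-1}$ respectively, giving $\|\psi\| \leq Ch^{r-1}$. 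An analogous computation starting from $P_{0}(u_{t} + \eta_{x} + \tfrac{1}{2}\eta\eta_{x} + \tfrac{3}{2}uu_{x}) = 0$ and the expansion $\tfrac{1}{2}\eta\eta_{x} + \tfrac{3}{2}uu_{x} - (\tfrac{1}{2}HH_{x} + \tfrac{3}{2}UU_{x})$ rewritten in terms of $\rho,\sigma$ yields $\|\zeta\| \leq Ch^{r-1}$.

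For the time-derivative bounds I would differentiate the same two identities in $t$ (legitimate since $P\partial_{t} = \partial_{t}P$ and $\eta_{t}, u_{t}$ are as smooth as required by the hypothesis that $(\eta,u)$ is a smooth solution of \eqref{eqssw} on $[0,T]$), noting that $\rho_{t} = \eta_{t} - P\eta_{t}$ and $\sigma_{t} = u_{t} - P_{0}u_{t}$ satisfy the same kind of approximation estimates, so exactly the same term-by-term bounding gives $\|\psi_{t}\| + \|\zeta_{t}\| \leq Ch^{r-1}$. Adding the four estimates gives \eqref{equ513}. There is no genuine obstacle: the only point to keep in mind is that the constant $C$ depends on norms of $\eta$ and $u$ (and their $t$-derivatives) that are finite under the standing smoothness assumption, so the proof is routine once one recognizes it as a restatement of Lemma 5.1.
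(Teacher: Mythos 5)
Your proposal is correct and coincides with the paper's own argument: the paper's proof of this lemma is literally a repetition of that of Lemma 5.1, subtracting \eqref{equ512} from the projected p.d.e., writing the differences in terms of $\rho=\eta-P\eta$, $\sigma=u-P_{0}u$ to get $P\bigl([(1+\tfrac{1}{2}\eta)\sigma]_{x}+\tfrac{1}{2}(u\rho)_{x}-\tfrac{1}{2}(\rho\sigma)_{x}\bigr)=-\psi$ and the analogous identity for $\zeta$, and then invoking the approximation properties of $S_{h}$, $S_{h,0}$ term by term, with the $t$-differentiated identities handled identically. Nothing is missing.
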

\begin{proof} Subtracting $\eqref{equ512}_{1}$ from the equation $P\bigl(\eta_{t}+u_{x}+\tfrac{1}{2}(\eta u)_{x}\bigr)=0$,
and setting $\rho:=\eta- H$, $\sigma:=u-U$, we get
\[
P\bigl([(1+\tfrac{1}{2}\eta)\sigma]_{x} + \tfrac{1}{2}(u\rho)_{x}-\tfrac{1}{2}(\rho\sigma)_{x}\bigr)=-\psi.
\]
Thus, from the approximation properties of $S_{h}$, $S_{h,0}$, we have
\[
\|\psi\| \leq \|P[(1+\tfrac{1}{2}\eta)\sigma]_{x}\|+\tfrac{1}{2}\|P(u\rho)_{x}\|
+ \tfrac{1}{2}\|P(\rho\sigma)_{x}\| \leq Ch^{r-1}.
\]
Similarly,
\[
\|\psi_{t}\|\leq \|P[(1+\tfrac{1}{2}\eta)\sigma]_{xt}\|+\tfrac{1}{2}\|P(u\rho)_{xt}\|
+ \tfrac{1}{2}\|P(\rho\sigma)_{xt}\| \leq Ch^{r-1}.
\]
Subtracting $\eqref{equ512}_{2}$ from the equation $P_{0}\bigl(u_{t}+\eta_{x} + \tfrac{1}{2}\eta\eta_{x}+\tfrac{3}{2}uu_{x}\bigr)=0$
we have
\[
P_{0}\bigl(\rho_{x} + \tfrac{1}{2}(\eta\rho)_{x}-\tfrac{1}{2}\rho\rho_{x}
+ \tfrac{3}{2}(u\sigma)_{x} - \tfrac{3}{2}\sigma\sigma_{x}\bigr)=-\zeta.
\]
Therefore
\[
\|\zeta\|\leq \|P_{0}[(1+\tfrac{1}{2}\eta)\rho]_{x}\| + \tfrac{1}{2}\|P_{0}(\rho\rho_{x})\|
+\tfrac{3}{2}\|P_{0}(u\sigma)_{x}\| + \tfrac{3}{2}\|P_{0}(\sigma\sigma_{x})\|\leq Ch^{r-1},
\]
and
\[
\|\zeta_{t}\|\leq \|P_{0}[(1+\tfrac{1}{2}\eta)\rho]_{xt}\| + \tfrac{1}{2}\|P_{0}(\rho\rho)_{xt}\|
+\tfrac{3}{2}\|P_{0}(u\sigma)_{xt}\| + \tfrac{3}{2}\|P_{0}(\sigma\sigma)_{xt}\|\leq Ch^{r-1}.
\]
\end{proof}
In order to estimate the local error of the scheme, we let $H^{n}=H(t^{n})=P\eta(t^{n})$, $U^{n}=U(t^{n})=P_{0}u(t^{n})$
and define the functions $(H^{n,1},U^{n,1}) \in S_{h}\times S_{h,0}$ for $0\leq n\leq M-1$ by the equations
\begin{equation}
\begin{aligned}
H^{n,1} - H^{n} & + \tfrac{k}{2}PU_{x}^{n} + \tfrac{k}{4}P(H^{n}U^{n})_{x}=0, \\
U^{n,1} - U^{n} & + \tfrac{k}{2}P_{0}H_{x}^{n} + \tfrac{k}{4}P_{0}(H^{n}H_{x}^{n}) + \tfrac{3k}{4}P_{0}(U^{n}U_{x}^{n})=0.
\end{aligned}
\label{equ514}
\end{equation}
\begin{lemma} Suppose that $(\eta,u)$ is the solution of \eqref{eqssw} on $[0,T]$ and let $\lambda=k/h$. Define
$(\delta_{1}^{n}, \delta_{2}^{n})\in S_{h}\times S_{h,0}$ for $0\leq n\leq M-1$ by the relations
\begin{equation}
\begin{aligned}
\delta_{1}^{n}& :=H^{n+1} - H^{n} + kPU_{x}^{n,1} + \tfrac{k}{2}P(H^{n,1}U^{n,1})_{x}, \\
\delta_{2}^{n}& :=U^{n+1} - U^{n} + kP_{0}H_{x}^{n,1} + \tfrac{k}{2}P_{0}(H^{n,1}H_{x}^{n,1})
+ \tfrac{3k}{2}P_{0}(U^{n,1}U_{x}^{n,1}).
\end{aligned}
\label{equ515}
\end{equation}
Then there exists a constant $C=C(\lambda)$, depending polynomially on $\lambda$, so that
\begin{equation}
\max_{0\leq n\leq M-1}(\|\delta_{1}^{n}\| + \|\delta_{2}^{n}\|) \leq Ck(k^{2} + h^{r-1}).
\label{equ516}
\end{equation}
\end{lemma}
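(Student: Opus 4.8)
The plan is to follow the proof of the consistency estimate for the explicit Euler method, but to push the Taylor expansions one order further, and to exploit that $H(t)=P\eta(t)$ and $U(t)=P_0u(t)$ are $L^2$-projections of smooth functions, so that all their time derivatives are bounded in $L^2$ uniformly in $h$. Introduce the (at most quadratic) semidiscrete operators
\[
\Phi_1(H,U):=-PU_x-\tfrac{1}{2}P(HU)_x,\qquad
\Phi_2(H,U):=-P_0H_x-\tfrac{1}{2}P_0(HH_x)-\tfrac{3}{2}P_0(UU_x),
\]
and write $\Phi=(\Phi_1,\Phi_2)$, $W=(H,U)$, $g=(\psi,\zeta)$. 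Then \eqref{equ512} reads $W_t=\Phi(W)+g$; the definition \eqref{equ514} says $W^{n,1}=W^n+\tfrac{k}{2}\Phi(W^n)$; and \eqref{equ515} says $(\delta_1^n,\delta_2^n)=\delta^n=W^{n+1}-W^n-k\Phi(W^{n,1})$, with $W^n=W(t^n)$; in the product-space norm $\delta_1^n$ and $\delta_2^n$ are then treated simultaneously.

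First I would Taylor-expand $W^{n+1}=W(t^{n+1})$ about $t^n$ to third order with integral remainder, $W^{n+1}=W^n+kW_t^n+\tfrac{k^2}{2}W_{tt}^n+R^n$; since $W_{ttt}=(P\eta_{ttt},P_0u_{ttt})$ is $L^2$-bounded uniformly in $h$, one gets $\|R^n\|\leq Ck^3$ with $C$ independent of $h$ — this is the point that keeps the leading temporal remainder harmless, and it is exactly why the Euler proof required no inverse estimate. Substituting $W_t^n=\Phi(W^n)+g^n$ and $W_{tt}^n=\Phi'(W^n)\bigl(\Phi(W^n)+g^n\bigr)+g_t^n$ (with $\Phi'(W)$ the Fréchet derivative, a fixed bounded linear operator, and $\Phi''$ its constant symmetric bilinear second derivative, constant because $\Phi$ is quadratic) and using that the expansion of $\Phi(W^{n,1})=\Phi\bigl(W^n+\tfrac{k}{2}\Phi(W^n)\bigr)$ in powers of $k$ therefore terminates exactly,
\[
\Phi(W^{n,1})=\Phi(W^n)+\tfrac{k}{2}\Phi'(W^n)(\Phi(W^n))+\tfrac{k^2}{8}\Phi''\bigl(\Phi(W^n),\Phi(W^n)\bigr),
\]
the $O(1)$ and the $\tfrac{k^2}{2}\Phi'(W^n)(\Phi(W^n))$ contributions cancel (this is the second-order consistency of the improved Euler scheme), leaving
\[
\delta^n=kg^n+\tfrac{k^2}{2}\bigl(\Phi'(W^n)(g^n)+g_t^n\bigr)+R^n-\tfrac{k^3}{8}\Phi''\bigl(\Phi(W^n),\Phi(W^n)\bigr).
\]

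It then remains to bound the four groups. By the preceding lemma (estimate \eqref{equ513}), $k\|g^n\|\leq Ckh^{r-1}$ and $\tfrac{k^2}{2}\|g_t^n\|\leq Ck^2h^{r-1}\leq Ckh^{r-1}$. In $\Phi'(W^n)(g^n)$ exactly one spatial differentiation falls on $g^n\in S_h\times S_{h,0}$, so by the inverse inequality \eqref{eq23} and \eqref{equ513} one has $\|\Phi'(W^n)(g^n)\|\leq Ch^{-1}\|g^n\|\leq Ch^{r-2}$, and this contribution is $Ck^2h^{r-2}=C\lambda\,kh^{r-1}$. The remainder is already $\|R^n\|\leq Ck^3$. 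For the genuinely nonlinear $k^3$ term I would rewrite $\Phi(W^n)=W_t^n-g^n=(P\eta_t^n,P_0u_t^n)-g^n=:V^n-g^n$ and expand by bilinearity, $\Phi''(\Phi(W^n),\Phi(W^n))=\Phi''(V^n,V^n)-2\Phi''(V^n,g^n)+\Phi''(g^n,g^n)$: the first piece is a sum of terms $\pm P(\text{product of components of }V^n)_x$ that are $L^2$-bounded uniformly in $h$ by \eqref{eq22a}, \eqref{eq21a} (projections of smooth functions, one derivative), contributing $Ck^3$; the mixed and purely $g^n$ pieces, handled with one or two uses of the inverse inequalities \eqref{eq23}, \eqref{eq24} together with \eqref{equ513}, are $O(k^3h^{r-2})$ and $O(k^3h^{2r-7/2})$ respectively, both $\leq Ck^3$ since $r\geq3$. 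Collecting everything gives \eqref{equ516} with $C$ depending polynomially (in fact linearly) on $\lambda$.

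The step I expect to be the main obstacle is the bookkeeping of the last term $\tfrac{k^3}{8}\Phi''\bigl(\Phi(W^n),\Phi(W^n)\bigr)$: the naive move of estimating $\Phi(W^n)$ crudely — in particular bounding it in $L^\infty$ through an inverse inequality — costs an extra half-power of $h$ and breaks the estimate, so one must recognize $\Phi(W^n)$ as $(P\eta_t^n,P_0u_t^n)$ up to an $O(h^{r-1})$ correction and treat its smooth part exactly; the same philosophy (never differentiating a projection of a smooth function via an inverse estimate, since $(P\eta)^{(j)}=P\eta^{(j)}$ in time) underlies the bound on $R^n$. Everything else is routine Taylor expansion and the approximation, stability and inverse properties recorded in Section 2.
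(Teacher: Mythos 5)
Your proposal is correct and follows essentially the same route as the paper's proof: a third-order Taylor expansion in $t$ of $H^{n+1},U^{n+1}$ (harmless because $\partial_t^jH=P\partial_t^j\eta$, $\partial_t^jU=P_0\partial_t^ju$ are $L^2$-bounded uniformly in $h$), insertion of the truncation errors $\psi,\zeta$ from Lemma 5.4, exact evaluation of the quadratic nonlinearity at the intermediate stage, and inverse inequalities spent only on the terms carrying $\psi^n,\zeta^n$, which is exactly where the polynomial $\lambda$-dependence arises. The abstract packaging via the quadratic operator $\Phi$ and its terminating Taylor expansion is only a notational reorganization of the paper's componentwise computation of $H^{n,1}U^{n,1}$, $H^{n,1}H_x^{n,1}$, $U^{n,1}U_x^{n,1}$ and the remainders $w_1^n,w_2^n,w_3^n$.
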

\begin{proof} Let $0\leq n\leq M-1$. From \eqref{equ514}, \eqref{equ512} we obtain
\begin{equation}
H^{n,1} = H^{n} + \tfrac{k}{2}H_{t}^{n} - \tfrac{k}{2}\psi^{n}, \qquad
U^{n,1} =U^{n} +\tfrac{k}{2}U_{t}^{n} - \tfrac{k}{2}\zeta^{n},
\label{equ517}
\end{equation}
where $\psi^{n}=\psi(t^{n})$ and $\zeta^{n}=\zeta(t^{n})$. From these equations we get
\[
H^{n,1}U^{n,1} = H^{n}U^{n} + \tfrac{k}{2}(HU)_{t}^{n} + w_{1}^{n},
\]
where
\begin{equation}
w_{1}^{n} = \tfrac{k^{2}}{4}H_{t}^{n}U_{t}^{n} - \tfrac{k}{2}(U^{n} + \tfrac{k}{2}U_{t}^{n})\psi^{n}
-\tfrac{k}{2}(H^{n} + \tfrac{k}{2}H_{t}^{n})\zeta^{n} + \tfrac{k^{2}}{4}\psi^{n}\zeta^{n}.
\label{equ518}
\end{equation}
From $(\ref{equ515})_{1}$, \eqref{equ517}, $(\ref{equ512})_{1}$, and \eqref{equ518} we see that
\begin{equation}
\delta_{1}^{n}=H^{n+1} - H^{n} - kH_{t}^{n} - \tfrac{k^{2}}{2}H_{tt}^{n} + k\psi^{n} + \tfrac{k^{2}}{2}\psi_{t}^{n}
-\tfrac{k^{2}}{2}P\zeta_{x}^{n} + \tfrac{k}{2}Pw_{1x}^{n}.
\label{equ519}
\end{equation}
From \eqref{equ515}, taking into account \eqref{equ513} and the approximation and inverse properties of $S_{h}$, $S_{h,0}$,
we obtain
\begin{align*}
\|w_{1}^{n}\|_{1} & \leq C(k^{2}\|H_{t}^{n}\|_{1}\|U_{t}^{n}\|_{1} + k\|\psi^{n}\|_{1}(\|U^{n}\|+k\|U_{t}^{n}\|)\\
&\,\,\,\,\, + k\|\zeta^{n}\|_{1}(\|H^{n}\|_{1} + k\|H_{t}^{n}\|_{1}) + k^{2}\|\psi^{n}\|_{1}\|\zeta^{n}\|_{1})\\
& \leq C(k^{2} + (\lambda + \lambda^{2})h^{r-1}).
\end{align*}
Hence, from Taylor's theorem, \eqref{equ513} and \eqref{equ519} we get
\begin{equation}
\|\delta_{1}^{n}\| \leq C_{1}k(k^{2} + h^{r-1}),
\label{equ520}
\end{equation}
where $C_{1}$ is a polynomial of $\lambda$ with positive coefficients. To estimate $\|\delta_{2}^{n}\|$ we have
from \eqref{equ517}
\begin{equation}
H^{n,1}H_{x}^{n,1} = H^{n}H_{x}^{n} + \tfrac{k}{2}(HH_{x})_{t}^{n} + w_{2}^{n},
\label{equ521}
\end{equation}
where
\[
w_{2}^{n}:=\tfrac{k^{2}}{4}H_{t}^{n}H_{tx}^{n}-\tfrac{k}{2}\bigl((H^{n} + \tfrac{k}{2}H_{t}^{n})\psi^{n}\bigr)_{x}
+\tfrac{k^{2}}{4}\psi^{n}\psi_{x}^{n}.
\]
From \eqref{equ513} and the approximation and inverse properties of $S_{h}$ we obtain
\begin{equation}
\|w_{2}^{n}\| \leq C(k^{2} + \lambda h^{r-1}).
\label{equ522}
\end{equation}
Similarly,
\begin{equation}
U^{n,1}U_{x}^{n,1} = U^{n}U_{x}^{n} + \tfrac{k}{2}(UU_{x})_{t}^{n} + w_{3}^{n},
\label{equ523}
\end{equation}
where
\[
w_{3}^{n}:=\tfrac{k^{2}}{4}U_{t}^{n}U_{tx}^{n}-\tfrac{k}{2}\bigl((U^{n} + \tfrac{k}{2}U_{t}^{n})\zeta^{n}\bigr)_{x}
+\tfrac{k^{2}}{4}\zeta^{n}\zeta_{x}^{n}.
\]
With similar estimates we obtain
\begin{equation}
\|w_{3}^{n}\| \leq C(k^{2} + \lambda h^{r-1}).
\label{equ524}
\end{equation}
Now, from $(\ref{equ515})_{1}$, \eqref{equ517}, \eqref{equ521}, \eqref{equ523} we see that
\[
\delta_{2}^{n}=(U^{n+1} - U^{n} - kU_{t}^{n} - \tfrac{k^{2}}{2}U_{tt}^{n}) + k\zeta^{n} + \tfrac{k^{2}}{2}\zeta_{t}^{n}
- \tfrac{k^{2}}{2}P_{0}\psi_{x}^{n} + \tfrac{k}{2}P_{0}w_{2}^{n} + \tfrac{3k}{2}P_{0}w_{3}^{n},
\]
and therefore, from Taylor's theorem, \eqref{equ513}, \eqref{equ522}, \eqref{equ524} we get
\[
\|\delta_{2}^{n}\| \leq C_{1}k(k^{2}+h^{r-1}),
\]
for some constant $C_{1}$ which is a polynomial of $\lambda$ with positive coefficients. This inequality and \eqref{equ520}
conclude the proof.
\end{proof}
In order to prove the main error estimate for the scheme, we state and prove some preliminary results. Given $H^{n}$, $U^{n}$
defined as before we define the operators $A : S_{h}\times S_{h,0} \to S_{h}$, $B : S_{h}\times S_{h,0} \to S_{h,0}$
for $\phi \in S_{h}$, $\chi \in S_{h,0}$ by the equations
\begin{equation}
\begin{aligned}
A(\phi,\chi) &: = \tfrac{1}{4}P(U^{n}\phi)_{x} + \tfrac{1}{4}P(H^{n}\chi)_{x} + \tfrac{1}{2}P\chi' - \tfrac{1}{4}P(\phi\chi)',\\
B(\phi,\chi) &: = \tfrac{1}{4}P_{0}(H^{n}\phi)_{x} + \tfrac{3}{4}P_{0}(U^{n}\chi)_{x} + \tfrac{1}{2}P_{0}\phi'
-\tfrac{1}{4}P_{0}(\phi\phi') - \tfrac{3}{4}P_{0}(\chi\chi').
\end{aligned}
\label{equ525}
\end{equation}
\begin{lemma} If $\kp \in \mathbb{R}$ and $\phi \in S_{h}$, $\chi \in S_{h,0}$, then
\begin{equation}
\begin{aligned}
A(\phi_{1}-\kp\phi_{2},\chi_{1}-\kp\chi_{2}) & = A(\phi_{1},\chi_{1})-\kp A(\phi_{2},\chi_{2})
+ \tfrac{\kp}{4}P(\phi_{1}\chi_{2} + \phi_{2}\chi_{1})' - \tfrac{\kp + \kp^{2}}{4}P(\phi_{2}\chi_{2})', \\
B(\phi_{1}-\kp\phi_{2},\chi_{1}-\kp\chi_{2}) & = B(\phi_{1},\chi_{1})-\kp B(\phi_{2},\chi_{2})
+ \tfrac{\kp}{4}P_{0}(\phi_{1}\phi_{2})' + \tfrac{3\kp}{4}P_{0}(\chi_{1}\chi_{2})' \\
&\quad
- \tfrac{\kp+\kp^{2}}{4}P_{0}(\phi_{2}\phi_{2}')
-\tfrac{3(\kp+\kp^{2})}{4}P_{0}(\chi_{2}\chi_{2}').
\end{aligned}
\label{equ526}
\end{equation}
\end{lemma}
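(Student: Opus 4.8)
The plan is purely algebraic: split each of $A$ and $B$ into the part of its defining expression \eqref{equ525} that is \emph{linear} in the pair of arguments and the part that is \emph{quadratic}, substitute $(\phi_1-\kp\phi_2,\chi_1-\kp\chi_2)$, and use the linearity of $P$, $P_0$ and of differentiation to expand. Concretely, write $A=L_A+Q_A$ with $L_A(\phi,\chi):=\tfrac14 P(U^{n}\phi)_{x}+\tfrac14 P(H^{n}\chi)_{x}+\tfrac12 P\chi'$ (linear) and $Q_A(\phi,\chi):=-\tfrac14 P(\phi\chi)'$ (bilinear), and $B=L_B+Q_B$ with $L_B(\phi,\chi):=\tfrac14 P_0(H^{n}\phi)_{x}+\tfrac34 P_0(U^{n}\chi)_{x}+\tfrac12 P_0\phi'$ and $Q_B(\phi,\chi):=-\tfrac14 P_0(\phi\phi')-\tfrac34 P_0(\chi\chi')$. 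Since $L_A$ and $L_B$ are linear, evaluating them at $(\phi_1-\kp\phi_2,\chi_1-\kp\chi_2)$ gives $L_A(\phi_1,\chi_1)-\kp L_A(\phi_2,\chi_2)$ and $L_B(\phi_1,\chi_1)-\kp L_B(\phi_2,\chi_2)$ with no remainder.

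Next I would expand the quadratic parts. For $A$, bilinearity of $(\phi,\chi)\mapsto\phi\chi$ gives
\[
(\phi_1-\kp\phi_2)(\chi_1-\kp\chi_2)=\phi_1\chi_1-\kp(\phi_1\chi_2+\phi_2\chi_1)+\kp^{2}\phi_2\chi_2,
\]
so that $Q_A(\phi_1-\kp\phi_2,\chi_1-\kp\chi_2)=Q_A(\phi_1,\chi_1)+\tfrac{\kp}{4}P(\phi_1\chi_2+\phi_2\chi_1)'-\tfrac{\kp^{2}}{4}P(\phi_2\chi_2)'$. For $B$, writing $\phi\phi'=\tfrac12(\phi^{2})'$ and expanding $(\phi_1-\kp\phi_2)^{2}$ yields $(\phi_1-\kp\phi_2)(\phi_1-\kp\phi_2)'=\phi_1\phi_1'-\kp(\phi_1\phi_2)'+\kp^{2}\phi_2\phi_2'$, and similarly for $\chi$, so that $Q_B(\phi_1-\kp\phi_2,\chi_1-\kp\chi_2)=Q_B(\phi_1,\chi_1)+\tfrac{\kp}{4}P_0(\phi_1\phi_2)'+\tfrac{3\kp}{4}P_0(\chi_1\chi_2)'-\tfrac{\kp^{2}}{4}P_0(\phi_2\phi_2')-\tfrac{3\kp^{2}}{4}P_0(\chi_2\chi_2')$.

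Finally I would recombine the linear and quadratic contributions. For $A$ one has $L_A(\phi_1,\chi_1)+Q_A(\phi_1,\chi_1)=A(\phi_1,\chi_1)$, while $-\kp L_A(\phi_2,\chi_2)=-\kp A(\phi_2,\chi_2)+\kp Q_A(\phi_2,\chi_2)=-\kp A(\phi_2,\chi_2)-\tfrac{\kp}{4}P(\phi_2\chi_2)'$; the surplus $-\tfrac{\kp}{4}P(\phi_2\chi_2)'$ combines with the $-\tfrac{\kp^{2}}{4}P(\phi_2\chi_2)'$ from $Q_A$ to give $-\tfrac{\kp+\kp^{2}}{4}P(\phi_2\chi_2)'$, which is the first identity of \eqref{equ526}. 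The identical bookkeeping for $B$, using $-\kp L_B(\phi_2,\chi_2)=-\kp B(\phi_2,\chi_2)-\tfrac{\kp}{4}P_0(\phi_2\phi_2')-\tfrac{3\kp}{4}P_0(\chi_2\chi_2')$ together with the quadratic expansion above, produces the second identity. There is no analytic obstacle here; the only points requiring care are the structural asymmetry between $A$, whose single nonlinear term $P(\phi\chi)'$ couples both arguments and hence generates the symmetric cross term $\tfrac{\kp}{4}P(\phi_1\chi_2+\phi_2\chi_1)'$, and $B$, whose two nonlinear terms $P_0(\phi\phi')$ and $P_0(\chi\chi')$ each involve only a single argument and hence generate the two separate cross terms $\tfrac{\kp}{4}P_0(\phi_1\phi_2)'$ and $\tfrac{3\kp}{4}P_0(\chi_1\chi_2)'$, together with keeping the numerical coefficients $\tfrac14$, $\tfrac34$, $\tfrac12$ straight throughout.
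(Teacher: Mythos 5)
Your proposal is correct and amounts to the same direct substitution-and-expansion that the paper carries out; organizing the computation by first splitting $A$ and $B$ into their linear and quadratic parts is a cosmetic repackaging rather than a different argument. The algebra, including the cross terms $\tfrac{\kp}{4}P(\phi_{1}\chi_{2}+\phi_{2}\chi_{1})'$, $\tfrac{\kp}{4}P_{0}(\phi_{1}\phi_{2})'$, $\tfrac{3\kp}{4}P_{0}(\chi_{1}\chi_{2})'$ and the $\kp+\kp^{2}$ coefficients, checks out exactly as in the paper.
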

\begin{proof}
From $(\ref{equ525})_{1}$ we have
\begin{align*}
A(\phi_{1}-\kp\phi_{2},\chi_{1}-\kp\chi_{2}) & = \tfrac{1}{4}P\bigl(U^{n}(\phi_{1}-\kp\phi_{2})\bigr)_{x}
+\tfrac{1}{4}P\bigl(H^{n}(\chi_{1}-\kp\chi_{2})\bigr)_{x} + \tfrac{1}{2}P(\chi_{1}-\kp\chi_{2})'\\
& \quad
-\tfrac{1}{4}P\bigl((\phi_{1}-\kp\phi_{2})(\chi_{1}-\kp\chi_{2})\bigr)'\\
& = A(\phi_{1},\chi_{1})-\kp A(\phi_{2},\chi_{2}) + \tfrac{\kp}{4}P(\phi_{1}\chi_{2}+\phi_{2}\chi_{1})'
-\tfrac{\kp+\kp^{2}}{4}P(\phi_{2}\chi_{2})'.
\end{align*}
From $(\ref{equ525})_{2}$ we obtain
\begin{align*}
B(\phi_{1}-\kp\phi_{2},\chi_{1}-\kp\chi_{2}) & = \tfrac{1}{4}P_{0}\bigl(H^{n}(\phi_{1}-\kp\phi_{2})\bigr)_{x}
+\tfrac{3}{4}P_{0}\bigl(U^{n}(\chi_{1}-\kp\chi_{2})\bigr)_{x} + \tfrac{1}{2}P_{0}(\phi_{1}-\kp\phi_{2})'\\
& \quad
-\tfrac{1}{4}P_{0}\bigl((\phi_{1}-\kp\phi_{2})(\phi_{1}'-\kp\phi_{2}')\bigr)
-\tfrac{3}{4}P_{0}\bigl((\chi_{1}-\kp\chi_{2})(\chi_{1}'-\kp\chi_{2}')\bigr)\\
& = B(\phi_{1},\chi_{1}) - \kp B(\phi_{2},\chi_{2}) + \tfrac{\kp}{4}P_{0}(\phi_{1}\phi_{2})'
+ \tfrac{3\kp}{4}P_{0}(\chi_{1}\chi_{2})' \\
&\quad
-\tfrac{\kp+\kp^{2}}{4}P_{0}(\phi_{2}\phi_{2}') - \tfrac{3(\kp+\kp^{2})}{4}P_{0}(\chi_{2}\chi_{2}').
\end{align*}
\end{proof}
In the sequel we let
$\ve^{n}=H^{n}-H_{h}^{n}$, $e^{n}=U^{n}-U_{h}^{n}$, $\theta^{n}=H^{n,1}-H_{h}^{n,1}$, $\xi^{n}=U^{n,1} - U_{h}^{n,1}$, and let
again $\lambda=k/h$.
\begin{lemma} There holds that
\begin{equation}
\begin{aligned}
\theta^{n} & = \ve^{n} - kA(\ve^{n},e^{n}),\\
\xi^{n} & =e^{n} - kB(\ve^{n},e^{n}).
\end{aligned}
\label{equ527}
\end{equation}
\end{lemma}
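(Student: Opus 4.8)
The plan is to obtain \eqref{equ527} by a direct subtraction. Both $(H^{n,1},U^{n,1})$ and $(H_h^{n,1},U_h^{n,1})$ are first-stage Euler-type steps: the former starts from the exact projections $(H^n,U^n)$ and is governed by \eqref{equ514}, while the latter starts from $(H_h^n,U_h^n)$ and is governed by the first two equations of \eqref{equ511}. The operators $A$ and $B$ of \eqref{equ525} are tailored precisely so that the stage error $\theta^n$ equals $\ve^n - kA(\ve^n,e^n)$ and $\xi^n$ equals $e^n - kB(\ve^n,e^n)$, so the whole argument is a matching-of-terms computation.

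First I would subtract the $H$-equation of \eqref{equ511} from the $H$-equation of \eqref{equ514}. Since $H^n-H_h^n=\ve^n$, $U^n-U_h^n=e^n$, and, using $H_h^n=H^n-\ve^n$, $U_h^n=U^n-e^n$,
\[
H^nU^n - H_h^nU_h^n = H^n e^n + U^n\ve^n - \ve^n e^n,
\]
I get $\theta^n = \ve^n - \tfrac{k}{2}Pe_x^n - \tfrac{k}{4}P\bigl(H^n e^n + U^n\ve^n - \ve^n e^n\bigr)_x$, which, compared term by term with $\eqref{equ525}_1$, is exactly $\ve^n - kA(\ve^n,e^n)$. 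Analogously, subtracting the two $U$-equations and using $2\phi\phi'=(\phi^2)'$ to write $H^nH_x^n - H_h^nH_{hx}^n = (H^n\ve^n)_x - \ve^n\ve_x^n$ and $U^nU_x^n - U_h^nU_{hx}^n = (U^ne^n)_x - e^ne_x^n$, I obtain $\xi^n = e^n - \tfrac{k}{2}P_0\ve_x^n - \tfrac{k}{4}P_0\bigl((H^n\ve^n)_x-\ve^n\ve_x^n\bigr) - \tfrac{3k}{4}P_0\bigl((U^ne^n)_x-e^ne_x^n\bigr)$, which matches $e^n - kB(\ve^n,e^n)$ by $\eqref{equ525}_2$. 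This establishes \eqref{equ527}.

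There is no genuine obstacle here: the computation is elementary, and the only point requiring mild care is the bookkeeping of signs in the quadratic expansions, since the cross terms $\ve^n e^n$, $\ve^n\ve_x^n$, $e^ne_x^n$ are exactly the ones carrying the coefficients $-\tfrac14$, $-\tfrac14$, $-\tfrac34$ in the definitions of $A$ and $B$. Note that Lemma 5.6 is \emph{not} used for this step; it will be invoked afterwards, when $A$ and $B$ must be evaluated at arguments of the form $\ve^n - \kp\,kA(\ve^n,e^n)$ that arise from inserting \eqref{equ527} into the second stage of the scheme.
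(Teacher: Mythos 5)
Your proposal is correct and follows exactly the paper's own argument: subtract the first two equations of \eqref{equ511} from \eqref{equ514}, expand the quadratic differences as $H^nU^n-H_h^nU_h^n=H^ne^n+U^n\ve^n-\ve^ne^n$, $H^nH_x^n-H_h^nH_{hx}^n=(H^n\ve^n)_x-\ve^n\ve_x^n$, $U^nU_x^n-U_h^nU_{hx}^n=(U^ne^n)_x-e^ne_x^n$, and match the result against the definitions \eqref{equ525} of $A$ and $B$. Your closing remark is also accurate: the $\kp$-identity \eqref{equ526} plays no role here and is only needed later when $A$ and $B$ are evaluated at $(\theta^n,\xi^n)=(\ve^n-kA,e^n-kB)$.
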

\begin{proof} Subtracting $(\ref{equ511})_{1}$ from $(\ref{equ514})_{1}$, we obtain
\begin{equation}
\theta^{n}- \ve^{n} + \tfrac{k}{2}Pe_{x}^{n} + \tfrac{k}{4}P(H^{n}U^{n}-H_{h}^{n}U_{h}^{n})_{x}=0.
\label{equ528}
\end{equation}
Since
\[
H^{n}U^{n}-H_{h}^{n}U_{h}^{n} = H^{n}e^{n} + U^{n}\ve^{n} - e^{n}\ve^{n},
\]
we get from \eqref{equ528}
\[
\theta^{n}-\ve^{n} + \tfrac{k}{2}Pe_{x}^{n} + \tfrac{k}{4}P(H^{n}e^{n})_{x} + \tfrac{k}{4}P(U^{n}\ve^{n})_{x}
-\tfrac{k}{4}P(\ve^{n}e^{n})_{x}=0,
\]
and by the definition of $A(\ve^{n},e^{n})$ and $(\ref{equ525})_{1}$ we obtain $(\ref{equ527})_{1}$. Subtracting now $(\ref{equ511})_{2}$
from $(\ref{equ514})_{2}$ we get that
\begin{equation}
\begin{aligned}
\xi^{n} - e^{n} + \tfrac{k}{2}P_{0}\ve_{x}^{n} + \tfrac{k}{4}P_{0}(H^{n}H_{x}^{n}-H_{h}^{n}H_{hx}^{n})
+ \tfrac{3k}{4}P_{0}(U^{n}U_{x}^{n}-U_{h}^{n}U_{hx}^{n})=0.
\end{aligned}
\label{equ529}
\end{equation}
Since
\[
H^{n}H_{x}^{n}-H_{h}^{n}H_{hx}^{n} = (H^{n}\ve^{n})_{x} - \ve^{n}\ve_{x}^{n},
\]
and
\[
U^{n}U_{x}^{n}-U_{h}^{n}U_{hx}^{n} = (U^{n}e^{n})_{x} - e^{n}e_{x}^{n},
\]
we see from \eqref{equ529} that
\[
\xi^{n} - e^{n} + \tfrac{k}{2}P_{0}\ve_{x}^{n} + \tfrac{k}{4}P_{0}(H^{n}\ve^{n})_{x} + \tfrac{3k}{4}P_{0}(U^{n}e^{n})_{x}
-\tfrac{k}{4}P_{0}(\ve^{n}\ve_{x}^{n}) - \tfrac{3k}{4}P_{0}(e^{n}e_{x}^{n})=0.
\]
Taking into account the definition of $B(\ve^{n},e^{n})$ and $(\ref{equ525})_{2}$ we obtain now $(\ref{equ527})_{2}$.
\end{proof}
\begin{lemma} If $\|\ve^{n}\|_{1,\infty} + \|e^{n}\|_{1,\infty} \leq 1$, for some index $n$, then
\begin{equation}
\begin{aligned}
\|A(\ve^{n},e^{n})\| + \|B(\ve^{n},e^{n})\| & \leq Ch^{-1}(\|\ve^{n}\| + \|e^{n}\|),\\
\|A(\ve^{n},e^{n})\|_{\infty} + \|B(\ve^{n},e^{n})\|_{\infty} & \leq C h^{-1},\\
\|\theta^{n}\| + \|\xi^{n}\| & \leq C_{2}(\|\ve^{n}\| + \|e^{n}\|),\\
\|\theta^{n}\|_{\infty} + \|\xi^{n}\|_{\infty} & \leq C, \\
\|A(\theta^{n},\xi^{n})\| + \|B(\theta^{n},\xi^{n})\| & \leq Ch^{-1}(\|\ve^{n}\| + \|e^{n}\|),
\end{aligned}
\label{equ530}
\end{equation}
where $C_{2}$ a constant depending polynomially on $\lambda$.
\end{lemma}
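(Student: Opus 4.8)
The plan is to treat the five estimates of \eqref{equ530} in two groups. The bounds on $A(\ve^{n},e^{n}),B(\ve^{n},e^{n})$ and on $A(\theta^{n},\xi^{n}),B(\theta^{n},\xi^{n})$ will follow by expanding the definitions \eqref{equ525} term by term and using: (a) the $L^{2}$-stability of the projections, $\|Pv\|\le\|v\|$, $\|P_{0}v\|\le\|v\|$; (b) the $L^{\infty}$-stability \eqref{eq22a}; (c) the inverse inequalities \eqref{eq23}, \eqref{eq24}; and (d) a uniform bound $\|H^{n}\|_{1,\infty}+\|U^{n}\|_{1,\infty}\le C$ for all $n$ and $h$. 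The bound (d) I would establish from the smoothness of $(\eta,u)$, the projection-error estimate \eqref{eq22a} in $L^{\infty}$, the approximation properties of $S_{h},S_{h,0}$, and the inverse inequality \eqref{eq24}: writing $H^{n}=P\eta(t^{n})$ and comparing with a nodal quasiinterpolant $I_{h}\eta(t^{n})$, one gets $\|H^{n}_{x}\|_{\infty}\le\|(H^{n}-I_{h}\eta(t^{n}))_{x}\|_{\infty}+\|(I_{h}\eta(t^{n}))_{x}\|_{\infty}\le Ch^{-3/2}\|H^{n}-I_{h}\eta(t^{n})\|+C\le Ch^{r-3/2}+C\le C$ since $r\ge2$, and likewise for $U^{n}$. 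The bounds on $\theta^{n},\xi^{n}$ are then immediate consequences of the identities \eqref{equ527}.

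For the first line of \eqref{equ530}, apply $\|P\cdot\|\le\|\cdot\|$, $\|P_{0}\cdot\|\le\|\cdot\|$ to each term. The linear terms are controlled by the product rule, step (d), and \eqref{eq23}: e.g. $\|P(U^{n}\ve^{n})_{x}\|\le\|U^{n}_{x}\|_{\infty}\|\ve^{n}\|+\|U^{n}\|_{\infty}\|\ve^{n}_{x}\|\le C\|\ve^{n}\|+Ch^{-1}\|\ve^{n}\|$, and similarly $\|P(H^{n}e^{n})_{x}\|$, $\|P(e^{n})'\|$, $\|P_{0}(\ve^{n})'\|$, etc., are $\le Ch^{-1}(\|\ve^{n}\|+\|e^{n}\|)$. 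For the nonlinear terms we use the hypothesis $\|\ve^{n}\|_{1,\infty}+\|e^{n}\|_{1,\infty}\le1$: $\|P(\ve^{n}e^{n})'\|\le\|\ve^{n}_{x}\|_{\infty}\|e^{n}\|+\|\ve^{n}\|_{\infty}\|e^{n}_{x}\|\le\|\ve^{n}\|+\|e^{n}\|$, and the same for $\|P_{0}(\ve^{n}(\ve^{n})')\|$ and $\|P_{0}(e^{n}(e^{n})')\|$. This proves the first line. The second line is obtained by the identical expansion but using the $L^{\infty}$-stability \eqref{eq22a} in place of $L^{2}$-stability: by the hypothesis and step (d), every term of $A(\ve^{n},e^{n})$ and $B(\ve^{n},e^{n})$ is $O(1)$ in $L^{\infty}$, hence trivially $O(h^{-1})$.

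The third and fourth lines follow from \eqref{equ527} and the first two lines, recalling $k=\lambda h$: $\|\theta^{n}\|\le\|\ve^{n}\|+k\|A(\ve^{n},e^{n})\|\le\|\ve^{n}\|+C\lambda(\|\ve^{n}\|+\|e^{n}\|)$, and likewise for $\xi^{n}$, so the third line holds with $C_{2}=C(1+\lambda)$, a degree-one polynomial in $\lambda$; and $\|\theta^{n}\|_{\infty}\le\|\ve^{n}\|_{\infty}+k\|A(\ve^{n},e^{n})\|_{\infty}\le1+C\lambda\le C$, and likewise for $\xi^{n}$, giving the fourth line.

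For the last line I would repeat the argument for the first line with $(\theta^{n},\xi^{n})$ in place of $(\ve^{n},e^{n})$. The linear terms of $A(\theta^{n},\xi^{n}),B(\theta^{n},\xi^{n})$ are bounded, exactly as before (step (d) and \eqref{eq23}), by $Ch^{-1}(\|\theta^{n}\|+\|\xi^{n}\|)$, which by the third line of \eqref{equ530} is $\le Ch^{-1}(\|\ve^{n}\|+\|e^{n}\|)$. The only point requiring care is the nonlinear terms, since now no hypothesis on $\|\theta^{n}\|_{1,\infty}$ is available: here I would place the $L^{\infty}$-norm on the \emph{undifferentiated} factor (controlled by the fourth line of \eqref{equ530}) and the $L^{2}$-norm on the \emph{differentiated} factor (controlled by the inverse inequality \eqref{eq23}), e.g. $\|P(\theta^{n}\xi^{n})'\|\le\|\theta^{n}\|_{\infty}\|\xi^{n}_{x}\|+\|\xi^{n}\|_{\infty}\|\theta^{n}_{x}\|\le Ch^{-1}(\|\theta^{n}\|+\|\xi^{n}\|)$ and $\|P_{0}(\theta^{n}(\theta^{n})')\|\le\|\theta^{n}\|_{\infty}\|\theta^{n}_{x}\|\le Ch^{-1}\|\theta^{n}\|$; combining with the third line of \eqref{equ530} once more yields the fifth line. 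The main obstacle throughout is precisely this bookkeeping in the nonlinear terms — one must consistently place the $L^{\infty}$ factor so as not to pick up an extra power of $h^{-1}$ — together with the preliminary verification of the uniform $W^{1,\infty}$ bound on $H^{n},U^{n}$; everything else is a routine application of stability and inverse estimates.
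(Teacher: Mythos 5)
Your proposal is correct and follows essentially the same route as the paper, which simply invokes the definitions \eqref{equ525}, the inverse and approximation properties of $S_{h}$, $S_{h,0}$, the uniform bound $\|H^{n}\|_{1,\infty}+\|U^{n}\|_{1,\infty}\leq C$, and the identities \eqref{equ527}; your extra care in placing the $L^{\infty}$-norm on the undifferentiated factor when bounding $A(\theta^{n},\xi^{n})$, $B(\theta^{n},\xi^{n})$ is exactly the point that makes the fifth estimate work. (Only a cosmetic slip: in the chain $\|\ve^{n}_{x}\|_{\infty}\|e^{n}\|+\|\ve^{n}\|_{\infty}\|e^{n}_{x}\|\leq\|\ve^{n}\|+\|e^{n}\|$ the second term should carry the sup-norm on $e^{n}_{x}$ instead, i.e. $\|e^{n}_{x}\|_{\infty}\|\ve^{n}\|$, which the hypothesis also controls; the conclusion is unaffected.)
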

\begin{proof} The estimates $(\ref{equ530})_{1}$ and $(\ref{equ530})_{2}$ follow from the definitions of $A(\ve^{n},e^{n})$ and
$B(\ve^{n},e^{n})$, the inverse and approximation properties of $S_{h}$, $S_{h,0}$ and the fact that
$\|H^{n}\|_{W_{\infty}^{1}} + \|U^{n}\|_{W_{\infty}^{1}}\leq C=C(\eta,u)$. The inequalities $(\ref{equ530})_{3}$, $(\ref{equ530})_{4}$
follow from \eqref{equ527} and $(\ref{equ530})_{1}$, $(\ref{equ530})_{2}$. Finally, $(\ref{equ530})_{5}$ follows from the definition
of $A(\theta^{n},\xi^{n})$, $B(\theta^{n},\xi^{n})$, the inverse and approximation properties of $S_{h}$, $S_{h,0}$, $(\ref{equ530})_{3}$
and the fact that $\|H^{n}\|_{W_{\infty}^{1}} + \|U^{n}\|_{W_{\infty}^{1}}\leq C=C(\eta,u)$.
\end{proof}
\begin{lemma} Let $f^{n}=H^{n,1} - H^{n}$, $g^{n}=U^{n,1} - U^{n}$, and
\begin{equation}
\begin{aligned}
\omega_{1}^{n} & = \tfrac{k}{2}P(f^{n}\xi^{n} + g^{n}\theta^{n})_{x} - \delta_{1}^{n},\\
\omega_{2}^{n} & = \tfrac{k}{2}P_{0}(f^{n}\theta^{n})_{x} + \tfrac{3k}{2}P_{0}(g^{n}\xi^{n})_{x} - \delta_{2}^{n}.
\end{aligned}
\label{equ531}
\end{equation}
If $\|\ve^{n}\|_{1,\infty} + \|e^{n}\|_{1,\infty} \leq 1$, then
\begin{equation}
\begin{aligned}
& \|f^{n}\|_{\infty} + \|g^{n}\|_{\infty}  \leq Ck,\\
& \|\omega_{1}^{n}\| \leq C_{3}k(\|\ve^{n}\| + \|e^{n}\|) + \|\delta_{1}^{n}\|,\\
& \|\omega_{2}^{n}\| \leq C_{4}k(\|\ve^{n}\| + \|e^{n}\|) + \|\delta_{2}^{n}\|,
\end{aligned}
\label{equ532}
\end{equation}
where $C_{3}$, $C_{4}$ are constants depending polynomially on $\lambda$.
\end{lemma}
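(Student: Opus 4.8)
The plan is to read the stage increments $f^n,g^n$ directly off \eqref{equ514}: its first equation gives $f^n=H^{n,1}-H^n=-\tfrac{k}{2}PU_x^n-\tfrac{k}{4}P(H^nU^n)_x$, and its second gives $g^n=U^{n,1}-U^n=-\tfrac{k}{2}P_0H_x^n-\tfrac{k}{4}P_0(H^nH_x^n)-\tfrac{3k}{4}P_0(U^nU_x^n)$. Since $H^n=P\eta(t^n)$ and $U^n=P_0u(t^n)$ satisfy $\|H^n\|_{1,\infty}+\|U^n\|_{1,\infty}\le C=C(\eta,u)$ (the same fact used in the proof of the previous lemma), the piecewise polynomials $U_x^n$, $(H^nU^n)_x$, $H_x^n$, $H^nH_x^n$ and $U^nU_x^n$ are all bounded in $L^\infty$ by a constant depending only on $(\eta,u)$; applying the $L^\infty$-stability of the projections $P$ and $P_0$ (cf.\ \eqref{eq22a} and \cite{ddw}) term by term then yields $\|f^n\|_\infty+\|g^n\|_\infty\le Ck$, which is the first line of \eqref{equ532}. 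I note that the cruder route through the inverse inequality \eqref{eq24} would give only $\|f^n\|_\infty\le Ckh^{-1/2}$, which is not sharp enough for what follows, so the $L^\infty$-stability is used in an essential way here.

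For $\omega_1^n$ and $\omega_2^n$ the triangle inequality separates off the $\delta_j^n$ terms, so it suffices to bound the $P$- and $P_0$-parts, and I would do this by a short duality argument that exploits that $\xi^n=U^{n,1}-U_h^{n,1}$ and $g^n=U^{n,1}-U^n$ both lie in $S_{h,0}$ and that $P_0$ always maps into $S_{h,0}$. For the term $P(f^n\xi^n)_x$ in $\omega_1^n$, testing $\bigl(P(f^n\xi^n)_x,\phi\bigr)=\bigl((f^n\xi^n)_x,\phi\bigr)$ with $\phi=P(f^n\xi^n)_x\in S_h$ and integrating by parts (the boundary term vanishes because $\xi^n$ vanishes on $\partial I$) gives $\|P(f^n\xi^n)_x\|^2=-\bigl(f^n\xi^n,(P(f^n\xi^n)_x)'\bigr)\le Ch^{-1}\|f^n\|_\infty\|\xi^n\|\,\|P(f^n\xi^n)_x\|$ by \eqref{eq23}, hence $\|P(f^n\xi^n)_x\|\le Ch^{-1}\|f^n\|_\infty\|\xi^n\|$; the term $P(g^n\theta^n)_x$ is treated the same way, now using that $g^n$ vanishes on $\partial I$. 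Inserting $\|f^n\|_\infty+\|g^n\|_\infty\le Ck$, the bound $\|\theta^n\|+\|\xi^n\|\le C_2(\|\ve^n\|+\|e^n\|)$ from \eqref{equ530}, and $kh^{-1}=\lambda$, the prefactor $\tfrac{k}{2}$ absorbs exactly one power of $h^{-1}$ and one obtains $\|\omega_1^n\|\le C_3k(\|\ve^n\|+\|e^n\|)+\|\delta_1^n\|$ with $C_3=O(\lambda C_2)$ polynomial in $\lambda$. For $\omega_2^n$ the identical computation applies to $P_0(f^n\theta^n)_x$ and $P_0(g^n\xi^n)_x$; here the boundary term in the integration by parts vanishes simply because $P_0(\cdot)\in S_{h,0}$, so no boundary condition on $f^n$ or $\theta^n$ is needed, and one gets $\|\omega_2^n\|\le C_4k(\|\ve^n\|+\|e^n\|)+\|\delta_2^n\|$ with $C_4$ polynomial in $\lambda$.

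The one point that needs care is the bookkeeping of the powers of $h$: each derivative costs an $h^{-1}$ through \eqref{eq23}, so to land on a bound proportional to the single power $k$ one must ensure both that the explicit prefactors $\tfrac{k}{2}$ and $\tfrac{3k}{2}$ supply the cancelling $h^{-1}$ and that the $L^\infty$ norms of $f^n$ and $g^n$ are genuinely $O(k)$ rather than $O(kh^{-1/2})$; both are secured in the first paragraph, and the polynomial dependence of $C_3$ and $C_4$ on $\lambda$ is inherited from that of $C_2$ together with the single extra factor $\lambda=kh^{-1}$ produced by the derivative. Everything else is a routine application of the triangle and Cauchy--Schwarz inequalities and the approximation and inverse properties recorded in Section 2.
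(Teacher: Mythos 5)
Your proof is correct and takes essentially the same route as the paper: the $O(k)$ bound on $\|f^{n}\|_{\infty}+\|g^{n}\|_{\infty}$ comes from reading the increments off \eqref{equ514} together with $\|H^{n}\|_{1,\infty}+\|U^{n}\|_{1,\infty}\leq C$ and the $L^{\infty}$-stability of the projections, and the remaining terms cost exactly one inverse-inequality factor $h^{-1}$ that the prefactor $k$ converts into $\lambda$, after which $(\ref{equ530})_{3}$ and the triangle inequality finish the argument. The only difference is cosmetic: where you integrate by parts against $P(f^{n}\xi^{n})_{x}$ itself (checking that the boundary terms vanish because $\xi^{n}$ and $g^{n}$ lie in $S_{h,0}$), the paper simply bounds $\|P(\cdot)_{x}\|\leq\|(\cdot)_{x}\|$, expands with the product rule, and applies the inverse inequalities to $f_{x}^{n}$ and $\xi_{x}^{n}$ directly --- both yield the same bound $Ch^{-1}\|f^{n}\|_{\infty}\|\xi^{n}\|$.
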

\begin{proof} The inequality $(\ref{equ532})_{1}$ follows from \eqref{equ514} and the fact that
$\|H^{n}\|_{W_{\infty}^{1}} + \|U^{n}\|_{W_{\infty}^{1}}\leq C=C(\eta,u)$. From the inverse properties of $S_{h}$, $S_{h,0}$
we have
\begin{align*}
\|\omega_{1}^{n}\| & \leq Ck(\|f_{x}^{n}\|_{\infty}\|\xi^{n}\| + \|f^{n}\|_{\infty}\|\xi^{n}\|_{1})
+ Ck(\|g_{x}^{n}\|_{\infty}\|\theta^{n}\| + \|g^{n}\|_{\infty}\|\theta^{n}\|_{1}) + \|\delta_{1}^{n}\|\\
& \leq C\lambda\|f^{n}\|_{\infty}\|\xi^{n}\| + C\lambda\|g^{n}\|_{\infty}\|\theta^{n}\| + \|\delta_{1}^{n}\|.
\end{align*}
Hence, $(\ref{equ532})_{2}$ follows from $(\ref{equ532})_{1}$, $(\ref{equ532})_{3}$. The inequality
$(\ref{equ532})_{3}$ follows in a similar manner.
\end{proof}
\begin{proposition} Let $(\eta,u)$ be the solution of \eqref{eqssw}, $H_{h}^{n}$, $U_{h}^{n}$ the solution of
\eqref{equ510} and $\mu=k/h^{4/3}$. Then, there exists a constant $C=C(\mu)$ such that
\[
\max_{0\leq n\leq M}(\|H_{h}^{n} - \eta(t^{n})\| + \|U_{h}^{n}-U(t^{n})\|) \leq C(k^{2} + h^{r-1}).
\]
\end{proposition}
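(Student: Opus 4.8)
The plan is to combine a discrete energy estimate for the errors $\ve^{n}:=H^{n}-H_{h}^{n}\in S_{h}$ and $e^{n}:=U^{n}-U_{h}^{n}\in S_{h,0}$ with a bootstrap on the a priori condition $\|\ve^{n}\|_{1,\infty}+\|e^{n}\|_{1,\infty}\leq 1$, under which the estimates \eqref{equ527}, \eqref{equ530} and \eqref{equ532} are available. First I would subtract the scheme relations $(\ref{equ511})_{3}$ and $(\ref{equ511})_{4}$ from the defining relations \eqref{equ515} of $\delta_{1}^{n}$, $\delta_{2}^{n}$; using the algebraic splittings $H^{n,1}U^{n,1}-H_{h}^{n,1}U_{h}^{n,1}=H^{n}\xi^{n}+U^{n}\theta^{n}-\theta^{n}\xi^{n}+f^{n}\xi^{n}+g^{n}\theta^{n}$ (and the analogous ones for the two products in the second equation), the definitions \eqref{equ531} of $\omega_{1}^{n}$, $\omega_{2}^{n}$, and the definitions \eqref{equ525} of $A$ and $B$, the error recursion takes the compact form
\[
\ve^{n+1}=\ve^{n}-2kA(\theta^{n},\xi^{n})-\omega_{1}^{n},\qquad e^{n+1}=e^{n}-2kB(\theta^{n},\xi^{n})-\omega_{2}^{n},
\]
while by \eqref{equ527} the stage errors satisfy $\theta^{n}=\ve^{n}-kA(\ve^{n},e^{n})$, $\xi^{n}=e^{n}-kB(\ve^{n},e^{n})$. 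Let $0\leq n^{*}\leq M-1$ be maximal so that $\|\ve^{n}\|_{1,\infty}+\|e^{n}\|_{1,\infty}\leq 1$ for all $n\leq n^{*}$; this holds for $n=0$ because $\ve^{0}=e^{0}=0$.

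Squaring the two relations and adding gives, for $0\leq n\leq n^{*}$,
\[
\|\ve^{n+1}\|^{2}+\|e^{n+1}\|^{2}=\|\ve^{n}\|^{2}+\|e^{n}\|^{2}+T^{n}+R^{n},
\]
where $R^{n}$ gathers the terms carrying $\omega_{1}^{n}$ or $\omega_{2}^{n}$ and $T^{n}:=-4k[(\ve^{n},A(\theta^{n},\xi^{n}))+(e^{n},B(\theta^{n},\xi^{n}))]+4k^{2}[\|A(\theta^{n},\xi^{n})\|^{2}+\|B(\theta^{n},\xi^{n})\|^{2}]$ is the purely stiff part. The term $R^{n}$ is routine: by \eqref{equ532} and \eqref{equ516}, $\|\omega_{i}^{n}\|\leq C(\mu)k(\|\ve^{n}\|+\|e^{n}\|)+Ck(k^{2}+h^{r-1})$, while \eqref{equ530} gives $\|A(\theta^{n},\xi^{n})\|+\|B(\theta^{n},\xi^{n})\|\leq Ch^{-1}(\|\ve^{n}\|+\|e^{n}\|)$; inserting these, using Cauchy--Schwarz and arithmetic--geometric inequalities and the identities $k^{2}h^{-1}=\mu^{2}h^{5/3}$, $k^{3}h^{-2}=\mu^{3}h^{2}$ (both $\leq k$ for $h$ small, since $k=\mu h^{4/3}$), one obtains $|R^{n}|\leq C(\mu)k(\|\ve^{n}\|^{2}+\|e^{n}\|^{2})+C(\mu)k(k^{2}+h^{r-1})^{2}$.

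The heart of the matter — and, I expect, the main obstacle — is to prove $T^{n}\leq C(\mu)k(\|\ve^{n}\|^{2}+\|e^{n}\|^{2})$, for it is here that the mesh restriction $k=O(h^{4/3})$ is forced. The idea is to apply the identities \eqref{equ526} with $\kp=k$, $\phi_{1}=\ve^{n}$, $\phi_{2}=A(\ve^{n},e^{n})$, $\chi_{1}=e^{n}$, $\chi_{2}=B(\ve^{n},e^{n})$, which yield $A(\theta^{n},\xi^{n})=A(\ve^{n},e^{n})-k\,\mathcal{E}_{1}^{n}$ and $B(\theta^{n},\xi^{n})=B(\ve^{n},e^{n})-k\,\mathcal{E}_{2}^{n}$, where $\mathcal{E}_{1}^{n}\in S_{h}$, $\mathcal{E}_{2}^{n}\in S_{h,0}$ are built from $A$, $B$ and $L^{2}$-projections of products of $\ve^{n}$, $e^{n}$, $A(\ve^{n},e^{n})$, $B(\ve^{n},e^{n})$ and, by \eqref{equ530}, the inverse inequalities \eqref{eq23}, \eqref{eq24} and the a priori hypothesis, satisfy $\|\mathcal{E}_{1}^{n}\|+\|\mathcal{E}_{2}^{n}\|\leq Ch^{-2}(\|\ve^{n}\|+\|e^{n}\|)$. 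Substituting into $T^{n}$ and grouping by powers of $k$ produces an $O(k)$ term $-4k[(\ve^{n},A(\ve^{n},e^{n}))+(e^{n},B(\ve^{n},e^{n}))]$, an $O(k^{2})$ bracket $4k^{2}[(\ve^{n},\mathcal{E}_{1}^{n})+(e^{n},\mathcal{E}_{2}^{n})+\|A(\ve^{n},e^{n})\|^{2}+\|B(\ve^{n},e^{n})\|^{2}]$, an $O(k^{3})$ bracket $-8k^{3}[(A(\ve^{n},e^{n}),\mathcal{E}_{1}^{n})+(B(\ve^{n},e^{n}),\mathcal{E}_{2}^{n})]$ and an $O(k^{4})$ term $4k^{4}[\|\mathcal{E}_{1}^{n}\|^{2}+\|\mathcal{E}_{2}^{n}\|^{2}]$. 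The key is a sequence of exact cancellations, all consequences of integration by parts together with the vanishing at $x=0,1$ of the $S_{h,0}$-valued quantities $e^{n}$, $\xi^{n}$, $B(\ve^{n},e^{n})$, $\mathcal{E}_{2}^{n}$: (a) the skew-symmetry of the linearized transport part makes $(\ve^{n},A(\ve^{n},e^{n}))+(e^{n},B(\ve^{n},e^{n}))$ equal to a quadratic form in $\ve^{n}$, $e^{n}$ whose coefficients involve only $H^{n}$, $U^{n}$ and their first derivatives and are therefore bounded uniformly in $h$ — exactly as in the proof of Proposition 2.1 — so the $O(k)$ term is $\leq Ck(\|\ve^{n}\|^{2}+\|e^{n}\|^{2})$; (b) in the $O(k^{2})$ bracket the a priori dangerous $O(h^{-2})$ contribution $\|A(\ve^{n},e^{n})\|^{2}+\|B(\ve^{n},e^{n})\|^{2}$ is exactly cancelled by the corresponding $O(h^{-2})$ piece of $(\ve^{n},\mathcal{E}_{1}^{n})+(e^{n},\mathcal{E}_{2}^{n})$ (a manifestation of the second-order order conditions of the improved-Euler scheme), while the remaining cubic terms cancel in pairs through an energy identity for the quadratic part of $(A,B)$ of the same type used for the nonlinear terms in Proposition 2.1, so that this bracket is only $O(h^{-1})$; (c) by the same identities the $O(k^{3})$ bracket reduces to $O(h^{-2})$. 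Since $k=\mu h^{4/3}$, the surviving bounds for the three brackets are $4k^{2}h^{-1}=4\mu^{2}h^{5/3}$, $8k^{3}h^{-2}=8\mu^{3}h^{2}$ and $4k^{4}h^{-4}=4\mu^{3}k$ times $\|\ve^{n}\|^{2}+\|e^{n}\|^{2}$, each of which is $\leq C(\mu)k(\|\ve^{n}\|^{2}+\|e^{n}\|^{2})$ for $h$ small; the borderline estimate is $4k^{4}h^{-4}=4\mu^{3}k$, which is precisely why $k=O(h^{4/3})$ is the correct condition, and the delicate part of the argument is to verify (b) and (c) so that no term of order $h^{-3}$ or worse survives.

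Combining the two estimates gives, for $0\leq n\leq n^{*}$, $\|\ve^{n+1}\|^{2}+\|e^{n+1}\|^{2}\leq(1+C(\mu)k)(\|\ve^{n}\|^{2}+\|e^{n}\|^{2})+C(\mu)k(k^{2}+h^{r-1})^{2}$, so that, since $\ve^{0}=e^{0}=0$, the discrete Gronwall lemma yields $\|\ve^{n}\|+\|e^{n}\|\leq C(\mu,T)(k^{2}+h^{r-1})$ for $0\leq n\leq n^{*}+1$. By the inverse inequality \eqref{eq24} this implies $\|\ve^{n^{*}+1}\|_{1,\infty}+\|e^{n^{*}+1}\|_{1,\infty}\leq Ch^{-3/2}(\|\ve^{n^{*}+1}\|+\|e^{n^{*}+1}\|)\leq C(\mu,T)(h^{-3/2}k^{2}+h^{r-5/2})$, which tends to $0$ as $h\to0$ because $k=\mu h^{4/3}$ and $r\geq3$; hence for $h$ small it is $<1$, contradicting the maximality of $n^{*}$ unless $n^{*}=M-1$. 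Therefore $\|\ve^{n}\|+\|e^{n}\|\leq C(\mu)(k^{2}+h^{r-1})$ for all $0\leq n\leq M$, and the conclusion of the proposition follows from $H_{h}^{n}-\eta(t^{n})=-(\rho^{n}+\ve^{n})$, $U_{h}^{n}-u(t^{n})=-(\sigma^{n}+e^{n})$ together with $\|\rho^{n}\|+\|\sigma^{n}\|\leq Ch^{r}$.
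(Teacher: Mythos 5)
Your proposal is correct and follows essentially the same route as the paper's proof: the same error recursion $\ve^{n+1}=\ve^{n}-2kA(\theta^{n},\xi^{n})-\omega_{1}^{n}$, the same bootstrap on $\|\ve^{n}\|_{1,\infty}+\|e^{n}\|_{1,\infty}\leq 1$, the same expansion via \eqref{equ526} and grouping by powers of $k$, and the correct identification of the $O(k^{4})$ term $4k^{4}(\|\mathcal{E}_{1}^{n}\|^{2}+\|\mathcal{E}_{2}^{n}\|^{2})\leq C\mu^{3}k(\|\ve^{n}\|^{2}+\|e^{n}\|^{2})$ as the borderline forcing $k=O(h^{4/3})$. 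The cancellations you assert in steps (b) and (c) are precisely what the paper verifies explicitly in its computation of the quantities $K2$ through $K6$ (e.g.\ $K2$ collapses to $k^{2}(U_{x}^{n}\ve^{n},A)+k^{2}(H_{x}^{n}e^{n},A)+k^{2}(H_{x}^{n}\ve^{n},B)+3k^{2}(U_{x}^{n}e^{n},B)$), so your outline is sound but those computations still need to be carried out to complete the argument.
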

\begin{proof} We will show that
\begin{equation}
\max_{0\leq n\leq M}(\|\ve^{n}\| + \|e^{n}\|) \leq C(k^{2} + h^{r-1}),
\label{equ533}
\end{equation}
from which the result of the proposition follows. From $(\ref{equ511})_{3}$, $(\ref{equ511})_{4}$, \eqref{equ515} we have
\begin{equation}
\begin{aligned}
\ve^{n+1} & = \ve^{n} - kP\xi_{x}^{n} - \tfrac{k}{2}P(H^{n,1}U^{n,1} - H_{h}^{n,1}U_{h}^{n,1})_{x} + \delta_{1}^{n},\\
e^{n+1} & = e^{n} - kP_{0}\theta_{x}^{n}-\tfrac{k}{2}P_{0}(H^{n,1}H_{x}^{n,1}-H_{h}^{n,1}H_{hx}^{n,1})
-\tfrac{3k}{2}P_{0}(U^{n,1}U_{x}^{n,1}-U_{h}^{n,1}U_{hx}^{n,1}) + \delta_{2}^{n}.
\end{aligned}
\label{equ534}
\end{equation}
But
\[
H^{n,1}U^{n,1} - H_{h}^{n,1}U_{h}^{n,1}=H^{n,1}\xi^{n} + U^{n,1}\theta^{n} - \theta^{n}\xi^{n},
\]
and therefore
\begin{equation}
H^{n,1}U^{n,1} - H_{h}^{n,1}U_{h}^{n,1}=H^{n}\xi^{n} + U^{n}\theta^{n} - \theta^{n}\xi^{n} + f^{n}\xi^{n} + g^{n}\theta^{n},
\label{equ535}
\end{equation}
where $f^{n}$, $g^{n}$ as in Lemma 5.8. Also,
\begin{align*}
H^{n,1}H_{x}^{n,1} - H_{h}^{n,1}H_{hx}^{n,1} & = (H^{n,1}\theta^{n})_{x} - \theta^{n}\theta_{x}^{n},\\
U^{n,1}U_{x}^{n,1} - U_{h}^{n,1}U_{hx}^{n,1} & = (U^{n,1}\xi^{n})_{x} - \xi^{n}\xi_{x}^{n}.
\end{align*}
Hence
\begin{equation}
\begin{aligned}
H^{n,1}H_{x}^{n,1} - H_{h}^{n,1}H_{hx}^{n,1} & = (H^{n}\theta^{n})_{x} - \theta^{n}\theta_{x}^{n} + (f^{n}\theta^{n})_{x},\\
U^{n,1}U_{x}^{n,1} - U_{h}^{n,1}U_{hx}^{n,1} & = (U^{n}\xi^{n})_{x} - \xi^{n}\xi_{x}^{n} + (g^{n}\xi^{n})_{x}.
\end{aligned}
\label{equ536}
\end{equation}
Following \eqref{equ535}, \eqref{equ536}, \eqref{equ531} we write the equations \eqref{equ534} in the form
\begin{align*}
\ve^{n+1} & = \ve^{n} - kP\xi_{x}^{n} - \tfrac{k}{2}P(H^{n}\xi^{n})_{x} - \tfrac{k}{2}P(U^{n}\theta^{n})_{x}
+\tfrac{k}{2}P(\theta^{n}\xi^{n})_{x} - \omega_{1}^{n},\\
e^{n+1} & = e^{n} - kP_{0}\theta_{x}^{n} - \tfrac{k}{2}P_{0}(H^{n}\theta^{n})_{x} + \tfrac{k}{2}P_{0}(\theta^{n}\theta_{x}^{n})
-\tfrac{3k}{2}P_{0}(U^{n}\xi^{n})_{x} + \tfrac{3k}{2}P_{0}(\xi^{n}\xi_{x}^{n}) - \omega_{2}^{n}.
\end{align*}
Using the definition of $A(\theta^{n},\xi^{n})$, $B(\theta^{n},\xi^{n})$ (cf. \eqref{equ525}) we write the above as
\begin{align*}
\ve^{n+1} & = \ve^{n} - 2kA(\theta^{n},\xi^{n}) - \omega_{1}^{n}, \\
e^{n+1} & = e^{n} - 2kB(\theta^{n},\xi^{n}) - \omega_{2}^{n}.
\end{align*}
Then
\begin{equation}
\begin{aligned}
\|\ve^{n+1}\|^{2} & = \|\ve^{n}-2kA(\theta^{n},\xi^{n})\|^{2} + \|\omega_{1}^{n}\|^{2} - 2(\ve^{n}-2kA(\theta^{n},\xi^{n}),\omega_{1}^{n})\\
& \leq \|\ve^{n}-2kA(\theta^{n},\xi^{n})\|^{2} + \|\omega_{1}^{n}\|^{2} + 2\|\ve^{n}\|\|\omega_{1}^{n}\| +
4k\|A(\theta^{n},\xi^{n})\|\|\omega_{1}^{n}\|,
\end{aligned}
\label{equ537}
\end{equation}
and
\begin{equation}
\begin{aligned}
\|e^{n+1}\|^{2} & = \|e^{n}-2kB(\theta^{n},\xi^{n})\|^{2} + \|\omega_{2}^{n}\|^{2} - 2(e^{n}-2kB(\theta^{n},\xi^{n}),\omega_{2}^{n})\\
& \leq \|e^{n}-2kB(\theta^{n},\xi^{n})\|^{2} + \|\omega_{2}^{n}\|^{2} + 2\|e^{n}\|\|\omega_{2}^{n}\| +
4k\|B(\theta^{n},\xi^{n})\|\|\omega_{2}^{n}\|.
\end{aligned}
\label{equ538}
\end{equation}
Unlike the proof of convergence of the Euler scheme where the `temporary' hypothesis that
$\|\ve^{n}\|_{\infty} + \|e^{n}\|_{\infty}\leq 1$ up to some index $n^{*}$ was sufficient, the present proof requires a
stronger hypothesis, which necessitates that $r$ should be taken at least 3.
Let $0\leq n^{*}\leq M-1$ be the maximal index for which it holds that
\[
\|\ve^{n}\|_{1,\infty} + \|e^{n}\|_{1,\infty} \leq 1, \quad 0\leq n\leq n^{*}.
\]
Then, from $(\ref{equ530})_{5}$, having in mind $(\ref{equ532})_{2}$ as well, we obtain for $0\leq n\leq n^{*}$
\begin{align*}
\|\omega_{1}^{n}\|^{2} + 2\|\ve^{n}\|\|\omega_{1}^{n}\| & + 4k\|A(\theta^{n},\xi^{n}\|\|\omega_{1}^{n}\|
\leq \|\omega_{1}^{n}\|^{2} + C(\|\ve^{n}\| + \|e^{n}\|)\|\omega_{1}^{n}\|\\
& \leq \bigl(\|\omega_{1}^{n}\| + C(\|\ve^{n}\| + \|e^{n}\|)\bigr)\|\omega_{1}^{n}\|\\
& \leq \bigl(C(\|\ve^{n}\| + \|e^{n}\|) + \|\delta_{1}^{n}\|\bigr)\bigl(C_{5}k(\|\ve^{n}\| + \|e^{n}\|) + \|\delta_{1}^{n}\|\bigr)\\
& \leq C_{6}k(\|\ve^{n}\|^{2} + \|e^{n}\|^{2}) + C_{7}(\|\ve^{n}\| + \|e^{n}\|)\|\delta_{1}^{n}\| + \|\delta_{1}^{n}\|^{2},
\end{align*}
where $C_{5}$, $C_{6}$, $C_{7}$ are constants that depend on $\lambda=k/h$. From \eqref{equ516} we see that
\begin{align*}
(\|\ve^{n}\| + \|e^{n}\|)\|\delta_{1}^{n}\| & \leq C\sqrt{k}(\|\ve^{n}\| + \|e^{n}\|)\cdot \sqrt{k}(k^{2} + h^{r-1})\\
& \leq Ck(\|\ve^{n}\|^{2} + \|e^{n}\|^{2}) + Ck(k^{2} + h^{r-1})^{2},
\end{align*}
and therefore, for constants $C_{8}$, $C_{9}$ depending on $\lambda$, that
\[
\|\omega_{1}^{n}\|^{2} + 2\|\ve^{n}\|\|\omega_{1}^{n}\| + 4k\|A(\theta^{n},\xi^{n}\|\|\omega_{1}^{n}\|
\leq C_{8}k(\|\ve^{n}\|^{2} + \|e^{n}\|^{2}) + C_{9}k(k^{2} + h^{r-1})^{2}.
\]
(In the sequel we will not be mentioning the dependence of constants on $\lambda$.) Finally, for $0\leq n\leq n^{*}$ we have
from \eqref{equ537}
\[
\|\ve^{n+1}\|^{2} \leq \|\ve^{n}-2kA(\theta^{n},\xi^{n})\|^{2} + Ck(\|\ve^{n}\|^{2} + \|e^{n}\|^{2}) + Ck(k^{2} + h^{r-1})^{2}.
\]
Similarly, from $(\ref{equ530})_{5}$, $(\ref{equ532})_{3}$, \eqref{equ516} and for $0\leq n\leq n^{*}$ we get from \eqref{equ538}
\[
\|e^{n+1}\|^{2} \leq \|e^{n}-2kB(\theta^{n},\xi^{n})\|^{2} + Ck(\|\ve^{n}\|^{2} + \|e^{n}\|^{2}) + Ck(k^{2} + h^{r-1})^{2}.
\]
Adding the last two inequalities yields
\begin{equation}
\begin{aligned}
\|\ve^{n+1}\|^{2} + \|e^{n+1}\|^{2} & \leq \|\ve^{n}-2kA(\theta^{n},\xi^{n})\|^{2} + \|e^{n}-2kB(\theta^{n},\xi^{n})\|^{2}\\
& \quad + Ck(\|\ve^{n}\|^{2} + \|e^{n}\|^{2}) + Ck(k^{2} + h^{r-1})^{2},
\end{aligned}
\label{equ539}
\end{equation}
for $0\leq n\leq n^{*}$. In what follows we will show that if $k=O(h^{4/3})$, then for $0\leq n\leq n^{*}$
\[
\|\ve^{n}-2kA(\theta^{n},\xi^{n})\|^{2} + \|e^{n}-2kB(\theta^{n},\xi^{n})\|^{2}
\leq (1 + Ck)(\|\ve^{n}\|^{2} + \|e^{n}\|^{2}),
\]
for some constant $C$ depending on $\mu$. \par
We note first that
\begin{equation}
\begin{aligned}
\|\ve^{n} & -2kA(\theta^{n},\xi^{n})\|^{2} + \|e^{n}-2kB(\theta^{n},\xi^{n})\|^{2} =
\|\ve^{n}\|^{2} + \|e^{n}\|^{2}\\
& -4k\bigl((\ve^{n},A(\theta^{n},\xi^{n})) + (e^{n},B(\theta^{n},\xi^{n}))\bigr)
+ 4k^{2}\bigl(\|A(\theta^{n},\xi^{n})\|^{2} + \|B(\theta^{n},\xi^{n})\|^{2}\bigr).
\end{aligned}
\label{equ540}
\end{equation}
In the sequel instead of $A(\ve^{n},e^{n})$ and $B(\ve^{n},e^{n})$ we will write $A$ and $B$, respectively. From
\eqref{equ527}, \eqref{equ526} it follows that
\begin{align*}
A(\theta^{n},\xi^{n}) = A(\ve^{n}-kA,e^{n}-kB) & = A - kA(A,B) +\tfrac{k}{4}P(\ve^{n}B+e^{n}A)_{x}
-\tfrac{k+k^{2}}{4}P(AB)_{x},\\
B(\theta^{n},\xi^{n}) = B(\ve^{n}-kA,e^{n}-kB) & = B - kB(A,B) + \tfrac{k}{4}P_{0}(\ve^{n}A)_{x} + \tfrac{3k}{4}P_{0}(e^{n}B)_{x}\\
&\quad -\tfrac{k+k^{2}}{4}P_{0}(AA_{x}) -\tfrac{3(k+k^{2})}{4}P_{0}(BB_{x}).
\end{align*}
Hence
\begin{equation}
\begin{aligned}
4k\bigl((\ve^{n},A(\theta^{n},\xi^{n})+(e^{n},B(\theta^{n},\xi^{n})\bigr) & = 4k\bigl((\ve^{n},A) + (e^{n},B)\bigr) \\
&\quad - 4k^{2}\bigl((\ve^{n},A(A,B)) + (e^{n},B(A,B))\bigr)\\
&\quad -k^{2}(\ve_{x}^{n},\ve^{n}B+e^{n}A)+ (k^{2}+k^{3})(\ve_{x}^{n},AB) \\
&\quad - k^{2}(e_{x}^{n},\ve^{n}A) - 3k^{2}(e_{x}^{n},e^{n}B)-(k^{2}+k^{3})(e^{n},AA_{x}) \\
&\quad - 3(k^{2}+k^{3})(e^{n},BB_{x}).
\end{aligned}
\label{equ541}
\end{equation}
Therefore for the $O(k)$ terms in \eqref{equ540} we finally have (cf. \eqref{equ525}) that
\begin{align*}
4k\bigl((\ve^{n},A) + (e^{n},B)\bigr) & = -k(\ve_{x}^{n},U^{n}\ve^{n}) - k(\ve_{x}^{n},H^{n}e^{n})-2k(\ve_{x}^{n},e^{n})
+ k(\ve_{x}^{n},\ve^{n}e^{n}) \\
& \quad  -k(e_{x}^{n},H^{n}\ve^{n}) - 3k(e_{x}^{n},U^{n}e^{n})-2k(e_{x}^{n},\ve^{n})-k(e^{n},\ve^{n}\ve_{x}^{n}),
\end{align*}
and thus
\begin{equation}
\begin{aligned}
-4k\bigl((\ve^{n},A) + (e^{n},B)\bigr) & = -\tfrac{k}{2}(\ve^{n},U_{x}^{n}\ve^{n}) - k(\ve^{n},H_{x}^{n}e^{n})
- \tfrac{3k}{2}(e^{n},U_{x}^{n}e^{n})\\
& \leq Ck(\|\ve^{n}\|^{2} + \|e^{n}\|^{2}).
\end{aligned}
\label{equ542}
\end{equation}
Gathering the $O(k^{2})$ terms of \eqref{equ540} we obtain
\begin{align*}
K2 & :=4k^{2}\|A\|^{2} + 4k^{2}\|B\|^{2} + 4k^{2}(\ve^{n},A(A,B)) + 4k^{2}(e^{n},B(A,B))
+ k^{2}(\ve_{x}^{n},\ve^{n}B+e^{n}A)\\
& \quad   - k^{2}(\ve_{x}^{n},AB) + k^{2}(e_{x}^{n},\ve^{n}A) + 3k^{2}(e_{x}^{n},e^{n}B) + k^{2}(e^{n},AA_{x})
+ 3k^{2}(e^{n},BB_{x}).
\end{align*}
If $K2_{1}:=4k^{2}\|A\|^{2} + 4k^{2}\|B\|^{2}=4k^{2}(A,A) + 4k^{2}(B,B)$, then, from \eqref{equ525}
\begin{align*}
K2_{1}& =k^{2}((U^{n}\ve^{n})_{x},A) + k^{2}((H^{n}e^{n})_{x},A)
+ 2k^{2}(e_{x}^{n},A) -k^{2}((\ve^{n}e^{n})_{x},A) \\
& \quad  + k^{2}((H^{n}\ve^{n})_{x},B) + 3k^{2}((U^{n}e^{n})_{x},B) + 2k^{2}(\ve_{x}^{n},B)
- k^{2}(\ve^{n}\ve_{x}^{n},B)-3k^{2}(e^{n}e_{x}^{n},B).
\end{align*}
If $K2_{2}:=4k^{2}(\ve^{n},A(A,B)) + 4k^{2}(e^{n},B(A,B))$, then, from \eqref{equ525} again
\begin{align*}
K2_{2} & =-k^{2}(\ve_{x}^{n},U^{n}A) - k^{2}(\ve_{x}^{n},H^{n}B) - 2k^{2}(\ve_{x}^{n},B) + k^{2}(\ve_{x}^{n},AB)\\
& \quad -k^{2}(e_{x}^{n},H^{n}A) - 3k^{2}(e_{x}^{n},U^{n}B) - 2k^{2}(e_{x}^{n},A) - k^{2}(e^{n},AA_{x})
-3k^{2}(e^{n},BB_{x}).
\end{align*}
Adding the last two equations yields
\begin{align*}
K2_{1} + K2_{2}& =k^{2}(U_{x}^{n}\ve^{n},A) + k^{2}(H_{x}^{n}e^{n},A) - k^{2}((\ve^{n}e^{n})_{x},A)
+ k^{2}(H_{x}^{n}\ve^{n},B) +3k^{2}(U_{x}^{n}e^{n},B) \\
& \quad  - k^{2}(\ve^{n}\ve_{x}^{n},B) - 3k^{2}(e^{n}e_{x}^{n},B) + k^{2}(\ve_{x}^{n},AB)-k^{2}(e^{n},AA_{x})
-3k^{2}(e^{n},BB_{x}),
\end{align*}
and so
\begin{align*}
K2&=K2_{1} + K2_{2} + k^{2}(\ve_{x}^{n},\ve^{n}B) + k^{2}((\ve^{n}e^{n})_{x},A)-k^{2}(\ve_{x}^{n},AB)
+ 3k^{2}(e_{x}^{n},e^{n}B)\\
&\quad  + k^{2}(e^{n},AA_{x}) + 3k^{2}(e^{n},BB_{x}),
\end{align*}
i.e.
\[
K2=k^{2}(U_{x}^{n}\ve^{n},A) + k^{2}(H_{x}^{n}e^{n},A) + k^{2}(H_{x}^{n}\ve^{n},B) + 3k^{2}(U_{x}^{n}e^{n},B),
\]
and consequently, according to $(\ref{equ530})_{1}$, for $0\leq n\leq n^{*}$, there holds that
\begin{equation}
K2 \leq Ck^{2}(\|\ve^{n}\|+\|e^{n}\|)(\|A\| + \|B\|) \leq Ck(\|\ve^{n}\|^{2} + \|e^{n}\|^{2}).
\label{equ543}
\end{equation}
The $O(k^{3})$ terms of \eqref{equ540} are
\begin{align*}
K3& :=-k^{3}(\ve_{x}^{n},AB) + k^{3}(e^{n},AA_{x}) + 3k^{3}(e^{n},BB_{x}) - 8k^{3}(A,A(A,B)) + 2k^{3}(A,(\ve^{n}B+e^{n}A)_{x})\\
&\quad -2k^{3}(A,(AB)_{x}) -8k^{3}(B,B(A,B)) +2k^{3}(B,(\ve^{n}A)_{x}) +6k^{3}(B,(e^{n}B)_{x})-2k^{3}(B,AA_{x}).
\end{align*}
Hence
\[
K3=k^{3}(\ve_{x}^{n},AB) - k^{3}(e^{n},AA_{x}) - 3k^{3}(e^{n},BB_{x}) - 8k^{3}(A,A(A,B)) - 8k^{3}(B,B(A,B)).
\]
If $K3_{1}=k^{3}(\ve_{x}^{n},AB) - k^{3}(e^{n},AA_{x}) - 3k^{3}(e^{n},BB_{x})$ then
\[
K3_{1}\leq k^{3}\|\ve^{n}\|_{1}\|A\|_{\infty}\|B\| + k^{3}\|e^{n}\|(\|A\|_{\infty}\|A\|_{1} + 3\|B\|_{\infty}\|B\|_{1}),
\]
and, consequently, from $(\ref{equ530})_{2}$, the inverse properties of $S_{h}$, $S_{h,0}$ and $(\ref{equ530})_{1}$ it follows that
\[
K3_{1} \leq Ck(\|\ve^{n}\|^{2} + \|e^{n}\|^{2}),
\]
If $K3_{2}=- 8k^{3}(A,A(A,B)) - 8k^{3}(B,B(A,B))$ then, from \eqref{equ525}
\begin{align*}
K3_{2}&=2k^{3}(A_{x},U^{n}A) + 2k^{3}(A_{x},H^{n}B) + 4k^{3}(A_{x},B) - 2k^{3}(A_{x},AB)\\
&\quad +2k^{3}(B_{x},H^{n}A) + 6k^{3}(B_{x},U^{n}B) + 4k^{3}(B_{x},A) + 2k^{3}(B,AA_{x}),
\end{align*}
from which
\[
K3_{2}=-k^{3}(U_{x}^{n}A,A)-2k^{3}(H_{x}^{n}A,B)-3k^{3}(U_{x}^{n}B,B).
\]
Thus, from $(\ref{equ530})_{1}$
\[
K3_{2}\leq Ck(\|\ve^{n}\|^{2} + \|e^{n}\|^{2}).
\]
Hence, for $0\leq n\leq n^{*}$
\begin{equation}
K3 \leq C_{12}k(\|\ve^{n}\|^{2} + \|e^{n}\|^{2}).
\label{equ544}
\end{equation}
The $O(k^{4})$ terms of \eqref{equ540} are
\begin{align*}
K4&:=4k^{4}\|A(A,B)\|^{2} + \tfrac{k^{4}}{4}\|P(\ve^{n}B+e^{n}A)_{x}\|^{2} + \tfrac{k^{4}}{4}\|P(AB)_{x}\|^{2}
-2k^{4}(A,(AB)_{x})\\
&\quad -2k^{4}(A(A,B),(\ve^{n}B+e^{n}A)_{x})
+ 2k^{4}(A(A,B),(AB)_{x}) - \tfrac{k^{4}}{2}((\ve^{n}B+e^{n}A)_{x},P(AB)_{x})\\
&\quad + 4k^{4}\|B(A,B)\|^{2}
+ \tfrac{k^{4}}{4}\|P_{0}(\ve^{n}A)_{x}\|^{2} + \tfrac{9k^{4}}{4}\|P_{0}(e^{n}B)_{x}\|^{2}
+ \tfrac{k^{4}}{4}\|P_{0}(AA_{x})\|^{2} + \tfrac{9k^{4}}{4}\|P_{0}(BB_{x})\|^{2}\\
&\quad - 2k^{4}(B(A,B),(\ve^{n}A)_{x}) - 6k^{4}(B(A,B),(e^{n}B)_{x}) + 2k^{4}(B(A,B),AA_{x}) + 6k^{4}(B(A,B),BB_{x})\\
&\quad + \tfrac{3k^{4}}{2}((\ve^{n}A)_{x},P_{0}(e^{n}B)_{x})-\tfrac{k^{4}}{2}((\ve^{n}A)_{x},P_{0}(AA_{x}))
-\tfrac{3k^{4}}{2}((\ve^{n}A)_{x},P_{0}(BB_{x}))\\
&\quad  -\tfrac{3k^{4}}{2}((e^{n}B)_{x},P_{0}(AA_{x}))- \tfrac{9k^{4}}{2}((e^{n}B)_{x},P_{0}(BB_{x}))
+ \tfrac{3k^{4}}{2}(AA_{x},P_{0}(BB_{x})).
\end{align*}
From Lemma 5.7 and the inverse properties of $S_{h}$, $S_{h,0}$ we have
\begin{equation}
\|A(A,B)\| + \|B(A,B)\| \leq C'h^{-1}(\|A\| + \|B\|)\leq Ch^{-2}(\|\ve^{n}\| + \|e^{n}\|).
\label{equ545}
\end{equation}
Similarly it follows that
\begin{equation}
\begin{aligned}
\|(\ve^{n}B)_{x}\| &\leq \|\ve_{x}^{n}B\| + \|\ve^{n}B_{x}\|\leq \|\ve^{n}\|_{1}\|B\|_{\infty}+\|\ve^{n}\|_{\infty}\|B\|_{1}\\
& \leq Ch^{-1}(\|\ve^{n}\| + \|B\|)\leq Ch^{-2}(\|\ve^{n}\| + \|e^{n}\|),
\end{aligned}
\label{equ546}
\end{equation}
and
\begin{equation}
\|(e^{n}B)_{x}\| + \|(\ve^{n}A)_{x}\| + \|(e^{n}A)_{x}\|\leq Ch^{-2}(\|\ve^{n}\| + \|e^{n}\|).
\label{equ547}
\end{equation}
Finally
\begin{equation}
\begin{aligned}
\|(AB)_{x}\| & \leq \|A_{x}B\| + \|AB_{x}\| \leq \|A\|_{1}\|B\|_{\infty} + \|A\|_{\infty}\|B\|_{1}\\
&\leq Ch^{-1}(\|A\| + \|B\|)\leq Ch^{-2}(\|\ve^{n}\| + \|e^{n}\|),
\end{aligned}
\label{equ548}
\end{equation}
and
\begin{equation}
\|AA_{x}\| + \|BB_{x}\|\leq Ch^{-1}(\|A\| + \|B\|)\leq Ch^{-2}(\|\ve^{n}\| + \|e^{n}\|).
\label{equ549}
\end{equation}
Hence
\begin{equation}
K4 \leq C'k^{4}h^{-4}(\|\ve^{n}\|^{2} + \|e^{n}\|^{2}) \leq C'k(\|\ve^{n}\|^{2} + \|e^{n}\|^{2}),
\label{equ550}
\end{equation}
where $C'$ a constant that depends polynomially on $\mu=k/h^{4/3}$. For the $O(k^{5})$ terms of \eqref{equ540} we get
\begin{align*}
K5&:=2k^{5}(A(A,B),(AB)_{x}) - \tfrac{k^{5}}{2}((\ve^{n}B+e^{n}A)_{x},P(AB)_{x}) + \tfrac{k^{5}}{2}\|P(AB)_{x}\|^{2}\\
&\quad + 2k^{5}(B(A,B),AA_{x}) - \tfrac{k^{5}}{2}((\ve^{n}A)_{x},P_{0}(AA_{x})) - \tfrac{3k^{5}}{2}((e^{n}B)_{x},P_{0}(AA_{x}))\\
&\quad +6k^{5}(B(A,B),BB_{x}) - \tfrac{3k^{5}}{2}((\ve^{n}A)_{x},P_{0}(BB_{x}))-\tfrac{9k^{5}}{2}((e^{n}B)_{x},P_{0}(BB_{x}))\\
&\quad + 3k^{5}(AA_{x},P_{0}(BB_{x})) + \tfrac{k^{5}}{2}\|P_{0}(AA_{x})\|^{2} + \tfrac{9k^{5}}{2}\|P_{0}(BB_{x})\|^{2}.
\end{align*}
From \eqref{equ540}-\eqref{equ549} we obtain
\begin{equation}
K5 \leq Ck(\|\ve^{n}\|^{2} + \|e^{n}\|^{2}).
\label{equ551}
\end{equation}
The $O(k^{6})$ terms of \eqref{equ540} are:
\[
K6:=\tfrac{k^{6}}{4}\|P(AB)_{x}\|^{2} + \tfrac{k^{6}}{4}\|P_{0}(AA_{x})\|^{2} + \tfrac{9k^{6}}{4}\|P_{0}(BB_{x})\|^{2}
+ \tfrac{3k^{6}}{4}(AA_{x},P_{0}(BB_{x})).
\]
Thus, from \eqref{equ545}-\eqref{equ549} we obtain
\[
K6 \leq Ck^{2}(\|\ve^{n}\|^{2} + \|e^{n}\|^{2}).
\]
From this inequality and \eqref{equ543}, \eqref{equ544}, \eqref{equ550} and \eqref{equ551}, we have, in view of \eqref{equ540}
\[
\|\ve^{n}-2kA(\theta^{n},\xi^{n})\|^{2} + \|e^{n}-2kB(\theta^{n},\xi^{n})\|^{2} \leq (1+C'k)(\|\ve^{n}\|^{2} + \|e^{n}\|^{2}),
\]
where $C'$ depends polynomially on $\mu$. Hence, from \eqref{equ539} and the above it follows that for $0\leq n\leq n^{*}$
\[
\|\ve^{n+1}\|^{2} + \|e^{n+1}\|^{2} \leq (1+C'k)(\|\ve^{n}\|^{2} + \|e^{n}\|^{2}) + Ck(k^{2} + h^{r-1})^{2},
\]
where $C'$ a constant depending polynomially on $\mu$. Using Gronwall's lemma in the above and the inverse properties of $S_{h}$,
$S_{h,0}$, we conclude that, since $r\geq 3$, if $h$ is taken sufficiently small, $n^{*}$ is not maximal and the argument
may be repeated so that eventually $n^{*}$ can be taken equal to $M-1$. Therefore
\[
\|\ve^{n}\|^{2} + \|e^{n}\|^{2} \leq C'(k^{2} + h^{r-1})^{2},\quad 0\leq n\leq M,
\]
for some constant $C'$ depending continuously on $\mu$. The conclusion of the proposition follows.
\end{proof}
In Table \ref{tbl51} we present the results of computations ($L^{2}$ errors for $\eta$) of the numerical solution of the nonhomogeneous
versions of both systems \eqref{eqsw} and \eqref{eqssw} with exact solution given by $\eta(x,t)=\exp(2t)(\cos(\pi x) + x + 2)$,
$u(x,t)=\exp(xt)(\sin(\pi x) + x^{3}-x^{2})$, $(x,t)\in [0,1]\times(0,1]$. We use piecewise linear continuous functions for
discretizing on a uniform mesh in space with $N=1/h=400$ and the improved Euler scheme with $k=h/10$ and $k=h^{4/3}/10$.
For both systems it was seen that the computations with $k=h/10$ were unstable. The fully discrete method for both systems
appeared to be stable when we took $k=h^{4/3}/10$.
\scriptsize
\captionsetup[subtable]{labelformat=empty,position=top,margin=1pt,singlelinecheck=false}
\begin{table}[h]
\subfloat[Shallow Water errors]{
\begin{tabular}[h]{ | c | c || c | c | }\hline
$t^{n}$  &  $k=h/10$      & $t^{n}$  &  $k=h^{4/3}/10$ \\ \hline
$0.05$   &  $0.3096(-5)$  & $0.0500$ &  $0.3098(-5)$ \\ \hline
$0.1$    &  $0.3309(-5)$  & $0.1000$ &  $0.3302(-5)$ \\ \hline
$0.3$    &  $0.1045(-4)$  & $0.3001$ &  $0.1036(-4)$ \\ \hline
$0.5$    &  $0.1487(-3)$  & $0.5001$ &  $0.1176(-4)$ \\ \hline
$0.59$   &  $3.2147$      & $0.59$   &  $0.1221(-4)$ \\ \hline
$0.7$    &  $overflow$    & $0.7002$ &  $0.1358(-4)$ \\ \hline
$0.8$    &  $$            & $0.8002$ &  $0.1289(-4)$ \\ \hline
$0.9$    &  $$            & $0.9002$ &  $0.1107(-4)$ \\ \hline
$1.0$    &  $$            & $1.0000$ &  $0.1682(-4)$ \\ \hline
\end{tabular}
}
\qquad
\subfloat[Symmetric Shallow Water errors]{
\begin{tabular}[h]{ | c | c || c | c | }\hline
$t^{n}$ &  $k=h/10$      & $t^{n}$   &  $k=h^{4/3}/10$ \\ \hline
$0.05$  &  $0.3066(-5)$  & $0.0500$  &  $0.3068(-5)$ \\ \hline
$0.1$   &  $0.2479(-5)$  & $0.1000$  &  $0.2474(-5)$ \\ \hline
$0.3$   &  $0.1043(-2)$  & $0.3001$  &  $0.6657(-5)$ \\ \hline
$0.35$  &  $1.4689$      & $0.35$    &  $0.6729(-5)$ \\ \hline
$0.5$   &  $overflow$    & $0.5001$  &  $0.7390(-5)$ \\ \hline
$0.7$   &  $$            & $0.7002$  &  $0.9463(-5)$ \\ \hline
$0.8$   &  $$            & $0.8002$  &  $0.1002(-4)$ \\ \hline
$0.9$   &  $$            & $0.9001$  &  $0.1118(-4)$ \\ \hline
$1.0$   &  $$            & $1.0000$  &  $0.1297(-4)$ \\ \hline
\end{tabular}
}
\caption{$L^{2}$-errors $\|H_{h}^{n} - \eta(t^{n})\|$, for \eqref{eqsw} and \eqref{eqssw}, piecewise linear continuous
functions on a uniform mesh $h=1/400$ and improved Euler time stepping.}
\label{tbl51}
\end{table}
\normalsize
\subsection{The third-order Shu-Osher scheme}
We now examine a third-order accurate explicit Runge-Kutta scheme due to Shu and Osher, \cite{so}. Written in the
standard Butcher notation, it is a three-stage scheme corresponding to the tableau below.\newpage
\begin{table}[h]
  \begin{equation*}
  \begin{array}{c|c}
  \rule{0pt}{2,3ex} A & \tau\\ \hline
  \rule{0pt}{2.3ex} b^{T} &
  \end{array} \,=\,
    \begin{array}{c c c | c}
      \rule{0pt}{2,3ex}  0   & \quad 0   & \quad 0   & 0   \\
      \rule{0pt}{2,3ex}  1   & \quad 0   & \quad 0   & 1   \\
      \rule{0pt}{2,3ex}  1/4 & \quad 1/4 & \quad 0   & 1/2 \\ \hline
      \rule{0pt}{2,3ex}  1/6 & \quad 1/6 & \quad 2/3 &
    \end{array}
  \end{equation*}
\end{table} \normalsize
Due to the special structure of this tableau one may simplify the scheme and write it as a two-stage
method approximating the o.d.e. $y'=f(t,y)$ in the form
\begin{align*}
y^{n,1} & = y^{n} + kf(t^{n},y^{n}),\\
y^{n,2} & = y^{n} + \tfrac{k}{4}f(t^{n},y^{n}) + \tfrac{k}{4}f(t^{n+1},y^{n,1}),\\
y^{n+1} & = y^{n} + \tfrac{k}{6}f(t^{n},y^{n}) + \tfrac{k}{6}f(t^{n+1},y^{n,1}) + \tfrac{2k}{3}f(t^{n}+k/2,y^{n,2});
\end{align*}
this is precisely the explicit scheme $(2.19)$ in \cite{so}. It is easy to check that the absolute stability interval of this scheme
on the imaginary axis is $[-\sqrt{3},\sqrt{3}]$; thus it is suitable for integrating in time semidiscretizations of e.g.
linear, first-order hyperbolic problems, such as the periodic initial-value problem for $u_{t} + u_{x} = 0$, under a Courant number
restriction. It is also well known, \cite{so}, that it has good nonlinear stability properties such as the TVD property, and has been
extensively used as a time-stepping scheme for the numerical approximation of hyperbolic systems in conservation law form.
In the rest of this section we will use it to discretize in time the semidiscrete \eqref{eqssw} initial-value problem
\eqref{eq27}-\eqref{eq28}.\par
Using the notation of Section 2, we let $S_{h}=S_{h}^{k,r}$, $S_{h,0}=S_{h,0}^{k,r}$, for $r\geq 3$. We put again
$k=T/M$, $t^{n}=nk$, $0\leq n\leq M$. We let as in Lemma 5.1 $H(t)=P\eta(t)$, $U(t)=P_{0}u(t)$, $H^{n}=H(t^{n})$,
$U^{n}=U(t^{n})$, and define
\begin{align}
\Phi & = U + \tfrac{1}{2}(HU)\,, \qquad \qquad \qquad \Phi^{n} = \Phi(t^{n}),
\label{equ552}\\
F & = H_{x} + \tfrac{1}{2}HH_{x} + \tfrac{3}{2}UU_{x}\,, \qquad F^{n}=F(t^{n}).
\label{equ553}
\end{align}
\normalsize
The Shu-Osher time-stepping scheme for \eqref{eq27}-\eqref{eq28} is the following: We seek $H_{h}^{n}\in S_{h}$,
$U_{h}^{n}\in S_{h,0}$ for $0\leq n\leq M$ and $H_{h}^{n,1}$, $H_{h}^{n,2}\in S_{h}$, $U_{h}^{n,1}$, $U_{h}^{n,2}\in S_{h,0}$
for $0\leq n\leq M-1$ such that for $0\leq n\leq M-1$,
\begin{equation}
\begin{aligned}
H_{h}^{n,1} - H_{h}^{n} + kP\Phi_{hx}^{n}& = 0,\\
U_{h}^{n,1} - U_{h}^{n} + kP_{0}F_{h}^{n} & = 0,\\
H_{h}^{n,2} - H_{h}^{n} + \tfrac{k}{4}P\Phi_{hx}^{n}+ \tfrac{k}{4}P\Phi_{hx}^{n,1} & = 0,\\
U_{h}^{n,2} - U_{h}^{n} + \tfrac{k}{4}P_{0}F_{h}^{n} + \tfrac{k}{4}P_{0}F_{h}^{n,1} & = 0,\\
H_{h}^{n+1} - H_{h}^{n} +\tfrac{k}{6}P\Phi_{hx}^{n} + \tfrac{k}{6}P\Phi_{hx}^{n,1} + \tfrac{2k}{3}P\Phi_{hx}^{n,2} & = 0,\\
U_{h}^{n+1} - U_{h}^{n} +\tfrac{k}{6}P_{0}F_{h}^{n} + \tfrac{k}{6}P_{0}F_{h}^{n,1} + \tfrac{2k}{3}P_{0}F_{h}^{n,2} & = 0,
\end{aligned}
\label{equ554}
\end{equation}
\normalsize
and $H_{h}^{0} = \eta_{h}(0)$, $U_{h}^{0} = u_{h}(0)$, where
\begin{align}
\Phi_{h}^{n} & = U_{h}^{n} + \tfrac{1}{2}(H_{h}^{n}U_{h}^{n}),
\label{equ555}\\
F_{h}^{n} & = H_{hx}^{n} + \tfrac{1}{2}H_{h}^{n}H_{hx}^{n} + \tfrac{3}{2}U_{h}^{n}U_{hx}^{n},
\label{equ556}
\end{align}
\normalsize and, for $j=1,2$,
\begin{align}
\Phi_{h}^{n,j} & = U_{h}^{n,j} + \tfrac{1}{2}(H_{h}^{n,j}U_{h}^{n,j}),
\label{equ557}\\
F_{h}^{n,j} & = H_{hx}^{n,j} + \tfrac{1}{2}H_{h}^{n,j}H_{hx}^{n,j} + \tfrac{3}{2}U_{h}^{n,j}U_{hx}^{n,j}.
\label{equ558}
\end{align} \normalsize
The intermediate stages $V^{n,j}\in S_{h}$, $W^{n,j}\in S_{h,0}$ for $j=1,2$ are defined, for $0\leq n\leq M-1$,
by the equations
\begin{align}
V^{n,1} - H^{n} + kP\Phi_{x}^{n} & = 0,
\label{equ559} \\
W^{n,1} - U^{n} + kP_{0}F^{n} & = 0,
\label{equ560}\\
V^{n,2} - H^{n} + \tfrac{k}{4}P\Phi_{x}^{n} + \tfrac{k}{4}P\Phi_{x}^{n,1} & = 0,
\label{equ561}\\
W^{n,2} - U^{n} + \tfrac{k}{4}P_{0}F^{n} + \tfrac{k}{4}P_{0}F^{n,1} & = 0,
\label{equ562}
\end{align}
where
\begin{align}
\Phi^{n,j} &=W^{n,j} + \tfrac{1}{2}(V^{n,j}W^{n,j}),
\label{equ563}\\
F^{n,j} & = V_{x}^{n,j} + \tfrac{1}{2}V^{n,j}V_{x}^{n,j} + \tfrac{3}{2}W^{n,j}W_{x}^{n,j},
\label{equ564}
\end{align}
for $j=1,2$.\\
We estimate first the continuous time truncation error using $L^{2}$ projections.
\begin{lemma} Let $(\eta,u)$ be the solution of  \eqref{eqssw} on $[0,T]$. If $H(t)=P\eta(t)$, $U(t)=P_{0}u(t)$,
and $\psi=\psi(t)\in S_{h}$ and $\zeta=\zeta(t)\in S_{h,0}$ are such that
\begin{align}
H_{t} + P\Phi_{x} = \psi,
\label{equ565}\\
U_{t} + P_{0}F = \zeta,
\label{equ566}
\end{align}
for $0\leq t\leq T$, then there exists a constant $C$ such that for $j=0,1,2$, it holds that
\[
\max_{0\leq t\leq T}(\|\partial_{t}^{j}\psi\| + \|\partial_{t}^{j}\zeta\|) \leq Ch^{r-1}\,.
\]
\end{lemma}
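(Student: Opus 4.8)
The plan is to reduce this to the computation already carried out in the proof of Lemma 5.1. Indeed, after expanding $\Phi = U + \tfrac{1}{2}(HU)$ and $F = H_x + \tfrac{1}{2}HH_x + \tfrac{3}{2}UU_x$, the defining relations \eqref{equ565}, \eqref{equ566} become \emph{verbatim} the relations \eqref{eq54}, \eqref{eq55} of Lemma 5.1; hence the bound $\|\psi\|+\|\psi_t\|+\|\zeta\|+\|\zeta_t\|\le Ch^{r-1}$, i.e. the cases $j=0$ and $j=1$, is precisely what Lemma 5.1 asserts. Only the case $j=2$, i.e. $\|\psi_{tt}\|+\|\zeta_{tt}\|\le Ch^{r-1}$, remains, and it is obtained by pushing the argument of Lemma 5.1 one $t$-derivative further.

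First I would record the closed forms of $\psi$ and $\zeta$. Writing $\rho:=\eta-H=(I-P)\eta$, $\sigma:=u-U=(I-P_0)u$, and using that $S_h$, $S_{h,0}$ are time-independent (so $P$, $P_0$ commute with $\partial_t$ and $H_t=P\eta_t$, $U_t=P_0u_t$), one subtracts \eqref{equ565} from $P(\eta_t+u_x+\tfrac12(\eta u)_x)=0$, which holds because $(\eta,u)$ solves \eqref{eqssw}; using $\eta u-HU=\eta\sigma+u\rho-\rho\sigma$ this gives
\[
\psi=-P\Bigl([(1+\tfrac12\eta)\sigma]_x+\tfrac12(u\rho)_x-\tfrac12(\rho\sigma)_x\Bigr),
\]
and, subtracting \eqref{equ566} from $P_0(u_t+\eta_x+\tfrac12\eta\eta_x+\tfrac32uu_x)=0$ and using $HH_x-\eta\eta_x=\tfrac14(H^2-\eta^2)_x$, $UU_x-uu_x=\tfrac14(U^2-u^2)_x$,
\[
\zeta=-P_0\Bigl([(1+\tfrac12\eta)\rho]_x-\tfrac12\rho\rho_x+\tfrac32(u\sigma)_x-\tfrac32\sigma\sigma_x\Bigr).
\]
Differentiating these two identities twice in $t$ and commuting $\partial_t^2$ through $P$, $P_0$, one finds that $\psi_{tt}$, $\zeta_{tt}$ are $P$- (respectively $P_0$-) images of finite sums of terms, each carrying exactly one spatial derivative and built from factors $\partial_t^a\eta$, $\partial_t^b u$ with $a,b\le2$ (which are smooth, with $W^{1,\infty}$ norms bounded uniformly on $[0,T]$), together with at least one ``error factor'' $\partial_t^c\rho=(I-P)\partial_t^c\eta$ or $\partial_t^c\sigma=(I-P_0)\partial_t^c u$, $c\le2$; the terms coming from $\rho\sigma$, $\rho\rho_x$, $\sigma\sigma_x$ carry two such error factors.

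To finish, estimate each term in $L^2$ after dropping $P$, $P_0$ (which are $L^2$-contractions), exactly as in the proof of Lemma 5.1. A term with a single error factor is bounded by $C\|\partial_t^c\rho\|_1$ or $C\|\partial_t^c\sigma\|_1$, and by \eqref{eq21a} applied with $w=\partial_t^c\eta$, $\partial_t^c u$ this is $O(h^{r-1})$; a term with two error factors is bounded, using also \eqref{eq22b}, by the product of an $O(h^{r-1})$ factor (a $\|\cdot\|_1$ norm) and an $O(h^{r})$ or $O(h^{r-1})$ factor (an $\|\cdot\|_\infty$ or $\|\cdot\|_{1,\infty}$ norm), hence is $O(h^{2r-1})$. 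Summing, $\|\psi_{tt}\|+\|\zeta_{tt}\|\le Ch^{r-1}$; the identical count with one fewer $t$-derivative reproves the $j=0,1$ cases, and the lemma follows.

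The one prerequisite to keep track of — and the only point where care is needed, though it is routine bookkeeping rather than a genuine difficulty — is that $\partial_t^c\eta,\partial_t^c u\in C(0,T;H^r)$ for $c=0,1,2$, which is what legitimizes applying \eqref{eq21a} to these time-derivatives. This is guaranteed by the standing assumption that the solution of \eqref{eqssw} is sufficiently smooth: differentiating the system \eqref{eqssw} with respect to $t$ expresses $\eta_{tt}$, $u_{tt}$ — and, inductively, every $\partial_t^c$ needed — through space derivatives of $\eta$ and $u$, which are controlled by the assumed regularity.
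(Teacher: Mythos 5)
Your proposal is correct and follows essentially the same route as the paper: the paper's own proof likewise subtracts the projected p.d.e.\ from \eqref{equ565}--\eqref{equ566} to obtain exactly the closed forms you write for $\psi$ and $\zeta$, and then invokes the approximation properties of $S_{h}$, $S_{h,0}$ for $j=0,1,2$ (your treatment of the $j=2$ case merely spells out the differentiation-in-$t$ bookkeeping that the paper leaves implicit). The only blemish is the parenthetical identity $HH_{x}-\eta\eta_{x}=\tfrac{1}{4}(H^{2}-\eta^{2})_{x}$, where the constant should be $\tfrac{1}{2}$ and the identity actually needed is $\eta\eta_{x}-HH_{x}=(\eta\rho)_{x}-\rho\rho_{x}$; your final expression for $\zeta$ is nevertheless the correct one.
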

\begin{proof}
Subtracting both sides of the equations
\begin{align*}
P\eta_{t} + Pu_{x} + \tfrac{1}{2}P(\eta u)_{x} & = 0, \\
H_{t} + PU_{x} + \tfrac{1}{2}P(HU)_{x} &= \psi,
\end{align*}
and putting $\rho=\eta - H$, $\sigma=u-U$, we have
\[
P\sigma_{x} + \tfrac{1}{2}P(\eta u - HU)_{x}=-\psi.
\]
Since
\[
\eta u - HU = \eta\sigma + u\rho - \rho\sigma,
\]
it follows that
\[
P\sigma_{x} + \tfrac{1}{2}P(\eta\sigma)_{x} + \tfrac{1}{2}P(u\rho)_{x} - \tfrac{1}{2}P(\rho\sigma)_{x}=-\psi,
\]
and, as a consequence of the approximation properties of $S_{h}$ and $S_{h,0}$, for $j=0,1,2$
\[
\|\partial_{t}^{j}\psi\| \leq C h^{r-1}.
\]
Subtracting now both sides of the equations
\begin{align*}
P_{0}u_{t} + P_{0}\eta_{x} + \tfrac{1}{2}P_{0}(\eta\eta_{x}) + \tfrac{3}{2}P_{0}(uu_{x}) &= 0,\\
U_{t} + P_{0}H_{x} + \tfrac{1}{2}P_{0}(HH_{x}) + \tfrac{3}{2}P_{0}(UU_{x}) = \zeta,
\end{align*}
we obtain
\[
P_{0}\rho_{x} +\tfrac{1}{2}P_{0}(\eta\eta_{x} - HH_{x}) + \tfrac{3}{2}P_{0}(uu_{x}-UU_{x})=-\zeta.
\]
Since
\[
\eta\eta_{x} - HH_{x} = (\eta\rho)_{x} - \rho\rho_{x}\,, \qquad uu_{x}-UU_{x} = (u\sigma)_{x} - \sigma\sigma_{x},
\]
it follows that
\[
P_{0}\rho_{x} + \tfrac{1}{2}P_{0}((\eta\rho)_{x} - \rho\rho_{x})
+ \tfrac{3}{2}(P_{0}(u\sigma)_{x}-P_{0}(\sigma\sigma_{x})) = -\zeta,
\]
and, as in the $\psi$ case, we see that for $j=0,1,2$
\[
\|\partial_{t}^{j}\zeta\| \leq Ch^{r-1}.
\]
\end{proof}
We prove now consistency estimates for the scheme \eqref{equ554}.
\begin{lemma} Let $\lambda=k/h$. If
\begin{align}
\delta_{1}^{n} & = H^{n+1} - H^{n} +\tfrac{k}{6}P\Phi_{x}^{n} +\tfrac{k}{6}P\Phi_{x}^{n,1}+\tfrac{2k}{3}P\Phi_{x}^{n,2},
\label{equ567}\\
\delta_{2}^{n} & = U^{n+1} - U^{n} +\tfrac{k}{6}P_{0}F^{n} +\tfrac{k}{6}P_{0}F^{n,1}+\tfrac{2k}{3}P_{0}F^{n,2},
\label{equ568}
\end{align}
then there exists a constant $C_{\lambda}$, which is a polynomial of $\lambda$ with positive coefficients, such that
\[
\max_{0\leq n\leq M-1}(\|\delta_{1}^{n}\| + \|\delta_{2}^{n}\|)\leq C_{\lambda}k(k^{3} + h^{r-1}).
\]
\end{lemma}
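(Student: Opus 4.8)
The plan is to mimic, at one higher order of temporal accuracy, the argument used for the improved Euler scheme in Lemma 5.5, with Lemma 5.9 playing the role that Lemma 5.2 played there. First I would use \eqref{equ565}--\eqref{equ566} to view the exact‑projection trajectory $(H(t),U(t))=(P\eta(t),P_0u(t))$ as the solution of a \emph{perturbed} system $H_t=-P\Phi_x+\psi$, $U_t=-P_0F+\zeta$, together with the differentiated relations $-P\Phi_{xt}=H_{tt}-\psi_t$, $-P_0F_t=U_{tt}-\zeta_t$, $-P\Phi_{xtt}=H_{ttt}-\psi_{tt}$ (and likewise for $F$), all of which are controlled by Lemma 5.9. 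Since $\Phi(v,w)=w+\tfrac12 vw$ and $F(v,w)=v_x+\tfrac12 vv_x+\tfrac32 ww_x$ are \emph{quadratic} in $(v,w)$, each stage value $V^{n,1},W^{n,1}$ from \eqref{equ559}--\eqref{equ560} is a polynomial in $k$ of degree one, $V^{n,2},W^{n,2}$ from \eqref{equ561}--\eqref{equ562} of degree three, and hence $\Phi^{n,j},F^{n,j}$, and ultimately $\tfrac{k}{6}P\Phi_x^n+\tfrac{k}{6}P\Phi_x^{n,1}+\tfrac{2k}{3}P\Phi_x^{n,2}$ and its $F$-analogue, are polynomials in $k$ of bounded degree. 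I would substitute $V^{n,1}=H^n+k(-P\Phi_x^n)$, $W^{n,1}=U^n+k(-P_0F^n)$, expand $\Phi^{n,1},F^{n,1},V^{n,2},W^{n,2},\Phi^{n,2},F^{n,2}$, and re‑express every monomial in $k$ in terms of the $t$-derivatives $H^{(j),n},U^{(j),n}$ (which are $P\eta^{(j),n},P_0u^{(j),n}$, hence $h$-independent by the smoothness of $(\eta,u)$ and the $L^2$-stability of $P,P_0$), of $\psi^n,\zeta^n,\psi_t^n,\zeta_t^n,\psi_{tt}^n,\zeta_{tt}^n$, and of $P\partial_x,P_0\partial_x$ acting on products of these with the smooth, $W^{1,\infty}$-bounded trajectory quantities.

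Next, Taylor's theorem gives $H^{n+1}-H^n=\sum_{j=1}^{4}\tfrac{k^j}{j!}H^{(j),n}+O(k^5)$ and analogously for $U$, with $h$-independent constants, and I would collect the resulting expression for $\delta_1^n$ (and $\delta_2^n$) by powers of $k$. The coefficients of $k^1,k^2,k^3$ are, after using the differentiated perturbed relations, exactly the residuals governed by the order conditions of the Shu--Osher tableau, namely $\sum b_i=1$, $\sum b_ic_i=\tfrac12$, $\sum b_ic_i^2=\tfrac13$, $\sum b_i(Ac)_i=\tfrac16$ with $c=(0,1,\tfrac12)$ — all of which this scheme satisfies — so they vanish apart from terms that are linear combinations of $\psi^n,\zeta^n,\psi_t^n,\zeta_t^n,\psi_{tt}^n,\zeta_{tt}^n$ and of $P\partial_x,P_0\partial_x$ of products of these with trajectory quantities; by Lemma 5.9 and the inverse inequalities \eqref{eq23}--\eqref{eq24} each such term is $\leq C_\lambda h^{r-1}$. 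The coefficients of $k^4$ and higher come only from $\tfrac{2k}{3}P\Phi_x^{n,2}$ together with $\tfrac{k^4}{24}H^{(4),n}$, and these I would bound by the product rule and the inverse inequalities, using that $-P\Phi_x^n=H_t^n-\psi^n$, $-P\Phi_{xt}^n=H_{tt}^n-\psi_t^n$, $-P\Phi_{xtt}^n=H_{ttt}^n-\psi_{tt}^n$ are $L^2$-projections of smooth functions up to $O(h^{r-1})$ residuals and so are $H^1$-bounded by \eqref{eq21a}-type stability; consequently each power of $k$ beyond the fourth costs at most one power of $h^{-1}$, giving $k^{4+m}h^{-m}=\lambda^m k^4\leq C_\lambda k^4$. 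Adding the contributions yields $\|\delta_1^n\|+\|\delta_2^n\|\leq C_\lambda k(k^3+h^{r-1})$, with $C_\lambda$ polynomial in $\lambda$ because only finitely many Taylor terms are involved.

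The hard part is the last step: showing that the genuinely ``clean'' $k^4$ coefficient of $\tfrac{2k}{3}P\Phi_x^{n,2}$, combined with $\tfrac{k^4}{24}H^{(4),n}$, is $O(1)$ in $L^2$ rather than the $O(h^{-1})$ one gets by treating $V^{n,2}$ and the inner stage values as generic finite element functions. The resolution is that the nested applications of $P\partial_x,P_0\partial_x$ dictated by the scheme, evaluated along the exact trajectory, reproduce the smooth quantities $H_{tt}^n,H_{ttt}^n,H^{(4),n}$ up to residuals of the form $P\partial_x(\text{residual}\cdot\text{trajectory})$ which are $O(h^{r-3})=O(1)$ for $r\geq 3$, and up to genuinely rough pieces $P\partial_x(\text{products of trajectory derivatives})$ that occur precisely in the pairings forced by the third-order conditions and cancel; this is where the hypothesis $r\geq 3$, together with the quasiuniformity-based $H^1$-stability of $P,P_0$, is indispensable, in complete parallel with the role of $r\geq 3$ in Proposition 2.2 and in Lemma 5.5 for the companion estimates. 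Carrying out the same bookkeeping for the $F$-quantities, using the symmetry of \eqref{eqssw}, gives the bound on $\delta_2^n$.
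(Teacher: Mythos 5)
Your proposal is correct and takes essentially the same route as the paper's proof: the paper expands the stage values $V^{n,j}$, $W^{n,j}$, $\Phi^{n,j}$, $F^{n,j}$ explicitly in powers of $k$ via the perturbed trajectory relations $H_{t}=-P\Phi_{x}+\psi$, $U_{t}=-P_{0}F+\zeta$, realizes your ``order-condition'' cancellations concretely through the identity $\tfrac{1}{6}\Phi^{n}+\tfrac{1}{6}\Phi^{n,1}+\tfrac{2}{3}\Phi^{n,2}=\Phi^{n}+\tfrac{k}{2}\Phi_{t}^{n}+\tfrac{k^{2}}{6}\Phi_{tt}^{n}+\cdots$ (including the exact cancellation of the $H_{t}^{n}U_{t}^{n}$ cross terms), and then Taylor-expands $H^{n+1}-H^{n}$. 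The remainders are controlled exactly as you indicate — Lemma 5.9 for $\psi,\zeta$ and their time derivatives, inverse inequalities converting each extra $P\partial_{x}$ into a factor of $\lambda$, and the $H^{1}$-stability of $P$, $P_{0}$ to keep the genuine $O(k^{4})$ contribution free of negative powers of $h$ — so no further comparison is needed.
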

\begin{proof}
From \eqref{equ559}, \eqref{equ565} and \eqref{equ560}, \eqref{equ565} we see that
\begin{align*}
V^{n,1} & = H^{n} + k H_{t}^{n} - k\psi^{n},\\
W^{n,1} & = U^{n} + kU_{t}^{n} - k\zeta^{n},
\end{align*}
and hence that
\[
V^{n,1}W^{n,1} = H^{n}U^{n} + k(HU)_{t}^{n} + k^{2}H_{t}^{n}U_{t}^{n} + v_{1}^{n}.
\]
Using the $L^{\infty}$ stability of the $L^{2}$ projection $P$, cf. \eqref{eq22a}, we obtain
\[
\|v_{1}^{n}\|\leq Ckh^{r-1}\,, \qquad \|v_{1}^{n}\|_{1} \leq C_{\lambda}h^{r-1}\,.
\]
Thus
\begin{equation}
\Phi^{n,1} = W^{n,1} + \tfrac{1}{2}(V^{n,1}W^{n,1})=\Phi^{n}+k\Phi_{t}^{n}+\tfrac{k^{2}}{2}H_{t}^{n}U_{t}^{n}+v_{2}^{n},
\label{equ569}
\end{equation}
with
\[
\|v_{2}^{n}\|\leq Ckh^{r-1}\,, \qquad \|v_{2}^{n}\|_{1}\leq C_{\lambda}h^{r-1}.
\]
Also
\[
V^{n,1}V_{x}^{n,1} = H^{n}H_{x}^{n} + k(HH_{x})_{t}^{n} + k^{2}H_{t}^{n}H_{tx}^{n}+\omega_{1}^{n},
\]
with
\[
\|\omega_{1}^{n}\| \leq C_{\lambda}h^{r-1}, \qquad  k\|\omega_{1}^{n}\|_{1}\leq Ck^{3} + C_{\lambda}h^{r-1}.
\]
In addition, since
\[
W^{n,1}W_{x}^{n,1} = U^{n}U_{x}^{n} + k(UU_{x})_{t}^{n} + k^{2}U_{t}^{n}U_{tx}^{n} + \omega_{2}^{n},
\]
with
\[
\|\omega_{2}^{n}\| \leq C_{\lambda}h^{r-1}, \qquad k\|\omega_{2}^{n}\|_{1}\leq Ck^{3} + C_{\lambda}h^{r-1},
\]
we will have
\begin{equation}
F^{n,1} = F^{n} + kF_{t}^{n} + \tfrac{k^{2}}{2}(H_{t}^{n}H_{tx}^{n} + 3U_{t}^{n}U_{tx}^{n}) + \omega_{3}^{n},
\label{equ570}
\end{equation}
with
\[
\|\omega_{3}^{n}\|\leq C_{\lambda}h^{r-1}, \qquad k\|\omega_{3}^{n}\|_{1} \leq Ck^{3} + C_{\lambda}h^{r-1}.
\]
Now
\begin{align*}
V^{n,2} & = H^{n} - \tfrac{k}{2}P\Phi_{x}^{n} - \tfrac{k^{2}}{2}P\Phi_{tx}^{n}-\tfrac{k^{3}}{8}P(H_{t}^{n}U_{t}^{n})_{x}
-\tfrac{k}{4}Pv_{2x}^{n}\\
& = H^{n} + \tfrac{k}{2}H_{t}^{n} - \tfrac{k}{2}\psi^{n} + \tfrac{k^{2}}{4}H_{tt}^{n}-\tfrac{k^{2}}{4}\psi_{t}^{n}
-\tfrac{k^{3}}{8}P(H_{t}^{n}U_{t}^{n})_{x} - \tfrac{k}{4}Pv_{2x}^{n},
\end{align*}
and finally
\begin{equation}
V^{n,2} = H^{n} + \tfrac{k}{2}H_{t}^{n} + \tfrac{k^{2}}{4}H_{tt}^{n} + \psi_{1}^{n},
\label{equ571}
\end{equation}
with
\[
\|\psi_{1}^{n}\| \leq Ck^{3} + C_{\lambda}h^{r-1}, \qquad  \|\psi_{1}^{n}\|_{1}\leq Ck^{3}+C_{\lambda}h^{r-1},
\]
where we used the stability of the $L^{2}$ projection in the $H^{1}$ norm, cf. \cite{twa}, and the inverse and approximation properties
of $S_{h}$.  Now
\begin{align*}
W^{n,2} & = U^{n} - \tfrac{k}{2}P_{0}F^{n} - \tfrac{k^{2}}{4}P_{0}F_{t}^{n}
-\tfrac{k^{3}}{8}P_{0}(H_{t}^{n}H_{tx}^{n}+3U_{t}^{n}U_{tx}^{n}) - \tfrac{k}{4}P_{0}\omega_{3}^{n}\\
& = U^{n} + \tfrac{k}{2}U_{t}^{n} - \tfrac{k}{2}\zeta^{n} + \tfrac{k^{2}}{4}U_{tt}^{n} - \tfrac{k^{2}}{4}\zeta_{t}^{n}
-\tfrac{k^{3}}{8}P_{0}(H_{t}^{n}H_{tx}^{n} + 3U_{t}^{n}U_{tx}^{n}) - \tfrac{k}{4}P_{0}\omega_{3}^{n},
\end{align*}
and therefore
\begin{equation}
W^{n,2} = U^{n} + \tfrac{k}{2}U_{t}^{n} + \tfrac{k^{2}}{4}U_{tt}^{n} + \zeta_{1}^{n},
\label{equ572}
\end{equation}
with
\[
\|\zeta_{1}^{n}\| \leq Ck^{3} + C_{\lambda}kh^{r-1}, \qquad \|\zeta_{1}^{n}\|_{1} \leq Ck^{3} + C_{\lambda}h^{r-1},
\]
where  we took into account the approximation and inverse properties of $S_{h,0}$, the stability of the $L^{2}$
projection in $H^{1}$, and the fact that $\eta_{x}\in H_{0}^{1}(0,1)$. It follows that
\[
V^{n,2}W^{n,2} = H^{n}U^{n} + \tfrac{k}{2}(HU)_{t}^{n} + \tfrac{k^{2}}{4}(HU)_{tt}^{n} - \tfrac{k^{2}}{4}H_{t}^{n}U_{t}^{n}
+ v_{3}^{n},
\]
with
\[
\|v_{3}^{n}\| \leq Ck^{3} + C_{\lambda}kh^{r-1}, \qquad \|v_{3}^{n}\|_{1} \leq Ck^{3} + C_{\lambda}h^{r-1}.
\]
Thus
\begin{equation}
\Phi^{n,2} = \Phi^{n} + \tfrac{k}{2} \Phi_{t}^{n} + \tfrac{k^{2}}{4}\Phi_{tt}^{n} - \tfrac{k^{2}}{8}H_{t}^{n}U_{t}^{n} + v_{4}^{n},
\label{equ573}
\end{equation}
with
\[
\|v_{4}^{n}\| \leq Ck^{3} + C_{\lambda}kh^{r-1}, \qquad \|v_{4}^{n}\|_{1} \leq Ck^{3} + C_{\lambda}h^{r-1}.
\]
From \eqref{equ569}, \eqref{equ573} we conclude that
\begin{align*}
\tfrac{1}{6}\Phi^{n} + \tfrac{1}{6}\Phi^{n,1} + \tfrac{2}{3}\Phi^{n,2} & =
\tfrac{1}{6}\Phi^{n} + \tfrac{1}{6}(\Phi^{n} + k\Phi_{t}^{n} + \tfrac{k^{2}}{2}H_{t}^{n}U_{t}^{n} + v_{2}^{n}) \\
\,\,\,& \,\,\,\,\,\,\, + \tfrac{2}{3}(\Phi^{n} + \tfrac{k}{2}\Phi_{t}^{n}
+ \tfrac{k^{2}}{4}\Phi_{tt}^{n} - \tfrac{k^{2}}{8}H_{t}^{n}U_{t}^{n} + v_{4}^{n}) \\
& = \Phi^{n} + \tfrac{k}{2}\Phi_{t}^{n} + \tfrac{k^{2}}{6}\Phi_{tt}^{n} + \tfrac{1}{6}v_{2}^{n} + \tfrac{2}{3}v_{4}^{n}.
\end{align*}
Hence
\begin{align*}
\delta_{1}^{n} & = H^{n+1} - H^{n} + kP\Phi_{x}^{n} + \tfrac{k^{2}}{2}P\Phi_{tx}^{n} + \tfrac{k^{3}}{6}P\Phi_{ttx}^{n}
+ \tfrac{k}{2}Pv_{2x}^{n} + \tfrac{2k}{3}Pv_{4x}^{n}\\
& = H^{n+1} - H^{n} - kH_{t}^{n} + k\psi^{n} - \tfrac{k^{2}}{2}H_{tt}^{n}
+ \tfrac{k^{2}}{2}\psi_{t}^{n} - \tfrac{k^{3}}{6}H_{ttt}^{n}
+\tfrac{k^{3}}{6}\psi_{tt}^{n} + \tfrac{k}{6}Pv_{2x}^{n} + \tfrac{2k}{3}Pv_{4x}^{n}\\
& = H^{n+1} - H^{n} - kH_{t}^{n} - \tfrac{k^{2}}{2}H_{tt}^{n} - \tfrac{k^{3}}{6}H_{ttt}^{n} + \alpha^{n},
\end{align*}
with
\[
\|\alpha^{n}\| \leq Ckh^{r-1} + C_{\lambda}kh^{r-1} + C_{\lambda}k(k^{3} + h^{r-1})\leq C_{\lambda}k(k^{3}+h^{r-1}).
\]
Therefore
\begin{equation}
\|\delta_{1}^{n}\| \leq C_{\lambda}k(k^{3} + h^{r-1}).
\label{equ574}
\end{equation}
From \eqref{equ571} we have
\[
V^{n,2}V_{x}^{n,2} = H^{n}H_{x}^{n} + \tfrac{k}{2}(HH_{x})_{t}^{n} + \tfrac{k^{2}}{4}(HH_{x})_{tt}^{n}
- \tfrac{k^{2}}{4}H_{t}^{n}H_{tx}^{n} + \omega_{4}^{n},
\]
with
\[
\|\omega_{4}^{n}\| \leq Ck^{3} + C_{\lambda}h^{r-1}.
\]
From \eqref{equ572} we obtain
\[
W^{n,2}W_{x}^{n,2} = U^{n}U_{x}^{n} + \tfrac{k}{2}(UU_{x})_{t}^{n} + \tfrac{k^{2}}{4}(UU_{x})_{tt}^{n}
- \tfrac{k^{2}}{4}U_{t}^{n}U_{tx}^{n} + \omega_{5}^{n},
\]
with
\[
\|\omega_{5}^{n}\|\leq Ck^{3} + C_{\lambda}h^{r-1}.
\]
Thus
\[
F^{n,2} = F^{n} + \tfrac{k}{2}F_{t}^{n} + \tfrac{k^{2}}{4}F_{tt}^{n} + \psi_{1x}^{n} - \tfrac{k^{2}}{8}H_{t}^{n}H_{tx}^{n}
 + \tfrac{1}{2}\omega_{4}^{n}
- \tfrac{3k^{2}}{8}U_{t}^{n}U_{tx}^{n} + \tfrac{3}{2}\omega_{5}^{n},
\]
i.e.
\begin{equation}
F^{n,2} = F^{n} + \tfrac{k}{2}F_{t}^{n} + \tfrac{k^{2}}{4}F_{tt}^{n} - \tfrac{k^{2}}{8}(H_{t}^{n}H_{tx}^{n}+3U_{t}^{n}U_{tx}^{n})
+ \omega_{6}^{n},
\label{equ575}
\end{equation}
where
\[
\|\omega_{6}^{n}\| \leq Ck^{3} + C_{\lambda}h^{r-1}.
\]
From \eqref{equ570}, \eqref{equ575} we now obtain
\begin{align*}
\tfrac{1}{6}F^{n} + \tfrac{1}{6}F^{n,1} + \tfrac{2}{3}F^{n,2} & = F^{n} + \tfrac{1}{6}(F^{n} + kF_{t}^{n}
+ \tfrac{k^{2}}{2}(H_{t}^{n}H_{tx}^{n} + 3U_{t}^{n}U_{tx}^{n}) + \omega_{3}^{n})\\
\,\,\,\,\,\,&\,\,\,\,\,\,\, + \tfrac{2}{3}(F^{n} + \tfrac{k}{2}F_{t}^{n}
+ \tfrac{k^{2}}{4}F_{tt}^{n} - \tfrac{k^{2}}{8}(H_{t}^{n}H_{tx}^{n}+3U_{t}^{n}U_{tx}^{n})
+ \omega_{6}^{n})\\
& = F^{n} + \tfrac{k}{2}F_{t}^{n} + \tfrac{k^{2}}{6}F_{tt}^{n} + \tfrac{1}{6}\omega_{3}^{n}+\tfrac{2}{3}\omega_{6}^{n},
\end{align*}
and therefore
\begin{align*}
\delta_{2}^{n} & = U^{n+1} - U^{n} + kP_{0}F^{n} + \tfrac{k^{2}}{2}P_{0}F_{t}^{n} + \tfrac{k^{3}}{6}P_{0}F_{tt}^{n}
+ \tfrac{k}{6}P_{0}\omega_{3}^{n} + \tfrac{2k}{3}P_{0}\omega_{6}^{n}\\
& = U^{n+1} - U^{n} - kU_{t}^{n} + k\zeta^{n} -\tfrac{k^{2}}{2}U_{tt}^{n} - \tfrac{k^{2}}{2}\zeta_{t}^{n}
-\tfrac{k^{3}}{6}U_{ttt}^{n} -\tfrac{k^{3}}{6}\zeta_{tt}^{n} + \tfrac{k}{6}P_{0}\omega_{3}^{n} + \tfrac{2k}{3}P_{0}\omega_{6}^{n}\\
& = U^{n+1} - U^{n} - kU_{t}^{n}  -\tfrac{k^{2}}{2}U_{tt}^{n} -\tfrac{k^{3}}{6}U_{ttt}^{n} + k\beta^{n},
\end{align*}
where
\[
\|\beta^{n}\| \leq Ckh^{r-1} + Ck^{3} + C_{\lambda}h^{r-1}\leq Ck^{3} + C_{\lambda}h^{r-1}.
\]
Hence
Επομένως
\[
\|\delta_{2}^{n}\| \leq C_{\lambda}k(k^{3} + h^{r-1}).
\]
From this estimate and from \eqref{equ574} the result of the Lemma follows.
\end{proof}
We now proceed with the proof of convergence of the scheme.
\begin{proposition} Let $(H_{h}^{n}$, $U_{h}^{n})$ be the solution of \eqref{equ554}. If $\lambda = k/h$ then there exists
a constant $\lambda_{0}$ and a constant $C$ independent of $k$, $h$ such that for $\lambda\leq \lambda_{0}$,
\[
\max_{0\leq n\leq M}(\|\eta(t^{n}) - H_{h}^{n}\| + \|u(t^{n}) - U_{h}^{n}\|) \leq C(k^{3} + h^{r-1}).
\]
\end{proposition}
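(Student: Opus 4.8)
The plan is to follow closely the proof of Proposition 5.2 for the improved Euler scheme, adapting it to the three-stage structure of the Shu--Osher tableau. Write $H^n=P\eta(t^n)$, $U^n=P_0u(t^n)$, $\rho^n=\eta(t^n)-H^n$, $\sigma^n=u(t^n)-U^n$, and set $\varepsilon^n=H^n-H_h^n$, $e^n=U^n-U_h^n$, with stage errors $\theta^{n,j}=V^{n,j}-H_h^{n,j}$, $\xi^{n,j}=W^{n,j}-U_h^{n,j}$ for $j=1,2$. Subtracting \eqref{equ554} from \eqref{equ559}--\eqref{equ564} and using the identities $\Phi^n-\Phi_h^n=(1+\tfrac12H^n)e^n+\tfrac12U^n\varepsilon^n-\tfrac12\varepsilon^ne^n$ and its counterpart for $F$, one expresses $(\theta^{n,1},\xi^{n,1})$ and then $(\theta^{n,2},\xi^{n,2})$ as explicit affine functions of $(\varepsilon^n,e^n)$ built from a single linearized Galerkin operator $\mathcal{L}^n=(\mathcal{L}^n_1,\mathcal{L}^n_2):S_h\times S_{h,0}\to S_h\times S_{h,0}$ (the analogue of the pair $(A,B)$ of \S5.2), plus quadratic remainders, and arrives at a recursion
\begin{equation*}
(\varepsilon^{n+1},e^{n+1})=R(-k\mathcal{L}^n)(\varepsilon^n,e^n)+\mathcal{R}^n+(\delta_1^n,\delta_2^n),
\end{equation*}
where $R(z)=1+z+\tfrac12z^2+\tfrac16z^3$ is the stability function of the scheme, $\mathcal{R}^n$ gathers the quadratic terms, and $\delta_1^n,\delta_2^n$ are the consistency errors bounded in Lemma 5.10. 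A first step is to record, exactly as in Lemmas 5.5--5.8 and under the \emph{temporary hypothesis} that $\|\varepsilon^n\|_{1,\infty}+\|e^n\|_{1,\infty}\le1$ holds up to a maximal index $n^*$, the estimates $\|\mathcal{L}^n(\varepsilon^n,e^n)\|\le Ch^{-1}(\|\varepsilon^n\|+\|e^n\|)$, $\|\mathcal{L}^n(\varepsilon^n,e^n)\|_\infty\le Ch^{-1}$, $\|\theta^{n,j}\|+\|\xi^{n,j}\|\le C(\|\varepsilon^n\|+\|e^n\|)$, $\|\theta^{n,j}\|_\infty+\|\xi^{n,j}\|_\infty\le C$, together with the analogous bounds for $\mathcal{L}^n$ applied to the stage errors and for $\|\mathcal{R}^n\|$ (all constants depending polynomially on $\lambda=k/h$); these follow from the inverse and approximation properties of $S_h,S_{h,0}$, the superapproximation estimate \eqref{eq223}, and the $W^{1,\infty}$-bounds on $H^n,U^n$.

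The heart of the proof is the energy inequality
\begin{equation*}
\|\varepsilon^{n+1}\|^2+\|e^{n+1}\|^2\le(1+Ck)(\|\varepsilon^n\|^2+\|e^n\|^2)+Ck(k^3+h^{r-1})^2,\qquad 0\le n\le n^*,
\end{equation*}
obtained by expanding $\|R(-k\mathcal{L}^n)(\varepsilon^n,e^n)\|^2$ and collecting powers of $k$. Writing $\mathcal{L}^n=\mathcal{L}^n_0+(\mathcal{L}^n)'$, with $\mathcal{L}^n_0$ the skew-symmetric part carrying the transport terms (symmetrised by the weight $1+\tfrac12\eta$ via integration by parts, as in the proof of Theorem 4.1) and $(\mathcal{L}^n)'$ a remainder that is bounded in operator norm uniformly in $h$, every contribution that contains a factor of $(\mathcal{L}^n)'$, and every cross-term against $\mathcal{R}^n$ or against the Taylor remainders $f^n=H^{n,1}-H^n$, $g^n=U^{n,1}-U^n$, turns out --- after the integration-by-parts manoeuvres that use the $W^{1,\infty}$ bound of the temporary hypothesis to dispose of the divergence-form quadratic terms --- to be bounded by $Ck(\|\varepsilon^n\|^2+\|e^n\|^2)+Ck(k^3+h^{r-1})^2$, Lemma 5.10 entering through Cauchy--Schwarz. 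The purely skew part obeys the algebraic identity
\begin{equation*}
\bigl(R(-k\mathcal{L}^n_0)^\ast R(-k\mathcal{L}^n_0)y,y\bigr)=\|y\|^2-\tfrac{k^4}{12}\|(\mathcal{L}^n_0)^2y\|^2+\tfrac{k^6}{36}\|(\mathcal{L}^n_0)^3y\|^2,
\end{equation*}
the orders $k^1,k^2,k^3,k^5$ dropping out on account of the order conditions of the tableau. Since $\|(\mathcal{L}^n_0)^3y\|\le Ch^{-1}\|(\mathcal{L}^n_0)^2y\|$ by the inverse inequality, the positive $k^6$-term is dominated by $\tfrac{C^2\lambda^2}{3}\cdot\tfrac{k^4}{12}\|(\mathcal{L}^n_0)^2y\|^2$, hence absorbed by the negative $k^4$-term as soon as $\lambda\le\lambda_0$ with $\lambda_0$ small enough. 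This is the energy counterpart of the fact that $|R(iy)|^2=1-\tfrac{y^4}{12}+\tfrac{y^6}{36}\le1$ for $|y|\le\sqrt3$, i.e.\ that the absolute-stability interval of the scheme on the imaginary axis is $[-\sqrt3,\sqrt3]$, and it is exactly why the Courant-type condition $k/h\le\lambda_0$ suffices here, in contrast with the condition $k=O(h^{4/3})$ needed for the improved Euler scheme.

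With the energy inequality in hand, the discrete Gronwall lemma together with $\varepsilon^0=e^0=0$ gives $\|\varepsilon^n\|+\|e^n\|\le C(k^3+h^{r-1})$ for $0\le n\le n^*+1$. Because $r\ge3$ and $\lambda\le\lambda_0$, the inverse inequality \eqref{eq24} then yields $\|\varepsilon^{n^*+1}\|_{1,\infty}+\|e^{n^*+1}\|_{1,\infty}\le Ch^{-3/2}(k^3+h^{r-1})\to0$ as $h\to0$, which contradicts the maximality of $n^*$; hence $n^*=M-1$ and the estimate holds for all $n$. Finally, from $H_h^n-\eta(t^n)=-(\rho^n+\varepsilon^n)$, $U_h^n-u(t^n)=-(\sigma^n+e^n)$ and $\|\rho^n\|+\|\sigma^n\|\le Ch^{r}$ the conclusion of the proposition follows.

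I expect the main obstacle to be the bookkeeping in the expansion of $\|R(-k\mathcal{L}^n)(\varepsilon^n,e^n)\|^2$: one must check that \emph{all} of the many $O(k^jh^{-m})$ terms with $j\ge4$, other than the single stabilising term $-\tfrac{k^4}{12}\|(\mathcal{L}^n_0)^2y\|^2$, are either already $O(k)(\|\varepsilon^n\|^2+\|e^n\|^2)$ or of the wrong sign but small enough to be absorbed by it when $\lambda\le\lambda_0$, and that the numerous quadratic remainders generated by the two intermediate stages (products such as $\theta^{n,j}\xi^{n,j}_x$, $f^n\xi^n_x$, $g^n\theta^n_x$) never produce a genuine $O(k^4h^{-4})$ obstruction. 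It is precisely at this step that the special structure of the Shu--Osher stability polynomial --- which makes the $k^4$-term negative rather than positive --- is indispensable and cannot be replaced by crude estimates.
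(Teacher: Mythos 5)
Your proposal is correct, and its skeleton --- the comparison with $H^{n}=P\eta(t^{n})$, $U^{n}=P_{0}u(t^{n})$, the consistency estimate of Lemma 5.10, the bootstrap on a maximal index $n^{*}$ with $\|\ve^{n}\|_{1,\infty}+\|e^{n}\|_{1,\infty}\leq 1$, and the closing of the bootstrap via the inverse inequalities and $r\geq 3$ --- is exactly the paper's. Where you genuinely depart is in the central stability computation. The paper writes $(\ve^{n+1},e^{n+1})=(\gamma^{n},\sigma^{n})+\tfrac{k}{6}(\omega^{n},w^{n})+(\delta_{1}^{n},\delta_{2}^{n})$ and expands $\|\gamma^{n}\|^{2}+\|\sigma^{n}\|^{2}$ term by term into the quantities $a_{1}^{n},\dots,a_{6}^{n}$ of \eqref{equ5126}, verifying the cancellations at orders $k$ through $k^{5}$ by explicit integrations by parts and isolating by hand the negative term $-\tfrac{k^{4}}{12}(\|Pf_{x}^{n}\|^{2}+\|P_{0}g_{x}^{n}\|^{2})$ in \eqref{equ5130} against the positive term $a_{6}^{n}\leq C_{0}h^{-2}(\|Pf_{x}^{n}\|^{2}+\|P_{0}g_{x}^{n}\|^{2})$ of \eqref{equ5133}, whence $\lambda_{0}=\sqrt{1/(12C_{0})}$. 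You instead split the frozen-coefficient linearization $\mathcal{L}^{n}$ into its exact skew part $\mathcal{L}_{0}^{n}$ plus a symmetric remainder, note that the remainder is bounded in operator norm uniformly in $h$ (its quadratic form is controlled by integration by parts, using that $U^{n}\in S_{h,0}$ vanishes at the endpoints and $\|H^{n}\|_{1,\infty}+\|U^{n}\|_{1,\infty}\leq C$ --- for \eqref{eqssw} no weight $1+\tfrac{1}{2}\eta$ and no superapproximation property \eqref{eq223} are in fact needed; those devices belong to the nonsymmetric system), and then invoke the identity $\|R(-k\mathcal{L}_{0}^{n})y\|^{2}=\|y\|^{2}-\tfrac{k^{4}}{12}\|(\mathcal{L}_{0}^{n})^{2}y\|^{2}+\tfrac{k^{6}}{36}\|(\mathcal{L}_{0}^{n})^{3}y\|^{2}$, which I have checked is correct. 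Every mixed term at order $k^{j}$ containing at least one bounded factor carries at most $j-1$ powers of $h^{-1}$ and is therefore $\leq C_{\lambda}k(\|\ve^{n}\|^{2}+\|e^{n}\|^{2})$, and the $k^{6}$ term is absorbed by the $k^{4}$ term once $k/h\leq\lambda_{0}$; this disposes in one stroke of the bookkeeping you worry about in your last paragraph and makes the Courant condition transparent as the energy form of $|R(iy)|^{2}\leq 1$ on $[-\sqrt{3},\sqrt{3}]$. What the paper's explicit route buys in exchange is that it never needs the adjoint of $\mathcal{L}^{n}$, and it works directly with the concrete objects $P\rho_{x}^{n}$, $P_{0}r_{x}^{n}$, $Pf_{x}^{n}$, $P_{0}g_{x}^{n}$ that must be estimated anyway; be aware that most of the real labour of the proof lies not in the $k$-power expansion but in the chain of stage-error and quadratic-remainder bounds \eqref{equ584}--\eqref{equ5123} that justify $\|\mathcal{R}^{n}\|\leq C_{\lambda}k(\|\ve^{n}\|+\|e^{n}\|)$, which your plan states but does not carry out.
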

\begin{proof}
It suffices to show that
\[
\max_{0\leq n\leq M}(\|H^{n} - H_{h}^{n}\| + \|U^{n} - U_{h}^{n}\|) \leq C(k^{3} + h^{r-1}).
\]
We let
\[
\ve^{n}=H^{n}-H_{h}^{n}, \,\,e^{n}=U^{n}-U_{h}^{n}, \,\,\ve^{n,j}=V^{n,j}-H_{h}^{n,j}, \,\,e^{n,j}=W^{n,j}-U_{h}^{n,j},
\,\,j=1,2.
\]
Then from \eqref{equ554}, \eqref{equ559}-\eqref{equ562} if follows that
\begin{align}
\ve^{n,1} & = \ve^{n} - kP(\Phi^{n}-\Phi_{h}^{n})_{x},
\label{equ576}\\
e^{n,1} & = e^{n} - kP_{0}(F^{n}-F_{h}^{n}),
\label{equ577} \\
\ve^{n,2} & = \ve^{n} - \tfrac{k}{4}P(\Phi^{n}-\Phi_{h}^{n})_{x} - \tfrac{k}{4}P(\Phi^{n,1}-\Phi_{h}^{n,1})_{x},
\label{equ578} \\
e^{n,2} & = e^{n} - \tfrac{k}{4}P_{0}(F^{n}-F_{h}^{n}) - \tfrac{k}{4}P_{0}(F^{n,1}-F_{h}^{n,1}),
\label{equ579}
\end{align}
so that from the two last equations of \eqref{equ554} and also from \eqref{equ567}, \eqref{equ568} we have
\begin{align}
\ve^{n+1} &=\ve^{n} - \tfrac{k}{6}P(\Phi^{n}-\Phi_{h}^{n})_{x}-\tfrac{k}{6}P(\Phi^{n,1}-\Phi_{h}^{n,1})_{x}
-\tfrac{2k}{3}P(\Phi^{n,2}-\Phi_{h}^{n,2})_{x} + \delta_{1}^{n},
\label{equ580}\\
e^{n+1} &=e^{n} - \tfrac{k}{6}P_{0}(F^{n}-F_{h}^{n})-\tfrac{k}{6}P_{0}(F^{n,1}-F_{h}^{n,1})
-\tfrac{2k}{3}P_{0}(F^{n,2}-F_{h}^{n,2}) + \delta_{2}^{n}.
\label{equ581}
\end{align}
From \eqref{equ552}, \eqref{equ555} it follows that
\[
\Phi^{n} - \Phi_{h}^{n} = U^{n} + \tfrac{1}{2}(H^{n}U^{n}) - U_{h}^{n}-\tfrac{1}{2}(H_{h}^{n}U_{h}^{n})=
e^{n} + \tfrac{1}{2}(H^{n}U^{n} - H_{h}^{n}U_{h}^{n}),
\]
and since
\begin{align*}
H^{n}U^{n} - H_{h}^{n}U_{h}^{n}& = H^{n}(U^{n}-U_{h}^{n}) + U^{n}(H^{n}-H_{h}^{n} - (H^{n}-H_{h}^{n})(U^{n}-U_{h}^{n})\\
& = H^{n}e^{n} + U^{n}\ve^{n} - \ve^{n}e^{n},
\end{align*}
we see that
\[
\Phi^{n} - \Phi_{h}^{n} = e^{n} + \tfrac{1}{2}H^{n}e^{n} + \tfrac{1}{2}U^{n}\ve^{n} - \tfrac{1}{2}\ve^{n}e^{n},
\]
or
\begin{equation}
\Phi^{n} - \Phi_{h}^{n} = \rho^{n} + \rho_{1}^{n},
\label{equ582}
\end{equation}
where
\begin{equation}
\rho^{n} = e^{n} + \tfrac{1}{2}H^{n}e^{n} + \tfrac{1}{2}U^{n}\ve^{n},
\label{equ583}
\end{equation}
and
\[
\rho_{1}^{n} = - \tfrac{1}{2}\ve^{n}e^{n}.
\]
Hence
\begin{align}
\|\rho^{n}\|\leq C(\|\ve^{n}\| + \|e^{n}\|),
\label{equ584} \\
\|\rho_{x}^{n}\|\leq \tfrac{C}{h}(\|\ve^{n}\| + \|e^{n}\|).
\label{equ585}
\end{align}
Let now $0\leq n^{*}\leq M-1$ be the maximal integer for which
\[
\|\ve^{n}\|_{1,\infty} + \|e^{n}\|_{1,\infty} \leq 1\,, \qquad 0\leq n\leq n^{*}.
\]
Then, for $0\leq n\leq n^{*}$,
\begin{align}
\|\rho_{x}^{n}\|_{\infty} & \leq C\,, \qquad \|\rho_{1x}^{n}\|_{\infty}\leq C,
\label{equ586} \\
\|\rho_{1x}^{n}\| & \leq C(\|\ve^{n}\| + \|e^{n}\|).
\label{equ587}
\end{align}
Now, from \eqref{equ586} and \eqref{equ582} we have
\[
\ve^{n,1} = \ve^{n} -kP\rho_{x}^{n} - kP\rho_{1x}^{n},
\]
and therefore, for $0\leq n\leq n^{*}$, we have
\begin{align}
\|\ve^{n,1}\| & \leq C_{\lambda}(\|\ve^{n}\| + \|e^{n}\|),
\label{equ588} \\
\|\ve^{n,1}\|_{1,\infty} & \leq C_{\lambda},
\label{equ589}
\end{align}
where we used the inverse properties of $S_{h}$ and the stability of $P$ in $L^{\infty}$ norm. Since now
\[
F^{n} - F_{h}^{n}=\ve_{x}^{n} + \tfrac{1}{2}(H^{n}H_{x}^{n}-H_{h}^{n}H_{hx}^{n})+\tfrac{3}{2}(U^{n}U_{x}^{n}-U_{h}^{n}U_{hx}^{n}),
\]
and
\[
H^{n}H_{x}^{n}-H_{h}^{n}H_{hx}^{n}=(H^{n}\ve^{n})_{x}-\ve^{n}\ve_{x}^{n}\,,\qquad
U^{n}U_{x}^{n}-U_{h}^{n}U_{hx}^{n}=(U^{n}e^{n})_{x}-e^{n}e_{x}^{n},
\]
we will have
\[
F^{n} - F_{h}^{n}=\ve_{x}^{n} + \tfrac{1}{2}(H^{n}\ve^{n})_{x} + \tfrac{3}{2}(U^{n}e^{n})_{x}
- \tfrac{1}{2}\ve^{n}\ve_{x}^{n} - \tfrac{3}{2}e^{n}e_{x}^{n},
\]
or
\begin{equation}
F^{n} - F_{h}^{n}= r_{x}^{n} + r_{1}^{n},
\label{equ590}
\end{equation}
where
\begin{equation}
r^{n} = \ve^{n} + \tfrac{1}{2}(H^{n}\ve^{n}) + \tfrac{3}{2}(U^{n}e^{n}),
\label{equ591}
\end{equation}
\[
r_{1}^{n} = - \tfrac{1}{2}\ve^{n}\ve_{x}^{n} - \tfrac{3}{2}e^{n}e_{x}^{n},
\]
with
\begin{align}
\|r^{n}\| & \leq C(\|\ve^{n}\| + \|e^{n}\|),
\label{equ592}\\
 \|r_{x}^{n}\| & \leq \tfrac{C}{h}(\|\ve^{n}\| + \|e^{n}\|),
\label{equ593}
\end{align}
and, for $0\leq n\leq n^{*}$,
\begin{align}
\|r_{x}^{n}\|_{\infty} & \leq C\,, \qquad \|r_{1}^{n}\|_{\infty} \leq C,
\label{equ594}\\
\|r_{1}^{n}\| & \leq C(\|\ve^{n}\| + \|e^{n}\|).
\label{equ595}
\end{align}
From \eqref{equ577}, \eqref{equ590} it follows that
\[
e^{n,1} = e^{n} - k P_{0}r_{x}^{n} - kP_{0}r_{1}^{n},
\]
and therefore, for $0\leq n\leq n^{*}$,
\begin{align}
\|e^{n,1}\| & \leq C_{\lambda}(\|\ve^{n}\| + \|e^{n}\|),
\label{equ596}\\
\|e^{n,1}\|_{1,\infty} & \leq C_{\lambda}.
\label{equ597}
\end{align}
Now, from \eqref{equ563}, \eqref{equ557} (for $j=1$) we obtain
\[
\Phi^{n,1} - \Phi_{h}^{n,1} = e^{n,1} + \tfrac{1}{2}V^{n,1}e^{n,1} + \tfrac{1}{2}W^{n,1}\ve^{n,1}
- \tfrac{1}{2}\ve^{n,1}e^{n,1},
\]
and
\begin{align*}
V^{n,1}e^{n,1} & = (H^{n} - kP\Phi_{x}^{n})(e^{n}-kP_{0}r_{x}^{n}-kP_{0}r_{1}^{n})\\
& = H^{n}e^{n}-kH^{n}P_{0}r_{x}^{n} - k H^{n}P_{0}r_{1}^{n} - kP\Phi_{x}^{n}\cdot e^{n,1}\,,\\
W^{n,1}\ve^{n,1} & = (U^{n} - kP_{0}F^{n})(\ve^{n}-kP\rho_{x}^{n}-kP\rho_{1x}^{n})\\
& = U^{n}\ve^{n} - kU^{n}P\rho_{x}^{n} - kU^{n}P\rho_{1x}^{n}-kP_{0}F^{n}\cdot\ve^{n,1}.
\end{align*}
Thus,
\begin{align*}
\Phi^{n,1} - \Phi_{h}^{n,1} & = (e^{n}+\tfrac{1}{2}H^{n}e^{n} + \tfrac{1}{2}U^{n}\ve^{n}) -
k(P_{0}r_{x}^{n} +\tfrac{1}{2}H^{n}P_{0}r_{x}^{n} + \tfrac{1}{2}U^{n}P\rho_{x}^{n}) \\
\,\,\,\,\,& \,\,\, - kP_{0}r_{1}^{n}-\tfrac{k}{2}(U^{n}P\rho_{1x}^{n}+P_{0}F^{n}\cdot\ve^{n,1}+H^{n}P_{0}r_{1}^{n}
+ P\Phi_{x}^{n}\cdot e^{n,1}) - \tfrac{1}{2}\ve^{n,1}e^{n,1},
\end{align*}
or
\begin{equation}
\Phi^{n,1} - \Phi_{h}^{n,1} = \rho^{n} - kf^{n} + \rho_{1}^{n,1},
\label{equ598}
\end{equation}
where
\begin{equation}
f^{n} = P_{0}r_{x}^{n} +\tfrac{1}{2}H^{n}P_{0}r_{x}^{n} + \tfrac{1}{2}U^{n}P\rho_{x}^{n},
\label{equ599}
\end{equation}
and
\[
\rho_{1}^{n,1} = - kP_{0}r_{1}^{n}-\tfrac{k}{2}(U^{n}P\rho_{1x}^{n}+P_{0}F^{n}\cdot\ve^{n,1}+H^{n}P_{0}r_{1}^{n}
+ P\Phi_{x}^{n}\cdot e^{n,1}) - \tfrac{1}{2}\ve^{n,1}e^{n,1}.
\]
From the inverse properties of $S_{h}$, $S_{h,0}$ the estimates \eqref{equ553}, \eqref{equ587}, the stability of the
$L^{2}$ projection in the $L^{\infty}$ norm, the fact that $\|F^{n}\|_{\infty}\leq C$, and that $\|\Phi_{x}^{n}\|\leq C$,
and the inequalities \eqref{equ596}, \eqref{equ589}, \eqref{equ597} and \eqref{equ594}, \eqref{equ586} we obtain
\begin{align}
\|\rho_{1x}^{n,1}\| & \leq C_{\lambda}(\|\ve^{n}\| + \|e^{n}\|),
\label{equ5100}\\
\|\rho_{1x}^{n,1}\|_{\infty} & \leq C_{\lambda}.
\label{equ5101}
\end{align}
In addition, from the inverse properties of $S_{h}$, $S_{h,0}$ and \eqref{equ586}, \eqref{equ585}, \eqref{equ594} we see that
\begin{align}
\|f_{x}^{n}\| & \leq \tfrac{C}{h^{2}}(\|\ve^{n}\| + \|e^{n}\|),
\label{equ5102}\\
\|f_{x}^{n}\|_{\infty} & \leq \tfrac{C}{h}.
\label{equ5103}
\end{align}
Now, from \eqref{equ578}, \eqref{equ582}, \eqref{equ598} if follows that
\begin{align*}
\ve^{n,2} & = \ve^{n} - \tfrac{k}{4}[P(\Phi^{n} - \Phi_{h}^{n})_{x} + P(\Phi^{n,1} - \Phi_{h}^{n,1})_{x}]\\
& = \ve^{n} - \tfrac{k}{4}[2P\rho_{x}^{n} - kPf_{x}^{n} + P\rho_{1x}^{n} + P\rho_{1x}^{n,1}],
\end{align*}
so that
\[
\ve^{n,2} = \ve^{n} - \tfrac{k}{2}P\rho_{x}^{n} + \tfrac{k^{2}}{4}Pf_{x}^{n} - \tfrac{k}{4}(P\rho_{1x}^{n}+P\rho_{1x}^{n,1}).
\]
From \eqref{equ585}, \eqref{equ5102}, \eqref{equ587}, \eqref{equ5100}, \eqref{equ586}, \eqref{equ5103} \eqref{equ5101}, and the
inverse properties of $S_{h}$, we obtain now, for $0\leq n\leq n^{*}$,
\begin{align}
\|\ve^{n,2}\| & \leq C_{\lambda}(\|\ve^{n}\| + \|e^{n}\|),
\label{equ5104}\\
\|\ve^{n,2}\|_{1,\infty} & \leq C_{\lambda}.
\label{equ5105}
\end{align}
We also have
\[
F^{n,1} - F_{h}^{n,1} = \ve_{x}^{n,1} + \tfrac{1}{2}(V^{n,1}\ve^{n,1})_{x} + \tfrac{3}{2}(W^{n,1}e^{n,1})_{x}
-\tfrac{1}{2}\ve^{n,1}\ve_{x}^{n,1} - \tfrac{3}{2}e^{n,1}e_{x}^{n,1},
\]
and
\begin{align*}
V^{n,1}\ve^{n,1} & = (H^{n} - kP\Phi_{x}^{n})(\ve^{n} - kP\rho_{x}^{n} - kP\rho_{1x}^{n})\\
& = H^{n}\ve^{n} - kH^{n}P\rho_{x}^{n} - kH^{n}P\rho_{1x}^{n} - kP\Phi_{x}^{n}\cdot \ve^{n,1},
\end{align*}
and also
\begin{align*}
W^{n,1}e^{n,1} & = (U^{n}-kP_{0}F^{n})(e^{n} - kP_{0}r_{x}^{n}-kP_{0}r_{1}^{n})\\
& = U^{n}e^{n} - kU^{n}P_{0}r_{x}^{n} - kU^{n}P_{0}r_{1}^{n} - kP_{0}F^{n}\cdot e^{n,1}.
\end{align*}
Hence,
\begin{align*}
F^{n,1} - F_{h}^{n,1} & = (\ve^{n} +\tfrac{1}{2}H^{n}\ve^{n} + \tfrac{3}{2}U^{n}e^{n})_{x}
- k(P\rho_{x}^{n} + \tfrac{1}{2}H^{n}P\rho_{x}^{n} + \tfrac{3}{2}U^{n}P_{0}r_{x}^{n})_{x} \\
\,\,\, & \,\,\,\,\,\,\, - k(P\rho_{1x}^{n} + \tfrac{1}{2}H^{n}P\rho_{1x}^{n} + \tfrac{1}{2}P\Phi_{x}^{n}\cdot\ve^{n,1}
+ \tfrac{3}{2}U^{n}P_{0}r_{1}^{n} + \tfrac{3}{2}P_{0}F^{n}\cdot e^{n,1})_{x} \\
\,\,\, & \,\,\,\,\,\,\, - \tfrac{1}{2}\ve^{n,1}\ve_{x}^{n,1} - \tfrac{3}{2}e^{n,1}e_{x}^{n,1},
\end{align*}
i.e.
\begin{equation}
F^{n,1} - F_{h}^{n,1} = r_{x}^{n} - kg_{x}^{n} + r_{1}^{n,1},
\label{equ5106}
\end{equation}
where
\begin{equation}
g^{n} = P\rho_{x}^{n} + \tfrac{1}{2}H^{n}P\rho_{x}^{n} + \tfrac{3}{2}U^{n}P_{0}r_{x}^{n},
\label{equ5107}
\end{equation}
and
\begin{align*}
r_{1}^{n,1} & = - k(P\rho_{1x}^{n} + \tfrac{1}{2}H^{n}P\rho_{1x}^{n} + \tfrac{1}{2}P\Phi_{x}^{n}\cdot\ve^{n,1}
 + \tfrac{3}{2}U^{n}P_{0}r_{1}^{n} + \tfrac{3}{2}P_{0}F^{n}\cdot e^{n,1})_{x} \\
\,\,\, & \,\,\,\,\,\,	- \tfrac{1}{2}\ve^{n,1}\ve_{x}^{n,1} -\tfrac{3}{2}e^{n,1}e_{x}^{n,1}.
\end{align*}
From the inverse property of $S_{h}$, $S_{h,0}$ \eqref{equ589}, \eqref{equ597}, \eqref{equ586}, \eqref{equ588}, \eqref{equ595},
\eqref{equ596}, \eqref{equ594}, we obtain, for $0\leq n\leq n^{*}$,
\begin{align}
\|r_{1}^{n,1}\| & \leq C_{\lambda}(\|\ve^{n}\| + \|e^{n}\|),
\label{equ5108} \\
\|r_{1}^{n,1}\|_{\infty} & \leq C_{\lambda}.
\label{equ5109}
\end{align}
In addition, from the inverse property of $S_{h}$, $S_{h,0}$ \eqref{equ585}, \eqref{equ593} and \eqref{equ586},
\eqref{equ594}, we obtain, for $0\leq n\leq n^{*}$,
\begin{align}
\|g_{x}^{n}\| & \leq \tfrac{C}{h^{2}}(\|\ve^{n}\| + \|e^{n}\|),
\label{equ5110} \\
\|g_{x}^{n}\|_{\infty} & \leq \tfrac{C}{h}.
\label{equ5111}
\end{align}
Now from \eqref{equ579}, \eqref{equ590}, \eqref{equ5106}, it follows that
\begin{align*}
e^{n,2} & = e^{n} - \tfrac{k}{4}[P_{0}(F^{n}-F_{h}^{n}) + P_{0}(F^{n,1}-F_{h}^{n,1})]\\
& = e^{n} - \tfrac{k}{4}[2P_{0}r_{x}^{n} - kP_{0}g_{x}^{n} + P_{0}r_{1}^{n} + P_{0}r_{1}^{n,1}],
\end{align*}
i.e.
\[
e^{n,2} = e^{n} - \tfrac{k}{2}P_{0}r_{x}^{n} + \tfrac{k^{2}}{4}P_{0}g_{x}^{n} - \tfrac{k}{4}(P_{0}r_{1}^{n}+P_{0}r_{1}^{n,1}).
\]
Hence, from \eqref{equ593}, \eqref{equ5107}, \eqref{equ595}, \eqref{equ5108}, the inverse properties of $S_{h,0}$, and \eqref{equ594},
\eqref{equ5111}, \eqref{equ5109}, we obtain, for $0\leq n\leq n^{*}$,
\begin{align}
\|e^{n,2}\| & \leq C_{\lambda}(\|\ve^{n}\| + \|e^{n}\|),
\label{equ5112} \\
\|e^{n,2}\|_{1,\infty} & \leq C_{\lambda}.
\label{equ5113}
\end{align}
In order to derive expressions for $\Phi^{n}-\Phi_{h}^{n}$, $F^{n}-F_{h}^{n}$, we note from \eqref{equ561}, \eqref{equ562}
that
\begin{align*}
H^{n} - V^{n,2} & = \tfrac{k}{4}(P\Phi_{x}^{n} + P\Phi_{x}^{n,1}), \\
U^{n} - W^{n,2} & = \tfrac{k}{4}(P_{0}F^{n} + P_{0}F^{n,1}),
\end{align*}
and therefore
\begin{align}
\|H^{n} - V^{n,2}\|_{\infty} & \leq C_{\lambda}k\,, \qquad \|H^{n}-V^{n,2}\|_{1,\infty} \leq C_{\lambda},
\label{equ5114} \\
\|U^{n} - W^{n,2}\|_{\infty} & \leq C_{\lambda}k\,, \qquad \|U^{n} - W^{n,2}\|_{1,\infty} \leq C_{\lambda}.
\label{equ5115}
\end{align}
We now have
\[
\Phi^{n,2} - \Phi_{h}^{n,2} = e^{n,2} + \tfrac{1}{2}V^{n,2}e^{n,2} + \tfrac{1}{2}W^{n,2}\ve^{n,2}-\tfrac{1}{2}\ve^{n,2}e^{n,2},
\]
and
\begin{align*}
V^{n,2}e^{n,2} & = [H^{n} - (H^{n}-V^{n,2})][e^{n}-\tfrac{k}{2}P_{0}r_{x}^{n}+\tfrac{k^{2}}{4}P_{0}g_{x}^{n}
-\tfrac{k}{4}(P_{0}r_{1}^{n} + P_{0}r_{1}^{n,1})]\\
& = H^{n}e^{n} - \tfrac{k}{2}H^{n}P_{0}r_{x}^{n}+\tfrac{k^{2}}{4}H^{n}P_{0}g_{x}^{n}-\tfrac{k}{4}H^{n}P_{0}(r_{1}^{n}+r_{1}^{n,1})
-(H^{n}-V^{n,2})e^{n,2},
\end{align*}
\begin{align*}
W^{n,2}\ve^{n,2} & = [U^{n}-(U^{n}-W^{n,2})][\ve^{n}-\tfrac{k}{2}P\rho_{x}^{n}+\tfrac{k^{2}}{4}Pf_{x}^{n}
-\tfrac{k}{4}(P\rho_{1x}^{n} + P\rho_{1x}^{n,1})]\\
& = U^{n}\ve^{n} - \tfrac{k}{2}U^{n}P\rho_{x}^{n} + \tfrac{k^{2}}{4}U^{n}Pf_{x}^{n}-\tfrac{k}{4}U^{n}P(\rho_{1x}^{n}+\rho_{1x}^{n,1})
-(U^{n}-W^{n,2})\ve^{n,2}.
\end{align*}
Hence, according to \eqref{equ583}, \eqref{equ599}
\begin{align*}
\Phi^{n,2} - \Phi_{h}^{n,2} & = \rho^{n} - \tfrac{k}{2}f^{n} + \tfrac{k^{2}}{4}[P_{0}g_{x}^{n} + \tfrac{1}{2}H^{n}P_{0}g_{x}^{n}
+ \tfrac{1}{2}U^{n}Pf_{x}^{n}] \\
\,\,\,&\,\,\,\,\,\, -\tfrac{k}{4}[P_{0}(r_{1}^{n} + r_{1}^{n,1}) + \tfrac{1}{2}H^{n}P_{0}(r_{1}^{n}+r_{1}^{n,1})
+ \tfrac{1}{2}U^{n}P(\rho_{1x}^{n} + \rho_{1x}^{n,1})]\\
\,\,\,\,\,\,\,\,&\,\,\,\,\,\,\,\,\, - \tfrac{1}{2}(H^{n}-V^{n,2})e^{n,2}-\tfrac{1}{2}(U^{n}-W^{n,2})\ve^{n,2}
-\tfrac{1}{2}\ve^{n,2}e^{n,2},
\end{align*}
i.e.
\begin{equation}
\Phi^{n,2} - \Phi_{h}^{n,2} = \rho^{n} - \tfrac{k}{2}f^{n} + \tfrac{k^{2}}{4}[P_{0}g_{x}^{n} + \tfrac{1}{2}H^{n}P_{0}g_{x}^{n}
+ \tfrac{1}{2}U^{n}Pf_{x}^{n}] + \rho_{1}^{n,2},
\label{equ5116}
\end{equation}
where
\begin{align*}
\rho_{1}^{n,2} & = -\tfrac{k}{4}[P_{0}(r_{1}^{n} + r_{1}^{n,1}) + \tfrac{1}{2}H^{n}P_{0}(r_{1}^{n}+r_{1}^{n,1})
+ \tfrac{1}{2}U^{n}P(\rho_{1x}^{n} + \rho_{1x}^{n,1})]\\
\,\,\,&\,\,\,\,\,\,\,\,\,\,- \tfrac{1}{2}(H^{n}-V^{n,2})e^{n,2}-\tfrac{1}{2}(U^{n}-W^{n,2})\ve^{n,2}
-\tfrac{1}{2}\ve^{n,2}e^{n,2}.
\end{align*}
From the inverse properties of $S_{h}$, $S_{h,0}$ and \eqref{equ595}, \eqref{equ5108}, \eqref{equ587}, \eqref{equ5100},
\eqref{equ5114}, \eqref{equ5112}, \eqref{equ5115}, \eqref{equ5104}, \eqref{equ5105}, \eqref{equ5113}, we obtain for
$0\leq n\leq n^{*}$,
\begin{equation}
\|\rho_{1x}^{n,2}\| \leq C_{\lambda}(\|\ve^{n}\| + \|e^{n}\|).
\label{equ5117}
\end{equation}
From \eqref{equ580} and \eqref{equ582}, \eqref{equ598}, \eqref{equ5116} we see that
\begin{equation}
\ve^{n+1} = \ve^{n} - kP\rho_{x}^{n} + \tfrac{k^{2}}{2}Pf_{x}^{n}
-\tfrac{k^{3}}{6}P[P_{0}g_{x}^{n} + \tfrac{1}{2}H^{n}P_{0}g_{x}^{n} + \tfrac{1}{2}U^{n}Pf_{x}^{n}]_{x}
+ \tfrac{k}{6}\omega^{n} + \delta_{1}^{n},
\label{equ5118}
\end{equation}
where
\[
\omega^{n} = - P(\rho_{1x}^{n} + \rho_{1x}^{n,1} + 4\rho_{1x}^{n,2}),
\]
for which it holds that
\begin{equation}
\|\omega^{n}\| \leq C_{\lambda}(\|\ve^{n}\| + \|e^{n}\|),
\label{equ5119}
\end{equation}
for $0\leq n\leq n^{*}$, by \eqref{equ587}, \eqref{equ5100}, \eqref{equ5117}. Also,
\[
F^{n,2} - F_{h}^{n,2} = \ve_{x}^{n,2} + \tfrac{1}{2}(V^{n,2}\ve^{n,2})_{x} + \tfrac{3}{2}(W^{n,2}e^{n,2})_{x}
-\tfrac{1}{2}\ve^{n,2}\ve_{x}^{n,2} - \tfrac{3}{2}e^{n,2}e_{x}^{n,2},
\]
and
\begin{align*}
V^{n,2}\ve^{n,2} & = [H^{n} - (H^{n} - V^{n,2})][\ve^{n} - \tfrac{k}{2}P\rho_{x}^{n}+\tfrac{k^{2}}{4}Pf_{x}^{n}
-\tfrac{k}{4}(P\rho_{1x}^{n} + P\rho_{1x}^{n,1})]\\
& = H^{n}\ve^{n} - \tfrac{k}{2}H^{n}P\rho_{x}^{n} + \tfrac{k^{2}}{4}H^{n}Pf_{x}^{n} - \tfrac{k}{4}H^{n}P(\rho_{1x}^{n}+\rho_{1x}^{n})
-(H^{n}- V^{n,2})\ve^{n,2},
\end{align*}
\begin{align*}
W^{n,2}e^{n,2} & = [U^{n} - (U^{n} - W^{n,2})][e^{n} - \tfrac{k}{2}P_{0}r_{x}^{n} + \tfrac{k^{2}}{4}P_{0}g_{x}^{n}
-\tfrac{k}{4}(P_{0}r_{1}^{n} + P_{0}r_{1}^{n,1})]\\
& = U^{n}e^{n} - \tfrac{k}{2}U^{n}P_{0}r_{x}^{n} + \tfrac{k^{2}}{4}U^{n}P_{0}g_{x}^{n}-\tfrac{k}{4}U^{n}P_{0}(r_{1}^{n}+r_{1}^{n,1})
-(U^{n} - W^{n,2})e^{n,2}.
\end{align*}
Hence, from \eqref{equ591}, \eqref{equ5107} it follows that
\begin{align*}
F^{n,2} - F_{h}^{n,2} & = r_{x}^{n} - \tfrac{k}{2}g_{x}^{n} + \tfrac{k^{2}}{4}[Pf_{x}^{n} + \tfrac{1}{2}H^{n}Pf_{x}^{n}
+\tfrac{3}{2}U^{n}P_{0}g_{x}^{n}]_{x}\\
\,\,\,&\,\,\,\,\,\, -\tfrac{k}{4}[P(\rho_{1x}^{n}+\rho_{1x}^{n,1}) + \tfrac{1}{2}H^{n}P(\rho_{1x}^{n}+\rho_{1x}^{n,1})
+\tfrac{3}{2}U^{n}P_{0}(r_{1}^{n} + r_{1}^{n,1})]_{x}\\
\,\,\,&\,\,\,\,\,\, -\tfrac{1}{2}[(H^{n} - V^{n,2})\ve^{n,2}]_{x} - \tfrac{3}{2}[(U^{n}-W^{n,2})e^{n,2}]_{x}
-\tfrac{1}{2}\ve^{n,2}\ve_{x}^{n,2} - \tfrac{3}{2}e^{n,2}e_{x}^{n,2},
\end{align*}
or
\begin{equation}
F^{n,2} - F_{h}^{n,2} = r_{x}^{n} - \tfrac{k}{2}g_{x}^{n} + \tfrac{k^{2}}{4}[Pf_{x}^{n} + \tfrac{1}{2}H^{n}Pf_{x}^{n}
+\tfrac{3}{2}U^{n}P_{0}g_{x}^{n}]_{x} + r_{1}^{n,2},
\label{equ5120}
\end{equation}
where
\begin{align*}
r_{1}^{n,2} & = -\tfrac{k}{4}[P(\rho_{1x}^{n}+\rho_{1x}^{n,1}) + \tfrac{1}{2}H^{n}P(\rho_{1x}^{n}+\rho_{1x}^{n,1})
+\tfrac{3}{2}U^{n}P_{0}(r_{1}^{n} + r_{1}^{n,1})]_{x}\\
\,\,\,&\,\,\,\,\,\, -\tfrac{1}{2}[(H^{n} - V^{n,2})\ve^{n,2}]_{x} - \tfrac{3}{2}[(U^{n}-W^{n,2})e^{n,2}]_{x}
-\tfrac{1}{2}\ve^{n,2}\ve_{x}^{n,2} - \tfrac{3}{2}e^{n,2}e_{x}^{n,2}.
\end{align*}
From the inverse properties of $S_{h}$, $S_{h,0}$, \eqref{equ587}, \eqref{equ5100}, \eqref{equ595}, \eqref{equ5108}, \eqref{equ5104},
\eqref{equ5115}, \eqref{equ5104}, \eqref{equ5112}, \eqref{equ587}, we have, for $0\leq n\leq n^{*}$,
\begin{equation}
\|r_{1}^{n,2}\| \leq C_{\lambda}(\|\ve^{n}\| + \|e^{n}\|).
\label{equ5121}
\end{equation}
Also, from \eqref{equ581} and \eqref{equ590}, \eqref{equ5106}, \eqref{equ5120} we see that
\begin{equation}
e^{n+1} = e^{n} - kP_{0}r_{x}^{n} + \tfrac{k^{2}}{2}P_{0}g_{x}^{n}
- \tfrac{k^{3}}{6}P_{0}[Pf_{x}^{n} + \tfrac{1}{2}H^{n}Pf_{x}^{n}
+\tfrac{3}{2}U^{n}P_{0}g_{x}^{n}]_{x} + \tfrac{k}{6}w^{n} + \delta_{2}^{n},
\label{equ5122}
\end{equation}
where
\[
w^{n} = -P_{0}(r_{1}^{n} + r_{1}^{n,1} + 4 r_{1}^{n,2})
\]
satisfies
\begin{equation}
\|w^{n}\| \leq C_{\lambda}(\|\ve^{n}\| + \|e^{n}\|),
\label{equ5123}
\end{equation}
for $0\leq n\leq n^{*}$, in view of \eqref{equ595}, \eqref{equ5108}, \eqref{equ5121}. We now write \eqref{equ5118}, \eqref{equ5122}
in the form
\begin{align}
\ve^{n+1} & = \gamma^{n} + \tfrac{k}{6}\omega^{n} + \delta_{1}^{n},
\label{equ5124}\\
e^{n+1} & = \sigma^{n} + \tfrac{k}{6}w^{n} + \delta_{2}^{n},
\label{equ5125}
\end{align}
where
\begin{align*}
\gamma^{n} & = \ve^{n} - kP\rho_{x}^{n} + \tfrac{k^{2}}{2}Pf_{x}^{n}
-\tfrac{k^{3}}{6}P\widetilde{f}_{x}^{n}, \\
\sigma^{n} & = e^{n} - kP_{0}r_{x}^{n} + \tfrac{k^{2}}{2}P_{0}g_{x}^{n}
- \tfrac{k^{3}}{6}P_{0}\widetilde{g}_{x}^{n},
\end{align*}
and
\begin{align*}
\widetilde{f}^{n} & = P_{0}g_{x}^{n} + \tfrac{1}{2}H^{n}P_{0}g_{x}^{n} + \tfrac{1}{2}U^{n}Pf_{x}^{n},\\
\widetilde{g}^{n} & = Pf_{x}^{n} + \tfrac{1}{2}H^{n}Pf_{x}^{n} + \tfrac{3}{2}U^{n}P_{0}g_{x}^{n}.
\end{align*}
Taking squares of norms we see that
\begin{equation}
\|\gamma^{n}\|^{2} + \|\sigma^{n}\|^{2} = \|\ve^{n}\|^{2} + \|e^{n}\|^{2} + ka_{1}^{n} + k^{2}a_{2}^{n}
+k^{3}a_{3}^{n} + k^{4}a_{4}^{n} + k^{5}a_{5}^{n} + k^{6}a_{6}^{n},
\label{equ5126}
\end{equation}
where $a_{j}^{n}$, $j=1,2,\dots,6$, are quantities with no explicit dependence on $k$, which we will calculate and estimate below.
For $a_{1}^{n}$ we obtain
\[
a_{1}^{n} = -2(\ve^{n},P\rho_{x}^{n}) - 2(e^{n},P_{0}r_{x}^{n}) = 2(\ve_{x}^{n},\rho^{n})+2(e_{x}^{n},r^{n}),
\]
and, using the definitions of $\rho^{n}$, $r^{n}$ in \eqref{equ583}, \eqref{equ591},
\begin{align*}
a_{1}^{n} & = (\ve_{x}^{n},H^{n}e^{n} + U^{n}\ve^{n}) + (e_{x}^{n},H^{n}\ve^{n}+3U^{n}e^{n})\\
& = -(\ve^{n},H_{x}^{n}e^{n}) -\tfrac{1}{2}(U_{x}^{n}\ve^{n},\ve^{n}) - \tfrac{3}{2}(U_{x}^{n}e^{n},e^{n}).
\end{align*}
Therefore
\begin{equation}
\abs{a_{1}^{n}} \leq C(\|\ve^{n}\|^{2} + \|e^{n}\|^{2}).
\label{equ5127}
\end{equation}
For $a_{2}^{n}$ we obtain
\begin{align*}
a_{2}^{n} & = (\ve^{n},Pf_{x}^{n}) + (e^{n},P_{0}g_{x}^{n}) + \|P\rho_{x}^{n}\|^{2} + \|P_{0}r_{x}^{n}\|^{2}\\
& = -(\ve_{x}^{n},f^{n}) - (e_{x}^{n},g^{n}) + \|P\rho_{x}^{n}\|^{2} + \|P_{0}r_{x}^{n}\|^{2}.
\end{align*}
Using the definitions of $f^{n}$, $g^{n}$ in \eqref{equ599}, \eqref{equ5107}, we see that
\begin{align*}
a_{2}^{n} &= -(\ve_{x}^{n},P_{0}r_{x}^{n}) - \tfrac{1}{2}(\ve_{x}^{n},H^{n}P_{0}r_{x}^{n})- \tfrac{1}{2}(\ve_{x}^{n},U^{n}P\rho_{x}^{n})
-(e_{x}^{n},P\rho_{x}^{n}) \\
\,\,\,\,&\,\,\,\,\,\,- \tfrac{1}{2}(e_{x}^{n},H^{n}P\rho_{x}^{n})-\tfrac{3}{2}(e_{x}^{n},U^{n}P_{0}r_{x}^{n})
+ \|P\rho_{x}^{n}\|^{2} + \|P_{0}r_{x}^{n}\|^{2}\\
& = -(\ve_{x}^{n},P_{0}r_{x}^{n}) - \tfrac{1}{2}((H^{n}\ve^{n})_{x}-H_{x}^{n}\ve^{n},P_{0}r_{x}^{n})
-\tfrac{1}{2}((U^{n}\ve^{n})_{x}-U_{x}^{n}\ve^{n},P\rho_{x}^{n})
-(e_{x}^{n},P\rho_{x}^{n}) \\
\,\,\,\,&\,\,\,\,\,\,- \tfrac{1}{2}((H^{n}e^{n})_{x}-H_{x}^{n}e^{n},P\rho_{x}^{n})
-\tfrac{3}{2}((U^{n}e^{n})_{x}-U_{x}^{n}e^{n},P_{0}r_{x}^{n}) + \|P\rho_{x}^{n}\|^{2} + \|P_{0}r_{x}^{n}\|^{2}\\
& = -(r_{x}^{n},P_{0}r_{x}^{n}) - (\rho_{x}^{n},P\rho_{x}^{n}) + \tfrac{1}{2}(H_{x}^{n}\ve^{n},P_{0}r_{x}^{n})
+ \tfrac{1}{2}(U_{x}^{n}\ve^{n},P\rho_{x}^{n}) + \tfrac{1}{2}(H_{x}^{n}e^{n},P\rho_{x}^{n})\\
\,\,\,\,&\,\,\,\,\,\,+\tfrac{3}{2}(U_{x}^{n}e^{n},P_{0}r_{x}^{n}) + \|P\rho_{x}^{n}\|^{2} + \|P_{0}r_{x}^{n}\|^{2}.
\end{align*}
Hence,
\[
a_{2}^{n} = \tfrac{1}{2}(H_{x}^{n}\ve^{n},P_{0}r_{x}^{n})
+ \tfrac{1}{2}(U_{x}^{n}\ve^{n},P\rho_{x}^{n}) + \tfrac{1}{2}(H_{x}^{n}e^{n},P\rho_{x}^{n})
+\tfrac{3}{2}(U_{x}^{n}e^{n},P_{0}r_{x}^{n}),
\]
and, from \eqref{equ585}, \eqref{equ593},
\begin{equation}
\abs{a_{2}^{n}} \leq \tfrac{C}{h}(\|\ve^{n}\|^{2} + \|e^{n}\|^{2}).
\label{equ5128}
\end{equation}
For $a_{3}^{n}$ we have
\begin{align*}
a_{3}^{n} & = \tfrac{1}{3}(\ve_{x}^{n},P_{0}g_{x}^{n})
+ \tfrac{1}{3}(\tfrac{1}{2}(H^{n}\ve^{n})_{x}-\tfrac{1}{2}H_{x}^{n}\ve^{n},P_{0}g_{x}^{n})
+ \tfrac{1}{3}(\tfrac{1}{2}(U^{n}\ve^{n})_{x} - \tfrac{1}{2}U_{x}^{n}\ve^{n},Pf_{x}^{n})\\
\,\,\,\,&\,\,\,\,\,\,+\tfrac{1}{3}(e_{x}^{n},Pf_{x}^{n})
+ \tfrac{1}{3}(\tfrac{1}{2}(H^{n}e^{n})_{x} - \tfrac{1}{2}H_{x}^{n}e^{n},Pf_{x}^{n})
+ \tfrac{1}{3}(\tfrac{3}{2}(U^{n}e^{n})_{x}-\tfrac{3}{2}U_{x}^{n}e^{n},P_{0}g_{x}^{n})\\
\,\,\,\,&\,\,\,\,\,\, - (P\rho_{x}^{n},Pf_{x}^{n}) - (P_{0}r_{x}^{n},P_{0}g_{x}^{n}),
\end{align*}
whence
\[
a_{3}^{n} = -\tfrac{2}{3}(P\rho_{x}^{n},Pf_{x}^{n}) - \tfrac{2}{3}(P_{0}r_{x}^{n},P_{0}g_{x}^{n})
-\tfrac{1}{6}(H_{x}^{n}\ve^{n}+3U_{x}^{n}e^{n},P_{0}g_{x}^{n}) - \tfrac{1}{6}(U_{x}^{n}\ve^{n}+H_{x}^{n}e^{n},Pf_{x}^{n}).
\]
Since
\begin{align*}
(P\rho_{x}^{n},Pf_{x}^{n}) + (P_{0}r_{x}^{n},P_{0}g_{x}^{n})
& = (P\rho_{x}^{n},(P_{0}r_{x}^{n})_{x} + \tfrac{1}{2}(H^{n}P_{0}r_{x}^{n})_{x} + \tfrac{1}{2}(U^{n}P\rho_{x}^{n})_{x})\\
\,\,\,\,&\,\,\,\,\,\,+ (P_{0}r_{x}^{n},(P\rho_{x}^{n})_{x} + \tfrac{1}{2}(H^{n}P\rho_{x}^{n})_{x} + \tfrac{3}{2}(U^{n}P_{0}r_{x}^{n})_{x})\\
& = -\tfrac{1}{2}(H^{n}(P\rho_{x}^{n})_{x},P_{0}r_{x}^{n}) - \tfrac{1}{2}(U^{n}(P\rho_{x}^{n})_{x},P\rho_{x}^{n})\\
\,\,\,\,&\,\,\,\,\,\,+\tfrac{1}{2}(P_{0}r_{x}^{n},(H^{n}P\rho_{x}^{n})_{x})
- \tfrac{3}{2}(U^{n}(P_{0}r_{x}^{n})_{x}, P_{0}r_{x}^{n}),
\end{align*}
we obtain
\begin{align*}
(P\rho_{x}^{n},Pf_{x}^{n}) + (P_{0}r_{x}^{n},P_{0}g_{x}^{n})
& = -\tfrac{1}{2}((H^{n}P\rho_{x}^{n})_{x} - H_{x}^{n}P\rho_{x}^{n},P_{0}r_{x}^{n})
+ \tfrac{1}{4}(U_{x}^{n}P\rho_{x}^{n},P\rho_{x}^{n})\\
\,\,\,\,&\,\,\,\,\,\,+\tfrac{1}{2}(P_{0}r_{x}^{n},(H^{n}P\rho_{x}^{n})_{x}) + \tfrac{3}{4}(U_{x}^{n}P_{0}r_{x}^{n},P_{0}r_{x}^{n}),
\end{align*}
and therefore
\[
(P\rho_{x}^{n},Pf_{x}^{n}) + (P_{0}r_{x}^{n},P_{0}g_{x}^{n}) =
\tfrac{1}{2}(H_{x}^{n}P\rho_{x}^{n},P_{0}r_{x}^{n}) + \tfrac{1}{4}(U_{x}^{n}P\rho_{x}^{n},P\rho_{x}^{n})
+ \tfrac{3}{4}(U_{x}^{n}P_{0}r_{x}^{n},P_{0}r_{x}^{n}).
\]
Thus, in view of \eqref{equ585}, \eqref{equ593}, \eqref{equ5110}, \eqref{equ5102} we see for $0\leq n\leq n^{*}$, that
\begin{equation}
\abs{a_{3}^{n}} \leq \tfrac{C}{h^{2}}(\|\ve^{n}\|^{2} + \|e^{n}\|^{2}).
\label{equ5129}
\end{equation}
For $a_{4}^{n}$  we obtain
\[
a_{4}^{n} = \tfrac{1}{3}(P\rho_{x}^{n},\widetilde{f}_{x}^{n})
+ \tfrac{1}{3}(P_{0}r_{x}^{n},\widetilde{g}_{x}^{n})
+ \tfrac{1}{4}\|Pf_{x}^{n}\|^{2} + \tfrac{1}{4}\|P_{0}g_{x}^{n}\|^{2},
\]
and
\begin{align*}
(P\rho_{x}^{n},\widetilde{f}_{x}^{n}) + (P_{0}r_{x}^{n},\widetilde{g}_{x}^{n}) & =
- ((P\rho_{x}^{n})_{x},P_{0}g_{x}^{n}) - \tfrac{1}{2}(H^{n}(P\rho_{x}^{n})_{x},P_{0}g_{x}^{n})
-\tfrac{1}{2}(U^{n}(P\rho_{x}^{n})_{x},Pf_{x}^{n}) \\
\,\,\,&\,\,\,\,\,\,-\!((P_{0}r_{x}^{n})_{x},Pf_{x}^{n})
-\tfrac{1}{2}(H^{n}(P_{0}r_{x}^{n})_{x},Pf_{x}^{n}) - \tfrac{3}{2}(U^{n}(P_{0}r_{x}^{n})_{x},P_{0}g_{x}^{n}),
\end{align*}
so that
\begin{align*}
(P\rho_{x}^{n},\widetilde{f}_{x}^{n}) + (P_{0}r_{x}^{n},\widetilde{g}_{x}^{n})& =
-(g_{x}^{n},P_{0}g_{x}^{n}) + \tfrac{1}{2}(H_{x}^{n}P\rho_{x}^{n},P_{0}g_{x}^{n}) + \tfrac{3}{2}(U_{x}^{n}P_{0}r_{x}^{n},P_{0}g_{x}^{n})\\
\,\,\,&\,\,\,\,\,\,-\!(f_{x}^{n},Pf_{x}^{n})
+\tfrac{1}{2}(H_{x}^{n}P_{0}r_{x}^{n},Pf_{x}^{n}) + \tfrac{1}{2}(U_{x}^{n}P\rho_{x}^{n},Pf_{x}^{n}).
\end{align*}
Therefore
\begin{equation}
a_{4}^{n} = \tfrac{-1}{12}(\|Pf_{x}^{n}\|^{2} + \|P_{0}g_{x}^{n}\|^{2}) + \widetilde{a}_{4}^{n},
\label{equ5130}
\end{equation}
where
\begin{equation}
\abs{\widetilde{a}_{4}^{n}} \leq \tfrac{C}{h^{3}}(\|\ve^{n}\|^{2} + \|e^{n}\|^{2}),
\label{equ5131}
\end{equation}
for $0\leq n\leq n^{*}$. \\
For $a_{5}^{n}$ it holds that
\[
a_{5}^{n} = - \tfrac{1}{6}(Pf_{x}^{n},\widetilde{f}_{x}^{n}) - \tfrac{1}{6}(P_{0}g_{x}^{n},\widetilde{g}_{x}^{n}),
\]
and
\begin{align*}
(Pf_{x}^{n},\widetilde{f}_{x}^{n}) + (P_{0}g_{x}^{n},\widetilde{g}_{x}^{n}) & =
(Pf_{x}^{n}, (P_{0}g_{x}^{n})_{x} + \tfrac{1}{2}(H^{n}P_{0}g_{x}^{n})_{x} + \tfrac{1}{2}(U^{n}Pf_{x}^{n})_{x})\\
\,\,\,&\,\,\,\,\,\,+ (P_{0}g_{x}^{n}, (Pf_{x}^{n})_{x} + \tfrac{1}{2}(H^{n}Pf_{x}^{n})_{x} + \tfrac{3}{2}(U^{n}P_{0}g_{x}^{n})_{x})\\
& = \tfrac{-1}{2}(H^{n}(Pf_{x}^{n})_{x},P_{0}g_{x}^{n}) + \tfrac{1}{4}(U_{x}^{n}Pf_{x}^{n},Pf_{x}^{n})
+ \tfrac{1}{2}(P_{0}g_{x}^{n},(H^{n}Pf_{x}^{n})_{x})\\
\,\,\,&\,\,\,\,\,\,+\tfrac{3}{4}(U_{x}^{n}P_{0}g_{x}^{n},P_{0}g_{x}^{n})\\
& = \tfrac{1}{2}(H_{x}^{n}Pf_{x}^{n},P_{0}g_{x}^{n}) + \tfrac{1}{4}(U_{x}^{n}Pf_{x}^{n},Pf_{x}^{n})
+ \tfrac{3}{4}(U_{x}^{n}P_{0}g_{x}^{n},P_{0}g_{x}^{n}).
\end{align*}
So, from \eqref{equ5102}, \eqref{equ5110},
\begin{equation}
\abs{a_{5}^{n}} \leq \tfrac{C}{h^{4}}(\|\ve^{n}\|^{2} + \|e^{n}\|^{2}),,
\label{equ5132}
\end{equation}
for $0\leq n\leq n^{*}$. \par
Finally, for $a_{6}^{n}$ we see that
\[
a_{6}^{n} = \tfrac{1}{36}\|P\widetilde{f}_{x}^{n}\|^{2} + \tfrac{1}{36}\|P_{0}\widetilde{g}_{x}^{n}\|^{2}.
\]
But since
\[
\widetilde{f}_{x}^{n} = (P_{0}g_{x}^{n})_{x} + \tfrac{1}{2}(H^{n}P_{0}g_{x}^{n})_{x} + \tfrac{1}{2}(U^{n}Pf_{x}^{n})_{x},
\]
we have
\[
\|\widetilde{f}_{x}^{n}\| \leq \tfrac{C}{h}(\|P_{0}g_{x}^{n}\| + \|Pf_{x}^{n}\|).
\]
Similarly, since
\[
\widetilde{g}_{x}^{n} = (Pf_{x}^{n})_{x} + \tfrac{1}{2}(H^{n}Pf_{x}^{n})_{x} + \tfrac{3}{2}(U^{n}P_{0}g_{x}^{n})_{x},
\]
it follows that
\[
\|\widetilde{g}_{x}^{n}\| \leq \tfrac{C}{h}(\|Pf_{x}^{n}\| + \|P_{0}g_{x}^{n}\|).
\]
Therefore
\begin{equation}
\abs{a_{6}^{n}} \leq \tfrac{C_{0}}{h^{2}}(\|P_{0}g_{x}^{n}\|^{2} + \|Pf_{x}^{n}\|^{2}),
\label{equ5133}
\end{equation}
for $0\leq n\leq n^{*}$ and for some constant $C_{0}$ independent of $h$ and $k$. Hence, from \eqref{equ5126}-\eqref{equ5133}
we obtain
\[
\|\gamma^{n}\|^{2} + \|\sigma^{n}\|^{2} \leq (1+C_{\lambda}k)(\|\ve^{n}\|^{2} + \|e^{n}\|^{2})
+ k^{4}(C_{0}\lambda^{2} - \tfrac{1}{12})(\|P_{0}g_{x}^{n}\|^{2} + \|Pf_{x}^{n}\|^{2}),
\]
and therefore, for $\lambda\leq \lambda_{0}=\sqrt{1/(12C_{0})}$ it holds that
\begin{equation}
\|\gamma^{n}\|^{2} + \|\sigma^{n}\|^{2} \leq (1+C_{\lambda}k)(\|\ve^{n}\|^{2} + \|e^{n}\|^{2}),
\label{equ5134}
\end{equation}
and
\begin{equation}
\|\gamma^{n}\| + \|\sigma^{n}\| \leq C(\|\ve^{n}\| + \|e^{n}\|),
\label{equ5135}
\end{equation}
for $0\leq n\leq n^{*}$. From \eqref{equ5124}, \eqref{equ5125} we obtain
\begin{align*}
\|\ve^{n+1}\|^{2} + \|e^{n+1}\|^{2}&=\|\gamma^{n}\|^{2} + \|\sigma^{n}\|^{2}
+ \tfrac{k}{3}[(\gamma^{n},\omega^{n}) + (\sigma^{n},w^{n}) + (\omega^{n},\delta_{1}^{n}) + (w^{n},\delta_{2}^{n})]\\
\,\,\,&\,\,\,\,\, + 2(\gamma^{n},\delta_{1}^{n}) + 2(\sigma^{n},\delta_{2}^{n})
+ \tfrac{k^{2}}{36}[\|\omega^{n}\|^{2} + \|w^{n}\|^{2}]
+ \|\delta_{1}^{n}\|^{2} + \|\delta_{2}^{n}\|^{2}.
\end{align*}
But from \eqref{equ5119}, \eqref{equ5123}, \eqref{equ5135} and Lemma 5.10 we see that
\[
k(\|\gamma^{n}\|\|\omega^{n}\| + \|\sigma^{n}\|\|w^{n}\|) \leq Ck(\|\ve^{n}\|^{2} + \|e^{n}\|^{2}),
\]
and
\begin{align*}
(k\|\omega^{n}\| + \|\gamma^{n}\|)\|\delta_{1}^{n}\| + (k\|w^{n}\| + \|\sigma^{n}\|)\|\delta_{2}^{n}\|
& \leq Ck(k^{3} + h^{r-1})(\|\ve^{n}\| + \|e^{n}\|) \\
& \leq Ck(\|\ve^{n}\|^{2} + \|e^{n}\|^{2}) + Ck(k^{3} + h^{r-1})^{2}.
\end{align*}
Hence, finally
\[
\|\ve^{n+1}\|^{2} + \|e^{n+1}\|^{2} \leq (1 + C_{\lambda}k)(\|\ve^{n}\|^{2} + \|e^{n}\|^{2}) + Ck(k^{3} + h^{r-1})^{2}.
\]
Therefore, from Gronwall's lemma it follows that
\[
\|\ve^{n}\|^{2} + \|e^{n}\|^{2} \leq \widetilde{C}_{1}(\|\ve^{0}\|^{2} + \|e^{0}\|^{2}) + \widetilde{C}_{2}(k^{3}+h^{r-1})^{2},
\]
i.e.
\[
\|\ve^{n}\| + \|e^{n}\| \leq  C(k^{3} + h^{r-1}),
\]
for $0\leq n\leq n^{*}+1$. Using the inverse properties of the spaces $S_{h}$, $S_{h,0}$ and the fact that $r\geq 3$
we conclude that $n^{*}$ was not maximal. Hence we may go up to $n^{*}=M-1$ and the conclusion of the proposition follows.
\end{proof}
We close this section by presenting the results of a relevant numerical experiment. We solve the nonhomogeneous SSW system
with exact solutions given by the functions $\eta(x,t)=\exp (2t)(\cos (\pi x) + x + 2)$,
$u(x,t)=\exp(xt)(\sin(\pi x) + 5x^{2}(x-1))$, for $0\leq x\leq 1$, $t\geq 0$, using cubic splines on a uniform mesh
on $[0,1]$ with $h=1/N$ for the spatial discretization and the Shu-Osher scheme with $k=h/10$ for time stepping.
\captionsetup[subtable]{labelformat=empty,position=top,margin=1pt,singlelinecheck=false}
\scriptsize
\begin{table}[h]
\subfloat[$L^{2}$-errors]{
\begin{tabular}[h]{ | c | c | c | c | c | }\hline
$N$   &   $\eta$         &  $order$  &  $u$               &  $order$  \\ \hline
$40$  &  $0.1094(-6)$   &            &  $0.1672(-6)$    &          \\ \hline
$80$  &  $0.8004(-8)$   &  $3.7733$  &  $0.1961(-7)$    &  $3.0915$ \\ \hline
$120$ &  $0.1886(-8)$   &  $3.5645$  &  $0.5738(-8)$    &  $3.0310$  \\ \hline
$160$ &  $0.7094(-9)$   &  $3.3994$  &  $0.2410(-8)$    &  $3.0152$  \\ \hline
$200$ &  $0.3407(-9)$   &  $3.2872$  &  $0.1232(-8)$    &  $3.0090$  \\ \hline
$240$ &  $0.1897(-9)$   &  $3.2127$  &  $0.7120(-9)$    &  $3.0058$  \\ \hline
$280$ &  $0.1165(-9)$   &  $3.1619$  &  $0.4481(-9)$    &  $3.0042$  \\ \hline
$320$ &  $0.7674(-10)$  &  $3.1260$  &  $0.3000(-9)$    &  $3.0032$  \\ \hline
$360$ &  $0.5325(-10)$  &  $3.1016$  &  $0.2107(-9)$    &  $3.0023$  \\ \hline
$400$ &  $0.3849(-10)$  &  $3.0826$  &  $0.1535(-9)$    &  $3.0024$  \\ \hline
$440$ &  $0.2873(-10)$  &  $3.0659$  &  $0.1153(-9)$    &  $3.0018$  \\ \hline
$480$ &  $0.2201(-10)$  &  $3.0631$  &  $0.8883(-10)$   &  $3.0010$  \\ \hline
$520$ &  $0.1725(-10)$  &  $3.0463$  &  $0.6986(-10)$   &  $3.0017$  \\ \hline
$560$ &  $0.1378(-10)$  &  $3.0279$  &  $0.5592(-10)$   &  $3.0030$  \\ \hline
$600$ &  $0.1119(-10)$  &  $3.0247$  &  $0.4546(-10)$   &  $3.0022$  \\ \hline
$640$ &  $0.9226(-11)$  &  $2.9846$  &  $0.3744(-10)$   &  $3.0074$  \\ \hline
\end{tabular}
}\,\,\,
\subfloat[$L^{\infty}$-errors]{
\begin{tabular}[h]{ | c | c | c | c | c | }\hline
$N$   &    $\eta$       &  $order$   &      $u$        &   $order$    \\ \hline
$40$  &  $0.3242(-6)$   &            &  $0.3471(-6)$   &              \\ \hline
$80$  &  $0.2553(-7)$   &  $3.6665$  &  $0.3666(-7)$   &  $3.2450$   \\ \hline
$120$ &  $0.6012(-8)$   &  $3.5670$  &  $0.1028(-7)$   &  $3.1332$   \\ \hline
$160$ &	 $0.2226(-8)$   &  $3.4532$  &  $0.4221(-8)$   &  $3.0938$   \\ \hline
$200$ &  $0.1045(-8)$   &  $3.3880$  &  $0.2121(-8)$   &  $3.0834$   \\ \hline
$240$ &  $0.5671(-9)$   &  $3.3546$  &  $0.1211(-8)$   &  $3.0767$   \\ \hline
$280$ &  $0.3406(-9)$   &  $3.3063$  &  $0.7570(-9)$   &  $3.0456$   \\ \hline
$320$ &  $0.2201(-9)$   &  $3.2701$  &  $0.5044(-9)$   &  $3.0402$   \\ \hline
$360$ &  $0.1502(-9)$   &  $3.2474$  &  $0.3525(-9)$   &  $3.0418$   \\ \hline
$400$ &  $0.1070(-9)$   &  $3.2137$  &  $0.2557(-9)$   &  $3.0491$   \\ \hline
$440$ &  $0.7901(-10)$  &  $3.1842$  &  $0.1913(-9)$   &  $3.0407$   \\ \hline
$480$ &  $0.6011(-10)$  &  $3.1424$  &  $0.1471(-9)$   &  $3.0207$   \\ \hline
$520$ &  $0.4684(-10)$  &  $3.1155$  &  $0.1155(-9)$   &  $3.0202$   \\ \hline
$560$ &  $0.3725(-10)$  &  $3.0913$  &  $0.9230(-10)$  &  $3.0280$   \\ \hline
$600$ &  $0.3019(-10)$  &  $3.0458$  &  $0.7485(-10)$  &  $3.0370$   \\ \hline
$640$ &  $0.2470(-10)$  &  $3.1086$  &  $0.6152(-10)$  &  $3.0396$   \\ \hline
\end{tabular}
}\\
\subfloat[$H^{1}$-errors]{
\begin{tabular}[h]{ | c | c | c | c | c | }\hline
$N$   &    $\eta$      &  $order$   &      $u$       &   $order$   \\ \hline
$40$  &  $0.2022(-4)$  &            &  $0.1713(-4)$  &            \\ \hline
$80$  &  $0.2515(-5)$  &  $3.0076$  &  $0.2159(-5)$  &  $2.9885$   \\ \hline
$120$ &	 $0.7451(-6)$  &  $2.9998$  &  $0.6428(-6)$	 &  $2.9880$   \\ \hline
$160$ &	 $0.3150(-6)$  &  $2.9926$  &  $0.2723(-6)$	 &  $2.9858$   \\ \hline
$200$ &  $0.1617(-6)$  &  $2.9883$  &  $0.1400(-6)$  &  $2.9826$  \\ \hline
$240$ &  $0.9388(-7)$  &  $2.9830$  &  $0.8129(-7)$  &  $2.9797$  \\ \hline
$280$ &  $0.5932(-7)$  &  $2.9783$  &  $0.5138(-7)$  &  $2.9768$  \\ \hline
$320$ &  $0.3987(-7)$  &  $2.9753$  &  $0.3454(-7)$  &  $2.9736$  \\ \hline
$360$ &  $0.2809(-7)$  &  $2.9714$  &  $0.2434(-7)$  &  $2.9705$  \\ \hline
$400$ &  $0.2055(-7)$  &  $2.9667$  &  $0.1781(-7)$  &  $2.9678$  \\ \hline
$440$ &  $0.1550(-7)$  &  $2.9637$  &  $0.1342(-7)$  &  $2.9646$  \\ \hline
$480$ &  $0.1198(-7)$  &  $2.9617$  &  $0.1037(-7)$  &  $2.9615$  \\ \hline
$520$ &  $0.9451(-8)$  &  $2.9580$  &  $0.8186(-8)$  &  $2.9589$  \\ \hline
$560$ &  $0.7594(-8)$  &  $2.9510$  &  $0.6576(-8)$  &  $2.9558$  \\ \hline
$600$ &  $0.6199(-8)$  &  $2.9418$  &  $0.5365(-8)$  &  $2.9502$  \\ \hline
$640$ &  $0.5123(-8)$  &  $2.9551$  &  $0.4433(-8)$  &  $2.9550$  \\ \hline
\end{tabular}
}
\caption{$L^{2}$-,$L^{\infty}$-, and $H^{1}$-errors and orders of convergence, cubic splines on a uniform mesh
with $h=1/N$ and Shu-Osher time stepping with $k=h/10$, \eqref{eqssw}}
\label{tbl52}
\end{table}
\normalsize
Table \ref{tbl52} shows the $L^{2}$-, $L^{\infty}$- and $H^{1}$-errors and associated rates of convergence for this problem at
$T=0.5$ as $N$ is increased. The rate of convergence in $L^{2}$ stabilizes to about $3$ for both components of the solution,
which is the expected temporal rate, as the experimental spatial rate is four in view of the numerical results in
Table \ref{tbl42}. The $L^{\infty}$-errors converge at a rate which appears to be equal to $3$ again
(we expect a $O(k^{3} + h^{4})$ behaviour), and so do the $H^{1}$-errors as well, for which the expected error is of
$O(k^{3} + h^{3})$.
\section{Remarks}
\subsection{Periodic boundary conditions}
In this section we consider the {\it{periodic}} initial-value problem for the usual and the symmetric shallow-water systems,
which we discretize using the standard Galerkin method with periodic splines of order $r\geq 2$ on a uniform mesh.
Using suitable {\it{quasiinterpolants}} of smooth periodic functions in the space of periodic splines, cf. \cite{twe},
we will prove optimal-order $L^{2}$-error estimates for the semidiscrete approximations of both systems. A similar
error analysis in the case of Boussinesq (i.e. dispersive) systems was done in \cite{adm}.
For the purposes of the present subsection we shall denote, for integer $k\geq 0$, by $H_{per}^{k}$ the usual,
$L^{2}$-based, real Sobolev space of periodic functions on $[0,1]$ with associated norm $\|\cdot\|_{k}$, and by
$C_{per}^{k}$ the space of periodic functions in $C^{k}[0,1]$. \par
We consider the periodic initial-value problem for the shallow-water systems. In the case of the usual system
we seek $\eta=\eta(x,t)$, $u=u(x,t)$, 1-periodic in $x$ for all $t\in [0,T]$, such that
\begin{align}
\begin{aligned}
\eta_{t} & + u_{x} + (\eta u)_{x} = 0, \notag\\
u_{t} & + \eta_{x} + uu_{x} = 0,\notag
\end{aligned}
\quad  x\in [0,1],\,\,\, t\in [0,T],
\tag{SW$_{per}$} \label{eqswper}
\\
\eta(x,0) =\eta_{0}(x), \quad u(x,0)=u_{0}(x), \quad x\in [0,1],\hspace{-9pt}&  \notag
\end{align}
where $\eta_{0}$, $u_{0}$ are given smooth 1-periodic functions. The analogous problem for the symmetric system is
\begin{align}
\begin{aligned}
\eta_{t} & + u_{x} + \tfrac{1}{2}(\eta u)_{x} = 0, \notag\\
u_{t} & + \eta_{x} + \tfrac{3}{2}uu_{x} + \tfrac{1}{2}\eta\eta_{x} = 0,\notag
\end{aligned}
\quad  x\in [0,1],\,\,\, t\in [0,T],
\tag{SSW$_{per}$} \label{eqsswper}
\\
\eta(x,0) =\eta_{0}(x), \quad u(x,0)=u_{0}(x), \quad x\in [0,1],\hspace{23pt}&  \notag
\end{align}
where again $\eta(\cdot,t)$, $u(\cdot,t)$ are 1-periodic for $0\leq t\leq T$ and $\eta_{0}$, $u_{0}$ given smooth
1-periodic functions. We shall assume that \eqref{eqswper} has a unique smooth enough solution on $[0,T]$ and that there
exists a positive $\alpha$ such that $1+\eta(x,t)\geq \alpha>0$ for $x\in [0,1]$, $t\in [0,T]$. Similarly, it will be
assumed that \eqref{eqsswper} has a unique smooth enough solution for $0\leq t\leq T$. For a theory of local existence-uniqueness
of solutions of \eqref{eqswper} we refer the reader to \cite{rtt}. \par
Let $N$ be a positive integer, $h=1/N$, and $x_{j}=jh$, $0\leq j\leq N$. For integer $r\geq 2$ let $S_{h}$ be the
$N$-dimensional space of smooth 1-periodic splines, i.e.
\[
S_{h}=\{\phi \in C_{per}^{r-2}[0,1] : \phi\big|_{[x_{j},x_{j+1}]} \in \mathbb{P}_{r-1}\,, 1\leq j\leq N-1\}.
\]
It is well known that $S_{h}$ has the approximation property that given $w\in H_{per}^{s}$, where $1\leq s\leq r$, there
exists a $\chi\in S_{h}$ such that
\begin{equation}
\sum_{j=0}^{s-1}h^{j}\|w-\chi\|_{j}\leq Ch^{s}\|w\|_{s}, \quad 1\leq s\leq r,
\label{eq61}
\end{equation}
where $C$ is a constant independent of $h$ and $w$. In addition, the inverse inequalities \eqref{eq23} and \eqref{eq24}
hold in the present framework as well. Following Thom\'{e}e and Wendroff, \cite{twe}, one may construct a basis $\{\phi\}_{j=1}^{N}$
of $S_{h}$, with supp$(\phi_{j})=O(h)$, such that for a sufficiently smooth 1-periodic function $w$, the associated
{\it{quasiinterpolant}}
\[
Q_{h}w = \sum_{j=1}^{N}w(x_{j})\phi_{j},
\]
satisfies
\begin{equation}
\|w-Q_{h}w\| \leq Ch^{r}\|w^{(r)}\|.
\label{eq62}
\end{equation}
In addition, it follows from \cite{twe} that the basis $\{\phi\}_{j=1}^{N}$ may be chosen so that the following properties hold:\\
(i)\,\, If $\psi\in S_{h}$, then
\begin{equation}
\|\psi\| \leq Ch^{-1}\max_{1\leq i\leq N}\abs{(\psi,\phi_{i})}.
\label{eq63}
\end{equation}
(ii)\,\, Let $w$ be a sufficiently smooth 1-periodic function and $\nu$, $\kp$ integers such that $0\leq \nu, \kp\leq r-1$. Then
\begin{equation}
\bigl((Q_{h}w)^{(\nu)},\phi_{i}^{(\kp)}\bigr) = (-1)^{\kp}hw^{(\nu+\kp)}(x_{i}) + O(h^{2r+j-\nu-\kp}), \quad 1\leq i\leq N,
\label{eq64}
\end{equation}
where $j=1$ if $\nu+\kp$ is even, and $j=2$ if $\nu+\kp$ is odd.\\
(iii)\,\, Let $f$, $g$ be sufficiently smooth 1-periodic functions and $\nu$ and $\kp$ as in (ii) above.
Let
\[
\beta_{i}=\bigl(f(Q_{h}g)^{(\nu)},\phi_{i}^{(\kp)}\bigr) - (-1)^{\kp}\bigl(Q_{h}[(fg^{(\nu)})^{(\kp)}],\phi_{i}\bigr),
\quad 1\leq i\leq N.
\]
Then
\begin{equation}
\max_{1\leq i\leq N}\abs{\beta_{i}} = O(h^{2r + j - \nu-\kp}),
\label{eq65}
\end{equation}
where $j$ as in (ii). \par
The semidiscretizations of the two systems are defined as follows. In the case of \eqref{eqswper} we seek $\eta_{h}$,
$u_{h} : [0,T]\to S_{h}$ satisfying
\begin{equation}
\begin{aligned}
(\eta_{ht},\phi) & + (u_{hx},\phi) + ((\eta_{h}u_{h})_{x},\phi) = 0, \quad \forall \phi \in S_{h},\,\, 0\leq t\leq T,\\
(u_{ht},\chi) & + (\eta_{hx},\chi) + (u_{h}u_{hx},\chi) = 0, \quad \forall \chi \in S_{h},\,\, 0\leq t\leq T,
\end{aligned}
\label{eq66}
\end{equation}
\[
\eta_{h}(0) = \eta_{0,h}, \quad u_{h}(0) = u_{0,h},
\]
where $\eta_{0,h}$, $u_{0,h}\in S_{h}$ are any approximations of $\eta_{0}$, $u_{0}$ in $S_{h}$ satisfying
$\|\eta_{0,h} - \eta_{0}\| + \|u_{0,h} - u_{0}\| = O(h^{r})$. The analogous semidiscrete i.v.p. for
\eqref{eqsswper} is
\begin{equation}
\begin{aligned}
(\eta_{ht},\phi) & + (u_{hx},\phi) + \tfrac{1}{2}((\eta_{h}u_{h})_{x},\phi) = 0, \quad \forall \phi \in S_{h},\,\, 0\leq t\leq T,\\
(u_{ht},\chi) & + (\eta_{hx},\chi) + \tfrac{1}{2}(\eta_{h}\eta_{hx},\chi) +
\tfrac{3}{2}(u_{h}u_{hx},\chi) = 0, \quad \forall \chi \in S_{h},\,\, 0\leq t\leq T,
\end{aligned}
\label{eq67}
\end{equation}
\[
\eta_{h}(0) = \eta_{0,h}, \quad u_{h}(0) = u_{0,h},
\]
with $\eta_{0,h}$, $u_{0,h}$ as above. It is clear that \eqref{eq66} has a unique solution locally in time and due to
the conservation property \eqref{eq210}, which holds for solutions of \eqref{eq67} as well, \eqref{eq67} has a unique solution
in any temporal interval $[0,T]$. \par
The error analysis in the case of \eqref{eqsswper} is straightforward due to the symmetry of the system. We first estimate
a truncation error for the system \eqref{eq67} defined for all $t\in [0,T]$ in terms of the quasiinterpolants of $\eta$ and $u$.
\begin{lemma} Let $(\eta,u)$ be the solution of \eqref{eqsswper} and $H=Q_{h}\eta$, $U=Q_{h}u$. Define
$\psi$ and $\zeta \in S_{h}$ so that for $0\leq t\leq T$
\begin{align}
(H_{t},\phi) & + (U_{t},\phi) + \tfrac{1}{2}((HU)_{x},\phi) = (\psi,\phi), \quad \forall \phi \in S_{h},
\label{eq68} \\
(U_{t},\chi) & + (H_{t},\chi) + \tfrac{1}{2}(HH_{x},\chi) + \tfrac{3}{2}(UU_{x},\chi) = (\zeta,\chi), \quad
\forall \chi \in S_{h}.
\label{eq69}
\end{align}
Then, there is a constant $C$ independent of $h$, such that
\begin{equation}
\|\psi(t)\| + \|\zeta(t)\| \leq Ch^{r}, \quad 0\leq t\leq T.
\label{eq610}
\end{equation}
\end{lemma}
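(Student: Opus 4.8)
The plan is to reduce the estimate, by property \eqref{eq63}, to bounding the quantities $(\psi,\phi_i)$ and $(\zeta,\phi_i)$, $1\le i\le N$: once I show $|(\psi,\phi_i)|\le Ch^{r+1}$ and $|(\zeta,\phi_i)|\le Ch^{r+1}$ uniformly in $i$, \eqref{eq63} yields $\|\psi\|+\|\zeta\|\le Ch^{-1}\cdot O(h^{r+1})=O(h^{r})$, which is \eqref{eq610}. Throughout I write $\rho=\eta-Q_h\eta$, $\sigma=u-Q_hu$ for the quasiinterpolation errors, noting that $Q_h$ commutes with $\partial_t$, so that $H_t=Q_h\eta_t$, $U_t=Q_hu_t$, and $\rho_t$, $\sigma_t$ are the quasiinterpolation errors of $\eta_t$, $u_t$. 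Besides \eqref{eq64} and \eqref{eq65} I will use two elementary facts: the support bound $|(v,\phi_i)|\le\|v\|_{\infty}\|\phi_i\|_{L^1}\le Ch\|v\|_{\infty}$ (since $\operatorname{supp}\phi_i$ has length $O(h)$ and $\|\phi_i\|_{\infty}=O(1)$); and the consequence of \eqref{eq64} together with the local polynomial reproduction of $Q_h$ and Taylor's theorem that
\[
(w,\phi_i)=hw(x_i)+O(h^{r+1})\qquad\text{for smooth }1\text{-periodic }w,
\]
since $Q_h$ reproduces the degree-$(r-1)$ Taylor polynomial of $w$ at $x_i$, to which \eqref{eq64} (with $\nu=\kappa=0$) applies, while the $O(h^{r})$ remainder on $\operatorname{supp}\phi_i$ contributes $O(h^{r+1})$ by the support bound.

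The first step is to take $\phi=\phi_i$ in \eqref{eq68}, subtract the vanishing pairing of the first p.d.e.\ of \eqref{eqsswper} against $\phi_i$, and integrate by parts (no boundary terms, by periodicity); using $HU-\eta u=-(\eta\sigma+u\rho-\rho\sigma)$ this gives
\[
(\psi,\phi_i)=-(\rho_t,\phi_i)+(\sigma,\phi_i')+\tfrac12(\eta\sigma,\phi_i')+\tfrac12(u\rho,\phi_i')-\tfrac12(\rho\sigma,\phi_i').
\]
The term $(\rho_t,\phi_i)=(\eta_t,\phi_i)-(Q_h\eta_t,\phi_i)$ is $O(h^{r+1})$ because both pieces equal $h\eta_t(x_i)+O(h^{r+1})$ (the first by the displayed identity above with $w=\eta_t$, the second by \eqref{eq64}); and $(\rho\sigma,\phi_i')=-((\rho\sigma)',\phi_i)$ is $O(h^{2r})$ since $\|(\rho\sigma)'\|_{\infty}\le C(\|\rho\|_{\infty}\|\sigma'\|_{\infty}+\|\rho'\|_{\infty}\|\sigma\|_{\infty})\le Ch^{2r-1}$ by the standard $L^{\infty}$ bounds for $Q_h$. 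The three remaining terms are the heart of the matter: each pairs a quasiinterpolation error with $\phi_i'$, for which the crude estimate gives only $O(h^{r})$ --- this is exactly the point at which the argument on a general quasiuniform mesh (Lemma 5.1) halts, at $O(h^{r-1})$ --- and the extra power must be extracted from the \emph{superconvergence} of the quasiinterpolant. For $(\sigma,\phi_i')=(u,\phi_i')-(Q_hu,\phi_i')$ I would use $(u,\phi_i')=-(u',\phi_i)=-hu'(x_i)+O(h^{r+1})$ (the displayed identity with $w=u'$) and $(Q_hu,\phi_i')=-hu'(x_i)+O(h^{2r+1})$ from \eqref{eq64} with $\nu=0$, $\kappa=1$, so that the leading pointwise contributions cancel and $(\sigma,\phi_i')=O(h^{r+1})$. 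For $(\eta\sigma,\phi_i')=(\eta u,\phi_i')-(\eta Q_hu,\phi_i')$ I would combine $(\eta u,\phi_i')=-h(\eta u)'(x_i)+O(h^{r+1})$ with \eqref{eq65} (taking $f=\eta$, $g=u$, $\nu=0$, $\kappa=1$) followed by \eqref{eq64}, which give $(\eta Q_hu,\phi_i')=-h(\eta u)'(x_i)+O(h^{2r+1})$; again the leading terms cancel, so $(\eta\sigma,\phi_i')=O(h^{r+1})$. The term $(u\rho,\phi_i')$ is treated identically, with the roles of $\eta$ and $u$ interchanged. Collecting the estimates, $(\psi,\phi_i)=O(h^{r+1})$ uniformly in $i$.

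The estimate for $\zeta$ proceeds in exactly the same way. Taking $\chi=\phi_i$ in \eqref{eq69}, subtracting the pairing of the second p.d.e.\ of \eqref{eqsswper} against $\phi_i$, using $H^2-\eta^2=-\rho(2\eta-\rho)$ and $U^2-u^2=-\sigma(2u-\sigma)$, and integrating by parts, one writes $(\zeta,\phi_i)$ as a fixed linear combination of $(\sigma_t,\phi_i)$, $(\rho,\phi_i')$, $(\rho\eta,\phi_i')$, $(\sigma u,\phi_i')$, $(\rho^2,\phi_i')$ and $(\sigma^2,\phi_i')$. The first is $O(h^{r+1})$ as $(\rho_t,\phi_i)$ was above; $(\rho^2,\phi_i')$ and $(\sigma^2,\phi_i')$ are $O(h^{2r})$ after one integration by parts; and $(\rho,\phi_i')$, $(\rho\eta,\phi_i')$, $(\sigma u,\phi_i')$ are handled precisely as $(\sigma,\phi_i')$ and $(\eta\sigma,\phi_i')$ were, using \eqref{eq64} and \eqref{eq65}. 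Hence $(\zeta,\phi_i)=O(h^{r+1})$, and \eqref{eq63} gives $\|\zeta\|\le Ch^{r}$.

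I expect the only genuine difficulty to lie in the bookkeeping of those "error $\times\,\phi_i'$" terms: in each one must invoke \eqref{eq64} and \eqref{eq65} with exactly the right pair of indices $\nu,\kappa$ and verify that the $O(h)$ pointwise terms at the node $x_i$ coming from the quasiinterpolant piece and from the exact piece cancel, so that what is left is truly $O(h^{r+1})$ --- the single power of $h$ that, after division by $h$ in \eqref{eq63}, makes the difference between the optimal $O(h^{r})$ and the suboptimal $O(h^{r-1})$.
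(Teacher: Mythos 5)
Your proposal is correct and follows essentially the same route as the paper: both reduce the bound to $\max_i\abs{(\psi,\phi_i)}$, $\max_i\abs{(\zeta,\phi_i)}$ via \eqref{eq63} and extract the extra power of $h$ from the quadrature properties \eqref{eq64}--\eqref{eq65} of the quasiinterpolant, the paper merely organizing the computation by first converting $(H_t,\phi_i)+(U_x,\phi_i)$ to nodal values and invoking the p.d.e.\ pointwise at the nodes, whereas you subtract the weak form of the p.d.e.\ and estimate the resulting quasiinterpolation-error pairings term by term. The only ingredient you use beyond the properties as stated is the identity $(w,\phi_i)=hw(x_i)+O(h^{r+1})$ for the exact function $w$, which does follow from \eqref{eq64} combined with a localized ($L^\infty$ or local $L^2$) version of the quasiinterpolation error bound \eqref{eq62}.
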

\begin{proof} Applying \eqref{eq64} and \eqref{eq68} and using the first p.d.e. of \eqref{eqsswper} yields for $1\leq i\leq N$,
$t\in [0,T]$
\begin{align*}
(\psi,\phi_{i}) & = h(\eta_{t}+u_{x})(x_{i},t) + \tfrac{1}{2}((HU)_{x},\phi_{i}) + O(h^{2r+1})\\
& = -\tfrac{h}{2}(\eta u)_{x}(x_{i},t) + \tfrac{1}{2}((HU)_{x},\phi_{i}) + O(h^{2r+1})\\
& = \tfrac{1}{2}\bigl([(HU) - Q_{h}(\eta u)]_{x},\phi_{i}\bigr) + O(h^{2r+1}).
\end{align*}
Since
\[
HU - Q_{h}(\eta u) = \eta u - \ve u - e\eta + \ve e - Q_{h}(\eta u),
\]
where $\ve:=\eta -  H$, $e:=u-U$, we have, using \eqref{eq65}, for $1\leq i\leq N$
\begin{align*}
(\psi,\phi_{i}) = & \tfrac{1}{2}\bigl((\ve e)_{x},\phi_{i}\bigr) + \tfrac{1}{2}(\ve u,\phi_{i}')
 + \tfrac{1}{2}(e\eta,\phi_{i}') + \tfrac{1}{2}\bigl((\eta u)_{x}-Q_{h}[(\eta u)_{x}],\phi_{i}\bigr)\\
& + \tfrac{1}{2}\bigl(Q_{h}[(\eta u)_{x}] - [Q_{h}(\eta u)]_{x},\phi_{i}\bigr) + O(h^{2r+1}) \\
= & \tfrac{1}{2}\bigl((\ve e)_{x},\phi_{i}\bigr)
- \tfrac{1}{2}\bigl((\eta u)_{x} - Q_{h}[(\eta u)_{x}],\phi_{i}\bigr) + O(h^{2r+1}).
\end{align*}
Therefore, by \eqref{eq63} we obtain, using \eqref{eq61} and \eqref{eq62}
\[
\|\psi\| \leq C \|\ve\|_{1} \|e\|_{1} + O(h^{r}) \leq Ch^{r}.
\]
The analogous estimate for $\|\zeta\|$ follows along similar lines.
\end{proof}
We now proceed to prove an optimal-order $L^{2}$-error estimate for the solution of \eqref{eq67}.
\begin{proposition} Let $(\eta,u)$, $(\eta_{h},u_{h})$ be the solutions of \eqref{eqsswper}, \eqref{eq67}, respectively.
Then
\begin{equation}
\max_{0\leq t\leq T}(\|\eta-\eta_{h}\| + \|u-u_{h}\|) \leq Ch^{r}.
\label{eq611}
\end{equation}
\end{proposition}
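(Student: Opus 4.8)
The plan is to mimic the comparison‑with‑projection argument of Proposition 2.1, but with the $L^{2}$ projection replaced by the quasiinterpolant and with Lemma 6.1 used to dispose of the consistency terms. First, since the conservation identity \eqref{eq210} holds for solutions of \eqref{eq67} as well, the semidiscrete problem has a unique solution on all of $[0,T]$, so the errors below are defined there. Put $H=Q_{h}\eta$, $U=Q_{h}u$, $\ve=\eta-H$, $e=u-U$, and $\theta=H-\eta_{h}\in S_{h}$, $\xi=U-u_{h}\in S_{h}$; by \eqref{eq62} one has $\|\ve(t)\|+\|e(t)\|\le Ch^{r}$ on $[0,T]$, and from the approximation and inverse properties of $S_{h}$ together with the smoothness of $(\eta,u)$ one has $\|H_{x}\|_{\infty}+\|U_{x}\|_{\infty}\le C$ uniformly in $t$. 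Subtracting \eqref{eq67} from \eqref{eq68}--\eqref{eq69} and using the algebraic identities $HU-\eta_{h}u_{h}=H\xi+U\theta-\theta\xi$, $HH_{x}-\eta_{h}\eta_{hx}=(H\theta)_{x}-\theta\theta_{x}$, $UU_{x}-u_{h}u_{hx}=(U\xi)_{x}-\xi\xi_{x}$, one obtains, for all $\phi,\chi\in S_{h}$ and $t\in[0,T]$, error equations of the form
\[
(\theta_{t},\phi)+(\xi_{x},\phi)+\tfrac12\bigl((H\xi+U\theta-\theta\xi)_{x},\phi\bigr)=(\psi,\phi),
\]
\[
(\xi_{t},\chi)+(\theta_{x},\chi)+\tfrac12\bigl((H\theta)_{x}-\theta\theta_{x},\chi\bigr)+\tfrac32\bigl((U\xi)_{x}-\xi\xi_{x},\chi\bigr)=(\zeta,\chi),
\]
where $\psi,\zeta$ are the functions of Lemma 6.1.

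Next I would set $\phi=\theta$, $\chi=\xi$ and add the two equations. By periodicity $(\xi_{x},\theta)+(\theta_{x},\xi)=0$. Integrating by parts, the cross terms combine into $\tfrac12\bigl((H\xi)_{x},\theta\bigr)+\tfrac12\bigl((H\theta)_{x},\xi\bigr)=\tfrac12(H_{x}\theta,\xi)$, while $\tfrac12\bigl((U\theta)_{x},\theta\bigr)=\tfrac14(U_{x}\theta,\theta)$ and $\tfrac32\bigl((U\xi)_{x},\xi\bigr)=\tfrac34(U_{x}\xi,\xi)$; crucially, the genuinely cubic contributions $-\tfrac12\bigl((\theta\xi)_{x},\theta\bigr)-\tfrac12(\theta\theta_{x},\xi)-\tfrac32(\xi\xi_{x},\xi)$ all drop out (the first two cancel each other after integration by parts, the last vanishes by periodicity). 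This is exactly where the symmetry of \eqref{eqsswper} is used. Since $\|H_{x}\|_{\infty}+\|U_{x}\|_{\infty}\le C$, the surviving quadratic terms are bounded by $C(\|\theta\|^{2}+\|\xi\|^{2})$, so that
\[
\tfrac12\tfrac{d}{dt}\bigl(\|\theta\|^{2}+\|\xi\|^{2}\bigr)\le C\bigl(\|\theta\|^{2}+\|\xi\|^{2}\bigr)+(\psi,\theta)+(\zeta,\xi),
\]
and by \eqref{eq610} and Young's inequality $(\psi,\theta)+(\zeta,\xi)\le Ch^{r}(\|\theta\|+\|\xi\|)\le C(\|\theta\|^{2}+\|\xi\|^{2})+Ch^{2r}$.

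Finally, $\theta(0)=Q_{h}\eta_{0}-\eta_{0,h}$ and $\xi(0)=Q_{h}u_{0}-u_{0,h}$ satisfy $\|\theta(0)\|+\|\xi(0)\|\le\|Q_{h}\eta_{0}-\eta_{0}\|+\|\eta_{0}-\eta_{0,h}\|+\|Q_{h}u_{0}-u_{0}\|+\|u_{0}-u_{0,h}\|=O(h^{r})$ by \eqref{eq62} and the hypothesis on $\eta_{0,h},u_{0,h}$. Gronwall's lemma then gives $\|\theta(t)\|+\|\xi(t)\|\le Ch^{r}$ on $[0,T]$, and \eqref{eq611} follows by the triangle inequality together with $\|\ve\|+\|e\|=O(h^{r})$. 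I expect the only real work to be the bookkeeping of the nonlinear terms in the middle step --- checking the cubic cancellation and that every surviving term carries at most one derivative on a smooth factor --- but there is no delicate point of the kind met in the initial‑boundary‑value problem: no boundary terms arise, no bootstrap on $\|\xi_{x}\|_{\infty}$ is needed, and the superaccuracy of the quasiinterpolant has already been absorbed into Lemma 6.1.
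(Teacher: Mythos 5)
Your argument is correct and coincides with the paper's own proof: the same quasiinterpolant decomposition, the same error equations, the same choice $\phi=\theta$, $\chi=\xi$ with the cubic terms cancelling by periodicity and symmetry, leading to the identity $\tfrac{1}{2}\tfrac{d}{dt}(\|\theta\|^{2}+\|\xi\|^{2})+\tfrac{1}{2}(H_{x}\theta,\xi)+\tfrac{1}{4}(U_{x}\theta,\theta)+\tfrac{3}{4}(U_{x}\xi,\xi)=(\psi,\theta)+(\zeta,\xi)$, followed by Lemma 6.1, the bound $\|H_{x}\|_{\infty}+\|U_{x}\|_{\infty}\leq C$, and Gronwall. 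No discrepancies to report.
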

\begin{proof}
Let $\theta:=H-\eta_{h}=Q_{h}\eta-\eta_{h}$ and $\xi:=U-u_{h}=Q_{h}u-u_{h}$. Then, from \eqref{eq67} and \eqref{eq68},
\eqref{eq69} we have for $t\in [0,T]$
\begin{align}
(\theta_{t},\phi) & + (\xi_{x},\phi) + \tfrac{1}{2}\bigl((H\xi + U\theta - \theta\xi)_{x},\phi\bigr) = (\psi,\phi),
\quad \forall \phi \in S_{h},
\label{eq612} \\
(\xi_{t},\chi) & + (\theta_{x},\chi) + \tfrac{1}{2}\bigl((H\theta)_{x} - \theta\theta_{x},\chi\bigr)
+ \tfrac{3}{2}\bigl((U\xi)_{x}-\xi\xi_{x},\chi\bigr) = (\zeta,\chi), \quad \forall \chi \in S_{h},
\label{eq613}
\end{align}
Taking $\phi=\theta$ in \eqref{eq612}, $\chi=\xi$ in \eqref{eq613}, adding the resulting equations, and using periodicity
we obtain for $0\leq t\leq T$
\begin{equation}
\tfrac{1}{2}\tfrac{d}{dt}(\|\theta\|^{2} + \|\xi\|^{2}) + \tfrac{1}{2}(H_{x}\theta,\xi) + 
\tfrac{1}{4}(U_{x}\theta,\theta) + \tfrac{3}{4}(U_{x}\xi,\xi)
\\
 = (\psi,\theta) + (\zeta,\chi).
\label{eq614}
\end{equation}
From \eqref{eq62} and the inverse inequalities we have for $0\leq t\leq T$ $\|H_{x}\|_{\infty}\leq C$,
$\|U_{x}\|_{\infty}\leq C$, where $C$ is independent of $h$. Therefore it follows from \eqref{eq610} and \eqref{eq614}
that for $0\leq t\leq T$
\[
\tfrac{1}{2}\tfrac{d}{dt}(\|\theta\|^{2} + \|\xi\|^{2}) \leq C(\|\theta\|^{2} + \|\xi\|^{2} + h^{2r}).
\]
An application of Gronwall's lemma, \eqref{eq62}, and our choice of $\eta_{0,h}$ and $u_{0,h}$ yield now the
desired estimate \eqref{eq611}.
\end{proof}
We now estimate the errors of the semidiscrerization of \eqref{eqswper}. As before we may prove
\begin{lemma} Let $(\eta,u)$ be the solution of \eqref{eqswper} and $H=Q_{h}\eta$, $U=Q_{h}u$. Define $\psi$, $\zeta \in S_{h}$
so that for $t\in [0,T]$
\begin{align}
(H_{t},\phi) & + (U_{x},\phi) + ((HU)_{x},\phi) = (\psi,\phi), \quad \forall \phi \in S_{h},
\label{eq615}\\
(U_{t},\chi) & + (H_{x},\chi) + (UU_{x},\chi) = (\zeta,\chi),\quad \forall \chi \in S_{h}.
\label{eq616}
\end{align}
Then, for some constant $C$ independent of $h$, we have
\begin{equation}
\|\psi(t)\| + \|\zeta(t)\| \leq Ch^{r}, \quad 0\leq t\leq T.
\label{eq617}
\end{equation}
\end{lemma}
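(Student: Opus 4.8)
The plan is to follow the proof of Lemma 6.1 essentially verbatim, the only difference being that the nonlinear term in the first equation of \eqref{eqswper} carries the coefficient $1$ rather than $\tfrac12$, and that the second equation of \eqref{eqswper} contains the single nonlinear term $uu_x=\tfrac12(u^2)_x$. First I would set $\varepsilon:=\eta-H=\eta-Q_h\eta$ and $e:=u-U=u-Q_hu$; by \eqref{eq62}, \eqref{eq61} together with the inverse inequalities \eqref{eq23}, \eqref{eq24}, the quantities $\|\varepsilon\|_j$, $\|e\|_j$ are $O(h^{r-j})$ for $j=0,1$, and $\|\varepsilon\|_\infty$, $\|e\|_\infty$ are $O(h^r)$. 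Next I would test \eqref{eq615} against a basis element $\phi_i$ and apply the superapproximation identity \eqref{eq64}: with $\nu=\kappa=0$ for $(H_t,\phi_i)=\bigl((Q_h\eta_t),\phi_i\bigr)$ and with $\nu=1$, $\kappa=0$ for $(U_x,\phi_i)=\bigl((Q_hu)_x,\phi_i\bigr)$, this gives $(H_t,\phi_i)+(U_x,\phi_i)=h(\eta_t+u_x)(x_i,t)+O(h^{2r+1})$, $1\le i\le N$.

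Using the first p.d.e.\ of \eqref{eqswper} to replace $h(\eta_t+u_x)(x_i,t)$ by $-h(\eta u)_x(x_i,t)$, and then \eqref{eq64} once more (now $\nu=1$, $\kappa=0$, whence $j=2$) to identify $h(\eta u)_x(x_i,t)$ with $\bigl([Q_h(\eta u)]_x,\phi_i\bigr)+O(h^{2r+1})$, I obtain $(\psi,\phi_i)=\bigl([HU-Q_h(\eta u)]_x,\phi_i\bigr)+O(h^{2r+1})$. I would then expand $HU-Q_h(\eta u)=\varepsilon e-\varepsilon u-\eta e+\bigl(\eta u-Q_h(\eta u)\bigr)$, differentiate, and, after integrating by parts the two bilinear contributions against $\phi_i$, use the commutator estimate \eqref{eq65} (with $\nu=0$, $\kappa=1$) to show that each of $(\varepsilon u,\phi_i')$ and $(\eta e,\phi_i')$ equals $-\bigl((\eta u)_x-Q_h[(\eta u)_x],\phi_i\bigr)+O(h^{2r+1})$; \eqref{eq65} also lets me commute $Q_h$ past $\partial_x$ in the term arising from $\eta u-Q_h(\eta u)$ up to $O(h^{2r+1})$. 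Collecting terms gives $(\psi,\phi_i)=\bigl((\varepsilon e)_x,\phi_i\bigr)-\bigl((\eta u)_x-Q_h[(\eta u)_x],\phi_i\bigr)+O(h^{2r+1})$, which is exactly the expression appearing in the proof of Lemma 6.1 with the factors $\tfrac12$ removed. Writing $\psi=\psi_1+\psi_2+\psi_3$ according to the three groups on the right, an $L^2$ duality argument with \eqref{eq23} gives $\|\psi_1\|\le C\|\varepsilon\|_1\|e\|_1\le Ch^{2r-2}$; $\psi_2$ is a best $L^2$ approximation from $S_h$ of a function whose $L^2$-distance to $S_h$ is $O(h^r)$ by \eqref{eq62}, so $\|\psi_2\|\le Ch^r$; and $\psi_3$, whose action on each $\phi_i$ is $O(h^{2r+1})$, obeys $\|\psi_3\|\le Ch^{-1}\max_i|(\psi_3,\phi_i)|\le Ch^{2r}$ by \eqref{eq63}. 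Since $r\ge2$ each term is $O(h^r)$, hence $\|\psi\|\le Ch^r$.

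The bound on $\|\zeta\|$ is obtained the same way from \eqref{eq616}: \eqref{eq64} reduces $(U_t,\phi_i)+(H_x,\phi_i)$ to $h(u_t+\eta_x)(x_i,t)$; the second p.d.e.\ of \eqref{eqswper} turns this into $-\tfrac{h}{2}(u^2)_x(x_i,t)$; \eqref{eq64} and \eqref{eq65} then yield $(\zeta,\phi_i)=\tfrac12\bigl((e^2)_x,\phi_i\bigr)-\tfrac12\bigl((u^2)_x-Q_h[(u^2)_x],\phi_i\bigr)+O(h^{2r+1})$, and \eqref{eq63}, \eqref{eq62} give $\|\zeta\|\le C\|e\|_1^2+O(h^r)\le Ch^r$, using the periodicity of $\eta_x$ so that \eqref{eq64} applies to $(Q_h\eta)_x$. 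There is no genuine conceptual obstacle here; the only delicate point is the careful bookkeeping in the repeated applications of the quasiinterpolant superapproximation and commutator identities \eqref{eq64}, \eqref{eq65}, and this has already been carried out for the symmetric system in Lemma 6.1 and transfers directly, term by term.
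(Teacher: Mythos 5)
Your proposal is correct and follows exactly the route the paper intends: the statement is given without a written proof precisely because it is Lemma 6.1 transcribed to \eqref{eqswper}, and your term-by-term adaptation (coefficient $1$ on $(\eta u)_x$, $uu_x=\tfrac12(u^2)_x$, the same use of \eqref{eq64}, \eqref{eq65}, \eqref{eq63}, and the bounds $\|\varepsilon\|_1\|e\|_1\le Ch^{2r-2}\le Ch^r$) reproduces it faithfully. The only blemish is the unused claim that $\|\varepsilon\|_\infty,\|e\|_\infty=O(h^{r})$, which the quoted properties only give as $O(h^{r-1/2})$ via \eqref{eq24}; nothing in your argument depends on it.
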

The proof of the main error estimate for \eqref{eqswper} is not as straightforward as that of the symmetric system
but goes through if we use ideas from the proof of Proposition 2.2.
\begin{proposition} Let $(\eta,u)$ be the solution of \eqref{eqswper}. Then, for $h$ sufficiently small, \eqref{eq66}
has a unique solution $(\eta_{h},u_{h})$ for $0\leq t\leq T$, satisfying
\begin{equation}
\max_{0\leq t\leq T}(\|\eta - \eta_{h}\| + \|u - u_{h}\|) \leq Ch^{r}.
\label{eq618}
\end{equation}
\end{proposition}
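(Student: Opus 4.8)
The strategy is to follow the proof of Proposition 2.2, with the $L^{2}$-projections $P$, $P_{0}$ replaced by the quasiinterpolant $Q_{h}$: its superconvergence, already packaged in the consistency estimate \eqref{eq617}, is what upgrades the $O(h^{r-1})$ bound of Proposition 2.2 to the optimal $O(h^{r})$ here. Put $H:=Q_{h}\eta$, $U:=Q_{h}u$, $\rho:=\eta-H$, $\sigma:=u-U$, and $\theta:=H-\eta_{h}$, $\xi:=U-u_{h}$; note $\theta,\xi\in S_{h}$. Subtracting \eqref{eq66} from \eqref{eq615}-\eqref{eq616} and using the algebraic identities $HU-\eta_{h}u_{h}=H\xi+U\theta-\theta\xi$ and $UU_{x}-u_{h}u_{hx}=(U\xi)_{x}-\xi\xi_{x}$, one gets, for all $\phi,\chi\in S_{h}$ and $0\le t\le T$,
\begin{align*}
(\theta_{t},\phi)+(\xi_{x},\phi)+\bigl((H\xi+U\theta-\theta\xi)_{x},\phi\bigr)&=(\psi,\phi),\\
(\xi_{t},\chi)+(\theta_{x},\chi)+\bigl((U\xi)_{x}-\xi\xi_{x},\chi\bigr)&=(\zeta,\chi),
\end{align*}
where $\|\psi\|+\|\zeta\|\le Ch^{r}$ by \eqref{eq617}.

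First I would take $\phi=\theta$. Using periodicity to integrate by parts, $(\xi_{x},\theta)+((H\xi)_{x},\theta)=\bigl(((1+\eta)\xi)_{x},\theta\bigr)-((\rho\xi)_{x},\theta)=-(\gamma,\theta_{x})-((\rho\xi)_{x},\theta)$ with $\gamma:=(1+\eta)\xi$, while $((U\theta)_{x},\theta)=\tfrac12(U_{x}\theta,\theta)$ and $((\theta\xi)_{x},\theta)=\tfrac12(\xi_{x}\theta,\theta)$. Let $t_{h}\in(0,T]$ be maximal with $\|\xi_{x}\|_{\infty}\le1$ on $[0,t_{h}]$, and suppose $t_{h}<T$. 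Then $\tfrac12(\xi_{x}\theta,\theta)\le\tfrac12\|\theta\|^{2}$, and bounding $|((\rho\xi)_{x},\theta)|=|(\rho\xi,\theta_{x})|$ via $\|\rho\|_{\infty}\le Ch^{r}$ and the inverse inequalities \eqref{eq23}, \eqref{eq24}, one obtains
\[
\tfrac12\tfrac{d}{dt}\|\theta\|^{2}-(\gamma,\theta_{x})\le C\bigl(h^{r}\|\theta\|+\|\theta\|^{2}+\|\xi\|^{2}\bigr),\qquad 0\le t\le t_{h}.
\]

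Next I would take $\chi=P\gamma=P[(1+\eta)\xi]\in S_{h}$ in the second equation, $P$ being the $L^{2}$-projection onto the periodic spline space $S_{h}$. Since $\xi_{t}\in S_{h}$ one has $(\xi_{t},P\gamma)=(\xi_{t},\gamma)=\tfrac12\tfrac{d}{dt}((1+\eta)\xi,\xi)-\tfrac12(\eta_{t}\xi,\xi)$, and the superapproximation estimate $\|P\gamma-\gamma\|\le Ch\|\xi\|$ (the periodic analogue of \eqref{eq223}) lets one bound $((U\xi)_{x},P\gamma)$, $(\xi\xi_{x},P\gamma)$ and $(\zeta,P\gamma)$ exactly as in Proposition 2.2, giving
\[
\tfrac12\tfrac{d}{dt}((1+\eta)\xi,\xi)+(\theta_{x},P\gamma)\le C\bigl(h^{r}\|\xi\|+\|\xi\|^{2}\bigr),\qquad 0\le t\le t_{h}.
\]
Adding the two displays, the terms $-(\gamma,\theta_{x})$ and $(\theta_{x},P\gamma)$ combine into $(\theta_{x},P\gamma-\gamma)$, which I would estimate by pairing the inverse inequality $\|\theta_{x}\|\le Ch^{-1}\|\theta\|$ with the superapproximation bound, so that $|(\theta_{x},P\gamma-\gamma)|\le C\|\theta\|\|\xi\|$; absorbing this and $\tfrac12(\eta_{t}\xi,\xi)$ into the right-hand side yields
\[
\tfrac12\tfrac{d}{dt}\bigl[\|\theta\|^{2}+((1+\eta)\xi,\xi)\bigr]\le C\bigl[h^{r}(\|\theta\|+\|\xi\|)+\|\theta\|^{2}+\|\xi\|^{2}\bigr]
\]
on $[0,t_{h}]$. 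Since $1+\eta\ge\alpha>0$, the functional $\|\theta\|^{2}+((1+\eta)\xi,\xi)$ is uniformly equivalent to $\|\theta\|^{2}+\|\xi\|^{2}$; Gronwall's lemma, \eqref{eq62}, and the hypothesis $\|\eta_{0,h}-\eta_{0}\|+\|u_{0,h}-u_{0}\|=O(h^{r})$ (so that $\|\theta(0)\|+\|\xi(0)\|\le Ch^{r}$) give $\|\theta\|+\|\xi\|\le Ch^{r}$ on $[0,t_{h}]$.

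Finally, \eqref{eq24} yields $\|\xi_{x}\|_{\infty}\le Ch^{-3/2}\|\xi\|\le Ch^{r-3/2}$, which for $r\ge2$ and $h$ small is strictly less than $1$, contradicting the maximality of $t_{h}$. Hence $t_{h}=T$, the discrete solution exists on all of $[0,T]$, and \eqref{eq618} follows from $\eta-\eta_{h}=\rho+\theta$, $u-u_{h}=\sigma+\xi$ together with \eqref{eq62}. I expect the one genuinely delicate point to be the treatment of the residual term $(\theta_{x},P\gamma-\gamma)$: it must be handled by the cancellation of the factors $h^{-1}$ (inverse inequality) and $h$ (superapproximation), and everything else is a routine, if lengthy, transcription of the argument of Proposition 2.2 to the periodic quasiinterpolant setting.
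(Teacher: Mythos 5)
Your proposal is correct and follows essentially the same route as the paper's proof: the same error decomposition via the quasiinterpolant, the test functions $\theta$ and $P\gamma=P[(1+\eta)\xi]$, the superapproximation property to control $(\theta_{x},P\gamma-\gamma)$, and the continuation argument via the maximal $t_{h}$ with $\|\xi_{x}\|_{\infty}\leq Ch^{r-3/2}$ closing the bootstrap for $r\geq 2$. The only cosmetic difference is that the paper splits $((H\xi)_{x},\theta)$ as $((\eta\xi)_{x},\theta)+((H-\eta)_{x}\xi,\theta)+((H-\eta)\xi_{x},\theta)$ rather than isolating $-((\rho\xi)_{x},\theta)$, which is the same estimate in different packaging.
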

\begin{proof}
We let again $\theta:=H-\eta_{h}=Q_{h}\eta - \eta_{h}$ and $\xi:=U-u_{h}=Q_{h}u - u_{h}$. Then, from \eqref{eq66}
and \eqref{eq615}-\eqref{eq616}, we have, while the solution of \eqref{eq66} exists,
\begin{align}
(\theta_{t},\phi) & + (\xi_{x},\phi) + ((H\xi + U\theta - \theta\xi)_{x},\phi) = (\psi,\phi),
\quad \forall \phi \in S_{h},
\label{eq619}\\
(\xi_{t},\chi) & + (\theta_{x},\chi) + ((U\xi)_{x} - \xi\xi_{x},\chi) = (\zeta,\chi), \quad \forall \chi \in S_{h}.
\label{eq620}
\end{align}
Putting $\phi=\theta$ in \eqref{eq619} and using periodicity we have
\begin{equation}
\tfrac{1}{2}\tfrac{d}{dt}\|\theta\|^{2} + (\xi_{x},\theta) + ((H\xi)_{x},\theta)
+ \tfrac{1}{2}(U_{x}\theta,\theta) - \tfrac{1}{2}(\xi_{x}\theta,\theta) = (\psi,\theta).
\label{eq621}
\end{equation}
Now, using the inverse inequalities and \eqref{eq62} we see that
\begin{equation}
((H\xi)_{x},\theta) = ((\eta\xi)_{x},\theta) + ((H-\eta)_{x}\xi,\theta) +
((H-\eta)\xi_{x},\theta) \leq ((\eta\xi)_{x},\theta) + C\|\xi\|\|\theta\|.
\label{eq622}
\end{equation}
Let $t_{h}>0$ denote a maximal value of $t$ such that $(\eta_{h},u_{h})$ exists and $\|\xi_{x}\|_{\infty}\leq 1$
for $0\leq t\leq t_{h}$, and suppose that $t_{h}<T$. From \eqref{eq621}, \eqref{eq622} and \eqref{eq617} we conclude
then that
\begin{equation}
\tfrac{1}{2}\tfrac{d}{dt}\|\theta\|^{2} - (\theta_{x},\gamma) \leq C(h^{r}\|\theta\| + \|\xi\|\|\theta\|), \quad
0\leq t\leq t_{h},
\label{eq623}
\end{equation}
where $\gamma:=(1+\eta)\xi$.\\
We put now in \eqref{eq620} $\chi=P\gamma=P[(1+\eta)\xi]$, where $P$ is the $L^{2}$-projection on $S_{h}$, and obtain
for $0\leq t\leq t_{h}$
\begin{equation}
(\xi_{t},(1+\eta)\xi) + (\theta_{x},P\gamma) = - ((U\xi)_{x}-\xi\xi_{x},P\gamma) + (\zeta,P\gamma).
\label{eq624}
\end{equation}
Now, using periodicity, we have
\begin{align*}
((U\xi)_{x},P\gamma) = & (U\xi_{x},P\gamma - \gamma) + (U_{x}\xi,P\gamma-\gamma) + (U_{x}\xi,(1+\eta)\xi)\\
& - \bigl((U(1+\eta))_{x},\xi^{2}\bigr).
\end{align*}
Using again the superapproximation property \eqref{eq223} which holds in the space of periodic splines as well,
the fact that $\|U\|_{1,\infty}\leq C$, and inverse properties we obtain from the above
\begin{equation}
((U\xi)_{x},P\gamma) \leq C\|\xi\|^{2}.
\label{eq625}
\end{equation}
Using, in addition, the fact that $\|\xi_{x}\|_{\infty}\leq 1$ in $[0,t_{h}]$, we also have
\begin{equation}
(\xi\xi_{x},P\gamma) = (\xi\xi_{x},P\gamma-\gamma) + (\xi\xi_{x},\gamma) \leq C\|\xi\|^{2}.
\label{eq626}
\end{equation}
Therefore, by \eqref{eq616}, \eqref{eq625}, \eqref{eq626}, and \eqref{eq624} we have
\begin{equation}
(\xi_{t},(1+\eta)\xi) + (\theta_{x},P\gamma) \leq C(h^{r}\|\xi\| + \|\xi\|^{2}), \quad 0\leq t\leq t_{h}.
\label{eq627}
\end{equation}
Adding \eqref{eq623} and \eqref{eq627} we see that
\begin{align*}
\tfrac{1}{2}\tfrac{d}{dt}\|\theta\|^{2} + (\xi_{t},(1+\eta)\xi) + (\theta_{x},P\gamma -\gamma)\leq
C\bigl(h^{r}(\|\xi\| + \|\theta\|) + \|\xi\|^{2} + \|\theta\|^{2}\bigr).
\end{align*}
As in the proof of Proposition 2.2 we have, mutatis mutandis, that
\[
\|\theta(t)\| + \|\xi(t)\| \leq Ch^{r}, \quad 0\leq t\leq t_{h},
\]
for a constant $C$ independent of $h$. It follows that $\|\xi_{x}\|_{\infty}\leq Ch^{r-3/2}$, i.e. that $t_{h}$ is not maximal
if $h$ is chosen sufficiently small. The result of the proposition now follows in the standard manner.
\end{proof}
\subsection{Existence-uniqueness of solutions of \eqref{eqssw}} As was already mentioned in the Introduction, Petcu and
Temam, \cite{pt}, proved existence and uniqueness of solutions of \eqref{eqsw}. In our notation their result is
\begin{theorem} Let $u_{0}$, $\eta_{0}\in H^{2}$ and $\alpha$ be a constant such that $1+\eta_{0}(x)\geq 2\alpha>0$,
$x\in [0,1]$. Then there exists a $T_{*}>0$ depending on $\|u_{0}\|_{2}$, $\|\eta_{0}\|_{2}$, and a unique solution
$(u,\eta)$ of \eqref{eqsw} for $0\leq t\leq T_{*}$ such that $(u,\eta)\in L^{\infty}(0,T_{*};H^{2})$. Moreover
$1+\eta_{0}(x)\geq \alpha$ $\forall t\in (0,T_{*})$.
\end{theorem}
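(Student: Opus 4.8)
The plan is to prove Theorem 6.1 by the classical Friedrichs-symmetrizer energy method for quasilinear hyperbolic initial--boundary-value problems, combined with a Galerkin (or parabolic-regularization) construction of approximate solutions. First I would rewrite \eqref{eqsw} in the quasilinear form $\partial_t V + A(V)\partial_x V = 0$ with $V=(\eta,u)^{T}$ and $A(V)=\left(\begin{smallmatrix} u & 1+\eta\\ 1 & u\end{smallmatrix}\right)$, and note that $S_{0}(V)=\mathrm{diag}(1,1+\eta)$ is a symmetrizer: $S_{0}A$ is symmetric and, as long as $1+\eta\geq\alpha>0$, $S_{0}$ is symmetric positive definite with norm and inverse norm controlled by $\alpha$ and $\|\eta\|_{\infty}$. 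Moreover the eigenvalues of $A$ are $u\pm\sqrt{1+\eta}$, so the system is strictly hyperbolic in the subsonic regime, and the boundary condition $u|_{\partial I}=0$ is then the admissible (maximal) one for this $2\times2$ system; crucially, since $S_{0}A=\left(\begin{smallmatrix} u & 1+\eta\\ 1+\eta & (1+\eta)u\end{smallmatrix}\right)$ and the relevant boundary quadratic form is $(S_{0}A\,W)\cdot W = u\,a^{2}+2(1+\eta)\,ab+(1+\eta)u\,b^{2}$ for $W=(a,b)$, this form vanishes on $\partial I$ whenever the second component of $W$ vanishes there, which is exactly what $u|_{\partial I}=0$ supplies for $W=V,V_{t},V_{tt}$.

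Next I would construct approximate solutions, for instance by a Faedo--Galerkin scheme in which $\eta_{m}$ is sought in a finite-dimensional subspace of $H^{1}$ and $u_{m}$ in a finite-dimensional subspace of $H_{0}^{1}$, so that the homogeneous boundary condition is encoded in the test spaces and the resulting ODE system is solvable locally in time; alternatively one adds $\varepsilon\eta_{xx}$, $\varepsilon u_{xx}$ and lets $\varepsilon\to0$. The heart of the matter is then a priori estimates uniform in the approximation parameter. Since the spatial domain is one-dimensional, I would differentiate the system only in $t$, obtaining $L^{2}$ bounds for $V$, $V_{t}$, $V_{tt}$: testing the $j$-times-$t$-differentiated equation against $S_{0}\,\partial_{t}^{j}V$ and integrating by parts in $x$, the boundary terms vanish by the remark above, the commutator and lower-order terms being estimated through the one-dimensional embeddings $H^{1}\hookrightarrow C^{0}$, $H^{2}\hookrightarrow C^{1}$ and the positivity of $S_{0}$. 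Spatial derivatives are then recovered algebraically: $A\,\partial_{x}V=-\partial_{t}V$, and one further $x$-differentiation expresses $\partial_{x}V$ and $\partial_{x}^{2}V$ in terms of $\partial_{t}V$, $\partial_{t}^{2}V$ and lower-order products, provided $A$ is boundedly invertible, i.e.\ $u^{2}-(1+\eta)$ is bounded away from $0$ (subsonicity), a condition the estimate must also propagate. Finally the lower bound on $1+\eta$ is tracked from the transport structure of the first equation: at a point where $\min_{x}(1+\eta)(\cdot,t)$ is attained one has $\eta_{x}=0$, hence $\tfrac{d}{dt}\min_{x}(1+\eta)\geq -\|u_{x}(\cdot,t)\|_{\infty}\min_{x}(1+\eta)$, and Gronwall keeps $1+\eta\geq 2\alpha\,e^{-Ct}\geq\alpha$ for $t$ small.

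Assembling these, the energy $E(t)=\|\eta(t)\|_{2}^{2}+\|u(t)\|_{2}^{2}$ (equivalent, through the equations, to $\sum_{j\leq 2}\|\partial_{t}^{j}V(t)\|^{2}$) satisfies a differential inequality $E'(t)\leq\Phi(E(t))$ with $\Phi$ continuous, increasing, and depending on $\alpha$; hence there is $T_{*}=T_{*}(\|\eta_{0}\|_{2},\|u_{0}\|_{2},\alpha)>0$ on which $E$ stays bounded, uniformly in the approximation. Passing to the limit then follows the standard route: a weak-$*$ limit in $L^{\infty}(0,T_{*};H^{2})$ exists, while the uniform bounds on $\partial_{t}V$ give, by Aubin--Lions, strong convergence in $C(0,T_{*};H^{1})$, enough to pass to the limit in the quadratic nonlinearities and to recover the initial and boundary data and the pointwise inequality $1+\eta\geq\alpha$. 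Uniqueness is obtained by the same symmetrizer energy estimate applied to the difference $\bar V=V_{1}-V_{2}$ of two solutions, which solves a linear symmetric hyperbolic system with coefficients in $L^{\infty}(0,T_{*};C^{1})$ and right-hand side linear in $\bar V$; since $\bar u=0$ on $\partial I$ the boundary terms vanish, and Gronwall forces $\bar V\equiv 0$.

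I expect the main obstacle to be the uniform $H^{2}$ a priori estimate: keeping the boundary contributions exactly zero in the estimates for the time-differentiated system, and, more delicately, controlling the passage from bounds on $\partial_{t}^{j}V$ to bounds on $\partial_{x}^{j}V$ through $A^{-1}$, which requires propagating the subsonic condition $u^{2}<1+\eta$ (automatic for the small smooth data used elsewhere in the paper, but needing care in general) alongside the lower bound on $1+\eta$. The corner compatibility conditions usually needed for $H^{2}$ regularity in space--time are sidestepped here, since only $L^{\infty}_{t}H^{2}_{x}$ regularity is claimed.
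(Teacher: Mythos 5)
The paper does not prove this statement: Theorem 6.1 is quoted verbatim from Petcu and Temam \cite{pt}, and Section 6.2 only records the result (together with the additional facts $u\in L^{\infty}(0,T_{*};H^{2}\cap H_{0}^{1})$, $\eta_{x}|_{\partial I}=0$, and the implicit compatibility assumptions $u_{0}(0)=u_{0}(1)=0$, $\eta_{0}'(0)=\eta_{0}'(1)=0$). So there is no in-paper proof to compare with; I can only assess your sketch on its own terms. Most of it is the standard and correct machinery: the symmetrizer $S_{0}=\mathrm{diag}(1,1+\eta)$, the vanishing of the boundary quadratic form $2(1+\eta)ab$ when the second component of $W$ vanishes on $\partial I$ (which holds for $W=V,V_{t},V_{tt}$ since $u$ and hence $u_{t},u_{tt}$ vanish there), the minimum-principle argument for $1+\eta$ (note that $\eta_{x}|_{\partial I}=0$, obtained from the second equation at the boundary, is what makes this argument work even when the minimum sits on $\partial I$), the Galerkin construction with $u_{m}$ in subspaces of $H_{0}^{1}$, and the uniqueness estimate for the difference of two solutions.

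The genuine gap is the step where you recover $\partial_{x}V$ and $\partial_{x}^{2}V$ from $\partial_{t}V$, $\partial_{t}^{2}V$ by inverting $A$. This requires $\det A=u^{2}-(1+\eta)$ to be bounded away from zero on all of $[0,1]$, i.e.\ a global subsonicity condition $u^{2}<1+\eta$. That condition is \emph{not} among the hypotheses of Theorem 6.1 (only $1+\eta_{0}\geq 2\alpha>0$ is assumed), and it is not propagated by anything else in your argument; as written, your energy $E(t)$ is equivalent to $\sum_{j\leq 2}\|\partial_{t}^{j}V\|^{2}$ only where $A$ is invertible, so the sketch proves a weaker theorem. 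You flag this yourself but do not resolve it. The standard repair is to localize: near $\partial I$ one has $u=0$ on the boundary, hence $|u|\leq\delta\|u_{x}\|_{\infty}$ on a collar of width $\delta$, so $1+\eta-u^{2}\geq\alpha/2$ there and the tangential-derivative-plus-algebraic-recovery argument is legitimate; in the interior one must instead estimate $\partial_{x}V$ and $\partial_{x}^{2}V$ directly, differentiating the symmetrized system in $x$ and testing against $\zeta^{2}\partial_{x}^{j}V$ with a cutoff $\zeta$ supported away from $\partial I$, so that no boundary terms and no inversion of $A$ are needed; the two estimates are then combined by a partition of unity, with $\delta$ controlled by the energy itself in the bootstrap. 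Without this (or an added subsonicity hypothesis), the passage from time-derivative bounds to the claimed $L^{\infty}(0,T_{*};H^{2})$ bound does not go through.
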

In the course of the proof it is shown that $u\in L^{\infty}(0,T;H^{2}\cap H_{0}^{1})$, that $u_{t}$,
$\eta_{t}\in L^{\infty}(0,T_{*};H^{1})$ and $\eta_{x}(0,t)=\eta_{x}(1,t)=0$ for $0<t<T_{*}$; it is also assumed that
$u_{0}(0)=u_{0}(1)=0$ and $\eta_{0}'(0)=\eta_{0}'(1)=0$. \par
Following the steps of the proof of Theorem 6.1 in \cite{pt}, it is straightforward to show that a similar result holds
for \eqref{eqssw}. In fact the proof is simpler due to the symmetry of the system in \eqref{eqssw}; we just state the result
\begin{theorem} Let $u_{0}\in H^{2}\cap H_{0}^{1}$, $\eta_{0}\in H^{2}$ with $\eta_{0}'(0)=\eta_{0}'(1)=0$, and $\alpha$ be
a constant such that $1+\tfrac{1}{2}\eta_{0}(x)\geq 2\alpha>0$, $x\in [0,1]$. Then, there exists a $T_{*}>0$ depending on
$\|u_{0}\|_{2}$, $\|\eta_{0}\|_{2}$, and a unique solution $(u,\eta)$ of \eqref{eqssw} for $0\leq t<T_{*}$ such that
$u\in L^{\infty}(0,T_{*};H^{2}\cap H_{0}^{1})$, $\eta\in L^{\infty}(0,T_{*};H^{2})$, $u_{t}$, $\eta_{t}\in L^{\infty}(0,T_{*};H^{1})$.
Moreover $1+\tfrac{1}{2}\eta(x,t)\geq \alpha>0$ for $(x,t)\in [0,1]\times [0,T_{*})$ and $\eta_{x}(0,t)=\eta_{x}(1,t)=0$ for
$0\leq t<T_{*}$.
\end{theorem}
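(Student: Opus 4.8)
The plan is to follow the proof of Theorem 6.1 of \cite{pt}, the argument being lighter here because of the symmetry of \eqref{eqssw}. First I would recast \eqref{eqssw} in quasilinear form: writing $w=(\eta,u)^{T}$, the system reads $w_{t}+A(w)w_{x}=0$ with
\[
A(w)=\begin{pmatrix} \tfrac{1}{2}u & 1+\tfrac{1}{2}\eta\\ 1+\tfrac{1}{2}\eta & \tfrac{3}{2}u\end{pmatrix},
\]
which is \emph{symmetric} and depends smoothly on $w$. As long as $1+\tfrac{1}{2}\eta>0$ the matrix $A(w)$ has real, distinct eigenvalues of opposite sign (at a point where $u=0$ they are $\pm(1+\tfrac{1}{2}\eta)$), so the lateral boundary $\partial I=\{0,1\}$ is noncharacteristic and exactly one scalar boundary condition is required at each endpoint; the prescribed condition $u|_{\partial I}=0$ makes the quadratic form $\langle A(w)w,w\rangle$ vanish on $\partial I$, since every term of $A(w)w\cdot w=\tfrac{1}{2}u\eta^{2}+2(1+\tfrac{1}{2}\eta)\eta u+\tfrac{3}{2}u^{3}$ carries a factor of $u$. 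Thus \eqref{eqssw} is an initial--boundary-value problem for a symmetric hyperbolic system with a conservative boundary condition, and testing the two equations with $\eta$ and $u$ respectively returns exactly the conservation identity \eqref{eq12} (the boundary contributions $[\eta u]_{0}^{1}$ and $[u^{3}]_{0}^{1}$ vanish). This is the mechanism that governs all subsequent estimates.

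Next I would set up an approximation scheme as in \cite{pt} --- for instance a parabolic regularization $w^{\ve}_{t}+A(w^{\ve})w^{\ve}_{x}=\ve w^{\ve}_{xx}$ with compatible boundary data, or a Galerkin scheme in a basis adapted to $u|_{\partial I}=0$ and $\eta_{x}|_{\partial I}=0$ --- each of which has a smooth local-in-time solution, and derive a priori bounds uniform in the regularization parameter. The substantive step is the $H^{2}$ estimate. Differentiating the system once and twice in $x$ and testing the resulting equations with $(\eta_{x},u_{x})$ and with $(\eta_{xx},u_{xx})$, the top-order term is $\int_{0}^{1}A(w)\,\partial_{x}^{3}w\cdot\partial_{x}^{2}w\,dx$, which by the symmetry $A(w)=A(w)^{T}$ integrates by parts to $\tfrac{1}{2}\bigl[A(w)\,w_{xx}\cdot w_{xx}\bigr]_{0}^{1}-\tfrac{1}{2}\int_{0}^{1}A(w)_{x}\,w_{xx}\cdot w_{xx}\,dx$, i.e.\ up to boundary terms it is bounded by $C\|w\|_{W^{1,\infty}}\|w\|_{2}^{2}$; all the commutator terms are estimated similarly using the one-dimensional embedding $H^{1}\hookrightarrow L^{\infty}$ and the lower bound $1+\tfrac{1}{2}\eta\geq\alpha$. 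This yields a differential inequality $\tfrac{d}{dt}\|w\|_{2}^{2}\leq C(1+\|w\|_{2}^{2})^{p}$ and hence a bound on some $[0,T_{*}]$ with $T_{*}$ depending only on $\|\eta_{0}\|_{2}$ and $\|u_{0}\|_{2}$. The boundary terms that arise vanish thanks to the compatibility hierarchy $u|_{\partial I}=0$, which gives $u_{t}|_{\partial I}=0$; the second p.d.e.\ of \eqref{eqssw} evaluated on $\partial I$ (where $u=u_{t}=uu_{x}=0$) then gives $(1+\tfrac{1}{2}\eta)\eta_{x}=0$, hence $\eta_{x}|_{\partial I}=0$ and $\eta_{xt}|_{\partial I}=0$; and the $x$-derivative of the first p.d.e.\ evaluated on $\partial I$ gives $(1+\tfrac{1}{2}\eta)u_{xx}=0$, hence $u_{xx}|_{\partial I}=0$ --- exactly what is needed to kill $[(1+\tfrac{1}{2}\eta)\eta_{x}u_{x}]_{0}^{1}$ and $[(1+\tfrac{1}{2}\eta)\eta_{xx}u_{xx}]_{0}^{1}$ at the $H^{1}$ and $H^{2}$ levels. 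These relations are consistent at $t=0$ with the two hypotheses $u_{0}|_{\partial I}=0$ and $\eta_{0}'(0)=\eta_{0}'(1)=0$.

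For the positivity of $1+\tfrac{1}{2}\eta$ I would use the transport structure of the first p.d.e.: with $q:=1+\tfrac{1}{2}\eta$ it reads $q_{t}+\tfrac{1}{2}u\,q_{x}=-\tfrac{1}{2}u_{x}\,q$, so along the characteristics $\dot{x}=\tfrac{1}{2}u$ one has $q(x,t)=q_{0}(x_{0})\exp\!\bigl(-\tfrac{1}{2}\int_{0}^{t}u_{x}\,ds\bigr)$; since the a priori bound controls $\sup_{[0,T_{*}]}\|u_{x}\|_{\infty}$, this gives $1+\tfrac{1}{2}\eta\geq 2\alpha e^{-CT_{*}}\geq\alpha$ after possibly shrinking $T_{*}$. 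Passing to the limit in the approximation scheme is then routine: the uniform $H^{2}$ bound together with $w_{t}\in L^{\infty}(0,T_{*};H^{1})$ (read off from the equation) gives, via an Aubin--Lions argument, strong convergence of a subsequence in $L^{\infty}(0,T_{*};H^{1})$, enough to pass to the limit in the quadratic nonlinearities; the regularity claims $u\in L^{\infty}(0,T_{*};H^{2}\cap H_{0}^{1})$, $\eta\in L^{\infty}(0,T_{*};H^{2})$, $\eta_{t},u_{t}\in L^{\infty}(0,T_{*};H^{1})$ and $\eta_{x}(0,t)=\eta_{x}(1,t)=0$ follow from the bounds and the equations. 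Uniqueness is obtained by an $L^{2}$ energy estimate for the difference $(\eta^{(1)}-\eta^{(2)},u^{(1)}-u^{(2)})$ of two solutions, which satisfies a linear symmetric hyperbolic system with coefficients in $L^{\infty}(0,T_{*};W^{1,\infty})$ and with vanishing boundary terms, so Gronwall's inequality forces the difference to be zero.

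The main obstacle is the closed $H^{2}$ a priori estimate --- in particular the careful bookkeeping of the compatibility hierarchy needed to make every boundary term disappear, and the verification that the regularization (or Galerkin) scheme is compatible enough with this structure to preserve the bounds uniformly; once this is in place, every other step is a standard compactness/uniqueness argument and is, as the authors note, simpler than in the \eqref{eqsw} case because of the symmetry of the system.
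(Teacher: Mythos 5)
The paper does not actually prove this theorem: it states the result and refers to the proof of Theorem 6.1 of \cite{pt} for \eqref{eqsw}, remarking only that the symmetric case is simpler. Your sketch is therefore a reconstruction rather than a rival to a written argument, and as a reconstruction it is essentially correct. The symmetric quasilinear form of $A(w)$, the eigenvalues $u\pm\bigl(u^{2}/4+(1+\tfrac{1}{2}\eta)^{2}\bigr)^{1/2}$ making $\partial I$ noncharacteristic with exactly one incoming characteristic per endpoint, the conservativity of $u|_{\partial I}=0$ (every term of $\langle A(w)w,w\rangle$ carries a factor of $u$), the compatibility hierarchy $u=u_{t}=0\Rightarrow(1+\tfrac{1}{2}\eta)\eta_{x}=0\Rightarrow\eta_{x}=\eta_{xt}=0\Rightarrow(1+\tfrac{1}{2}\eta)u_{xx}=0$ on $\partial I$ (consistent at $t=0$ with the hypotheses on $u_{0}$ and $\eta_{0}'$), the characteristic representation of $q=1+\tfrac{1}{2}\eta$, and the $L^{2}$ uniqueness argument all check out; and you correctly identify the dividend of symmetry, namely that unweighted $L^{2}$ energies of $\partial_{x}^{j}w$ close directly, whereas \cite{pt} must work with $(1+\eta)$-weighted quantities for \eqref{eqsw}.

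The one point that is genuinely nontrivial to complete is the one you flag yourself: the approximation scheme. The formal $H^{2}$ estimate uses the trace $u_{xx}|_{\partial I}=0$, which is meaningless for an $H^{2}$ solution and must be enforced on smooth approximants; a naive viscous regularization $\ve w_{xx}$ requires a second boundary condition per endpoint and leaves the boundary term $\ve\bigl[\partial_{x}^{3}w\cdot w_{xx}\bigr]_{0}^{1}$ in the twice-differentiated energy identity, which does not vanish under the natural conditions and threatens uniformity in $\ve$. To close this one should either reproduce the specific construction of \cite{pt} with the symmetric coefficients, or replace normal by tangential (time) differentiation and recover $\partial_{x}^{2}w$ from the equation through the noncharacteristic boundary. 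Since you defer exactly this point to \cite{pt}, as the paper itself does, I would accept the proposal as a correct proof outline at the same level of detail as the paper's own treatment.
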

\subsection{Comparison of SW and SSW for small-amplitude solutions} As is well known, the system of shallow water equations
(which has been written thusfar in terms of nondimensional, unscaled variables) is derived from the 2D-Euler equations for
surface water waves in the long-wave regime, i.e. when $\sigma:=\tfrac{h_{0}}{\lambda}<<1$, where $h_{0}$ is the depth of the
horizontal channel and $\lambda$ is a typical wavelength. Under the additional assumption that the wave amplitude is small, i.e.
when $\ve:=\tfrac{\alpha}{h_{0}}<<1$, one may formally derive, cf. \cite{p}, \cite{bcs}, from the Euler equations one of the
original versions of a Boussinesq system written in nondimensional, scaled variables in the form
\begin{align*}
\eta_{t} & + u_{x} + \ve(\eta u)_{x} + \tfrac{\sigma^{2}}{3}u_{xxx} = O(\ve\sigma^{2},\sigma^{4}),\\
u_{t} & + \eta_{x} + \ve uu_{x} = O(\ve\sigma^{2},\sigma^{4}),
\end{align*}
where $u$ denotes the horizontal velocity at the free surface and $\eta$ is the displacement of the free surface from its rest
position. (Here $x\in\mathbb{R}$ is proportional to length along the channel and $t\geq 0$ is proportional to time.) If we
assume that the dispersive effects are small, in the sense that $\ve\sim\sigma$, we obtain
\begin{align*}
\eta_{t} & + u_{x} + \ve(\eta u)_{x} = O(\ve^{2}),\\
u_{t} & + \eta_{x} + \ve uu_{x} = O(\ve^{3}),
\end{align*}
from which, replacing the right-hand side be zero, we get the system
\begin{align}
\eta_{t} & + u_{x} + \ve (\eta u)_{x} = 0,
\label{eq628}\\
u_{t} & + \eta_{x} + \ve uu_{x} = 0,
\label{eq629}
\end{align}
a scaled version of the shallow water equations valid for small-amplitude waves in the regime $\ve\sim\sigma << 1$. \par
Making in \eqref{eq628}-\eqref{eq629} the nonlinear change of variable $v=u(1+\tfrac{\ve}{2}\eta)$, used in \cite{bcl} in the
context of dispersive waves, we obtain that
\begin{align*}
\eta_{t} & + v_{x} + \tfrac{\ve}{2}(\eta v)_{x} = O(\ve^{2}),\\
v_{t} & + \eta_{x} + \tfrac{\ve}{2}\eta\eta_{x} + \tfrac{3\ve}{2} vv_{x} = O(\ve^{2}),
\end{align*}
i.e. that $(\eta,v)$ satisfy a scaled version of the symmetric shallow water equations which is formally equivalent as a model
up to $O(\ve^{2})$ terms to the scaled shallow water system. \par
Let now $(\eta^{s},u^{s})$ denote the solution of the Cauchy problem for the symmetric system
\begin{align}
\eta_{t}^{s} & + u_{x}^{s} + \tfrac{\ve}{2}(\eta^{s}u^{s})_{x} = 0,
\label{eq630}\\
u_{t}^{s} & + \eta_{x}^{s} + \tfrac{\ve}{2}\eta^{s}\eta_{x}^{s} + \tfrac{3\ve}{2}u^{s}u_{x}^{s}=0,
\label{eq631}
\end{align}
for $x\in\mathbb{R}$, $t\geq 0$, with initial data
\begin{equation}
\eta^{s}(x,0)=\eta_{0}^{s}(x), \quad u^{s}(x,0)=u_{0}^{s}(x), \quad  x\in\mathbb{R},
\label{eq632}
\end{equation}
and consider the Cauchy problem for the system \eqref{eq628}-\eqref{eq629} with initial conditions
\begin{equation}
\eta(x,0)=\eta_{0}(x), \quad u(x,0)=u_{0}(x), \quad  x\in\mathbb{R}.
\label{eq633}
\end{equation}
Using the theory of local existence for initial-value problems for quasilinear hyperbolic systems, \cite{m}, \cite{t}, and
examining the proofs of Proposition 4 and Corollary 2 of \cite{bcl}, we may conclude that the results of \cite{bcl} hold also
in the non-dispersive case, and specifically for the initial-value problems \eqref{eq628}, \eqref{eq629}, \eqref{eq633} and
\eqref{eq630}-\eqref{eq632}. In particular, if $(\eta_{0}^{s},u_{0}^{s})\in \bigl(H^{\ell}(\mathbb{R})\bigr)^{2}$ for some
$\ell>3/2$, there exists $T_{0}>0$ independent of $\ve$ and a unique solution
$(\eta^{s},u^{s}) \in C\bigl( [0,\tfrac{T_{0}}{\ve}];(H^{\ell}(\mathbb{R}))^{2}\bigr)$ of \eqref{eq630}-\eqref{eq632}.
In addition, $\|(\eta^{s},u^{s})\|_{W^{k,\infty}\bigl(0,\tfrac{T_{0}}{\ve};(H^{\ell-k}(\mathbb{R}))^{2}\bigr)}\leq C_{0}$
for some constant $C_{0}$ independent of $\ve$ and for all $k$ such that $\ell-k>3/2$. An entirely analogous result (with different
constants $T_{0}'$ and $C_{0}'$) holds for the solutions $(\eta,u)$ of the initial-value problem for the shallow-water system
\eqref{eq628},\eqref{eq629},
\eqref{eq633}. Under these hypotheses and if
\begin{equation}
\eta_{0}^{s}=\eta_{0}, \quad u_{0}^{s}=u_{0}(1+\tfrac{\ve}{2}\eta_{0}),
\label{eq634}
\end{equation}
and $T=\min (T_{0},T_{0}')$, there exists $\ve_{0}>0$ such that for $0<\ve<\ve_{0}$
\begin{equation}
\|\eta-\eta^{s}\|_{L^{\infty}(0,t;H^{\ell}(\mathbb{R}))}
+ \|u-(1-\tfrac{\ve}{2}\eta^{s})u^{s}\|_{L^{\infty}(0,t;H^{\ell}(\mathbb{R}))} \leq C\ve^{2}t,
\label{eq635}
\end{equation}
for all $t\in [0,\tfrac{T}{\ve}]$ and some constant $C$ independent of $\ve$.
If therefore the initial data in \eqref{eq632} and \eqref{eq633} are related by \eqref{eq634}, the solutions $(\eta,u)$
and $(\eta^{s},u^{s})$ of the two systems (transformed as in \eqref{eq635}) differ by an amount of at
most $O(\ve^{2}t)$ for $t$ up to $O(T/\ve)$.
(Note that initially smooth solutions of both systems are expected in general to develop singularities after times of
$O(1/\ve)$.) \par
We will now investigate by computational means whether an estimate of the form \eqref{eq635} holds also in the case of
initial-boundary value problems for the two systems when they are posed on a finite interval, say on $[0,1]$, with the velocity
variable equal to zero at the endpoints. We consider therefore the ibvp's $(\text{SW}_{\ve})$ consisting of \eqref{eq628} and
\eqref{eq629} for $x\in [0,1]$, $t\geq 0$, initial conditions of the form \eqref{eq633} for $x\in [0,1]$ and boundary
conditions $u(0,t)=u(1,t)=0$ for $t\geq 0$, and the analogous problem $(\text{SSW}_{\ve})$ consisting of \eqref{eq630}-\eqref{eq632}
for $x\in [0,1]$, $t\geq 0$, and boundary conditions $u^{s}(0,t)=u^{s}(1,t)=0$, $t\geq 0$. (Note that the change of variables
$u^{s}=u(1+\tfrac{\ve}{2}\eta)$ preserves the homogeneous boundary conditions on the velocity.) We solve both problems numerically
using cubic splines on a uniform mesh in space coupled with the third-order Shu-Osher temporal discretization with
$h=10^{-3}$, $k=10^{-3}$, taking as initial conditions for $(\text{SW}_{\ve})$ the functions
\[
\eta_{0}(x)=1, \quad u_{0}(x)=x(x-1), \quad x\in [0,1],
\]
and for $(\text{SSW}_{\ve})$  $\eta_{0}^{s}=\eta_{0}$, $u_{0}^{s}=u_{0}(1+\tfrac{\ve}{2}\eta_{0})$. In Figure \ref{fig61} we plot
as functions of $t$ the quantities
\begin{align*}
L_{2}-error & :=\|\eta - \eta^{s}\| + \|u-u^{s}(1-\tfrac{\ve}{2}\eta^{s})\|,\\
H^{1}-error & :=\|\eta - \eta^{s}\|_{1} + \|u-u^{s}(1-\tfrac{\ve}{2}\eta^{s})\|_{1},
\end{align*}
where $(\eta,u)$ and $(\eta^{s},u^{s})$ are the numerical approximations of the solutions of $(\text{SW}_{\ve})$ and $(\text{SSW}_{\ve})$,
respectively, evolving from the stated initial conditions for various values of $\ve$. For values of $\ve$ up to $10^{-3}$ the
temporal profile is practically linear up to about $t=300$ and the same is observed for $\ve=10^{-2}$ up to about $t=100$ for
the $L^{2}$-error. In the case $\ve=10^{-2}$ - note the change of scale in the $t$-axis in the figure - a singularity starts
developing after about $t=120$ (when $t\ve=O(1)$). In Table \ref{tbl61} we present the values of the $L^{2}$- and $H^{1}$-errors
from the same computations at $t=50$, $100$, $200$, $300$ as functions of diminishing $\ve$ in the range where the models are valid,
i.e. before singularities emerge. The computed numerical orders of convergence in $\ve$ for each fixed $t$ are practically equal to 2.
\captionsetup[subfloat]{labelformat=empty,position=bottom,singlelinecheck=true}
\begin{figure}[h]
  \begin{center}
    \subfloat[$L^{2}-error$ vs time]{\includegraphics[scale=0.38]{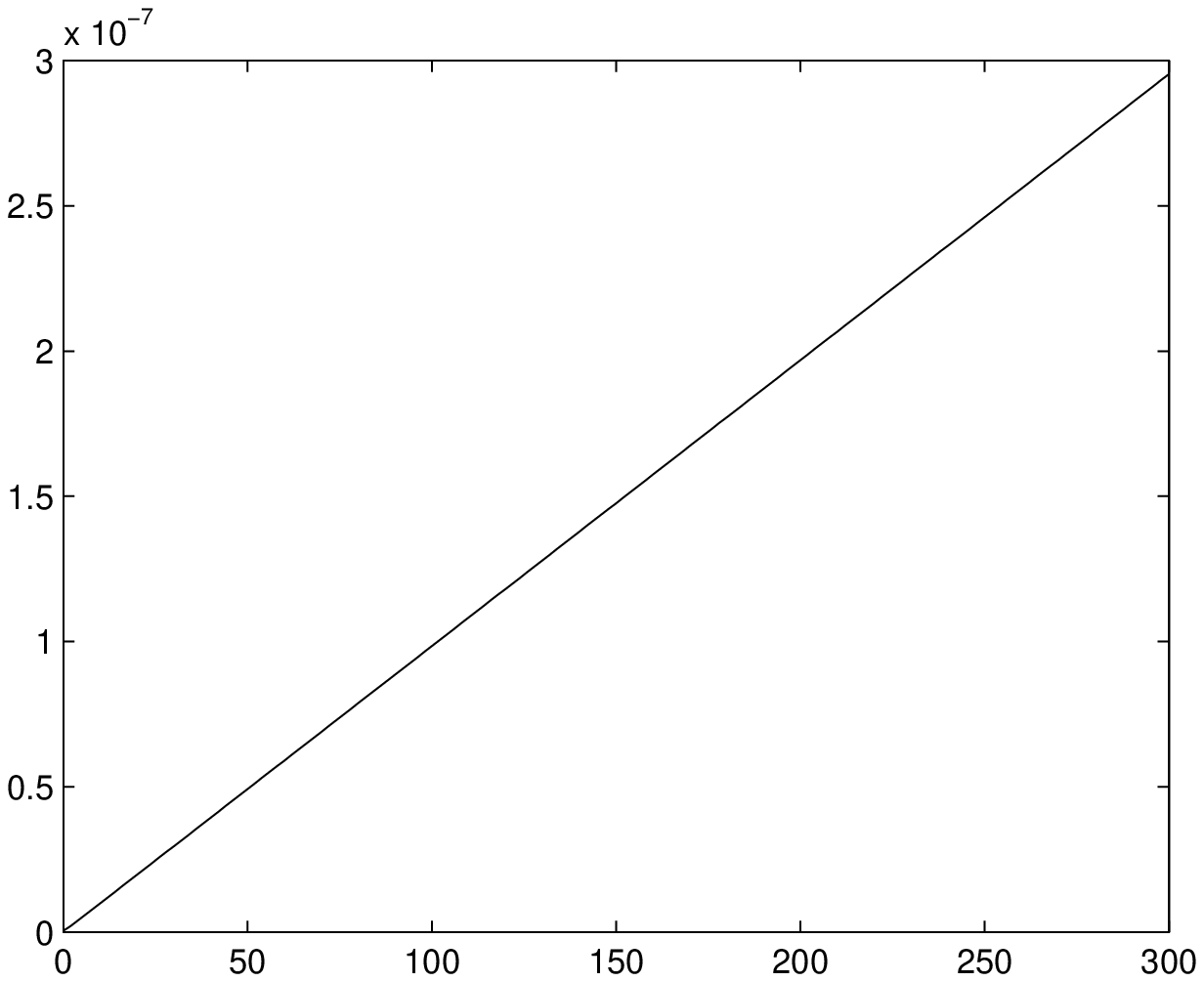}}\qquad
    \subfloat[$H^{1}-error$ vs time]{\includegraphics[scale=0.38]{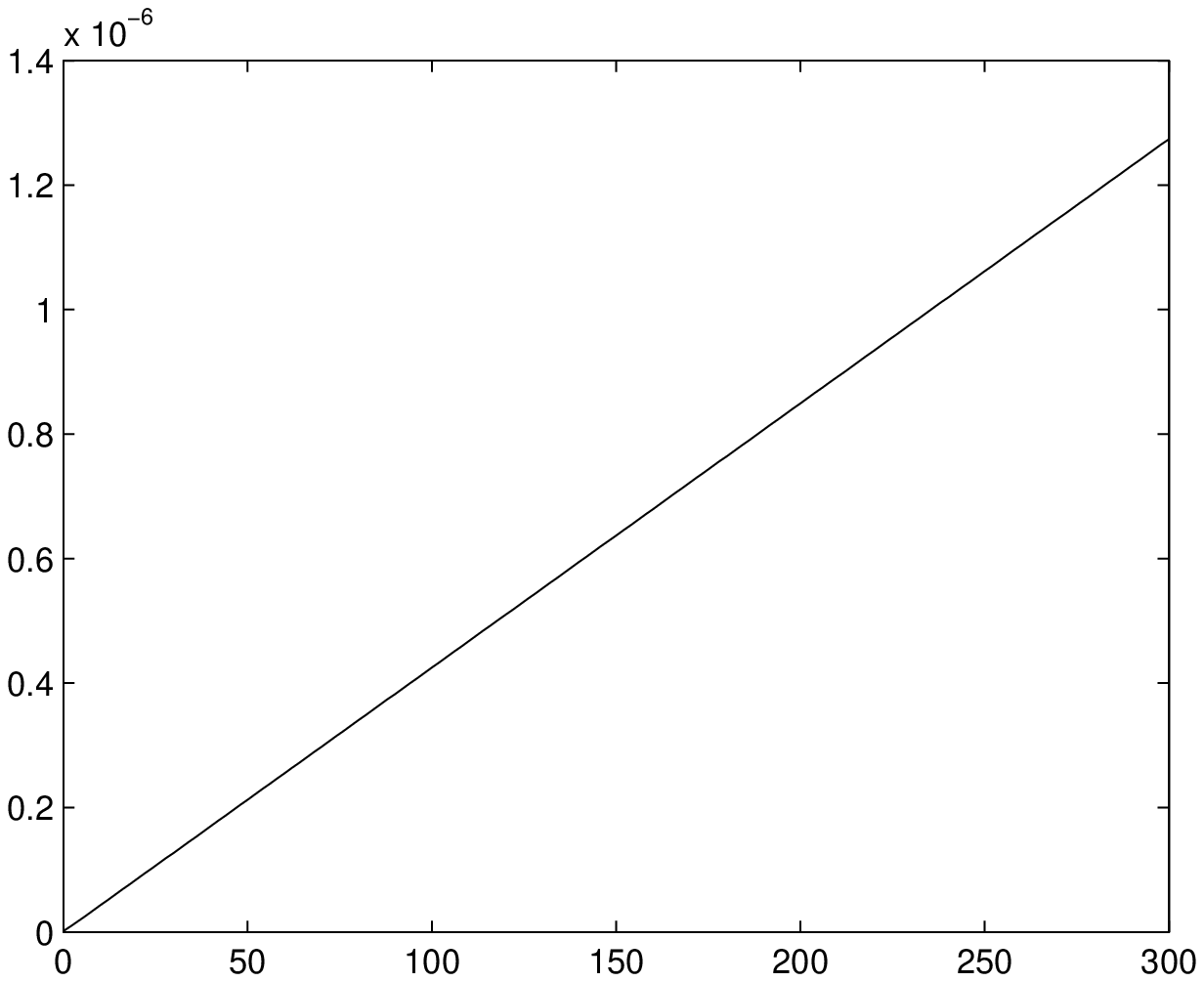}}\\
    $\ve = 10^{-4}$ \\
    \subfloat[$L^{2}-error$ vs time]{\includegraphics[scale=0.38]{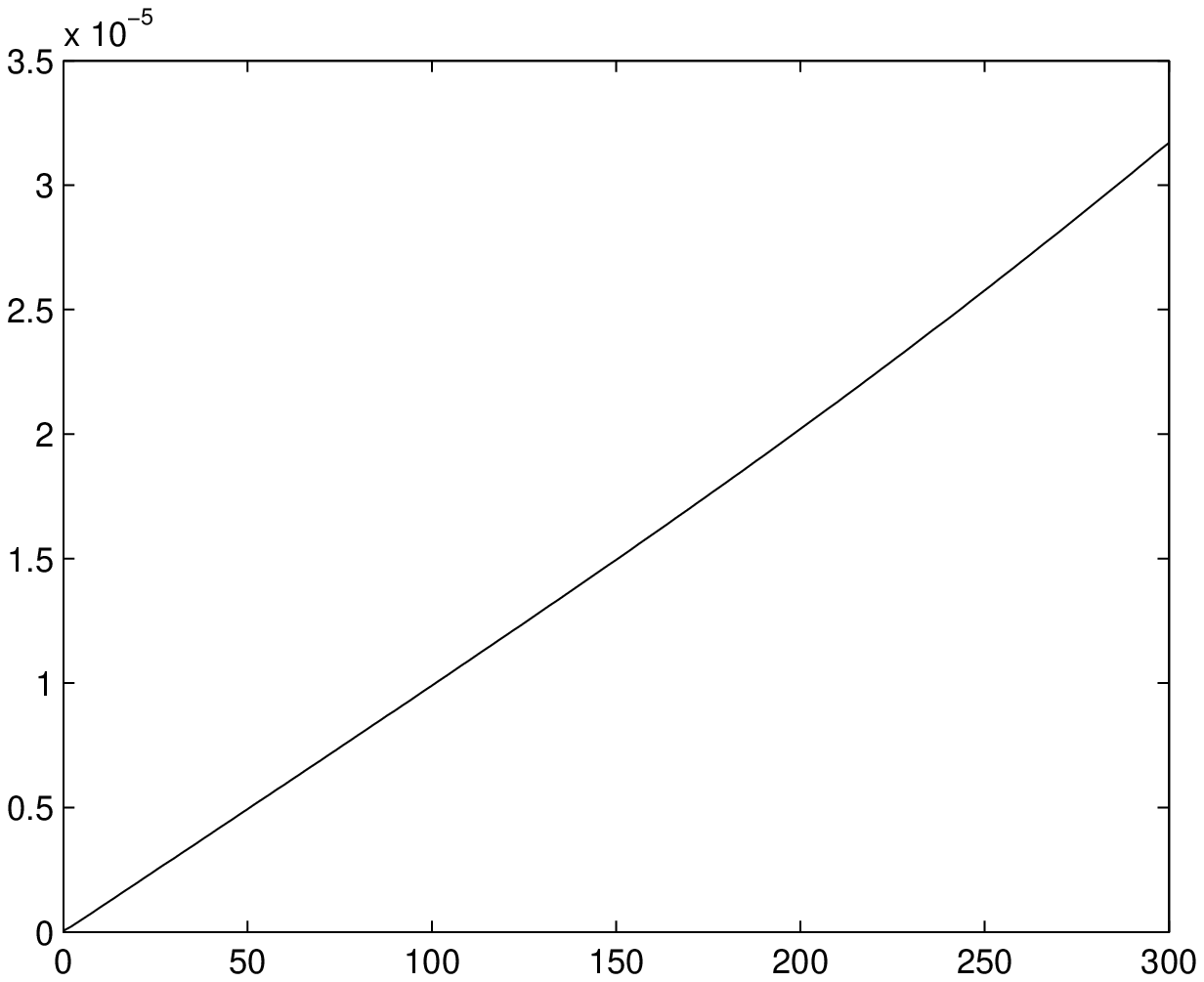}}\qquad
    \subfloat[$H^{1}-error$ vs time]{\includegraphics[scale=0.38]{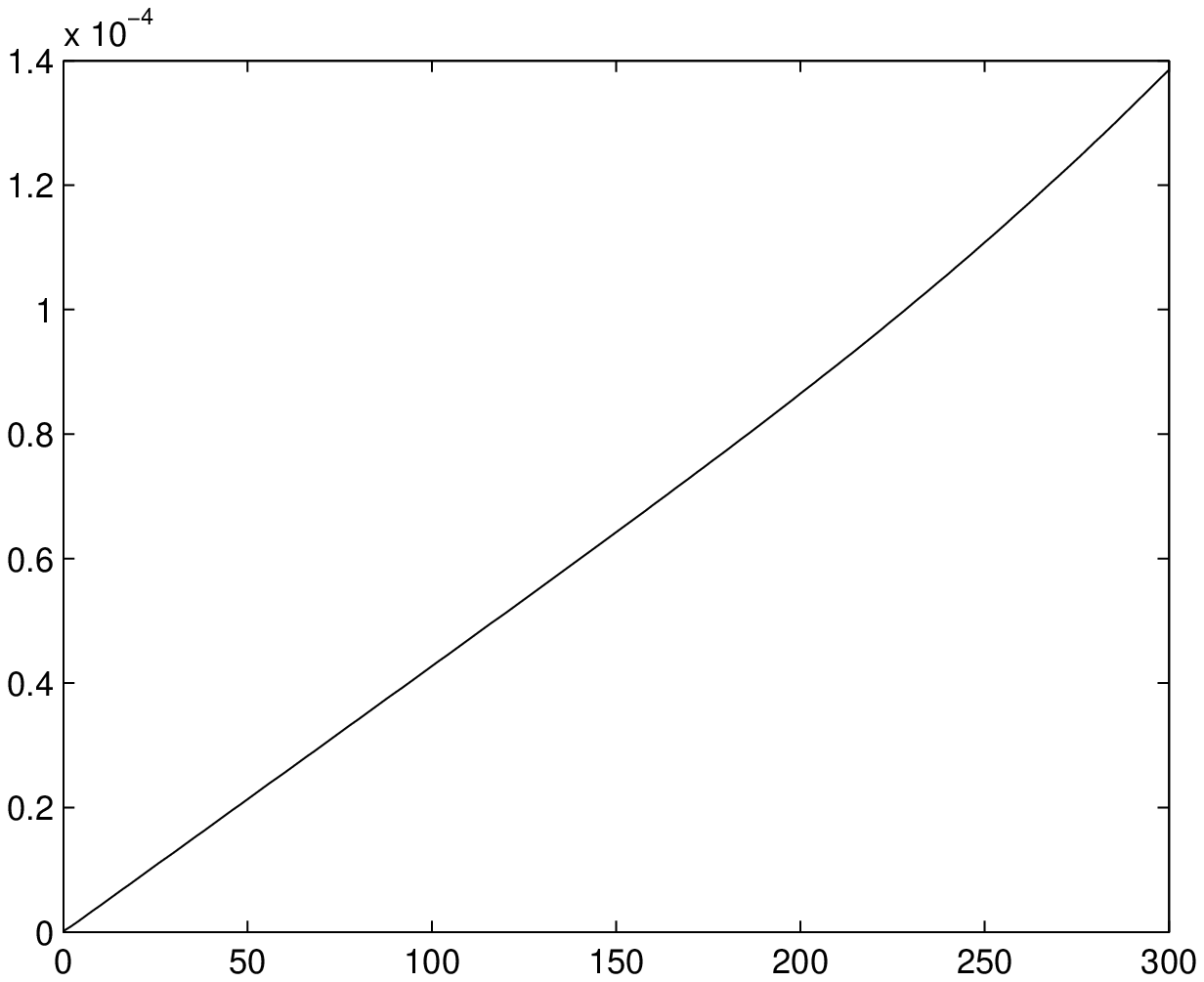}}\\
    $\ve = 10^{-3}$
   \end{center}
\end{figure}
\newpage
\captionsetup[subfloat]{labelformat=empty,position=bottom,singlelinecheck=true}
\begin{figure}[h]
  \begin{center}
    \subfloat[$L^{2}-error$ vs time]{\includegraphics[scale=0.38]{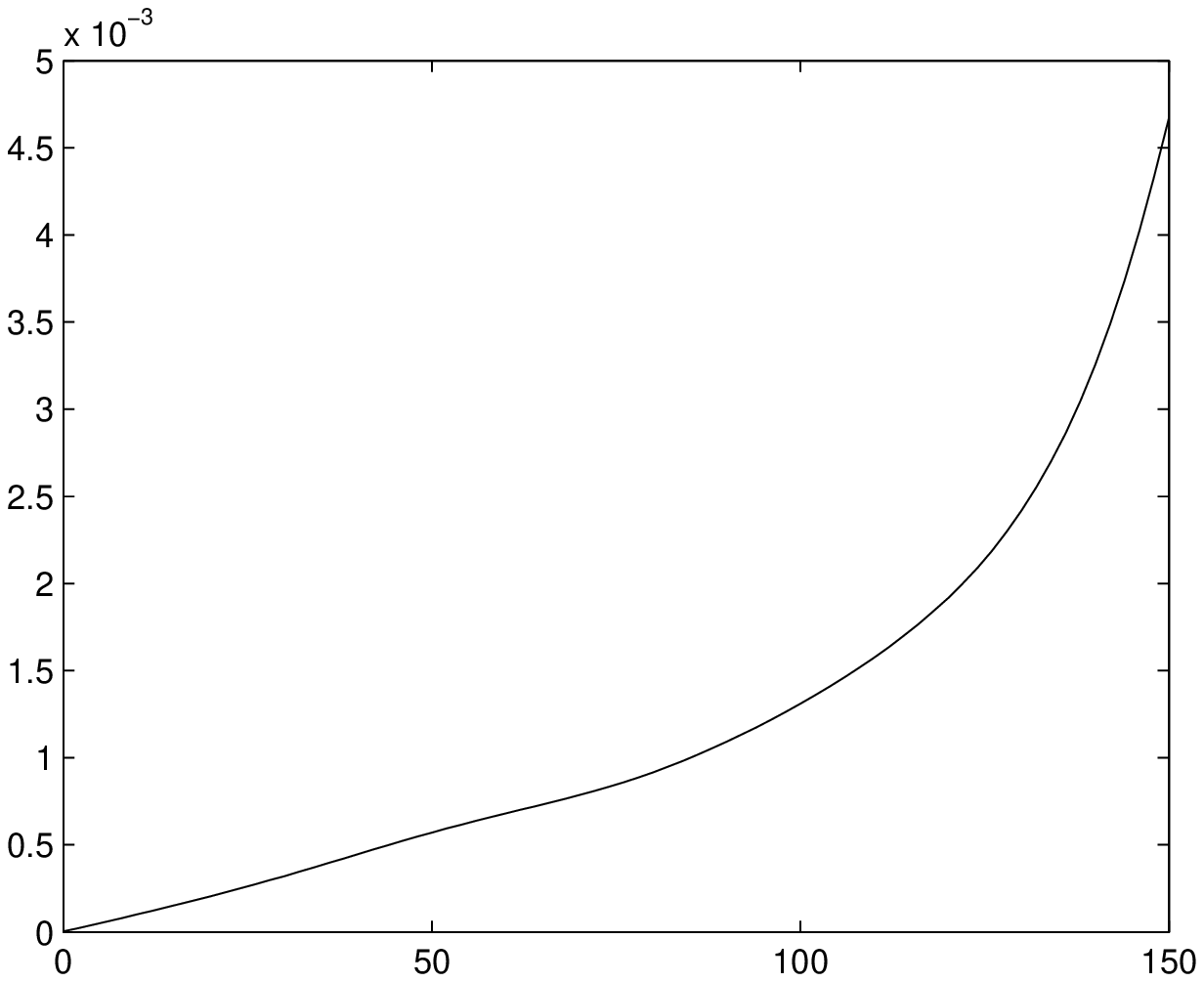}}\qquad
    \subfloat[$H^{1}-error$ vs time]{\includegraphics[scale=0.38]{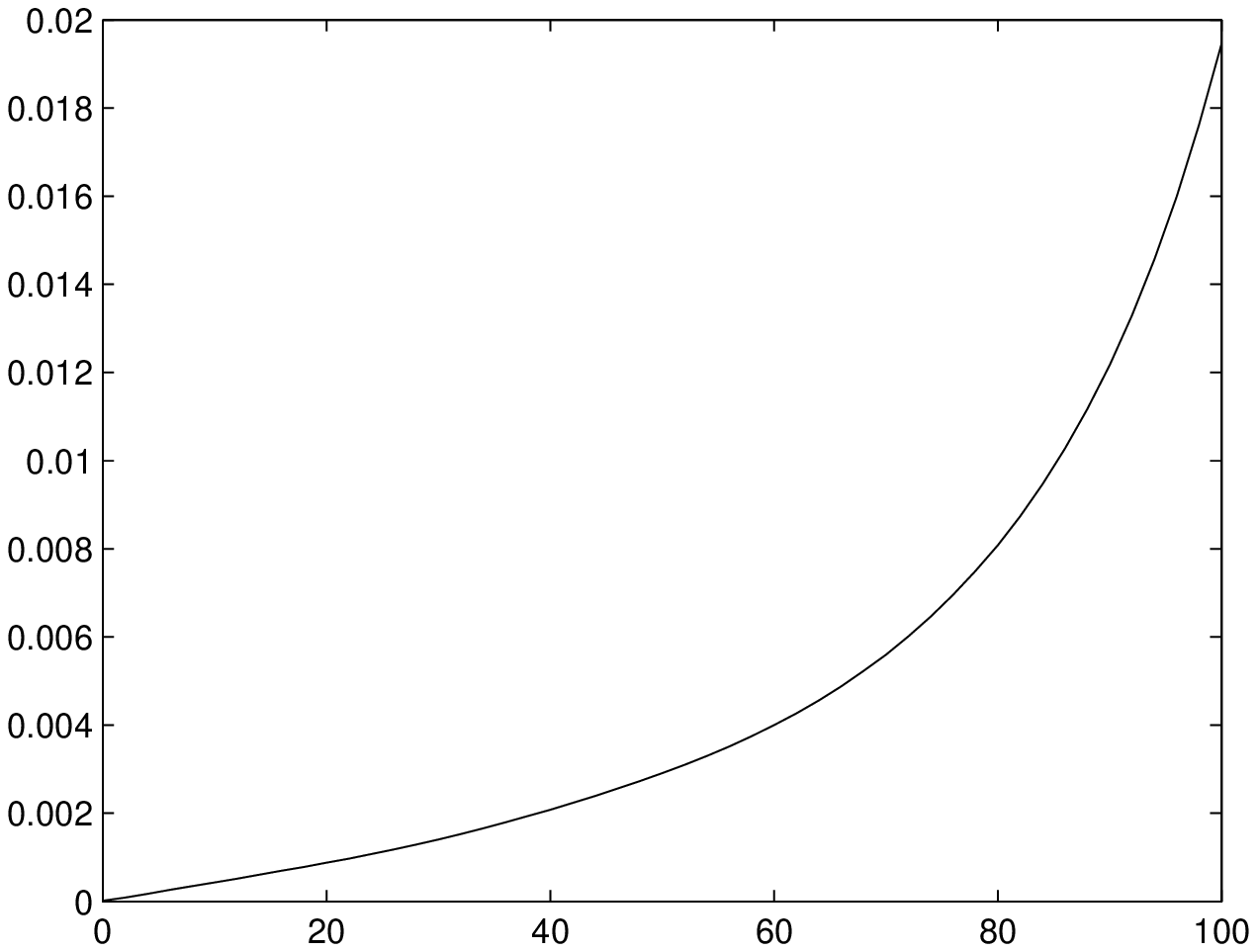}}\\
    $\ve = 10^{-2}$ \\
  \end{center}
  \caption{$L^{2}$ and $H^{1}$ norms of the differences $\bigl(\eta-\eta^{s},u-u^{s}(1-\tfrac{\ve}{2}\eta^{s})\bigr)$
  (``$L^{2}$,$H^{1}$-errors'') as functions of $t$ for $\ve=10^{-4}$, $10^{-3}$, $10^{-2}$.}
  \label{fig61}
\end{figure}
\scriptsize
\begin{table}[h]
\begin{center}
\begin{tabular}[h]{ | c | c | c | c | c | c | c | c | c |  }\hline
& \multicolumn{2}{c |}{$\text{time}=50$} &
\multicolumn{2}{c |}{$\text{time}=100$} & \multicolumn{2}{c |}{$\text{time}=200$} & \multicolumn{2}{c |}{$\text{time}=300$}
 \\ \hline
$\ve$ &  $L^{2}$-$error$  &  $order$ &  $L^{2}$-$error$ & $order$ &
$L^{2}$-$error$ & $order$ & $L^{2}$-$error$  &  $order$ \\ \hline
$10^{-2}$   &  $5.7064(-04)$  &          &  $1.3102(-03)$ &
            &                 &          &                &    \\ \hline
$10^{-3}$   &  $4.9362(-06)$  & $2.0630$ &  $9.8992(-06)$ & $2.1217$ & $2.0206(-05)$ &          & $3.1706(-05)$ &          \\ \hline
$10^{-4}$   &  $4.9224(-08)$  & $2.0012$ &  $9.8437(-08)$ & $2.0024$ & $1.9688(-07)$ & $2.0113$ & $2.9536(-07)$ & $2.0308$ \\ \hline
$10^{-5}$   &  $4.9214(-10)$  & $2.0001$ &  $9.8415(-10)$ & $2.0001$ & $1.9682(-09)$ & $2.0001$ & $2.9523(-09)$ & $2.0002$ \\ \hline
\end{tabular}\vspace{8pt} \\
\begin{tabular}[h]{ | c | c | c | c | c | c | c | c | c |  }\hline
& \multicolumn{2}{c |}{$\text{time}=50$} &
\multicolumn{2}{c |}{$\text{time}=100$} & \multicolumn{2}{c |}{$\text{time}=200$} & \multicolumn{2}{c |}{$\text{time}=300$}
 \\ \hline
$\ve$ &  $H^{1}$-$error$  &  $order$ &  $H^{1}$-$error$ & $order$ &
$H^{1}$-$error$ & $order$ & $H^{1}$-$error$  &  $order$ \\ \hline
$10^{-2}$   &  $2.9087(-03)$  &          &  $1.9463(-02)$ &
            &                 &          &                &    \\ \hline
$10^{-3}$   &  $2.1336(-05)$  & $2.1346$ &  $4.2709(-05)$ & $2.6587$ & $8.6497(-05)$ &          & $1.3853(-04)$ &          \\ \hline
$10^{-4}$   &  $2.1254(-07)$  & $2.0017$ &  $4.2482(-07)$ & $2.0023$ & $8.4936(-07)$ & $2.0079$ & $1.2740(-06)$ & $2.0364$ \\ \hline
$10^{-5}$   &  $2.1305(-09)$  & $2.0000$ &  $4.2485(-09)$ & $2.0000$ & $8.4888(-09)$ & $2.0003$ & $1.2725(-08)$ & $2.0005$ \\ \hline
\end{tabular}
\end{center}
\caption{Data of Figure 6.1. $L^{2}$- and $H^{1}$- errors at $t=50$, $100$, $200$, $300$ as functions of $\ve$, and
order of convergence as $\ve\to 0$.}
\label{tbl61}
\end{table}

\end{document}